 \newcommand\FF{{\mathbb{F}}}
 \def\KK{{\mathbb{K}}}
 \def\QQ{\mathbb{Q}}
 \newcommand\ZZ{{\mathbb{Z}}}
 \def\A{\mathcal{A}}
 \def\B{{\mathcal B}}
 \def\C{{\mathcal C}}
 \def\D{{\mathcal D}}
 \def\F{{\mathcal F}}
 \def\L{{\mathcal L}}
 \def\O{{\mathcal O}}
 \def\Q{{\mathcal Q}}
 \def\S{{\mathcal S}}
 \def\T{{\mathcal T}}
 \def\U{{\mathcal U}}
 \DeclareMathOperator\Lk{Lk}
 \def\op{\mathrm{op}}
 \newcommand{\GF}[1]{\mathbb{F}_{#1}}
 \DeclareMathOperator\GL{GL}
 \DeclareMathOperator\SL{SL}
 \DeclareMathOperator\Rad{Rad}
 \DeclareMathOperator\Aut{Aut}
 \DeclareMathOperator\Hom{Hom}
 \DeclareMathOperator\id{id}
 \def\Im{\mathrm{Im}}
 \DeclareMathOperator{\rk}{rk}
 \newcommand{\tq}{\mathrel{{\ensuremath{\: : \: }}}}
 \DeclareMathOperator{\Sub}{{\mathcal{S}}}
 \DeclareMathOperator{\PD}{\mathcal{PD}}
 \DeclareMathOperator{\OD}{\mathcal{OD}}
 \DeclareMathOperator{\PF}{\mathcal{PF}}
 \DeclareMathOperator\PB{PB} 
 \DeclareMathOperator\FC{FC} 
 \def\groupiso{\cong}
 \newcommand\gen[1]{\left\langle#1\right\rangle}
  \newcommand{\numberlike}[2]{%
     \expandafter\def\csname c@#1\endcsname{%
         \expandafter\csname c@#2\endcsname}%
  }
  \def\DefaultNumberTheoremWithin{section}
  \theoremstyle{plain}
  \newtheorem{lemma}{Lemma}
     \numberwithin{lemma}{\DefaultNumberTheoremWithin}
     \numberwithin{claim}{\DefaultNumberTheoremWithin}
  \newtheorem{theorem}{Theorem}
     \numberwithin{theorem}{\DefaultNumberTheoremWithin}
  \newtheorem{corollary}{Corollary}
     \numberwithin{corollary}{\DefaultNumberTheoremWithin}
  \newtheorem{proposition}{Proposition}
     \numberwithin{proposition}{\DefaultNumberTheoremWithin}
  \newtheorem{conjecture}{Conjecture}
     \numberwithin{conjecture}{\DefaultNumberTheoremWithin}
  \newtheorem*{warning*}{Warning}
  \theoremstyle{definition}
  \newtheorem{definition}{Definition}
     \numberwithin{definition}{\DefaultNumberTheoremWithin}
  \theoremstyle{definition}
  \newtheorem{question}{Question}
     \numberwithin{question}{\DefaultNumberTheoremWithin}
  \theoremstyle{definition}
     \numberwithin{problem}{\DefaultNumberTheoremWithin}
  \theoremstyle{remark}
  \newtheorem{remark}{Remark}
     \numberwithin{remark}{\DefaultNumberTheoremWithin}
  \theoremstyle{remark}
  \newtheorem{example}{Example}
     \numberwithin{example}{\DefaultNumberTheoremWithin}
\newlength{\dhatheight}
\newcommand{\hookdownarrow}{\mathrel{\rotatebox[origin=c]{-90}{$\hookrightarrow$}}}
\DeclareMathOperator{\Ivec}{IVec}
\DeclareMathOperator{\RIvec}{RIVec}
\DeclareMathOperator{\HT}{HT}
\DeclareMathOperator{\HF}{HF}
\DeclareMathOperator{\CB}{CB} 
\newcommand{\red}[1]{#1^*}
\newcommand{\redm}[1]{#1^\circ}
\newcommand{\Ord}{\mathcal{O}}
\newcommand{\Forget}{F}
\newcommand{\RE}[1]{\tilde{\chi}(#1)}
\newcommand{\Vog}{\mathcal{B}}
\newcommand{\PVog}{\mathcal{PB}}
\DeclareMathOperator{\Freefactors}{FC}
\newcommand{\Charney}{\mathcal{G}}
\newcommand{\sch}{(\sqcup,h)}
\newcommand{\Bergman}{\Delta}
\newcommand{\oPerpSymbol}{\begin{tikzpicture}[scale=0.134]
  \draw (0,-0.5)--(0,1); \draw (-0.866,-0.5)--(0.866,-0.5);
  \draw (0,0) circle [radius=1];
\end{tikzpicture}}
\newcommand{\operp}{\mathbin{\raisebox{-1pt}{\oPerpSymbol}}}
\begin{document}

\title[Posets arising from decompositions of objects in a monoidal category]{Posets arising from decompositions of objects in a monoidal category}

\author{Kevin I. Piterman}
\address{Philipps-Universit{\"a}t Marburg, Fachbereich Mathematik und Informatik, 35032 Marburg, Germany; and Vrije Universiteit Brussel, Department of Mathematics and Data Science, 1050 Brussels, Belgium}

\email{piterman@mathematik.uni-marburg.de; kevin.piterman@vub.be}

\author{Volkmar Welker}
\address{Philipps-Universit\"at Marburg \\
Fachbereich Mathematik und Informatik \\
35032 Marburg, Germany}
\email{welker@mathematik.uni-marburg.de}

\begin{abstract}
Given a symmetric monoidal category $\C$ with product $\sqcup$, where the neutral element for the product is an initial object,
we consider the poset of $\sqcup$-complemented subobjects of a given object $X$. When this poset has finite height, we define decompositions and partial decompositions of $X$ which are coherent with $\sqcup$, and order them by refinement. From these posets, we
define complexes of frames and partial bases, augmented Bergman complexes and 
related ordered versions.
We propose a unified approach to the study of their combinatorics and homotopy type, establishing various properties and relations between them. Via explicit homotopy formulas, we will be able to transfer structural properties, such as
Cohen-Macaulayness.

In well-studied scenarios,
the poset of $\sqcup$-complemented subobjects specializes to the poset of free factors of a free group, the subspace poset of a vector space, the poset of non-degenerate subspaces of a vector space with a non-degenerate form, and the lattice of flats of a matroid. The decomposition and partial decomposition posets, the complex of frames and partial bases together with the ordered versions, either coincide with well-known structures, generalize them, or yield
new interesting objects. In these particular cases, we provide new results along with open questions and conjectures.
\end{abstract}

\subjclass[2020]{05E45, 20F65, 57M07}

\keywords{Simplicial complexes, Posets, Monoidal categories, Matroids, Decompositions}

\maketitle

\tableofcontents

\section{Introduction}

In diverse areas of mathematics, posets and simplicial complexes arising from algebraic, combinatorial or geometric objects are widely studied.
Usually, one tries to answer questions or determine intrinsic properties of a mathematical object by looking into the combinatorics and topology of associated combinatorial structures.
One of the most classical examples is the poset of subspaces of a finite-dimensional vector space, which has been extensively studied in connection with the combinatorics, representation theory, and cohomology of linear groups.
Sometimes one even endeavors to associate several posets or simplicial complexes with a given object, establish relations among them, and then achieve stronger results. In this direction, many works provide resembling constructions and morphisms between the various combinatorial structures, analyzed often via ``Quillen's fiber theorem" type tools. In some cases, these yield wedge/join decompositions, allowing us to translate or interchange combinatorial and homotopy properties from one structure to another.

This paper aims to present a general framework in which many of these constructions are particular cases.
We discuss fundamental combinatorial properties that are intrinsic to our approach and show how they can be used to establish comparison results between the different posets and simplicial complexes arising from the same mathematical object.
These results provide a toolset for verifying homotopical properties such as high connectivity, sphericity or even Cohen-Macaulayness, and also for computing 
Euler characteristics (or, more generally, the module afforded in the homology).
These are goals prevalent in the literature.

The starting point of our constructions is the subobject poset of a given object $X$ in some category $\C$.
However, from a combinatorial point of view, this poset is often not suitable since it may contain infinite chains, as is, for example, the case of the subgroup poset of a free group, or even the subgroup poset of a free Abelian group.
Infinite chains obstruct the construction of ``natural" posets of decompositions and frame complexes associated with $X$. 
For that reason, we will keep only the subobjects that are complemented with respect to a suitable product. Thus, we shift gears and work instead with an initial category $\C$ equipped with a symmetric monoidal product $\sqcup$, which will often be the coproduct of $\C$.
With this additional structure at hand, we can extract more ``interesting" subobject posets and decompositions that encode different ways of resembling the original object from sets 
or tuples of subobjects whose $\sqcup$-product is isomorphic to $X$.

To be more precise, the constructions in this article are built up from a so-called \textit{poset of complemented subobjects} $\Sub(X,\sqcup)$ of a fixed object $X$ in our initial category $\C$ with symmetric monoidal product $\sqcup$.
A prominent example to keep in mind is the above-mentioned poset of subspaces $\S(V)$ of a 
vector space $V$ of dimension $n$ over a field $\KK$.
In this case, we consider the category of vector spaces over $\KK$ with $\sqcup$ being just the usual coproduct.
The order complex of $\S(V)$ is the Tits building of $\SL_n(\KK)$, and it has been widely studied in many areas of mathematics.
Then we define the posets of \textit{$\sqcup$-decompositions} and \textit{partial $\sqcup$-decompositions} (see \ref{def:decompositionsCategory}).
In our example case, the former corresponds to the poset $\D(V)$ of collections of non-zero subspaces $\{S_1,\ldots, S_r\}$ which are in internal direct sum and span the whole space $V$, that is, $S_1\oplus\cdots  \oplus S_r \cong \gen{S_1,\ldots,S_r} = V$.
The latter, denoted by $\PD(V)$, consists of collections of non-zero subspaces in direct sum, the span being just a subspace.
Both posets are ordered by refinement.
Direct sum decompositions of vector spaces over finite fields were shown to be homotopically Cohen-Macaulay in \cite{Welker}. Indeed, the same result for arbitrary fields is
already implicit in \cite{Charney}, as we explain later in the article {(see \ref{prop:charney} and \ref{sub:vectorspaces})}. 
For partial direct sum decompositions of vector spaces over finite fields, Cohen-Macaulayness was established in the unpublished work \cite{HHS}.
For infinite fields, the proper part of $\PD(V)$ is spherical by results of \cite{BPW,MPW}, as we explain in \ref{sub:vectorspaces}.
However, Cohen-Macaulayness has not been established yet in the case of infinite fields.
Then we define a simplicial complex of \textit{frames}. In the vector space case, a frame is a partial decomposition into $1$-dimensional subspaces.
A further construction is obtained as the \textit{inflation} of the frame complex.
That is, we replace vertices of the frame complex with sets (interpreted as sets of generators or bases).
This yields the \textit{complex of partial bases}.
From subobject posets and frame complexes, one can also build the \textit{augmented Bergman complex} -- which in the matroid case recently gained a lot of attention through \cite{BHMPW}.

For (partial) decompositions, frames and partial bases, there also exist ordered versions.
These are posets whose elements are all linear orders on sets and simplices in the unordered
poset or simplicial complex (see \ref{def:orderedVersions}).
Our key result regarding the ordered versions is that they exhibit a nice wedge decomposition in terms of their unordered counterparts. Therefore, properties such as sphericity or Cohen-Macaulayness can be transferred back and forth. 
In our vector space example, the ordered partial decompositions are linearly ordered sets of subspaces that form internal direct sums, and ordered decompositions are in close relationship to a complex studied in \cite{Charney}, which in our setting we call the \textit{Charney complex}.
The ordered versions of the frame complex and the partial basis complex essentially correspond to the associated complex of injective words (see \cite{JW,ReW}).

Our results about properties and connections among these different posets and complexes allow us to specialize them to important cases.
We summarize the main examples to which we apply our theory in \ref{tab:summaryExamples}.
In the row of this table corresponding to vector spaces with forms, we take the category of vector spaces with \textit{suitable} forms, and morphisms are isometries.
Even though this category misses most of the coproducts, the natural candidate for it is the orthogonal sum, and that is the monoidal product we consider.
Also observe that in the matroid case, there is no categorical approach developed so far that suits our purposes.
The problem relies on the fact that the main poset we want to study is the lattice of flats, but the matroid obtained from the join of two flats depends specifically on the matroid we are working with.
Hence, in that case, we regard the lattice of flats as a poset-category, and the monoidal product is just the join.

Finally, we would like to emphasize that our results are based on combinatorial and topological methods that apply mainly to finite-dimensional complexes and posets.

\renewcommand{\arraystretch}{2.0}
\begin{table}[ht]
    \centering
    \begin{tabular}{|c|c|c|c|}
        \hline 
Object & Category & \makecell{Monoidal \\ product} & $\Sub(X,\sqcup)$ \\
        \hline
        \pbox{3.5cm}{\centering Free group of finite rank} & Groups & Free product & \pbox{3cm}{\centering Poset of free factors}\\  [0.5cm]
        \pbox{3.5cm}{\centering Finite-dimensional $\KK$-vector space} & $\KK$-vector spaces & Direct sum & \pbox{3cm}{\centering Poset of subspaces}\\  [0.5cm]
        \pbox{3.5cm}{\centering Finite-dimensional $\KK$-vector space with a non-degenerate form} & \pbox{3.5cm}{\centering $\KK$-vector spaces with forms and isometries} & Orthogonal sum & \pbox{3cm}{\centering Non-degenerate subspace poset}\\  [0.8cm]
        Matroid & \pbox{3.2cm}{\centering Lattice of flats
        (of the matroid)} & Join & Lattice of flats\\  [0.5cm]
        \pbox{3.5cm}{\centering Free module of finite rank over a Dedekind domain $\O$} & (Free) $\O$-modules & Direct sum & \pbox{3cm}{\centering Poset of summands}\\  [0.5cm]
        \hline
    \end{tabular} \medskip
    \caption{Main examples of the paper}
    \label{tab:summaryExamples}
\end{table}

A related categorical approach is developed in \cite{GKRW, RW, RWW}, where certain complexes are constructed from complemented subobjects.
While our focus is the analysis of the combinatorics and the homotopy types of our constructions, these works address \textit{homological stability} problems for quite general families of groups by starting out with high connectivity assumptions. 
Thus, in a certain sense, our work complements \cite{GKRW, RW, RWW} by establishing a set of posets and simplicial complexes together with properties and relations among them that can be used to prove Cohen-Macaulayness or high connectivity.
We expect that our results will yield new classes of complexes, which then can be fed into the sophisticated machinery devised in these works.

A different but related problem where our results may provide new input data is the search for \textit{dualizing modules} (see e.g., \cite{CP,CFP}). If $V$ is an $n$-dimensional vector space over $\KK$, the order complex of the proper part of the poset of subspaces of $V$ is the Tits building of $\SL_n(\KK)$.
The top-dimensional homology group of this complex is the well-known \textit{Steinberg module} of $\SL_n(\KK)$, and by Borel-Serre \cite{BorelSerre} it is a rational dualizing module for $\SL_n(\O)$, when $\KK$ is a number field and $\O$ is its ring of integers.
The explicit determination of a dualizing module may be a difficult and often unsolved problem. 
For example, a question raised by Hatcher and Vogtmann \cite{HV1} asked whether the top dimensional cohomology group of the free factor complex $\Freefactors(F_n)$ of the free group $F_n$ of 
rank $n$ yields a dualizing module for $\Aut(F_n)$.
However, a negative answer to this question was recently given in the case $n = 5$ in \cite{HMNP}.
In general, the homology (or even the ``Lefschetz module") of $\Sub(X,\sqcup)$ can be regarded as a Steinberg module for a suitable group of automorphisms of this poset, prompting consideration for a similar interpretation in other contexts.

From the combinatorial perspective, this paper explores a unified approach to many well-studied poset constructions (see e.g., \cite{Sta1,Sta2, Wachs}). It comprises important complexes and posets associated with matroids including 
the lattice of flats, the independence complex and the
recently studied augmented Bergman complex (see e.g., \cite{Bjo92,BHMPW}).
In both cases, new constructions and results are added. These yield results on homotopy types about ``natural" combinatorially defined posets and complexes. When looking at the 
reduced Euler characteristic or equivalently the M{\"o}bius number, 
new enumerative identities follow, which will be addressed in an upcoming work \cite{PSW}.

\medskip

The paper is organized as follows.
In \ref{sec:definitionsposet} we give precise definitions of the framework we want to work with and provide definitions of complemented elements and decompositions arising from subobjects in a certain monoidal category.
In \ref{sec:properties} we establish basic properties of the posets of decompositions and partial decompositions, and in \ref{sec:derivedposets} we define other posets that can also be constructed from decompositions. In particular, we include here the definition of the frame complex, the partial basis complex, their ordered versions, the augmented Bergman complex, and the Charney complex.
In \ref{sec:homotopyposets}, we prove results on the homotopy types of posets and simplicial complexes defined in \ref{sec:definitionsposet} and \ref{sec:derivedposets}. Typical results from this paragraph express the
homotopy type of one class of posets in terms of another class. Through
these formulas for the homotopy types we can define conditions under which 
properties, such as sphericity or Cohen-Macaulayness are transferred.
In general, results from this section provide innovative tools 
to investigate the topology of combinatorially defined simplicial complexes.

Finally, in \ref{sec:examples}, we prove a variety of new results on the homotopy types and Euler characteristic 
of posets and simplicial complexes built in the preceding 
sections for a wide range of combinatorial,
algebraic and geometric objects (e.g., subspaces of vector spaces, non-degenerate subspaces, matroids, set-partitions, etc.). The proofs use a combination of methods developed in  
\ref{sec:derivedposets} \& \ref{sec:homotopyposets} and
additional enumerative results.
Throughout that section, we pose some questions and present open problems.

\medskip

\textbf{Acknowledgments.}
We are very grateful to Benjamin Br\"uck for the many discussions during his research visit to Marburg and for providing us with several motivational examples and references.
We thank Jesper {M\o ller} for pointing out to us that the Solomon identity can be used to simplify the proof of \ref{thm:eulerCharOPDVectorSpaces}
from an earlier version of this paper.
We also thank the anonymous referee for valuable suggestions, which greatly improved the presentation of this article.

The first author is supported by a Postdoctoral fellowship granted by the Alexander von Humboldt Stiftung and the FWO grant 12K1223N.

\bigskip

\section{Poset of decompositions into subobjects}
\label{sec:definitionsposet}

We start our poset constructions with a purely combinatorial approach. We will see that 
this approach, even though vital for subsequent developments, fails to cover important classes of posets that otherwise perfectly fit our setting.
For that reason, we will use category theory language later in this section.

First, we recall some basic notations and definitions.
A poset $\S$ is a (not necessarily finite) set with a partial order $\leq$.
We call $\S$ \textit{bounded} if it has a unique minimal element $0_{\S}$ and a unique 
maximal element $1_{\S}$. When the reference poset $\S$ is clear from the context, we just write $0$ and $1$ for the minimum and maximum of $\S$. 
For a bounded poset $\S$, we denote by $\red{\S} = \S\setminus \{0_{\S},1_{\S}\}$ its proper part.
Also if $\S$ has just a unique maximum element $1_{\S}$, we set $\redm{\S} = \S\setminus \{1_{\S}\}$.
For $x \in \S$ we write $\S_{\leq x}$ for the subposet $\{ \,y \in \S~|~y \leq x\}$. 
Analogously defined are $\S_{< x}$, $\S_{\geq x}$ and $\S_{> x}$. 

The order complex of a poset $\S$ is the simplicial complex $\Delta(\S)$ whose $i$-simplices 
are the finite chains $x_0 < \cdots < x_i$ of $i+1$ distinct elements from $\S$. When we speak of
topological properties of a poset we mean the corresponding property of the geometric realization of its
order complex.
For an element $x$ of $\S$, we let $h(x)$ be the dimension 
of the order complex of $\S_{\leq x}$
and call $h(x)$ the \textit{height} of $x$ in $\S$. The height of $\S$ is the supremum of
the heights of its elements or, equivalently, the dimension of its order complex.
All poset constructions we want to unify in this paper yield posets of finite height.
Nevertheless, during the construction process, we will sometimes have to go through infinite-height posets. 

In a poset $\S$ we say for a subset $\sigma$ that the join of $\sigma$ exists in $\S$ if
$\{ y \in \S~|~y \geq x $ for all $x \in \sigma\}$ has a unique minimal element.
In this case, we write $\bigvee_{x \in \sigma} x$ for the join of $\sigma$. 
Analogously, we say that  
the meet of $\sigma$ exists in $\S$ if $\{ y \in \S~|~y \leq x $ for all $x \in \sigma\}$ has a unique maximal element, and write $\bigwedge_{x \in \sigma} x$ for the meet of $\sigma$. 
Here, we adopt the usual convention that in a bounded poset 
the join over the empty set is $0_{\S}$ and the meet over the empty set is $1_{\S}$.

We denote by $|\sigma|$ the size of a set $\sigma$.

\begin{definition} 
\label{def:decompositionsAndPD}
Let $\S$ be a bounded poset.
We say that a non-empty subset $\sigma \subseteq \S $ is a \textit{(full) decomposition} of $\S$ if
   \begin{enumerate}
       \item for all $\tau \subseteq \sigma$ the join $\bigvee_{x \in \tau} x$ exists in $\S$ and
       \[ h\big(\bigvee_{x\in \tau} x\big) = \sum_{x\in \tau} h(x),\]
       \item $\big\{\,\bigvee_{x \in \tau} x \,\big|\, \tau \subseteq \sigma \,\big\} \subseteq \S$ with suprema
       and infima taken in $\S$ is a Boolean lattice on $|\sigma|$ elements with maximal element $1_\S$.
   \end{enumerate}
We say $\tau \subseteq \S$ is a \textit{partial decomposition} if it is a subset of a decomposition. We denote by $\PD(\S)$ the set of all partial decompositions and by $\D(\S)$ the set of decompositions of $\S$.
We order $\PD(\S)$ and $\D(\S)$ by refinement. That is,
$\tau\leq \sigma$ if and only if for all $x\in \tau$ there exists $y\in \sigma$ such that $x\leq y$.
\end{definition}

Note that $0_{\S}$ cannot be part of a decomposition.
It also follows from the definition that $\PD(\S)$ is a bounded poset where the empty set $\emptyset$ is the unique minimal element, and $\{1_{\S}\}$ is the unique maximal element. The subposet $\D(\S)$ is an upper-order ideal of $\PD(\S)$ which may not have a unique minimal element.
We study the case where $\D(\S)$ has a unique minimal element in \ref{coro:minElementD}.

The following simple properties of full and partial decompositions are immediate consequences of the definition.

\begin{lemma}
\label{rk:containmentFull}
Let $\S$ be a bounded poset and $\sigma,\tau \in \D(\S)$.
 \begin{enumerate}
    \item if $\tau \subseteq \sigma$, then $\tau = \sigma$.
    \item if $\tau \leq \sigma$ and $|\tau|=|\sigma|$ then $\tau = \sigma$.
 \end{enumerate}
\end{lemma}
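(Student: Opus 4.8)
The plan is to derive both statements from a short list of elementary consequences of Definition~\ref{def:decompositionsAndPD}. First, if $\rho\in\D(\S)$, then condition~(2) forces $\bigvee_{x\in\rho}x=1_\S$ (the maximum of the Boolean lattice is simultaneously $1_\S$ and the join of all of $\rho$), and then condition~(1) with $\tau=\rho$ yields the height identity $\sum_{x\in\rho}h(x)=h(1_\S)$; in particular $|\rho|\le h(1_\S)$, so in the finite-height setting relevant to this paper every decomposition is finite. Second, $0_\S$ lies in no decomposition (if $0_\S\in\rho$ then $\bigvee_{x\in\rho}x=\bigvee_{x\in\rho\setminus\{0_\S\}}x$, contradicting the bijectivity of $\tau\mapsto\bigvee_{x\in\tau}x$ on $2^{\rho}$ demanded by condition~(2)), so every element of a decomposition has height at least $1$. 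Third, $a\le b$ implies $h(a)\le h(b)$, and $a<b$ implies $h(a)<h(b)$, since a maximal chain of $\S_{\le a}$ necessarily ends at $a$, so adjoining $b$ produces a strictly longer chain in $\S_{\le b}$.

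For part~(1), I would apply the height identity to $\sigma$ and to $\tau$: with $\tau\subseteq\sigma$ this gives $\sum_{x\in\sigma}h(x)=h(1_\S)=\sum_{x\in\tau}h(x)$, hence $\sum_{x\in\sigma\setminus\tau}h(x)=0$, and since each summand is at least $1$ we must have $\sigma\setminus\tau=\emptyset$, that is, $\tau=\sigma$. Alternatively one can avoid heights entirely: if $z\in\sigma\setminus\tau$, then $z\le 1_\S=\bigvee_{x\in\tau}x$ shows $\bigvee_{x\in\tau}x$ to be an upper bound of $\tau\cup\{z\}$, so $\bigvee_{x\in\tau\cup\{z\}}x=\bigvee_{x\in\tau}x$, contradicting the injectivity of $\rho\mapsto\bigvee_{x\in\rho}x$ on subsets of $\sigma$ that is built into condition~(2).

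For part~(2), given $\tau\le\sigma$ with $|\tau|=|\sigma|$, I would use the refinement relation to choose $f\colon\tau\to\sigma$ with $x\le f(x)$ for all $x\in\tau$, and then show that $f$ is a bijection fixing every element. To see $f$ is injective: if it were not, then (both sets being finite of equal size) it would fail to be surjective, so some $z\in\sigma$ would avoid the image; the fibres $f^{-1}(y)$, $y\in\sigma$, partition $\tau$, each $f^{-1}(y)\subseteq\tau$ has a join by condition~(1) for $\tau$ which is bounded above by $y$, and height additivity gives $\sum_{x\in f^{-1}(y)}h(x)=h\big(\bigvee_{x\in f^{-1}(y)}x\big)\le h(y)$. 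Summing over $y\in\sigma$ and using the height identities for $\tau$ and $\sigma$ gives $h(1_\S)=\sum_{x\in\tau}h(x)=\sum_{y\in\sigma}\sum_{x\in f^{-1}(y)}h(x)\le\sum_{y\in\sigma}h(y)=h(1_\S)$, so all inequalities are equalities; in particular $h(z)=\sum_{x\in f^{-1}(z)}h(x)=0$ since $f^{-1}(z)=\emptyset$, contradicting $z\ne 0_\S$. Hence $f$ is a bijection, so $\sum_{x\in\tau}h(x)=\sum_{y\in\sigma}h(y)=\sum_{x\in\tau}h(f(x))$ while $h(x)\le h(f(x))$ termwise; thus $h(x)=h(f(x))$ for every $x$, and then $x\le f(x)$ together with the strict monotonicity of $h$ forces $x=f(x)$, giving $\tau=f(\tau)=\sigma$.

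I do not anticipate a genuine obstacle, as both parts are short; the step that demands the most care is the fibrewise height estimate in part~(2), which relies on condition~(1) of the definition to guarantee that arbitrary subsets of a decomposition have joins with additive height, and on the (elementary but not purely formal) fact that height is witnessed by chains terminating at the element, so that $a<b$ produces $h(a)<h(b)$.
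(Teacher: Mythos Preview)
Your proof is correct. The paper does not actually give a proof of this lemma; it simply declares both items to be ``immediate consequences of the definition.'' Your argument unpacks this claim carefully and is fully consistent with the paper's setup: the height identity $\sum_{x\in\rho}h(x)=h(1_\S)$ together with $h(x)\ge 1$ for $x\ne 0_\S$ disposes of part~(1), and the fibrewise height count you run for part~(2) is exactly the computation one would expect. The one implicit assumption you flag---that $h(1_\S)<\infty$ so that decompositions are finite sets---is indeed the standing hypothesis in the parts of the paper where the lemma is used (cf.\ \ref{prop:heightDandPD}).
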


\begin{remark}
\label{rk:severalOrders}
There are several natural orders on $\PD(\S)$:

\begin{enumerate}
    \item[$\bullet$] the order $\leq$ we have defined, i.e., order by refinement;
    \item[$\bullet$] the order $\leq'$ where $\tau\leq'\sigma$ if $\tau\leq \sigma$ and also there exist decompositions $\hat{\tau}\leq\hat{\sigma}$ with $\tau\subseteq \hat{\tau}$ and $\sigma\subseteq \hat{\sigma}$;
    \item[$\bullet$] the order $\subseteq$ induced by set-inclusion.
\end{enumerate}

  The ordering $\leq'$ coincides with $\leq$ if $\tau ,\sigma \in \D(\S)$. Although in general $\leq'$ is more restrictive than $\leq$, they will coincide in many important examples discussed later. See \ref{sec:examples}.
 
  On the other hand, we will not address the poset structure obtained from the ordering $\subseteq$.
  See \ref{rk:partitionsByInclusion} for an example comparing the inclusion and the refinement orderings.
\end{remark}

\begin{example}
\label{ex:booleanLattice}
    If $\S$ is the Boolean lattice of all subsets of a set $\tau$ ordered by
    refinement, then $\D(\S)$ is the partition lattice $\Pi(\tau)$. It is easily checked
    that for a decomposition $\sigma \in \D(\S)$ the upper interval
    $\D(\S)_{\geq \sigma}$ is isomorphic to $\Pi(\sigma)$.
    Recall that the M{\"o}bius number of $\Pi(\tau)$ is $(-1)^{|\tau|-1}(|\tau|-1)!$ if $|\tau|\geq 2$.

    For instance, let us consider the case $\tau = \{1,2,3\}$. The Hasse diagram of the poset of full decompositions (partitions in blue) and partial decompositions (partial partitions in blue or black) of the Boolean lattice on three elements is depicted in \ref{fig:part3}.
    Here we use the usual notation for set partition instead of the formal definition
    of (partial) decompositions, e.g., $1|23$ instead of $\{\{1\},\{2,3\}\}$.
\end{example}

\begin{figure}[ht]
\centering
\vskip-1.3cm
   \scalebox{1}{\begin{tikzpicture}[x=+4143sp, y=+4143sp]
\newdimen\XFigu\XFigu4143sp
\newdimen\XFigu\XFigu4143sp
\clip(525,-6913) rectangle (4910,-1331);
\tikzset{inner sep=+0pt, outer sep=+0pt}
\pgfsetlinewidth{+7.5\XFigu}
\pgfsetcolor{black}
\filldraw  (900,-4050) circle [radius=+45];
\filldraw  (3330,-4050) circle [radius=+45];
\filldraw  (4500,-4050) circle [radius=+45];
\filldraw  (1350,-4950) circle [radius=+45];
\filldraw  (2700,-4950) circle [radius=+45];
\filldraw  (4050,-4950) circle [radius=+45];
\filldraw  (1350,-5850) circle [radius=+45];
\filldraw  (2700,-5850) circle [radius=+45];
\filldraw  (4050,-5850) circle [radius=+45];
\filldraw  (2700,-6750) circle [radius=+45];
\pgfsetstrokecolor{black}
\pgfsetfillcolor{blue!50!black}
\filldraw  (2700,-2250) circle [radius=+45];
\filldraw  (4050,-3150) circle [radius=+45];
\filldraw  (1350,-3150) circle [radius=+45];
\filldraw  (2700,-3150) circle [radius=+45];
\filldraw  (2070,-4050) circle [radius=+45];
\pgfsetlinewidth{+15\XFigu}
\draw (2700,-2250)--(1350,-3150);
\draw (2700,-2250)--(2700,-3150);
\draw (2700,-2250)--(4050,-3150);
\draw (2070,-4050)--(4050,-3150);
\draw (2700,-3150)--(2070,-4050);
\draw (1350,-3150)--(2070,-4050);
\pgfsetstrokecolor{black}
\draw (1350,-3150)--(900,-4050);
\draw (900,-4050)--(1350,-4950)--(2070,-4050);
\draw (2070,-4050)--(2700,-4950);
\draw (2070,-4050)--(4050,-4950);
\draw (3330,-4050)--(2700,-4950);
\draw (4050,-4950)--(4500,-4050);
\draw (4050,-5850)--(4050,-4950);
\draw (2700,-4950)--(4050,-5850);
\draw (4050,-4950)--(2700,-5850);
\draw (1350,-5850)--(2700,-4950);
\draw (1350,-4950)--(2700,-5850);
\draw (1350,-4950)--(1350,-5850);
\draw (2700,-6750)--(1350,-5850);
\draw (2700,-5850)--(2700,-6750);
\draw (2700,-6750)--(4050,-5850);
\draw (2700,-3150)--(3330,-4050);
\draw (4050,-3150)--(4500,-4050);
\pgfsetfillcolor{black}

\pgftext[base,left,at=\pgfqpointxy{600}{-4050}] {\fontsize{12}{14.4}$12$}
\pgftext[base,left,at=\pgfqpointxy{3400}{-4050}] {\fontsize{12}{14.4}$13$}
\pgftext[base,left,at=\pgfqpointxy{4600}{-4050}] {\fontsize{12}{14.4}$23$}
\pgftext[base,left,at=\pgfqpointxy{990}{-4950}] {\fontsize{12}{14.4}$1|2$}
\pgftext[base,left,at=\pgfqpointxy{2340}{-4950}] {\fontsize{12}{14.4}$1|3$}
\pgftext[base,left,at=\pgfqpointxy{4150}{-4950}] {\fontsize{12}{14.4}$2|3$}
\pgftext[base,left,at=\pgfqpointxy{2800}{-5940}] {\fontsize{12}{14.4}$2$}
\pgftext[base,left,at=\pgfqpointxy{4150}{-5940}] {\fontsize{12}{14.4}$3$}
\pgftext[base,left,at=\pgfqpointxy{1150}{-5940}] {\fontsize{12}{14.4}$1$}
\pgfsetfillcolor{black}
\pgftext[base,left,at=\pgfqpointxy{1530}{-4050}] {\fontsize{12}{14.4}$1|2|3$}
\pgftext[base,left,at=\pgfqpointxy{2800}{-2250}] {\fontsize{12}{14.4}$123$}
\pgftext[base,left,at=\pgfqpointxy{950}{-3060}] {\fontsize{12}{14.4}$12|3$}
\pgftext[base,left,at=\pgfqpointxy{2800}{-3060}] {\fontsize{12}{14.4}$13|2$}
\pgftext[base,left,at=\pgfqpointxy{4150}{-3060}] {\fontsize{12}{14.4}$1|23$}
\pgfsetfillcolor{black}
\pgftext[base,left,at=\pgfqpointxy{2800}{-6900}] {\fontsize{12}{14.4}$\emptyset$}
\end{tikzpicture}}
\caption{Full and partial set-partitions of $\{1,2,3\}$} \label{fig:part3}
\end{figure}

Indeed, one reason for including the height condition in the definition of a decomposition is that we want to control upper intervals in $\D(\S)$ in the same way we do it in the partition lattice. 

\begin{lemma}
\label{lm:upperIntervalsDecomp}
  For a bounded poset $\S$ and $\tau\in\D(\S)$, we have $\D(\S)_{\geq \tau} \groupiso \Pi(\tau)$.
\end{lemma}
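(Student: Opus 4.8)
The plan is to construct an explicit order-isomorphism $\Phi\colon\Pi(\tau)\longrightarrow\D(\S)_{\ge\tau}$. To a set partition $P=\{B_1,\dots,B_k\}$ of $\tau$ I associate $\sigma_P:=\{\,\bigvee_{x\in B_i}x \mid 1\le i\le k\,\}$. Throughout I will use the following consequences of $\tau\in\D(\S)$ coming from \ref{def:decompositionsAndPD}: $\tau$ is finite; for every $\rho\subseteq\tau$ the join $v_\rho:=\bigvee_{x\in\rho}x$ exists and $h(v_\rho)=\sum_{x\in\rho}h(x)$; the assignment $\rho\mapsto v_\rho$ is an order-isomorphism from the Boolean lattice $2^{\tau}$ onto $L_\tau:=\{v_\rho\mid\rho\subseteq\tau\}$, which is closed under the meet and join of $\S$; and $v_\tau=1_\S$. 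I also use two elementary properties of the height function: it is monotone (if $a\le b$ then $\Delta(\S_{\le a})$ is a subcomplex of $\Delta(\S_{\le b})$), and if $a\le b$ with $h(a)=h(b)<\infty$ then $a=b$ (otherwise a longest chain of $\S_{\le a}$, which necessarily runs from $0_\S$ to $a$, could be prolonged by $b$, contradicting $h(a)=h(b)$). Recall also that no element of a decomposition is $0_\S$, so $h(x)\ge 1$ for $x\in\tau$.

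First I would check that $\Phi$ is well defined, i.e.\ $\sigma_P\in\D(\S)$ and $\sigma_P\ge\tau$. For a subset $\rho=\{\bigvee_{x\in B_i}x \mid i\in I\}$ of $\sigma_P$, associativity of joins (legitimate because all the joins in sight exist) gives $\bigvee_{z\in\rho}z=v_{\bigcup_{i\in I}B_i}$, of height $\sum_{x\in\bigcup_{i\in I}B_i}h(x)=\sum_{i\in I}\sum_{x\in B_i}h(x)=\sum_{i\in I}h(\bigvee_{x\in B_i}x)$, using disjointness of the $B_i$; this is \ref{def:decompositionsAndPD}(1) for $\sigma_P$. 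Since the $B_i$ are disjoint and nonempty, $I\mapsto\bigcup_{i\in I}B_i$ identifies $2^{[k]}$ with the sublattice of $2^{\tau}$ of unions of blocks, which is closed under $\cup$ and $\cap$; composing with the isomorphism $2^{\tau}\to L_\tau$ shows that $\{\bigvee_{z\in\rho}z\mid\rho\subseteq\sigma_P\}$ is a sublattice of $\S$ isomorphic to $2^{[k]}$ with top element $v_\tau=1_\S$, i.e.\ \ref{def:decompositionsAndPD}(2) for $\sigma_P$; in particular $|\sigma_P|=k$. Finally each $x\in B_i$ lies below $\bigvee_{x'\in B_i}x'\in\sigma_P$, so $\sigma_P\ge\tau$.

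The heart of the argument is surjectivity. Let $\sigma\in\D(\S)$ with $\sigma\ge\tau$; note $\sigma$ is finite. For $y\in\sigma$ set $B_y:=\{x\in\tau\mid x\le y\}$ and $u_y:=\bigvee_{x\in B_y}x\le y$, so $h(u_y)\le h(y)$ by monotonicity. Then, using \ref{def:decompositionsAndPD}(1)--(2) for $\sigma$ and then for $\tau$,
\[
h(1_\S)=\sum_{y\in\sigma}h(y)\ \ge\ \sum_{y\in\sigma}h(u_y)\ =\ \sum_{y\in\sigma}\sum_{x\in B_y}h(x)\ =\ \sum_{x\in\tau}c_x\,h(x)\ \ge\ \sum_{x\in\tau}h(x)\ =\ h(1_\S),
\]
where $c_x:=\#\{y\in\sigma\mid x\le y\}\ge 1$ since $\tau\le\sigma$. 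Hence all inequalities are equalities. As $h(x)\ge 1$, the middle equality forces $c_x=1$ for every $x\in\tau$, so the sets $B_y$ are pairwise disjoint with union $\tau$; and since $h(u_y)\le h(y)<\infty$, the first equality forces $h(u_y)=h(y)$, whence $u_y=y$ by the height property above. In particular each $B_y$ is nonempty, $P:=\{B_y\mid y\in\sigma\}$ is a set partition of $\tau$, and $\sigma_P=\{u_y\mid y\in\sigma\}=\sigma$. I expect this step — in particular the collapse of the above chain of inequalities, which rests on the additivity of heights in \ref{def:decompositionsAndPD}(1) and on $\tau$ being finite — to be the main obstacle.

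It remains to see that $\Phi$ is a bijection preserving order in both directions. Injectivity follows from injectivity of $\rho\mapsto v_\rho$: if $\sigma_P=\sigma_{P'}$ then $\{\bigvee_{x\in B}x\mid B\in P\}=\{\bigvee_{x\in B'}x\mid B'\in P'\}$ forces $P=P'$ as sets of blocks. If $P$ refines $P'$, then each block of $\sigma_P$ lies below a block of $\sigma_{P'}$, so $\sigma_P\le\sigma_{P'}$; conversely, if $\sigma_P\le\sigma_{P'}$ then for each $B\in P$ one has $\bigvee_{x\in B}x\le\bigvee_{x\in B'}x$ for some $B'\in P'$, so every $x\in B$ satisfies $x\le\bigvee_{x\in B'}x$, and the uniqueness statement ($c_x=1$) proved above for $\sigma_{P'}$ identifies $\bigvee_{x\in B'}x$ as the unique element of $\sigma_{P'}$ above $x$, so $x\in B'$; thus $B\subseteq B'$ and $P$ refines $P'$. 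Therefore $\Phi$ is the desired order-isomorphism.
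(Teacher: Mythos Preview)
Your proof is correct and follows essentially the same strategy as the paper: build the map $\Pi(\tau)\to\D(\S)_{\ge\tau}$ by joining blocks, and for surjectivity show that any $\sigma\ge\tau$ arises this way by considering $B_y=\{x\in\tau\mid x\le y\}$. The paper's proof is terser: it packages the step ``$\sigma'\le\sigma$ with $|\sigma'|=|\sigma|$ forces $\sigma'=\sigma$'' into the preceding \ref{rk:containmentFull}, whereas you unwind the height bookkeeping directly.

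One small methodological difference is worth noting. The paper (implicitly) gets disjointness of the $B_y$ from the Boolean-lattice axiom for $\sigma$: if $x\le y_1$ and $x\le y_2$ with $y_1\ne y_2$ in $\sigma$, then $x\le y_1\wedge_\S y_2=0_\S$, impossible. You instead derive $c_x=1$ purely from the height equality $h(1_\S)=\sum_{x\in\tau}h(x)=\sum_{y\in\sigma}h(y)$, never touching meets. Your route has the advantage of simultaneously yielding $u_y=y$ (and hence nonemptiness of each $B_y$, which the paper's sketch needs for its claim $|\sigma'|=|\sigma|$ but does not spell out). The paper's route has the advantage of isolating a reusable fact (\ref{rk:containmentFull}) before the proof. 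Either way the content is the same; your version is simply more self-contained.
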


\begin{proof}
By definition, it is clear that for every partition $\pi$ of $\tau$, the set $\{ \bigvee_{y\in p} y \tq p\in \pi\}$ is a decomposition.
Now suppose that $\tau \leq \sigma$.
For $x\in \sigma$, let $x' = \bigvee_{y\in \tau\tq y\leq x} y$.
Then $\sigma' = \{x' \tq x\in \sigma\}$ is a decomposition since it is obtained from a suitable partition of $\tau$, with $|\sigma'| = |\sigma|$ and $\sigma'\leq \sigma$.
By \ref{rk:containmentFull}, $\sigma' = \sigma$.
This means that elements of $\sigma$ are obtained by joining elements of $\tau$.
Thus $\D(\S)_{\geq \tau} \groupiso \Pi(\tau)$.
\end{proof}

The lower intervals in $\D(\S)$ and $\PD(\S)$ are not necessarily direct products of decomposition or partial decomposition posets if we do not require additional ``relative-extension'' properties.
We have the following characterization.

\begin{lemma}
\label{lm:lowerIntervalsPD}
Let $\S$ be a bounded poset such that for every $\sigma \in \PD(\S)$ it holds:

{\noindent
(E1) \quad $\PD(\S_{\leq x}) = \PD(\S)_{\leq \{x\}}$ for all $x\in \sigma$, and

\noindent
(E2) \quad if $\tau_x\in \PD(\S_{\leq x})$ for $x\in \sigma$, then $\cup_{x\in\sigma} \tau_x\in \PD(\S)_{\leq \sigma}$. 
}

\noindent
Then for every $\sigma \in\PD(\S)$ we have $\PD(\S)_{\leq \sigma} = \prod_{x\in\sigma} \PD(\S_{\leq x})$.
If $\sigma\in \D(\S)$ then also $\D(\S)_{\leq \sigma} = \prod_{x\in\sigma} \D(\S_{\leq x})$.

Conversely, if we naturally have $\PD(\S)_{\leq \sigma} = \prod_{x\in\sigma} \PD(\S_{\leq x})$ then (E1) and (E2) hold.
\end{lemma}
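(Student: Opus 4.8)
The plan is, for a fixed $\sigma\in\PD(\S)$, to write down an explicit poset isomorphism $\Phi\colon\PD(\S)_{\leq\sigma}\to\prod_{x\in\sigma}\PD(\S_{\leq x})$, with (E1) and (E2) furnishing exactly the well-definedness of $\Phi$ and of a candidate inverse $\Psi$; then to check that $\Phi$ carries $\D(\S)_{\leq\sigma}$ onto $\prod_{x\in\sigma}\D(\S_{\leq x})$ when $\sigma\in\D(\S)$. For the converse I would specialise the assumed identity to singleton decompositions $\sigma=\{x\}$.

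The structural fact I would isolate first is a \emph{unique parent} property: if $\tau\leq\sigma$ in $\PD(\S)$ and $y\in\tau$, then there is a \emph{unique} $x\in\sigma$ with $y\leq x$. Existence is the definition of refinement. For uniqueness, fix a decomposition $\hat\sigma\supseteq\sigma$; by \ref{def:decompositionsAndPD}\,(2) the elements $\bigvee_{z\in\rho}z$ ($\rho\subseteq\hat\sigma$) form a Boolean lattice for the meet and join of $\S$, so distinct $x,x'\in\sigma$ have $x\wedge x'=0_\S$ in $\S$; then $y\leq x$ and $y\leq x'$ would force $y=0_\S$, impossible since $0_\S$ lies in no decomposition. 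Thus for $\tau\leq\sigma$ the sets $\tau_x:=\{y\in\tau: y\leq x\}$ ($x\in\sigma$) are pairwise disjoint with union $\tau$, and I set $\Phi(\tau)=(\tau_x)_{x\in\sigma}$ and $\Psi\big((\tau_x)_{x\in\sigma}\big)=\bigcup_{x\in\sigma}\tau_x$. Since $\tau_x\subseteq\tau$ is a partial decomposition of $\S$ lying below $\{x\}$, (E1) gives $\tau_x\in\PD(\S_{\leq x})$, so $\Phi$ is well defined; (E2) says exactly that $\bigcup_x\tau_x\in\PD(\S)_{\leq\sigma}$, so $\Psi$ is well defined. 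One checks $\Psi\circ\Phi=\id$ at once, $\Phi\circ\Psi=\id$ again via the unique parent property, and that $\Phi$ is order preserving in both directions: if $\tau\leq\tau'$, each $y\in\tau_x$ lies below some $y'\in\tau'$ whose unique parent must be $x$, so $y'\in\tau'_x$ and $\tau_x\leq\tau'_x$ for all $x$; conversely componentwise refinement of the $\tau_x$ yields $\bigcup_x\tau_x\leq\bigcup_x\tau'_x$. Hence $\PD(\S)_{\leq\sigma}=\prod_{x\in\sigma}\PD(\S_{\leq x})$.

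For the statement about $\D$, let $\sigma\in\D(\S)$ and recall that a partial decomposition of a bounded poset is a full one exactly when the join of its members is the top element. If $\tau\in\D(\S)$ and $\tau\leq\sigma$, then no $\tau_x$ is empty: otherwise every element of $\tau$ would lie below $\bigvee_{x'\neq x}x'$, forcing $1_\S=\bigvee\tau\leq\bigvee_{x'\neq x}x'$, a proper element of the Boolean lattice on $\sigma$. Hence $\{\tau_x\}_{x\in\sigma}$ is a genuine partition of $\tau$, and by \ref{lm:upperIntervalsDecomp} (and the explicit form of the isomorphism $\D(\S)_{\geq\tau}\cong\Pi(\tau)$ obtained in its proof) this partition corresponds to $\sigma$, i.e.\ $x=\bigvee\tau_x$ for each $x\in\sigma$; one may also see $\{\bigvee\tau_x:x\in\sigma\}$ is a decomposition $\leq\sigma$ with $|\sigma|$ parts and invoke \ref{rk:containmentFull}\,(2). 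Either way $\bigvee\tau_x=x=1_{\S_{\leq x}}$, so $\tau_x\in\D(\S_{\leq x})$. Conversely, if each $\tau_x\in\D(\S_{\leq x})$ then, since joins in $\S_{\leq x}$ coincide with joins in $\S$ and all the relevant joins exist by \ref{def:decompositionsAndPD}\,(1), $\bigvee\big(\bigcup_x\tau_x\big)=\bigvee_{x\in\sigma}\bigvee\tau_x=\bigvee_{x\in\sigma}x=1_\S$, so $\bigcup_x\tau_x\in\D(\S)$. Thus $\Phi$ restricts to $\D(\S)_{\leq\sigma}\cong\prod_{x\in\sigma}\D(\S_{\leq x})$.

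For the converse, suppose that for every $\sigma\in\PD(\S)$ the assignment $\tau\mapsto(\{y\in\tau:y\leq x\})_{x\in\sigma}$ is a well-defined bijection $\PD(\S)_{\leq\sigma}\to\prod_{x\in\sigma}\PD(\S_{\leq x})$ with inverse $(\tau_x)_x\mapsto\bigcup_x\tau_x$. Taking $\sigma=\{x\}$ (allowed, since $\{x\}\in\PD(\S)$ whenever $x$ occurs in some partial decomposition) the assignment reduces to the identity map from $\PD(\S)_{\leq\{x\}}$ to $\PD(\S_{\leq x})$, and its being a bijection is precisely the equality (E1). Granting (E1), for an arbitrary $\sigma$ and elements $\tau_x\in\PD(\S_{\leq x})$ ($x\in\sigma$) the tuple $(\tau_x)_x$ lies in $\prod_{x\in\sigma}\PD(\S_{\leq x})$, so the inverse bijection sends it to $\bigcup_x\tau_x\in\PD(\S)_{\leq\sigma}$, which is (E2). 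I expect the only genuinely non-formal ingredient---and hence the step to handle with care---to be the unique parent property and its use in the $\D$-case through \ref{lm:upperIntervalsDecomp}; everything else is bookkeeping forced by the definitions.
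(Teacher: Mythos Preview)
Your proof is correct and follows essentially the same route as the paper: both define the map $\tau\mapsto(\tau\cap\S_{\leq x})_{x\in\sigma}$, use (E1) for well-definedness of this map and (E2) for the inverse, and then pass to the $\D$ case. Your version is considerably more detailed---you make the unique-parent property explicit (the paper uses it silently), you argue the $\D$ case carefully via \ref{lm:upperIntervalsDecomp}, and you actually write out the converse, which the paper leaves to the reader.
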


\begin{proof}
Let $\tau\leq \sigma$ and define $\tau_x = \sigma\cap \S_{\leq x}$.
Then by (E1), $\tau_x\in \PD(\S_{\leq x})$.
Therefore we have a well-defined order-preserving map $\PD(\S)_{\leq \sigma}\to \prod_{x\in \sigma} \PD(\S_{\leq x})$ given by $\tau\mapsto (\tau_x)_{x\in \sigma}$.
This map is clearly an embedding by definition of the ordering.
Property (E2) also implies that this is surjective: indeed, if $\tau_x\in \PD(\S_{\leq x})$, then  
$\tau = \cup_{x\in \sigma}\tau_x \in \PD(\S)_{\leq \sigma}$ by (E2), and $\tau\cap \S_{\leq x} = \tau_x$.

The structure of the lower interval in the decomposition poset follows immediately from the result in the partial decompositions. 
We leave the proof of the reverse implication for the reader.
\end{proof}

If we drop the requirement on the heights in \ref{def:decompositionsAndPD}, we may encounter situations where we have two distinct comparable decompositions of the same size.
In particular, the conclusions of \ref{lm:upperIntervalsDecomp} may fail.
This leads us to the definition of weak decompositions.

\begin{definition}
\label{def:weakDecompositions}
Let $\S$ be a bounded poset (not necessarily of finite height), and let $\sigma \subseteq  \S$.
We say that $\sigma$ is a \textit{weak decomposition} if for all $\tau\subseteq \sigma$, $\bigvee_{x\in \tau} x$ exists and the subposet $\{ \bigvee_{x\in \tau} x \tq \tau \subseteq \sigma\}$, with suprema and infima taken in $\S$, is a Boolean lattice on $|\sigma|$ elements with maximal element $1_\S$.
We analogously define the posets of weak decompositions $\D_w(\S)$ and weak partial decompositions $\PD_w(\S)$, both ordered by refinement.
\end{definition}

Clearly decompositions are weak decompositions, so $\D(\S)\subseteq \D_w(\S)$ and $\PD(\S)\subseteq \PD_w(\S)$ are subposets.

Note that in \ref{ex:booleanLattice}, weak decompositions are decompositions. So in that case $\PD(\Sub) = \PD_w(\Sub)$.
However, the following example shows that weak decompositions are not always decompositions and that upper-intervals in $\D_w(\S)$ may not be partition lattices.

\begin{example}
\label{ex:weakDefinitionDecomposition}
Let $\S$ be a bounded poset.
Then $d\in \D_w(\S)$ if and only if 
\begin{enumerate}
    \item[$\bullet$] either $d = \{x,y\}$, with $x$ and $y$ belonging to different connected components of $\red{\S}$, or 
    \item[$\bullet$] else $d\in \D_w(C \cup \{0_{\S},1_{\S}\})$ where $C$ is a connected component of $\red{\S}$.
    \end{enumerate}
Indeed, if $d\in\D_w(\S)$ contains elements $x,y$ in different connected components then $x\vee y = 1$. Thus, for $z\in d$ we have $z \wedge (x\vee y) = z$, which implies that $z = x$ or $y$.
Note also that if $C_1,C_2$ are two different connected components of $\red{\S}$ and $x,y,z\in \red{\S}$ are elements such that $x \leq y \in C_1$ and $z\in C_2$, then $\{x,z\}\leq \{y,z\}$ and both are weak decompositions of $\S$.
In particular, $\D_w(\S)_{\geq \{x,z\}} = \S_{\geq x} \times \S_{\geq z}$, so the upper interval is not necessarily a partition lattice.

Indeed, suppose that $\S = \{0,a,b,c,d,1\}$ with minimum $0$, maximum $1$, $a<c$ and $b<d$.
We have $\D_w(\S) = \{ \{a,b\}, \{b,c\}, \{a,d\}, \{c,d\}, \{1\} \}$, and the upper-interval $\D_w(\S)_{\geq \{a,b\}}$ is not a partition lattice.
See \ref{fig:weakDecompositions} for the Hasse diagrams of $\S$, $\PD_w(\S)$ and $\PD(S)$, and the corresponding decomposition posets which are depicted in blue.
Note that $\S$ is a lattice but $\PD(\S)$ is not.

This example also shows that $(\PD(\S),\leq)\neq (\PD(\S),\leq')$, where $\leq'$ is the ordering defined in \ref{rk:severalOrders}.
The latter poset is displayed in \ref{fig:strongerOrdering}.

\begin{figure}[ht]
\centering

\scalebox{0.85}{
\tikzpicture[x=+4143sp, y=+4143sp]
\newdimen\XFigu\XFigu4143sp
\newdimen\XFigu\XFigu4143sp
\definecolor{blue3}{rgb}{0,0,0.82}
\definecolor{brown3}{rgb}{0.75,0.38,0}
\clip(1860,-3620) rectangle (8602,90);
\tikzset{inner sep=+0pt, outer sep=+0pt}
\pgfsetlinewidth{+7.5\XFigu}
\pgfsetcolor{black}
\filldraw  (2505,-2385) circle [radius=+45];
\filldraw  (2955,-1935) circle [radius=+45];
\filldraw  (2055,-1935) circle [radius=+45];
\filldraw  (2055,-1485) circle [radius=+45];
\filldraw  (2955,-1485) circle [radius=+45];
\filldraw  (2505,-1035) circle [radius=+45];
\pgfsetlinewidth{+15\XFigu}
\draw (2505,-2385)--(2055,-1935);
\draw (2505,-2385)--(2955,-1935);
\draw (2055,-1935)--(2055,-1485);
\draw (2955,-1935)--(2955,-1485);
\draw (2055,-1485)--(2505,-1035);
\draw (2505,-1035)--(2955,-1485);
\pgftext[base,left,at=\pgfqpointxy{2415}{-3600}] {\fontsize{12}{14.4}$\S$}
\pgftext[base,left,at=\pgfqpointxy{2460}{-2700}] {\fontsize{12}{14.4}$0$}
\pgftext[base,left,at=\pgfqpointxy{1875}{-2025}] {\fontsize{12}{14.4}$a$}
\pgftext[base,left,at=\pgfqpointxy{3045}{-2070}] {\fontsize{12}{14.4}$b$}
\pgftext[base,left,at=\pgfqpointxy{1875}{-1485}] {\fontsize{12}{14.4}$c$}
\pgftext[base,left,at=\pgfqpointxy{2460}{-900}] {\fontsize{12}{14.4}$1$}
\pgftext[base,left,at=\pgfqpointxy{3045}{-1485}] {\fontsize{12}{14.4}$d$}
\pgfsetlinewidth{+7.5\XFigu}
\filldraw  (4230,-2390) circle [radius=+45];
\filldraw  (4050,-1940) circle [radius=+45];
\filldraw  (5400,-1940) circle [radius=+45];
\pgfsetlinewidth{+15\XFigu}
\pgfsetcolor{blue3}
\filldraw  (4725,-635) circle [radius=+45];
\filldraw  (4725,-230) circle [radius=+45];
\filldraw  (5400,-1085) circle [radius=+45];
\filldraw  (4725,-1715) circle [radius=+45];
\pgfsetlinewidth{+7.5\XFigu}
\pgfsetcolor{black}
\filldraw  (5220,-2390) circle [radius=+45];
\filldraw  (4725,-2840) circle [radius=+45];
\pgfsetlinewidth{+15\XFigu}
\pgfsetcolor{blue3}
\filldraw  (4050,-1080) circle [radius=+45];
\pgfsetstrokecolor{black}
\draw (4725,-1715)--(4230,-2390);
\draw (4725,-1715)--(5220,-2390);
\draw (4725,-2840)--(4230,-2390);
\draw (4725,-2840)--(5220,-2390);
\draw (4230,-2390)--(4050,-1940);
\draw (5220,-2390)--(5400,-1940);
\draw (5400,-1085)--(5400,-1940);
\pgfsetstrokecolor{blue3}
\draw (4725,-230)--(4725,-635);
\draw (4725,-635)--(4060,-1075);
\draw (4725,-635)--(5400,-1085);
\draw (4725,-1715)--(5400,-1085);
\draw (4725,-1715)--(4045,-1085);
\pgfsetstrokecolor{black}
\draw (4050,-1080)--(4050,-1920);
\pgfsetfillcolor{black}
\pgftext[base,left,at=\pgfqpointxy{4410}{-3590}] {\fontsize{12}{14.4}$\PD_w$}
\pgftext[base,left,at=\pgfqpointxy{3735}{-1895}] {\fontsize{12}{14.4}$\{c\}$}
\pgftext[base,left,at=\pgfqpointxy{3915}{-2570}] {\fontsize{12}{14.4}$\{a\}$}
\pgftext[base,left,at=\pgfqpointxy{5445}{-1895}] {\fontsize{12}{14.4}$\{d\}$}
\pgfsetfillcolor{blue3}
\pgftext[base,left,at=\pgfqpointxy{5445}{-1040}] {\fontsize{12}{14.4}$\{a,d\}$}
\pgftext[base,left,at=\pgfqpointxy{4815}{-635}] {\fontsize{12}{14.4}$\{c,d\}$}
\pgftext[base,left,at=\pgfqpointxy{4680}{-95}] {\fontsize{12}{14.4}$1$}
\pgftext[base,left,at=\pgfqpointxy{4185}{-1760}] {\fontsize{12}{14.4}$\{a,b\}$}
\pgfsetfillcolor{black}
\pgftext[base,left,at=\pgfqpointxy{5355}{-2525}] {\fontsize{12}{14.4}$\{b\}$}
\pgftext[base,left,at=\pgfqpointxy{4680}{-3155}] {\fontsize{12}{14.4}$\emptyset$}
\pgfsetlinewidth{+7.5\XFigu}
\filldraw  (6855,-2370) circle [radius=+45];
\filldraw  (6675,-1920) circle [radius=+45];
\filldraw  (8025,-1920) circle [radius=+45];
\pgfsetlinewidth{+15\XFigu}
\pgfsetcolor{blue3}
\filldraw  (7350,-210) circle [radius=+45];
\filldraw  (8025,-1065) circle [radius=+45];
\pgfsetlinewidth{+7.5\XFigu}
\pgfsetcolor{black}
\filldraw  (7340,-2860) circle [radius=+45];
\filldraw  (7840,-2385) circle [radius=+45];
\pgfsetlinewidth{+15\XFigu}
\pgfsetcolor{blue3}
\filldraw  (6670,-1065) circle [radius=+45];
\pgfsetstrokecolor{black}
\draw (8025,-1065)--(6855,-2370);
\draw (6665,-1060)--(7840,-2400);
\draw (7350,-2870)--(6855,-2370);
\draw (7345,-2865)--(7845,-2395);
\draw (6855,-2370)--(6675,-1920);
\draw (7835,-2390)--(8025,-1920);
\draw (8025,-1065)--(8025,-1920);
\draw (6675,-1065)--(6675,-1920);
\pgfsetstrokecolor{blue3}
\draw (7350,-210)--(6670,-1065);
\draw (7350,-210)--(8025,-1065);
\pgfsetfillcolor{black}
\pgftext[base,left,at=\pgfqpointxy{7215}{-3585}] {\fontsize{12}{14.4}$\PD$}
\pgftext[base,left,at=\pgfqpointxy{7260}{-3135}] {\fontsize{12}{14.4}$\emptyset$}
\pgftext[base,left,at=\pgfqpointxy{6360}{-1875}] {\fontsize{12}{14.4}$\{c\}$}
\pgftext[base,left,at=\pgfqpointxy{6540}{-2550}] {\fontsize{12}{14.4}$\{a\}$}
\pgftext[base,left,at=\pgfqpointxy{8070}{-1875}] {\fontsize{12}{14.4}$\{d\}$}
\pgfsetfillcolor{blue3}
\pgftext[base,left,at=\pgfqpointxy{6180}{-1065}] {\fontsize{12}{14.4}$\{b,c\}$}
\pgftext[base,left,at=\pgfqpointxy{7305}{-75}] {\fontsize{12}{14.4}$1$}
\pgfsetfillcolor{black}
\pgftext[base,left,at=\pgfqpointxy{7890}{-2550}] {\fontsize{12}{14.4}$\{b\}$}
\pgfsetfillcolor{blue3}
\pgftext[base,left,at=\pgfqpointxy{8095}{-1035}] {\fontsize{12}{14.4}$\{a,d\}$}
\pgftext[base,left,at=\pgfqpointxy{3590}{-1010}] {\fontsize{12}{14.4}$\{b,c\}$}
\endtikzpicture%
}

    \caption{From left to right, Hasse diagrams of posets $\S$, $\PD_w(\S)$ and $\PD(\S)$ respectively. The corresponding decomposition posets are displayed in blue}
    \label{fig:weakDecompositions}
\end{figure}

\begin{figure}
\centering
\tikzpicture[x=+4143sp, y=+4143sp]
\newdimen\XFigu\XFigu4143sp
\newdimen\XFigu\XFigu4143sp
\definecolor{blue3}{rgb}{0,0,0.82}
\definecolor{brown3}{rgb}{0.75,0.38,0}
\clip(3675,-2130) rectangle (7178,-105);
\tikzset{inner sep=+0pt, outer sep=+0pt}
\pgfsetlinewidth{+7.5\XFigu}
\pgfsetcolor{black}
\filldraw  (5400,-1800) circle [radius=+45];
\filldraw  (4050,-1350) circle [radius=+45];
\filldraw  (5850,-1350) circle [radius=+45];
\filldraw  (6750,-1350) circle [radius=+45];
\filldraw  (4950,-1350) circle [radius=+45];
\pgfsetcolor{blue3}
\filldraw  (4500,-900) circle [radius=+45];
\filldraw  (6300,-900) circle [radius=+45];
\filldraw  (5400,-450) circle [radius=+45];
\pgfsetlinewidth{+15\XFigu}
\draw (4500,-900)--(5400,-450);
\draw (5400,-450)--(6300,-900);
\pgfsetstrokecolor{black}
\draw (4050,-1350)--(4500,-900);
\draw (4500,-900)--(4950,-1350);
\draw (5400,-1800)--(4050,-1350);
\draw (5400,-1800)--(4950,-1350);
\draw (6300,-900)--(5400,-1800);
\draw (5400,-1800)--(6750,-1350);
\draw (6300,-900)--(6750,-1350);
\pgfsetfillcolor{black}
\pgftext[base,left,at=\pgfqpointxy{5355}{-2115}] {\fontsize{12}{14.4}$\emptyset$}
\pgftext[base,left,at=\pgfqpointxy{3690}{-1395}] {\fontsize{12}{14.4}$\{a\}$}
\pgftext[base,left,at=\pgfqpointxy{4590}{-1440}] {\fontsize{12}{14.4}$\{d\}$}
\pgftext[base,left,at=\pgfqpointxy{5490}{-1395}] {\fontsize{12}{14.4}$\{b\}$}
\pgftext[base,left,at=\pgfqpointxy{6840}{-1395}] {\fontsize{12}{14.4}$\{c\}$}
\pgfsetfillcolor{blue3}
\pgftext[base,left,at=\pgfqpointxy{4050}{-810}] {\fontsize{12}{14.4}$\{a,d\}$}
\pgftext[base,left,at=\pgfqpointxy{5355}{-270}] {\fontsize{12}{14.4}$1$}
\pgftext[base,left,at=\pgfqpointxy{6345}{-765}] {\fontsize{12}{14.4}$\{b,c\}$}
\endtikzpicture%

\caption{Poset $(\PD(\S),\leq')$}
\label{fig:strongerOrdering}
\end{figure}

\end{example}

Let us analyze now some examples in subgroup posets.

\begin{example}
\label{ex:symmetric3}
Let $G = S_3$ be the symmetric group on $3$ letters.
Then the poset of subgroups of $G$, denoted by $\S(G)$, consists of a unique minimal element (the trivial subgroup, denoted by $1$ in this example), a unique maximal element (the whole group $G$), a subgroup of order $3$ generated by a $3$-cycle, and three subgroups of order $2$ (one for each transposition).
Then $\red{\S(G)}$ is a discrete poset with $4$ points.
Hence, a decomposition of $\S(G)$ different from $\{G\}$ is a set of two non-trivial proper subgroups of $G$.
\end{example}

\begin{example}
\label{ex:cyclicGroup}
Let $G$ be a cyclic group of order $m$.
If $m = p^n$ is a prime power, then $\S(G)$, the lattice of subgroups of $G$, has height $n$, and $\PD(\S(G)) = \D(\S(G)) = \{ \{G\} \}$.

Suppose $G = C_{p_1^{n_1}}\times\cdots\times C_{p_r^{n_r}}$ for different primes $p_j$.
Let $G_j = C_{p_j^{n_j}}$.
Then the height of a subgroup of $G$ equals the number of primes (counted with multiplicity) of its order.
In particular, $h(\S(G)) = \sum_j n_j$.
Therefore, the only possible decompositions of $G$ are those obtained by coarsening the decomposition $\{G_1,\ldots,G_r\}$.
Hence $\D(\S(G))$ is the partition lattice on the set $\{G_1,\ldots,G_r\}$.
\end{example}

\subsection{Monoidal categories and restricted decompositions}
\label{sub:monoidalCat}

The constructions above will not cover all the structures we would like to
study. 
For example, consider the case of a non-trivial finitely generated free Abelian group $A$.
Its poset of subgroups is a lattice of infinite height.
Therefore, we cannot talk about decompositions given the finite-dimensional requirement in \ref{def:decompositionsAndPD}.
Nevertheless, it is implicit here that decompositions should be sets of subgroups $A_1,\ldots, A_r$ which span a Boolean lattice inside the poset of subgroups with join $A_1\oplus \cdots \oplus A_r = \gen{A_1,\ldots,A_r} = A$.
Thus, if we instead consider the subposet of complemented subgroups of $A$ (or \textit{summands}), this is a bounded poset of finite height.
However, there is still a problem in this subposet: even if $\{A_1,A_2\}$ is a (weak) decomposition in the poset sense, it is not clear that $A_1\vee A_2$ coincides with the direct sum $A_1\oplus A_2$.
For example, if $A = \ZZ^2$, then $A_1=\gen{(1,2)}$ and $A_2=\gen{(1,0)}$ are summands of $A$ such that $A_1\wedge A_2 = 0$ and $A_1\vee A_2 = A$ in the poset of summands of $A$, but $A_1+A_2 = A_1\oplus A_2 < A$.

A similar situation arises if we consider instead a free group of rank $\geq 2$.
By a decomposition here we not only want the Boolean lattice condition, but also ``free" spans.
That is, if $F$ is a free group of rank $\geq 2$ and $H,K$ are two subgroups, then $H,K$ is a weak decomposition if $H\cap K = 1$ and $H\vee K = \gen{H,K}=F$.
But the span $\gen{H,K}$ forgets the relation between the elements of $H$ and $K$, 
and we may actually want $\gen{H,K}$ to be equal to the free product of $H$ and $K$.
An adaptation of the Abelian-case example with $\ZZ^2$ shows that there are free factors $H,K$ of the free group $F_2$ of rank $2$ such that $H\cap K = 1$ but $H\vee K = \gen{H,K} = F$ is not the free product of $H$ and $K$.

To cover these and other cases, we extend our purely combinatorial setting 
by using concepts from category theory. We start with some notation.

For a category $\C$ and an object $X\in \C$, write $\Hom_\C(Y,X)$ for the homomorphisms from an object $Y$ to $X$, and $\Aut_\C(X)$ for the isomorphisms of $X$. Let $\C \downarrow X$ be the
comma category with objects $(Y,f)$, where
$Y \in \C$ and $f \in \Hom_{\C}(Y,X)$, and
morphisms between $(Y,f)$ and $(Z,g)$ being $h \in \Hom_{\C}(Y,Z)$ with 
$f = gh$.
We consider the subcategory 
$\C \hookdownarrow X$ with objects $(Y,i)$ such that $i$ is a monomorphism, and morphisms inherited from $\C \downarrow X$. Note that a morphism $h : (Y,i) \rightarrow
(Z,j)$ in $\C \hookdownarrow X$ comes indeed from a monomorphism $h:Y\to Z$.

A \textit{subobject} of $X$ is an isomorphism class of objects in 
$\C \hookdownarrow X$, where $(Y,i)$ and $(Z,j)$ are isomorphic if and only if there exists an isomorphism $\phi:Y\to Z$ such that $i = j\phi$. We write $[(Y,i)]$
for the subobject represented by $(Y,i)$.
Let $\Sub(X)$ denote the collection of subobjects of $X$, which is a poset-category (that is, there is at most one arrow between any two objects and this defines an anti-symmetric relation).
We gather some properties in the following lemma.

\begin{lemma}
Let $X$ be an object of a category $\C$.
The following hold:
\begin{enumerate}
    \item $\C\hookdownarrow X$ is a pre-order with $x\lesssim y$ if there is an arrow from $x$ to $y$.
    Namely, there is at most one arrow between any two objects.
    \item $\Sub(X)$ is a skeleton of the category $\C\hookdownarrow X$.
    \item If $\C$ has an initial object $0$, then $0\in \Sub(X)$ is the unique minimum of this poset.
    \item The poset $\Sub(X)$ has a unique maximum given by the class of $(X,i)$, where $i$ is any isomorphism of $X$.
    We denote this class just by $X$.
    \item If $z\in \Sub(X)$ then there is a poset isomorphism $\Sub(X)_{\leq z} \groupiso \Sub(Z)$ for any representative $(Z,i)\in z$.
\end{enumerate}
\end{lemma}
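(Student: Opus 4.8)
The plan is to derive all five assertions from one elementary fact: a monomorphism $m$ is left-cancellable, i.e.\ $mf = mg$ implies $f = g$. For (1), given two morphisms $h, h' \co (Y,i) \to (Z,j)$ in $\C \hookdownarrow X$ we have $jh = i = jh'$, and since $j$ is a monomorphism $h = h'$; together with the identities and composites this shows $\C \hookdownarrow X$ is a pre-order with $x \lesssim y$ iff there is an arrow $x \to y$. For (2), I would recall that in any pre-order the relation ``there are arrows both ways'' is an equivalence relation, that the full subcategory spanned by one chosen representative of each class has antisymmetric associated order (hence is a poset), and that its inclusion into $\C \hookdownarrow X$ is an equivalence of categories; this subcategory, equipped with the order $[(Y,i)] \leq [(Z,j)]$ iff $i$ factors through $j$, is precisely $\Sub(X)$, so $\Sub(X)$ is a skeleton of $\C \hookdownarrow X$.

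For (3), let $\iota \co 0 \to X$ be the unique arrow out of the initial object. One first checks that $\iota$ is a monomorphism --- immediate in all the settings of this paper, for instance whenever $\Hom_\C(W,0)$ has at most one element for every $W$ (in particular whenever $0$ is also terminal) --- so that $(0,\iota) \in \C \hookdownarrow X$. Then for any $(Y,i)$ the composite of $i$ with the unique arrow $0 \to Y$ is an arrow $0 \to X$, hence equals $\iota$; thus $(0,\iota) \lesssim (Y,i)$, and $0 := [(0,\iota)]$ is the minimum. For (4), $(X, \id_X)$ is an object of $\C \hookdownarrow X$, and for any $(Y,j)$ the morphism $j$ itself is an arrow $(Y,j) \to (X,\id_X)$ because $\id_X \circ j = j$; hence $[(X,\id_X)]$ is the maximum. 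Finally, if $i \in \Aut_\C(X)$ then $i^{-1}$ is an isomorphism with $\id_X = i \circ i^{-1}$, so $(X,i) \cong (X,\id_X)$ and it represents that same maximum element.

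For (5), fix $z \in \Sub(X)$ and a representative $(Z,i)$ with $i \co Z \to X$ a monomorphism; I would construct mutually inverse order-isomorphisms between $\Sub(X)_{\leq z}$ and $\Sub(Z)$. If $[(Y,j)] \leq z$, witnessed by $h \co Y \to Z$ with $j = ih$, then $h$ is the unique such arrow (cancel $i$) and is itself a monomorphism, since $hf = hg$ gives $jf = ihf = ihg = jg$, hence $f = g$; send $[(Y,j)] \mapsto [(Y,h)]$. In the other direction send $[(W,k)] \in \Sub(Z)$ to $[(W, ik)]$, noting $ik$ is a monomorphism as a composite of two and that $[(W,ik)] \leq z$ is witnessed by $k$. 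I would then verify --- each time by cancelling $i$ --- that both assignments are independent of the chosen representatives, that they are mutually inverse ($[(Y,j)] \mapsto [(Y,h)] \mapsto [(Y,ih)] = [(Y,j)]$, and $[(W,k)] \mapsto [(W,ik)] \mapsto [(W,k)]$ because $ih = ik$ forces $h = k$), and that both preserve order: a comparison $g \co Y \to Y'$ with $j = j'g$ yields $ih = j = j'g = ih'g$, hence $h = h'g$, so the images are comparable, and symmetrically in the reverse direction. This gives $\Sub(X)_{\leq z} \groupiso \Sub(Z)$.

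The only part carrying real content is (5), and within it the place to be careful is the routine bookkeeping --- well-definedness of the two maps on isomorphism classes and the two ``mutually inverse'' identities --- but I do not anticipate a genuine obstacle, since every such step is a one-line application of left-cancellability of monomorphisms. The only other point that deserves a word is the verification in (3) that the arrow out of the initial object is monic, which is automatic in every category to which we apply the lemma.
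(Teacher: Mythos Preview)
Your proof is correct and matches the paper's approach: the paper simply writes ``These are straightforward facts from the definitions'' and leaves all details to the reader, and what you have written is precisely the direct verification one would carry out. You are in fact more careful than the paper in one respect: in (3) you flag that the unique arrow $0 \to X$ must be a monomorphism, which is not automatic for an arbitrary initial object, whereas the paper asserts (3) without comment; your observation that this holds in all the categories actually used is the honest way to handle it.
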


\begin{proof}
These are straightforward facts from the definitions.
\end{proof}

Note that item 5 also implies that if $i,i':Z\to X$ are two monomorphisms and $z:=[(Z,i)]$, $z':=[(Z,i')]$ then $\Sub(X)_{\leq z}\groupiso \Sub(Z) \groupiso \Sub(X)_{\leq z'}$.
In particular, if $i:X\to X$ is a monomorphism, then $\Sub(X)_{\leq [(X,i)]} \groupiso \Sub(X)$.

\begin{warning*}
$\Sub(X)$ is always a bounded poset but, in general, of infinite height.
\end{warning*}

We want to restrict the collection of decompositions to those that have a 
particular shape. To specify the ``shape" of the decompositions, we define beforehand a 
``product of objects", and then we select the decompositions whose join equals the 
product. 
This naturally leads us to the definition of a monoidal product in our category.
However, we will require this product to have some particular properties closer to a coproduct.

\begin{definition}
\label{def:ISMcategory}
An \textit{initial symmetric monoidal category}, or \textit{ISM-category} for short,
is an initial monoidal category $(\C,\sqcup)$, where $\sqcup$ is a naturally associative and commutative product, and the initial object of the category is also a neutral element for $\sqcup$. 
\end{definition}

The notation ``$\sqcup$'' suggests a similarity with the coproduct.
Indeed, sometimes $\sqcup$ will be the coproduct of the category.
However, in many cases, the category $\C$ may not have all coproducts, and so $\sqcup$
will serve as a remedy. 

\begin{remark}
\label{rk:naturalMapToProduct}
Note that, for any two objects $X$, $Y$, we have a \textit{canonical map} $\iota_X:X\to X\sqcup Y$ arising from
\[ X \groupiso X \sqcup 0 \overset{\id_X \sqcup i_0}{\longrightarrow}X\sqcup Y,\]
where $i_0:0\to Y$.
\end{remark}

To define decompositions in $\Sub(X)$ we need finite height, and $\Sub(X)$ may have infinite dimension in general (for example, when $X$ is a non-trivial free group in the category of groups, see \ref{exa:freeGroupsStarting}).
For that reason, we will work with the subposet $\Sub(X,\sqcup)$ of $\sqcup$-complemented subobjects defined below, which we expect to have finite height and whose elements are eligible to be part of decompositions.
To that end, we introduce first the notion of $\sqcup$-compatible joins of elements.

\begin{definition}
\label{def:sqcupCompatible}
Let $(\C,\sqcup)$ be an ISM-category, $X\in \C$ and $\sigma = \{x_1,\ldots, x_r\}\subseteq \T\subseteq \Sub(X)$.
We say that $\sigma$ is \textit{$\sqcup$-compatible in $\T$} if there are suitable representatives $x_i = [(X_i, j_i)]$ such that the following conditions hold:
\begin{enumerate}
 \item For all $\emptyset\neq I \subseteq I' \subseteq [r]$,
  we have natural monomorphisms \footnote{In the sense that they are induced by the canonical maps $Y\to Y\sqcup Z \groupiso Z\sqcup Y$.}
  $$j_{I,I'} : \bigsqcup_{i\in I} X_i \rightarrow  \bigsqcup_{i\in I'} X_i \qquad \text{ and } \qquad j_{I}:\bigsqcup_{i\in I} X_i \rightarrow X;$$
 \item for any $\emptyset\neq I\subseteq [r]$, there exists the join $\bigvee_{i\in I} x_i$ in $\T$ and
    $$\Big[\,\big(\, \bigsqcup_{i \in I} X_i,j_I\,\big)\, \Big] = \bigvee_{i\in I} x_i;$$

 \item
 the following diagram commutes for all $\emptyset\neq I \subseteq I' \subseteq [r]$ and $k\in I$:
 \[\xymatrix{
    X_k \ar[r]^{j_{\{k\},I}} \ar[dr]_{j_k} \ar@/^2pc/[rr]^{j_{\{k\},I'}} & \bigsqcup_{i\in I} X_i \ar[r]^{j_{I,I'}} \ar[d]^{j_I} & \bigsqcup_{i\in I'} X_i \ar[dl]^{j_{I'}}\\
    & X &
 }\]
\end{enumerate}
\end{definition}
We now define posets of subobjects compatible with the monoidal product $\sqcup$:
\begin{definition}
\label{def:decompositionsCategory}
Let $(\C,\sqcup)$ be an ISM-category and let $X\in \C$ be an object.
\begin{enumerate}
    \item An element $x\in \Sub(X)$ is $\sqcup$-\textit{complemented} if there exists a poset-complement $y\in \Sub(X)$ of $x$ (that is, $x\wedge y = 0$ and $x\vee y = 1$) such that the set $\{x,y\}\subseteq\Sub(X)$ is $\sqcup$-compatible in $\Sub(X)$.
    \item We define $\Sub(X,\sqcup)$ to be the subposet of $\sqcup$-complemented elements of $\Sub(X)$.
    \item If $\Sub(X,\sqcup)$ has finite height, a $\sqcup$-\textit{decomposition}  of $X$ is a decomposition $\sigma \in \D(\Sub(X,\sqcup))$ which is $\sqcup$-compatible in $\Sub(X,\sqcup)$.
    \item Denote by $\D(X,\sqcup)$ the subposet of $\D(\Sub(X,\sqcup))$ consisting of $\sqcup$-decompositions, and by $\PD(X,\sqcup)$ the subposet of $\PD(\Sub(X,\sqcup))$ consisting of the subsets of $\sqcup$-decompositions.
\end{enumerate}
Similarly, we define $\sqcup$-weak (partial) decompositions but without the need to require finite height for $\Sub(X,\sqcup)$.
\end{definition}

Note that by definition $\Sub(X,\sqcup)$ is bounded with maximum $X$ and minimum $0$ (the initial object), and all its elements are $\sqcup$-complemented, and in particular complemented.
Also $\PD(X,\sqcup)$ is bounded with maximum $\{X\}$ and minimum $\emptyset$.
Moreover, $\D(X,\sqcup)$ is a non-empty order ideal of $\PD(X,\sqcup)$ and, in particular, 
$\{X\}$ is the unique maximal element of $\D(X,\sqcup)$.

\begin{example}
    Let $\C$ be the category of groups, so $\Sub(X)$ is the subgroup lattice for a group $X$.
    We choose ``$\sqcup$'' to be the direct product of groups.
    In order to get a finite-dimensional poset $\Sub(X,\sqcup)$, we work with a finite group $X$.
    Hence $\Sub(X,\sqcup)$ is the subposet of subgroups of $X$ which are direct-product factors of $X$.
    That is, $H\leq X$ is $\sqcup$-complemented if and only if there exists a subgroup $K\leq X$ such that $H\cap K=1$, $[H,K]=1$ and $H\times K \groupiso HK = X$.
    Recall that, by the Krull-Schmidt theorem for finite groups, there exists a unique direct product decomposition of a finite group into indecomposable factors, up to permutation and isomorphism of the factors.
    In particular, the minimal elements of $\Sub(X,\sqcup)$ are the indecomposable factors of $X$ and the height of an element $H\in \Sub(X,\sqcup)$ is the number of (non-trivial) indecomposable direct product factors.
    Therefore, $\D_w(X,\sqcup)$ is the poset of all (internal) direct-product decompositions of $X$ ordered by refinement, and $\D(X,\sqcup)$ equals $\D_w(X,\sqcup)$.
\end{example}

In view of the previous example and \ref{ex:cyclicGroup}, we propose the following question:

\begin{question}
If $G$ is the direct product of groups $G_1,\ldots,G_r$ of relatively prime orders, do we have $\D(\S(G),\times) = \Pi(r)\times \D(G_1,\times)\times\cdots\times \D(G_r,\times)$?    
\end{question}

\begin{example}
\label{exa:freeGroupsStarting}
Recall that the coproduct in the category of groups is the free product. So we choose ``$\sqcup$'' to be the free product of groups and $X$ a free group of finite rank.
Then $\Sub(X)$ is a lattice with infinite height, and we cannot apply our definition of decompositions.
However, $\Sub(X,\sqcup)$ does have finite height.
Indeed, we claim that $\Sub(X,\sqcup)$ is the poset $\FC(X)$ of free factors of $X$.
To see this, let $H\in \Sub(X,\sqcup)$ and let $K\in \Sub(X)$ be a $\sqcup$-complement for $H$.
This means that $H\cap K = 1$ and $H*K$ is naturally isomorphic to $\gen{H,K} = X$.
That is, $X = H*K$, so $H,K$ are free factors of $X$.
Since a free factor of $X$ belongs to $\Sub(X,\sqcup)$, we conclude that $\Sub(X, \sqcup) = \FC(X)$.
In particular, $\FC(X)$ is a graded poset and if $H\in \FC(X)$ then its height equals $\rk(H)$, the rank of $H$ as a (free) group.
Moreover, since $\rk(H*K) = \rk(H)+\rk(K)$, we conclude that $\sqcup$-weak decompositions are $\sqcup$-decompositions. 
Therefore, $\D_w(X,\sqcup) = \D(X,\sqcup)$ is the poset of decompositions $\{H_1,\ldots, H_k\}$ of $X$ into free factors $X = H_1 * \cdots * H_k$.
\end{example}

\begin{example}
\label{ex:latticeAsCategoryApproach}
Let $\S$ be a lattice.
Then we have the induced poset-category $\C$ whose objects are the elements of $\S$ with arrows $x\to y$ if and only if $x\leq y$.
Then $\Sub(1_{\S}) = \S$.
Moreover, $\C$ is an ISM-category with $\sqcup$ given by the join of elements in $\S$, i.e. $x\sqcup y = x\vee y$.
It is straightforward to verify then that $\Sub(1_{\S},\sqcup)$ is the subposet of complemented elements of $\S$.
In particular, if every element of $\S$ is complemented then $\S = \Sub(1_{\S},\sqcup)$, $\D(\S) = \D(\S,\sqcup)$ and $\PD(\S) = \PD(1_{\S},\sqcup)$.
\end{example}

\section{Properties of decomposition posets}
\label{sec:properties}

From now on, we assume that $X$ is an object of an ISM-category $(\C,\sqcup)$ such that $\Sub(X,\sqcup)$ has finite height.
Recall by \ref{ex:latticeAsCategoryApproach} that we can regard a complemented lattice $\S$ as some poset $\S(X,\sqcup)$ where also $\D(\S) = \D(X,\sqcup)$ and $\PD(\S) = \PD(X,\sqcup)$.
Therefore, the results of this section are also valid for complemented lattices of finite height.

The following proposition recovers the structure of upper intervals of decompositions and it is an immediate consequence of the definitions.

\begin{proposition}
  \label{prop:decompintervals}
  For $\sigma \in \D(X,\sqcup)$ we have
  $\D(X,\sqcup)_{\geq \sigma} \groupiso \Pi(\sigma)$.
\end{proposition}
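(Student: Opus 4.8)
The plan is to reduce to the purely order-theoretic \ref{lm:upperIntervalsDecomp}, applied to the bounded finite-height poset $\S:=\Sub(X,\sqcup)$, and then to observe that the additional $\sqcup$-compatibility condition singling out $\D(X,\sqcup)$ inside $\D(\Sub(X,\sqcup))$ is automatically inherited by coarsenings of a $\sqcup$-decomposition.

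Concretely, I would first recall that, by \ref{def:decompositionsCategory}, $\D(X,\sqcup)$ is the subposet of $\D(\Sub(X,\sqcup))$ consisting of the decompositions that are $\sqcup$-compatible in $\Sub(X,\sqcup)$, carrying the refinement order induced from $\D(\Sub(X,\sqcup))$. Since $\Sub(X,\sqcup)$ is bounded of finite height, \ref{lm:upperIntervalsDecomp} yields an order isomorphism $\Phi\co\Pi(\sigma)\to\D(\Sub(X,\sqcup))_{\geq\sigma}$ carrying a partition $\pi$ of $\sigma$ to $\sigma_\pi:=\{\,\bigvee_{x\in p}x\tq p\in\pi\,\}$, and its proof shows that \emph{every} $\tau\in\D(\Sub(X,\sqcup))$ with $\tau\geq\sigma$ equals $\sigma_\pi$ for a unique $\pi$. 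As $\D(X,\sqcup)_{\geq\sigma}$ is precisely the set of $\sqcup$-compatible members of $\D(\Sub(X,\sqcup))_{\geq\sigma}$, equipped with the induced order, it then suffices to show that $\sigma_\pi\in\D(X,\sqcup)$ for every $\pi\in\Pi(\sigma)$; the restriction of $\Phi$ will be the desired isomorphism.

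For that last step I would argue as follows. Writing $\sigma=\{x_1,\dots,x_r\}$ and using $\sigma\in\D(X,\sqcup)$, fix representatives $x_i=[(X_i,j_i)]$ together with the natural monomorphisms $j_{I,I'}$ and $j_I$ of \ref{def:sqcupCompatible}, and regard $\pi=\{p_1,\dots,p_s\}$ as a partition of $[r]$. For a block $p_k$ put $Y_k:=\bigsqcup_{i\in p_k}X_i$; by condition (2) of \ref{def:sqcupCompatible} the pair $(Y_k,j_{p_k})$ represents $\bigvee_{i\in p_k}x_i$, so $(Y_1,j_{p_1}),\dots,(Y_s,j_{p_s})$ represent the elements of $\sigma_\pi$. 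For $\emptyset\neq K\subseteq K'\subseteq[s]$ set $Q:=\bigcup_{k\in K}p_k$, $Q':=\bigcup_{k\in K'}p_k$; the naturally associative and commutative structure of $\sqcup$ supplies a canonical isomorphism $a_K\co\bigsqcup_{k\in K}Y_k\to\bigsqcup_{i\in Q}X_i$, and I would take $\tilde j_{K,K'}:=a_{K'}^{-1}\circ j_{Q,Q'}\circ a_K$ and $\tilde j_K:=j_Q\circ a_K$ as compatibility data for $\sigma_\pi$. These are natural monomorphisms; the join $\bigvee_{k\in K}[(Y_k,j_{p_k})]=\bigvee_{i\in Q}x_i$ exists in $\Sub(X,\sqcup)$ and is represented by $(\bigsqcup_{k\in K}Y_k,\tilde j_K)$; and the triangles required by condition (3) of \ref{def:sqcupCompatible} for the family $(Y_k)$ follow by pasting the triangles of condition (3) for $(X_i)$ with the coherence isomorphisms $a_K$. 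Hence $\sigma_\pi\in\D(X,\sqcup)$.

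\textbf{Main obstacle.} The only point that is not purely formal is this last verification --- checking that the monomorphisms and commuting triangles attached to the blocks $p_k$ arise from those attached to the singletons by composition with the associativity and commutativity constraints of $\sqcup$. This is exactly what ``naturally associative and commutative'' in \ref{def:ISMcategory} guarantees via Mac Lane coherence, and since every arrow involved is assembled from the canonical maps $Y\to Y\sqcup Z\groupiso Z\sqcup Y$, I expect no genuine computation to be needed; this is presumably why the authors call the proposition an immediate consequence of the definitions.
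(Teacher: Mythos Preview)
Your proposal is correct and follows the same approach as the paper: both reduce to \ref{lm:upperIntervalsDecomp} applied to $\Sub(X,\sqcup)$, and both rely on the fact that coarsenings of a $\sqcup$-decomposition are again $\sqcup$-decompositions. The paper phrases this second step simply as ``$\D(X,\sqcup)$ is an upper order ideal in $\D(\Sub(X,\sqcup))$'' and leaves the verification implicit; your proposal spells out why this holds by building the compatibility data for $\sigma_\pi$ from that of $\sigma$ via the coherence isomorphisms of the monoidal structure, which is exactly the content the paper is taking for granted when calling the proposition an immediate consequence of the definitions.
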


\begin{proof}
  This follows from \ref{lm:upperIntervalsDecomp} and the fact that $\D(X,\sqcup)$ is an upper order ideal in $\D(\Sub(X,\sqcup))$.
\end{proof}

In some important cases, we also have control over the lower intervals and the following property holds:

\medskip

\begin{definition}
\label{def:propLI}
We say that $X$ satisfies property (LI) if the following condition holds:

\begin{center}
\begin{tabular}{cc}
    \textbf{(LI)} & \pbox{12.5cm}{For all $\sigma \in \PD(X,\sqcup)$ we have $\PD(X,\sqcup)_{\leq \sigma} = \prod_{y\in \sigma} \PD(Y,\sqcup)$, for suitable representatives $(Y,i_y)\in y$.}
\end{tabular}
\end{center}

\medskip

\noindent
Here (LI) stands for Lower Interval.
\end{definition}

In particular, property (LI) implies that the lower interval $\D(X,\sqcup)_{\leq \sigma}$ is also a product $\prod_{y\in \sigma} \D(Y,\sqcup)$.
Conditions analogous to those from \ref{lm:lowerIntervalsPD} imply property (LI).

The following proposition provides some information on the heights of the posets of decompositions and partial decompositions.

\begin{proposition}
  \label{prop:heightDandPD}
  Let $\Sub(X,\sqcup)$ be of height $n$.
  Then every decomposition in $\D(X,\sqcup)$ has size at most $n$.
  In particular, $h(\PD(X,\sqcup)) \leq 2n-1$ and $h(\D(X,\sqcup))\leq n-1$.
  Moreover, the following are equivalent
  \begin{enumerate}
    \item there exists a decomposition of size $n$.
    \item $h(\PD(X,\sqcup)) = 2n-1$. 
    \item $h(\D(X,\sqcup)) = n-1$.
  \end{enumerate}
  If one of the preceding conditions holds, then the following are equivalent:
  \begin{enumerate}
    \item[1'.] for every decomposition $\sigma$ there exists a decomposition 
    $\tau$ of size $n$ with $\tau \leq \sigma$.
    \item[3'.] $\D(X,\sqcup)$ is graded. 
  \end{enumerate}
\end{proposition}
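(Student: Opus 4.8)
The plan is to handle the elementary bounds first, then the three‑way equivalence, and finally the equivalence of (1') and (3'). Throughout write $n=h(\Sub(X,\sqcup))$, and recall that $h(x)=0$ iff $x=0$, that $h$ is strictly increasing ($a<b\Rightarrow h(a)<h(b)$), and that distinct blocks of a partial decomposition meet in $0$ (they lie in a common decomposition, whose generated sublattice is Boolean). First I would prove the size bound and the two height bounds. Applying the height condition of \ref{def:decompositionsAndPD} with $\tau=\sigma$ to $\sigma\in\D(X,\sqcup)$ gives $\sum_{x\in\sigma}h(x)=h(X)=n$; since $0\notin\sigma$ and $h(x)\geq1$ otherwise, $|\sigma|\leq n$. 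For $h(\D(X,\sqcup))\leq n-1$: if $\tau<\sigma$ in $\D(X,\sqcup)$ then $\sigma\in\D(X,\sqcup)_{\geq\tau}\groupiso\Pi(\tau)$ by \ref{prop:decompintervals}, so $\sigma$ merges blocks of $\tau$ and $|\sigma|<|\tau|$ by \ref{rk:containmentFull}(2); thus block counts strictly decrease along strict chains, and lying in $\{1,\dots,n\}$ they force the length to be $\leq n-1$. For $h(\PD(X,\sqcup))\leq 2n-1$ I would introduce the rank function
\[
\rho(\sigma)\ :=\ \sum_{x\in\sigma}\bigl(2h(x)-1\bigr)\ =\ 2\sum_{x\in\sigma}h(x)-|\sigma| .
\]
Since a partial decomposition sits inside a decomposition, $\sum_{x\in\sigma}h(x)\leq n$, and with $h(x)\geq1$ one gets $0\leq\rho(\sigma)\leq 2n-1$. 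The crucial point is that $\rho$ is strictly order preserving: if $\tau\leq\sigma$ then each $x\in\tau$ lies below a unique $y\in\sigma$ (else $x\leq y_1\wedge y_2=0$), so the sets $\tau_y=\{x\in\tau:x\leq y\}$ partition $\tau$, and using $\bigvee_{x\in\tau_y}x\leq y$ together with the height condition,
\[
\rho(\sigma)-\rho(\tau)\ =\ \sum_{y\in\sigma}\Bigl[\,2\bigl(h(y)-\sum_{x\in\tau_y}h(x)\bigr)+\bigl(|\tau_y|-1\bigr)\,\Bigr]\ \geq\ 0 ,
\]
with equality forcing every $\tau_y=\{y\}$, i.e. $\tau=\sigma$. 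Hence every chain in $\PD(X,\sqcup)$ has length at most $\rho(\{X\})-\rho(\emptyset)=2n-1$.

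Next the equivalence (1)$\Leftrightarrow$(2)$\Leftrightarrow$(3). For (3)$\Rightarrow$(1): a chain of length $n-1$ in $\D(X,\sqcup)$ has $n$ terms with strictly decreasing block counts in $\{1,\dots,n\}$, so the counts are $n,n-1,\dots,1$ and the bottom term is a decomposition of size $n$. For (1)$\Rightarrow$(3): a size‑$n$ decomposition $\sigma$ has $\D(X,\sqcup)_{\geq\sigma}\groupiso\Pi(\sigma)\groupiso\Pi(n)$ of height $n-1$, so $h(\D(X,\sqcup))=n-1$. For (1)$\Rightarrow$(2): all blocks of a size‑$n$ decomposition $\sigma=\{\alpha_1,\dots,\alpha_n\}$ have height $1$, so $\emptyset<\{\alpha_1\}<\{\alpha_1,\alpha_2\}<\dots<\sigma$ (all terms are subsets of $\sigma$, hence partial decompositions) is a chain of length $n$, and a maximal chain of $\D(X,\sqcup)_{\geq\sigma}\groupiso\Pi(n)$ from $\sigma$ to $\{X\}$ adds $n-1$ steps, giving $h(\PD(X,\sqcup))=2n-1$.

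For (2)$\Rightarrow$(1), the heart of the matter, I would take a chain $\emptyset=c_0<\dots<c_{2n-1}=\{X\}$ realizing $h(\PD(X,\sqcup))=2n-1$; its endpoints are the extremes of $\PD(X,\sqcup)$ since it cannot be prolonged. As $\rho$ is strictly increasing with values in $\{0,\dots,2n-1\}$ we get $\rho(c_i)=i$, so each covering step raises $\rho$ by exactly $1$. Reading off the displayed identity in this case, each step is either \textbf{(A)} adjoining a new height‑$1$ block (block count $+1$) or \textbf{(B)} replacing two blocks $a,b$ by their join $a\vee b$ with $h(a\vee b)=h(a)+h(b)$ (block count $-1$); since $|c_0|=0$ and $|c_{2n-1}|=1$, there are exactly $n$ steps of type (A) and $n-1$ of type (B). A consecutive pair (type (B), then type (A)), say $D\cup\{a,b\}\lessdot D\cup\{a\vee b\}\lessdot D\cup\{a\vee b,\alpha\}$, can be replaced by (type (A), then type (B)) through $D\cup\{a,b,\alpha\}$: the latter is still a partial decomposition because $\{a,b\}$ is a decomposition of the subobject below $a\vee b$, and substituting it for the block $a\vee b$ of a decomposition containing $D\cup\{a\vee b,\alpha\}$ again yields a decomposition. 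After finitely many such swaps all type‑(A) steps precede all type‑(B) steps, so $c_n$ consists of $n$ pairwise disjoint height‑$1$ blocks with $\bigvee c_n=X$; being a partial decomposition with join $X$, it coincides with the decomposition it lies in, and so is a decomposition of size $n$. I expect the substitution fact just used — that refining one block of a decomposition by a decomposition of that block is again a decomposition — to be the main obstacle; it should be routine from the definitions, being the natural compatibility of $\Sub(\,\cdot\,,\sqcup)$ with passage to subobjects (and it is immediate under property (LI)).

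Finally, assuming one of (1)--(3) holds, I would prove (1')$\Leftrightarrow$(3'). By \ref{prop:decompintervals} a cover in $\D(X,\sqcup)$ merges exactly two blocks, so along any saturated chain the block count drops by exactly $1$; moreover, by the size bound a size‑$n$ decomposition has no proper refinement, hence is a minimal element of $\D(X,\sqcup)$, while conversely under (1') a minimal element of $\D(X,\sqcup)$ admits a size‑$n$ refinement, which by minimality is the element itself, so every minimal element has block count $n$. If $\D(X,\sqcup)$ is graded then every maximal chain has length $h(\D(X,\sqcup))=n-1$; for $\sigma\in\D(X,\sqcup)$, prolong a saturated chain below $\sigma$ down to a minimal $\tau_0$ and a saturated chain above $\sigma$ up to $\{X\}$, obtaining a maximal chain from $\tau_0$ to $\{X\}$ of length $n-1$, whence $|\tau_0|=1+(n-1)=n$ and $\tau_0\leq\sigma$; this is (1'). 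Conversely, if (1') holds then every minimal element of $\D(X,\sqcup)$ has block count $n$, so every maximal chain runs from block count $n$ down to $1$ in unit steps and has length $n-1$; hence $\D(X,\sqcup)$ is graded.
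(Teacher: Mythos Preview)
Your overall structure matches the paper's, and most of the arguments are correct. Your rank function $\rho(\sigma) = 2\sum_{x\in\sigma} h(x) - |\sigma|$ is precisely the quantity behind the paper's chain-size bound $|\gamma| \le 1 + \sum_{x\in\sigma}(2h(x)-1) = 1 + \rho(\sigma)$; your proof that $\rho$ is strictly order-preserving on $\PD(X,\sqcup)$ is in fact cleaner and more rigorous than the paper's informal ``split or eliminate'' counting. The size bound, $h(\D)\le n-1$, the implications (1)$\Leftrightarrow$(3), (1)$\Rightarrow$(2), and the equivalence (1')$\Leftrightarrow$(3') are all fine and essentially the same as the paper's.

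The genuine gap is in (2)$\Rightarrow$(1). Your swap argument passes from $D\cup\{a,b\} \lessdot D\cup\{a\vee b\} \lessdot D\cup\{a\vee b,\alpha\}$ to a chain through $D\cup\{a,b,\alpha\}$, and for this you need $D\cup\{a,b,\alpha\}$ to be a partial decomposition. Your justification---refining the block $a\vee b$ inside a full decomposition containing $D\cup\{a\vee b,\alpha\}$ by the decomposition $\{a,b\}$ of $\Sub(X,\sqcup)_{\le a\vee b}$---is exactly property (EX) of \ref{def:propsEXCM}. That property is introduced in the paper as an \emph{additional} hypothesis and is shown to fail in general (see the lattice in \ref{fig:decomp} and the discussion following it). So your expectation that the substitution is ``routine from the definitions'' or ``immediate under property (LI)'' is not warranted: neither (EX) nor (LI) is a hypothesis of this proposition. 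The extra information you do have---that $D\cup\{a,b\}$ is itself a partial decomposition---does not obviously help, since its witnessing full decomposition need not contain $\alpha$, while the one witnessing $D\cup\{a\vee b,\alpha\}$ need not split at $a\vee b$. The paper's own treatment of (2)$\Rightarrow$(1) is only the one-line claim that it ``is a consequence of the proof of the bound'', so there is no detailed argument to compare against; but your swap, as written, does not go through without (EX).
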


\begin{proof}
  The bound on the size of elements of $\D(X,\sqcup)$ is an immediate consequence of
  the definitions. The bound on the height of $\D(X,\sqcup)$ then follows from \ref{prop:decompintervals}.
  For $\PD(X,\sqcup)$ we argue as follows.
  Suppose that $\sigma \in \PD(X,\sqcup)$, $\sigma\neq\emptyset$, and consider a saturated chain $\gamma$ in $\PD(X,\sqcup)$ with maximal element $\sigma$.
  In particular, $\gamma$ contains the empty partial decomposition.
  Then, a covering relation $\tau \prec \tau'$ in $\gamma$ means that either we ``split" some elements of $\tau'$ (meaning that for some $x\in\tau'$ there are $y,z \in \tau$ with 
  $x \geq y \vee z$ and $x \not\in \tau$) or we eliminated an ``unsplittable" element (i.e. there is $x\in \tau'$ which cannot be split and $\tau= \tau'\setminus \{x\}$).
  Therefore, for $x \in \tau$ we can perform at most $h(x)-1$ splittings and eliminate at most $h(x)$ elements.
  Thus, the maximal chain $\gamma$ has size at most
  \[ 1+\sum_{x \in \sigma} \big(\,h(x)-1 + h(x)\,\big) = 1-|\sigma|+2\sum_{x \in \sigma} h(x) \leq 1-|\sigma| + 2n \leq 2n.\]
  This implies that $h(\PD(X,\sqcup))\leq 2n-1$.

  The equivalence of items 1 and 3 immediately follows from \ref{prop:decompintervals}.
  The equivalence of items 1 and 2 is a consequence of the proof of the bound 
  $h(\PD(X,\sqcup)) \leq 2n-1$. 

  Given the equivalent conditions of items 1, 2 and 3, again \ref{prop:decompintervals}
  implies the equivalence of 1' and 3'. 
\end{proof}

In general, $\D(X,\sqcup)$ has a unique maximal but not necessarily 
a unique minimal element. As defined at the beginning of \ref{sec:definitionsposet}, for a poset $\S$ with a unique
maximal element, we write $\redm{\S}$ for the subposet obtained by
removing the unique maximal element, and in case $\S$ has
also a unique minimal element we denote by $\red{\S}$ the
subposet obtained by removing both the unique minimal and the unique maximal element.
When $\Sub(X,\sqcup)$ is the Boolean lattice with $\sqcup$ the disjoint union, $\D(X,\sqcup)$ is the partition lattice which has a unique
minimal element and in the literature $\red{\D(X,\sqcup)}$ is
studied extensively. However, given that $\D(X,\sqcup)$ usually has
more than one minimal element, in our approach it is more natural to
consider the topology of the partition lattice after removing the maximal
element only. As a consequence, we will usually study $\redm{\D(X,\sqcup)}$.

In the following proposition, we see that $\D(X,\sqcup)$ is a lattice if we add an extra minimum.

\begin{proposition}
  \label{prop:latticeDecompositions}
  The poset $\D(X,\sqcup)$ is a lattice after adding an extra minimum element.
\end{proposition}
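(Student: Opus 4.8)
The plan is to adjoin a formal minimum $\hat 0$ below every element of $\D(X,\sqcup)$, obtaining a poset $\widehat\D$, and to prove that $\widehat\D$ is a lattice. It is already bounded, its maximum being $\{X\}$ (the unique maximal element of $\D(X,\sqcup)$). Since $\D(X,\sqcup)$ has finite height by \ref{prop:heightDandPD}, I would first reduce the statement to the claim that \emph{any two elements of $\D(X,\sqcup)$ have a join in $\D(X,\sqcup)$}. Granting this, $\widehat\D$ has all binary joins ($\hat 0 \vee a = a$, and joins of two elements of $\D(X,\sqcup)$ are the same whether computed in $\D(X,\sqcup)$ or in $\widehat\D$), and then meets come for free: for $\sigma,\tau\in\widehat\D$ the set $L$ of common lower bounds is non-empty (it contains $\hat 0$) and, by finite height, satisfies the ascending chain condition, so every element of $L$ lies below a maximal element of $L$; if $m_1,m_2$ are maximal in $L$, then $m_1\vee m_2$ exists in $\widehat\D$ and is again a lower bound of $\sigma$ and $\tau$ (it is the least upper bound of $\{m_1,m_2\}$, which $\sigma$ and $\tau$ bound above), hence $m_1\vee m_2\in L$ and maximality forces $m_1 = m_2$. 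Thus $L$ has a greatest element, namely $\sigma\wedge\tau$.

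It remains to construct joins in $\D(X,\sqcup)$. Fix $\sigma,\tau\in\D(X,\sqcup)$. Every common upper bound of $\sigma$ and $\tau$ lies in the interval $\D(X,\sqcup)_{\geq\sigma}$, which by \ref{prop:decompintervals} is isomorphic to the partition lattice $\Pi(\sigma)$; under this isomorphism (see \ref{lm:upperIntervalsDecomp}) a coarsening $\rho\geq\sigma$ corresponds to the partition $P$ of $\sigma$ with blocks $\{s\in\sigma : s\le r\}$ for $r\in\rho$, and conversely a partition $P$ of $\sigma$ corresponds to $\rho_P := \{\bigvee_{s\in B}s : B\in P\}$. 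Since $|\sigma|$ is bounded by the height of $\Sub(X,\sqcup)$ (again \ref{prop:heightDandPD}), the lattice $\Pi(\sigma)$ is finite, so it suffices to show that the set $\mathcal U = \{\rho\in\D(X,\sqcup)_{\geq\sigma} : \rho\geq\tau\}$ of common upper bounds of $\sigma$ and $\tau$ is non-empty and closed under the meet operation of $\Pi(\sigma)$: its least element is then $\sigma\vee\tau$, because every common upper bound of $\sigma$ and $\tau$ belongs to $\mathcal U$. Non-emptiness is clear since $\{X\}\in\mathcal U$.

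For the meet-closure I would argue as follows. If $\rho_P\geq\tau$, then each $t\in\tau$ satisfies $t\le\bigvee_{s\in B}s$ for a \emph{unique} block $B=:B_P(t)$ of $P$: for distinct blocks $B\neq B'$ one has $\bigvee_{s\in B}s\wedge\bigvee_{s\in B'}s=\bigvee_{s\in B\cap B'}s=0$, where the meet is computed inside the Boolean lattice $\{\bigvee_{s\in A}s : A\subseteq\sigma\}\cong 2^{\sigma}$, which by clause (2) of \ref{def:decompositionsAndPD} coincides with the meet in $\Sub(X,\sqcup)$; hence $t\le 0$ would contradict $t\neq 0$. Now if $\rho_P,\rho_Q\in\mathcal U$ and $t\in\tau$, then $t\le\bigvee_{s\in B_P(t)}s\wedge\bigvee_{s\in B_Q(t)}s=\bigvee_{s\in B_P(t)\cap B_Q(t)}s$ by the same Boolean identity, so $B_P(t)\cap B_Q(t)$ is a non-empty block of $P\wedge Q$ and $t$ lies below the corresponding element of $\rho_{P\wedge Q}$. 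Therefore $\rho_{P\wedge Q}\geq\tau$, and since $\rho_{P\wedge Q}$ is exactly the meet of $\rho_P$ and $\rho_Q$ in $\D(X,\sqcup)_{\geq\sigma}\cong\Pi(\sigma)$, the set $\mathcal U$ is meet-closed. The main obstacle is precisely this last step: the uniqueness of $B_P(t)$ and the identity $\bigvee B\wedge\bigvee B'=\bigvee(B\cap B')$, both of which rest on the Boolean-lattice clause in the definition of a decomposition; the remaining points are formal.
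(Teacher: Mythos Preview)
Your proof is correct, but it runs dual to the paper's argument. The paper proves directly that any two decompositions $\sigma_1,\sigma_2$ \emph{with a common lower bound} $\tau\in\D(X,\sqcup)$ have a meet: it computes their meet inside $\D(X,\sqcup)_{\geq\tau}\cong\Pi(\tau)$ and then observes that this meet can be written as $\{x\wedge y : x\in\sigma_1,\ y\in\sigma_2\}\setminus\{0\}$ (meets taken in $\Sub(X,\sqcup)$), a description independent of $\tau$. Adding $\hat 0$ then gives all meets, and joins follow from the existence of a maximum. You instead prove that \emph{joins} already exist in $\D(X,\sqcup)$ by working in $\D(X,\sqcup)_{\geq\sigma}\cong\Pi(\sigma)$ for one of the two given elements and showing that the set of common upper bounds is meet-closed there; meets are then deduced from finite height via the standard maximal-lower-bound argument.

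Both approaches rest on exactly the same structural inputs: \ref{prop:decompintervals} and the Boolean-lattice clause in \ref{def:decompositionsAndPD} (which gives the identity $\bigvee_{s\in A}s\wedge\bigvee_{s\in B}s=\bigvee_{s\in A\cap B}s$ in $\Sub(X,\sqcup)$). The paper's route has the advantage of producing an explicit formula for the meet; your route avoids having to check that the candidate meet is independent of the auxiliary lower bound $\tau$, since you anchor the partition-lattice calculation at $\sigma$ itself rather than at some third element.
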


\begin{proof}
  We show that two $\sqcup$-decompositions $\sigma_1,\sigma_2$, bounded below by some element $\tau\in \D(X,\sqcup)$, have a meet in $\D(X,\sqcup)$.
  Indeed, since $\sigma_1,\sigma_2\in \D(X,\sqcup)_{\geq \tau} \cong \Pi(\tau)$ is a partition lattice, they have a meet in this poset, which we denote by $\sigma$.
  Let $e:2^{\tau} \to \S(X,\sqcup)$ be the lattice embedding of the Boolean lattice on $\tau$.
  If $x\in \sigma_1$ and $y\in \sigma_2$, then there exist unique sets $\tau_x,\tau_y\subseteq \tau$ such that $x = \bigvee_{a\in \tau_x} a$ and $y = \bigvee_{b\in \tau_y} b$, where the join is taken in the poset $\Sub(X,\sqcup)$.
  Hence $e(\tau_x) = x$, $e(\tau_y) = y$ and $e(\tau_x\cap \tau_y) = x\wedge y$.
  The decomposition $\sigma$ can be described as follows:
  \[ \sigma = \{e(\tau_x\cap \tau_y) \tq x\in \sigma_1,y\in \sigma_2\} \setminus \{0\}.\]
  But then, by definition of $e$,
  \[ \sigma = \{ x\wedge y \tq x\in \sigma_1,y\in \sigma_2\} \setminus \{0\},\]
  where the meet is taken in $\Sub(X,\sqcup)$.
  This description of $\sigma$ does not depend on $\tau$, and clearly $\sigma\geq \tau$.
  Therefore, $\sigma_1\wedge \sigma_2$ exists in $\D(X,\sqcup)$ and it is equal to the element $\sigma$ described above.
\end{proof}

The poset $\PD(X,\sqcup)$ is not a lattice in general, as it is seen in the poset of \ref{fig:weakDecompositions} in \ref{ex:weakDefinitionDecomposition}.
The next example also shows that weak decomposition and weak partial decomposition posets might not be lattices.
Here we invoke \ref{ex:latticeAsCategoryApproach}.

\begin{example}
\label{ex:nonLatticePD}
Let $G = S_4$ be the symmetric group on $4$ letters.
Then for $\S(G)$, the lattice of subgroups of $G$, we have that $\D_w(\S(G))$ is not a lattice after adding a minimal element, and hence nor is $\PD_w(\S(G))$.
This was shown by using GAP.
To be more concrete, we have weak decompositions
\[  \{ \gen{(3\,4)}, \gen{(2\,3)}, \gen{(1\,2)} \}, \quad \text{and} \quad \{ \gen{(3\,4)}, \gen{(2\,3)}, \gen{(1\,4)(2\,3)} \}, \]
for which their join in $\D_w(\S(G))$ does not exist since the elements above both of them in $\D_w(\S(G))$ are
\[ \{ G\},\,\, \{ \gen{ (3\,4), (1\,2)}, \gen{(2\,3), (1\,4)}\} ,\,\, \{ \gen{(1\,4)(2\,3), (1\,3)(2\,4), (3\,4)}, \gen{(2\,3)}\}. \]
Moreover, $\PD(\S(G))$ is not a lattice either since the join fails for the partial decompositions
\[ \{ \gen{(3\,4)} \}, \quad \text{and} \quad \{ \gen{(2\,3)}, \gen{(1\,2)} \}.\]
\end{example}

In view of our definition of decompositions in terms of heights, it will be useful to work with complements of elements which also have the correct height.

\begin{definition}
  \label{def:hcomplementedForPosets}
  Let $\S$ be a poset.
  \begin{enumerate}
    \item[$\bullet$] Let $y\in \S$ and suppose that $\S_{\leq y}$ is bounded of finite height.
      We say that $x\in \S_{\leq y}$ is $h$-complemented in $y$ if there exists $x'\leq y$ such that $x'\wedge x = 0$ and $x\vee x' = y$ (meet and join taken in $\S$) and  $h(x) + h(x') = h(y)$.
  \end{enumerate}
  \noindent
  Suppose now that $\S$ is bounded of finite height.
  \begin{enumerate}
    \item[$\bullet$] An element $x\in \S$ is called $h$-complemented if $x$ is $h$-complemented in $1_{\S}$.
      We denote by $\S_h$ the subposet of $h$-complemented elements.
    \item[$\bullet$] We say that $\S$ is $h$-complemented if $\S = \S_h$.

    \item[$\bullet$] If for every $x\leq y$ in $\S$ there exists a (unique) $h$-complement of $x$ in $y$,   
      then we say that $\S$ is (uniquely) downward $h$-complemented.
  \end{enumerate}
\end{definition}

Next, we define the analogous concepts in posets $\Sub(X,\sqcup)$ by taking into account the monoidal product $\sqcup$.
These concepts will play an important role later in \ref{sec:homotopyposets}.
For instance, the unique downward $(\sqcup,h)$-complementation property, as defined below, will arise in examples like posets of non-degenerate subspaces with the orthogonal complementation, or the Boolean lattice with the usual set complementation.
Recall that we are assuming that $\Sub(X,\sqcup)$ has finite height.

\begin{definition}
  \label{def:hcomplementedForObjects}
  Let $X$ and $\sqcup$ be as above.
  \begin{enumerate}
    \item[$\bullet$] If $z\leq y\in \Sub(X,\sqcup)$, a $\sch$-complement of $z$ in $y$ is an $h$-complement    $w$ of $z$ in $y$ such that $\{z,w\}$ is $\sqcup$-compatible in $\Sub(X,\sqcup)$.
      We denote by $\Sub_h(X,\sqcup)$ the subposet of $\Sub(X,\sqcup)$ whose elements have a $\sch$-complement in $\Sub(X,\sqcup)$.

    \item[$\bullet$] We say that $\Sub(X,\sqcup)$ is $\sch$-complemented if $\Sub_h(X,\sqcup) = \Sub(X,  
      \sqcup)$.

    \item[$\bullet$] We say that $\Sub(X,\sqcup)$ is (uniquely) downward $\sch$-complemented if for all   
      $z\leq y \in \Sub(X,\sqcup)$ there exists a (unique) $\sch$-complement of $z$ in $y$.
      In the case of uniqueness, we denote by $z^{\perp_y}$ the unique $\sch$-complement of $z$ in $y$, and by $z^\perp$ the unique $\sch$-complement of $z$ in $\Sub(X,\sqcup)$.
  \end{enumerate}
\end{definition}

Note that $\S_h$ (resp. $\Sub_h(X,\sqcup)$) contains the unique minimum element and those elements which belong to some $\sigma \in \D(\S)$ (reps. $\sigma\in \D(X,\sqcup)$).

\bigskip
\noindent
Even if $\Sub(X,\sqcup)$ is uniquely $\sch$-complemented, the complements in the purely poset-theoretic sense need not be unique.
For example, for the poset of non-degenerate subspaces of a finite-dimensional vector space equipped with a non-degenerate sesquilinear form and $\sqcup$ the orthogonal sum, complements and even $h$-complements of non-degenerate subspaces are not unique.
However, the poset is uniquely downward $\sch$-complemented, i.e., $\sch$-complements are unique.
See \ref{sub:formedSpaces} for more details.
\bigskip

The following lemma is an immediate consequence of the definitions.

\begin{lemma}
\label{lm:PhiSpanMap}
The map $\Phi : \PD(X,\sqcup) \to \Sub(X,\sqcup)$ sending $\sigma$
to $\Phi(\sigma) = \bigvee_{x\in\sigma}x$ is an order-preserving map with $\Phi^{-1}(0) = \{ \emptyset \}$ and  $\Phi^{-1}(1) = \D(X,\sqcup)$.
Its image is the poset $\Sub_h(X,\sqcup)$.
Thus $\Phi$ is surjective if and only if $\Sub(X,\sqcup)$ is $\sch$-complemented.
\end{lemma}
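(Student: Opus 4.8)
The statement bundles four claims about the map $\Phi$, and each follows rather directly from the definitions, so the plan is to verify them in turn. First I would check that $\Phi$ is well-defined and order-preserving: for $\sigma\in\PD(X,\sqcup)$ the join $\bigvee_{x\in\sigma}x$ exists in $\Sub(X,\sqcup)$ by \ref{def:sqcupCompatible}(2) applied to a $\sqcup$-decomposition containing $\sigma$ (together with the fact that a partial decomposition is by definition a subset of a decomposition), and monotonicity is immediate since $\sigma\leq\tau$ in the refinement order means every $x\in\sigma$ lies below some $y\in\tau$, hence $\bigvee_{x\in\sigma}x\leq\bigvee_{y\in\tau}y$.

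Next, the two fiber computations. For $\Phi^{-1}(0)$: the empty set maps to the empty join, which by the convention fixed at the start of \ref{sec:definitionsposet} is $0_{\Sub(X,\sqcup)}$; conversely, if $\sigma\neq\emptyset$ then any $x\in\sigma$ satisfies $x\leq\bigvee_{y\in\sigma}y$ and $x\neq 0$ (since $0$ cannot belong to a decomposition, by the remark after \ref{def:decompositionsAndPD}, hence not to a partial decomposition), so $\Phi(\sigma)\neq 0$. For $\Phi^{-1}(1)$: by definition a $\sqcup$-decomposition $\sigma$ has $\bigvee_{x\in\sigma}x = 1_{\Sub(X,\sqcup)}$ (condition (2) of \ref{def:decompositionsAndPD}, the Boolean lattice has maximal element $1_{\S}$), so $\D(X,\sqcup)\subseteq\Phi^{-1}(1)$. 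Conversely, suppose $\sigma\in\PD(X,\sqcup)$ with $\Phi(\sigma)=1$. Then $\sigma$ is a subset of some $\tau\in\D(X,\sqcup)$; I would argue $\sigma=\tau$ using \ref{lm:upperIntervalsDecomp} / \ref{prop:decompintervals}: the coarsenings of $\tau$ correspond to partitions of $\tau$, and $\sigma$ being a $\sqcup$-compatible subset of $\tau$ whose join is all of $1$ forces $\sigma$ to already be a full decomposition — the height condition $h(\bigvee_{x\in\sigma}x)=\sum_{x\in\sigma}h(x)$ together with $\bigvee_{x\in\sigma}x=1$ gives $\sum_{x\in\sigma}h(x)=n=\sum_{x\in\tau}h(x)$, and since $\sigma\subseteq\tau$ with all heights positive this forces $\sigma=\tau$, so $\sigma\in\D(X,\sqcup)$.

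Finally, the image statement. By definition $\Sub_h(X,\sqcup)$ consists of the elements of $\Sub(X,\sqcup)$ admitting a $\sch$-complement in $\Sub(X,\sqcup)$ (\ref{def:hcomplementedForObjects}), and these are precisely the $z$ that occur in some $\sqcup$-decomposition: if $z$ has $\sch$-complement $w$, then $\{z,w\}$ is a $\sqcup$-decomposition (it is $\sqcup$-compatible by definition of $\sch$-complement, spans a Boolean lattice on $2$ elements with top $1$ since $z\wedge w=0$, $z\vee w=1$, and satisfies the height condition $h(z)+h(w)=h(1)$), so $z=\Phi(\{z\})$ after noting $\{z\}\in\PD(X,\sqcup)$; conversely $\Phi(\sigma)=\bigvee_{x\in\sigma}x$ for $\sigma\subseteq\tau\in\D(X,\sqcup)$, and $\bigvee_{x\in\sigma}x$ has $\sch$-complement $\bigvee_{x\in\tau\setminus\sigma}x$ by the Boolean lattice structure and $\sqcup$-compatibility of $\tau$ (using \ref{def:sqcupCompatible} to extract the needed canonical monomorphisms and commuting diagram for the pair $\{\bigvee_{x\in\sigma}x,\bigvee_{x\in\tau\setminus\sigma}x\}$). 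Hence $\Im\Phi=\Sub_h(X,\sqcup)$, and the last sentence is then just the definition of $\sch$-complemented ($\Sub_h(X,\sqcup)=\Sub(X,\sqcup)$). The only mildly delicate point — the ``main obstacle'' such as it is — is the bookkeeping in this last step: one must check that coarsening a $\sqcup$-compatible decomposition $\tau$ by grouping its blocks yields a $\sqcup$-compatible family, i.e. that the canonical monomorphisms and the coherence diagram of \ref{def:sqcupCompatible} are inherited under the induced maps $\bigsqcup_{i\in I}X_i\to\bigsqcup_{i\in I'}X_i$; this is where the naturality and associativity/commutativity hypotheses on $\sqcup$ in the definition of an ISM-category are used, and it should be spelled out carefully even though no computation is involved.
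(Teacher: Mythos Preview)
Your proposal is correct and follows exactly the route the paper intends: the paper simply declares the lemma ``an immediate consequence of the definitions'' without further argument, and you have unpacked those definitions carefully (well-definedness via \ref{def:sqcupCompatible}, the height argument for $\Phi^{-1}(1)$, and the two-block coarsening for the image). The only minor omission is the trivial edge cases $z=0$ and $z=1$ in the image computation, which are handled by $\Phi(\emptyset)=0$ and $\Phi(\{1\})=1$.
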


Similarly, for a bounded poset $\S$ of finite height and $x\in \S$, we have that $\{x\}$ is a partial decomposition of $\S$ if and only if $x\neq 0$ and it is $h$-complemented in $\S$.

We have the following necessary condition for the lattice property on $\PD(X,\sqcup)$.

\begin{lemma}
\label{lm:latticePDimpliesLatticeSh}
If $\PD(X,\sqcup)$ is a lattice, then $\Sub_h(X,\sqcup)$ is also a lattice.
\end{lemma}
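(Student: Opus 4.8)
The plan is to realise $\Sub_h(X,\sqcup)$ as a retract of $\PD(X,\sqcup)$ in the category of posets, and then to use the elementary fact that a poset-retract of a lattice is a lattice (with meets and joins transported through the retraction maps, not necessarily computed inside the ambient poset).

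First I would build the retraction. By \ref{lm:PhiSpanMap} the map $\Phi$ is order-preserving and its image is exactly $\Sub_h(X,\sqcup)$, so it restricts to an order-preserving surjection $\Phi\colon \PD(X,\sqcup)\to \Sub_h(X,\sqcup)$. For a section, set $s(0)=\emptyset$ and $s(z)=\{z\}$ for $0\neq z\in\Sub_h(X,\sqcup)$. This lands in $\PD(X,\sqcup)$: if $z$ has a $\sch$-complement $w$, then $\{z,w\}$ satisfies the conditions of \ref{def:decompositionsAndPD} (the subposet $\{0,z,w,1\}$ is Boolean with top $1$, and $h(z)+h(w)=h(1)$) and is $\sqcup$-compatible, hence $\{z,w\}\in\D(X,\sqcup)$ and so $\{z\}\in\PD(X,\sqcup)$; alternatively this is the singleton criterion recorded just after \ref{lm:PhiSpanMap}. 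The map $s$ is order-preserving, since $\{z\}\leq\{z'\}$ in $\PD(X,\sqcup)$ holds exactly when $z\leq z'$ and $\emptyset$ is the least element of $\PD(X,\sqcup)$; and $\Phi\circ s=\id$ is immediate.

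Then I would transport meets and joins. Fix $z_1,z_2\in\Sub_h(X,\sqcup)$; since $\PD(X,\sqcup)$ is a lattice, $\rho:=s(z_1)\vee s(z_2)$ and $\mu:=s(z_1)\wedge s(z_2)$ exist in $\PD(X,\sqcup)$, and $\Phi(\rho),\Phi(\mu)\in\Sub_h(X,\sqcup)$ by \ref{lm:PhiSpanMap}. I claim $\Phi(\rho)=z_1\vee z_2$ in $\Sub_h(X,\sqcup)$: from $\rho\geq s(z_i)$, together with $\Phi$ order-preserving and $\Phi\circ s=\id$, we get $\Phi(\rho)\geq z_i$, so $\Phi(\rho)$ is an upper bound of $\{z_1,z_2\}$; and if $w\in\Sub_h(X,\sqcup)$ with $w\geq z_1,z_2$, then $s(w)\geq s(z_1),s(z_2)$, hence $s(w)\geq\rho$, hence $w=\Phi(s(w))\geq\Phi(\rho)$. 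The dual argument (reverse all inequalities) gives $\Phi(\mu)=z_1\wedge z_2$ in $\Sub_h(X,\sqcup)$. Since $\Sub_h(X,\sqcup)$ is nonempty and has all binary meets and joins, it is a lattice.

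The argument is essentially formal, so I do not expect a real obstacle; the only step requiring genuine care is verifying that $s$ really takes values in $\PD(X,\sqcup)$, i.e.\ that a $\sch$-complemented element yields a genuine singleton partial $\sqcup$-decomposition, which forces one to unwind \ref{def:hcomplementedForObjects}, \ref{def:decompositionsAndPD} and \ref{def:decompositionsCategory}. It is also worth noting (though not needed for the statement) that the meet produced this way satisfies only $\Phi(\mu)\leq z_1\wedge z_2$ with the right-hand meet taken in $\Sub(X,\sqcup)$, and may be strictly smaller, so the lattice structure on $\Sub_h(X,\sqcup)$ is in general not inherited from $\Sub(X,\sqcup)$.
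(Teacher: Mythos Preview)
Your proof is correct and follows essentially the same approach as the paper: both use the order-preserving surjection $\Phi$ together with the singleton section $z\mapsto\{z\}$ to transport lattice operations from $\PD(X,\sqcup)$ to $\Sub_h(X,\sqcup)$. The paper argues only for meets (which suffices since $\Sub_h(X,\sqcup)$ is bounded) and in doing so extracts the extra observation that $\{x\}\wedge\{y\}$ is already a singleton in $\PD(X,\sqcup)$, whereas your packaging via the poset-retraction lemma handles meets and joins uniformly and is slightly more streamlined.
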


\begin{proof}
Suppose that $x,y\in \Sub_h(X,\sqcup)$ have a lower bound $0\neq z\in \Sub_h(X,\sqcup)$.
Then $\{z\}\leq \{x\},\{y\}$ in $\PD(X,\sqcup)$.
Since $\PD(X,\sqcup)$ is a lattice, there exists $\sigma \in \PD(X,\sqcup)$ such that $\sigma = \{x\} \wedge \{y\}$, and therefore $\{z\}\leq \sigma$.
But this implies that $\Phi(\sigma)\leq x,y$, so we must have $\sigma = \{ \Phi(\sigma)\}$.
Hence $z \leq \Phi(\sigma)\in \Sub_h(X,\sqcup)$ and $x\wedge y = \Phi(\sigma)$.
\end{proof}

The converse of this lemma does not hold, as the lattice of \ref{ex:nonLatticePD} shows.

\bigskip

We will work with the following properties.
These are motivated by the question of whether a decomposition of a subobject of $X$ can be extended to a decomposition of $X$.
Essentially, they will give us natural ways of constructing new partial decompositions and will help us to establish stronger relations among the different posets and complexes, in particular when combined with downward $\sch$-complementation.
In well-known scenarios (cf. \ref{sec:examples}), these properties are usually easy to verify, or else they are consequences of known theorems.

\begin{definition}
\label{def:propsEXCM}
Let $X$ and $\sqcup$ be as above.
We define the following properties:

\smallskip

\begin{center}

\begin{tabular}{cc}
    \textbf{(EX)} & \pbox{12cm}{If $y \in \sigma \in \PD(X,\sqcup)$ and $\tau\in \D( \Sub(X,\sqcup)_{\leq y})$ is $\sqcup$-compatible in $\Sub(X,\sqcup)$, then $\left( \sigma \setminus \{y\} \right)\cup \tau \in \PD(X,\sqcup)$.}
\end{tabular}

\bigskip

\begin{tabular}{cc}
    \textbf{(CM)} & \pbox{12cm}{If $\{x,y\}, \{x',y'\}\in \D(X,\sqcup)$ are such that $x\leq x'$ and $y'\leq y$, then $x'\wedge y$ exists and $\{x'\wedge y,x,y'\}\setminus\{0\}\in \D(X,\sqcup)$.}
\end{tabular}

\end{center}

\smallskip

Here (EX) stands for the Extension property, and (CM) for the Complementation property.
For instance, we will see later in \ref{sub:vectorspaces} that in the case of vector spaces with the direct sum, properties (EX) and (CM) hold.
\end{definition}

\begin{remark}
\label{rk:CPprop}
Note that in the definition of property (EX), if $\sigma\in \D(X,\sqcup)$ is a decomposition, then $\left( \sigma \setminus \{y\} \right)\cup \tau \in \D(X,\sqcup)$ by height. 
Basically, property (EX) means that any time we take a partial decomposition, we can replace any of its subobjects with a full decomposition of this subobject.
In the vector space example, where partial decompositions of a given vector space $V$ are sets of subspaces in internal direct sum, this means that we can replace any subspace $S$ by an internal direct sum of $S$ and still obtain a partial decomposition of $V$.

On the other hand, in property (CM), $\{x'\wedge y, y'\}\in \D( \Sub(X,\sqcup)_{\leq y})$ since $(x'\wedge y) \vee y'$ exists, it is below $y$, and the height condition $h(x'\wedge y) + h(y') = h(1) - h(x) = h(y)$ guarantees $(x'\wedge y) \vee y' = y$.
Roughly, property (CM) says that given two decompositions of size two as above, we can easily construct their meet in the decomposition poset by just taking the meet (in the original poset) of the ``largest" elements.
For instance, this property clearly holds in the vector space example by the dimension theorem. See \ref{sub:vectorspaces} for more details.
\end{remark}

In general, property (EX) might not hold (not even for lattices).

\begin{example}
\begin{figure}
\centering
\scalebox{0.9}{\tikzpicture[x=+4143sp, y=+4143sp]
\newdimen\XFigu\XFigu4143sp
\newdimen\XFigu\XFigu4143sp
\clip(615,-5955) rectangle (4837,-2010);
\tikzset{inner sep=+0pt, outer sep=+0pt}
\pgfsetlinewidth{+7.5\XFigu}
\pgfsetcolor{black}
\filldraw  (900,-4050) circle [radius=+50];
\filldraw  (2700,-4005) circle [radius=+50];
\filldraw  (4050,-3150) circle [radius=+50];
\filldraw  (2700,-2250) circle [radius=+50];
\filldraw  (4545,-4005) circle [radius=+50];
\pgfsetlinewidth{+15\XFigu}
\filldraw  (1350,-3150) circle [radius=+50];
\pgfsetlinewidth{+7.5\XFigu}
\filldraw  (2700,-4950) circle [radius=+50];
\pgfsetlinewidth{+15\XFigu}
\draw (1350,-3150)--(2700,-4005);
\draw (2700,-2250)--(1350,-3150);
\draw (2700,-2250)--(4050,-3150);
\draw (2700,-4005)--(4050,-3150);
\draw (4050,-3150)--(4545,-4005);
\draw (1350,-3150)--(900,-4050);
\draw (2700,-4005)--(2700,-4950);
\draw (4545,-4005)--(2700,-4950);
\draw (900,-4050)--(2700,-4950);
\pgftext[base,left,at=\pgfqpointxy{4650}{-4050}] {\fontsize{12}{14.4}$c$}
\pgftext[base,left,at=\pgfqpointxy{4150}{-3050}] {\fontsize{12}{14.4}$e$}
\pgftext[base,left,at=\pgfqpointxy{1150}{-3050}] {\fontsize{12}{14.4}$d$}
\pgftext[base,left,at=\pgfqpointxy{2800}{-4150}] {\fontsize{12}{14.4}$b$}
\pgftext[base,left,at=\pgfqpointxy{2800}{-2160}] {\fontsize{12}{14.4}$1$}
\pgftext[base,left,at=\pgfqpointxy{2800}{-5080}] {\fontsize{12}{14.4}$0$}
\pgftext[base,left,at=\pgfqpointxy{680}{-4050}] {\fontsize{12}{14.4}$a$}
\endtikzpicture}
\vskip-1cm
\caption{Lattice failing properties (EX) and (CM)}
\label{fig:decomp}
\end{figure}
The poset $\S$ with Hasse diagram as in \ref{fig:decomp}
is easily seen to be a lattice. Let $X = \S$ and $\sqcup = \vee$ be the join operation.
Then $y = e \in \sigma = \{a,e\} \in \D(X,\sqcup) \subseteq \PD(X,\sqcup)$. We also have that
$\tau = \{b,c\} \in \D(X,\sqcup)_{\leq e})$, but 
$(\sigma \setminus \{y\}) \cup \tau = \{a,b,c\} \not\in \PD(X,\sqcup)$.
Thus $(X,\sqcup)$ does not satisfy (EX).

The property (CM) fails in this example, too. We have $\{x=a,y=e\},\{x'=d,y'=c\}\in \D(X,\sqcup)$ with $x=a\leq d=x'$ and $y'=c\leq e=y$, but 
$\{x'\wedge y,x,y'\} = \{d\wedge e, a, c\} = \{ b,a,c\}\notin \D(X,\sqcup)$.
\end{example}

We will see later that property (CM) fails in the case when $X$ is a finitely generated free group in the category of groups with $\sqcup$ being the free product.
See \ref{exa:freeGroupsFailCP}.

\begin{corollary}
\label{coro:uniquenessEPandCP}
If $\Sub(X,\sqcup)$ is uniquely downward $\sch$-complemented and property (EX) holds, then the following hold:
\begin{enumerate}
    \item the map $x\mapsto x^\perp$ is an anti-isomorphism of $\Sub(X,\sqcup)$;
    \item if $x\leq y\leq z$, then $\{x,x^{\perp_y},y^{\perp_z},z^\perp\}\setminus\{0\} \in \D(X,\sqcup)$ and $x^{\perp_y}\vee y^{\perp_z} = x^{\perp_z}$;
    \item property (CM) is satisfied.
\end{enumerate}
\end{corollary}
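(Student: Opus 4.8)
The plan is to prove part 2 first, since parts~1 and 3 will follow from it with little extra work, after recording a symmetry observation about $\sch$-complements. The observation: the relation ``$w$ is a $\sch$-complement of $z$ in $y$'' is symmetric in $z$ and $w$, because each defining condition --- $z,w\le y$, $z\wedge w=0$, $z\vee w=y$, $h(z)+h(w)=h(y)$, and $\{z,w\}$ $\sqcup$-compatible in $\Sub(X,\sqcup)$ --- is unchanged under interchanging $z$ and $w$ (the last because $\sqcup$-compatibility of a set, as in \ref{def:sqcupCompatible}, does not depend on the labelling of its elements). With the uniqueness hypothesis this gives $(z^{\perp_y})^{\perp_y}=z$ for all $z\le y$, in particular $(x^\perp)^\perp=x$. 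I would also use two routine facts: a subset of a $\sqcup$-compatible set is $\sqcup$-compatible (restrict the index sets in \ref{def:sqcupCompatible}), and inside any $\sigma\in\D(X,\sqcup)$ the suprema and infima of the spanned Boolean lattice agree with those of $\Sub(X,\sqcup)$, as required in \ref{def:decompositionsAndPD}.

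For part 2, given $x\le y\le z$, I would start from $\{z,z^\perp\}\setminus\{0\}\in\D(X,\sqcup)$ and apply property (EX) twice. Since $\{y,y^{\perp_z}\}\setminus\{0\}$ is a decomposition of $\Sub(X,\sqcup)_{\le z}$ that is $\sqcup$-compatible in $\Sub(X,\sqcup)$, property (EX) --- upgraded to a full decomposition by the height count, as in \ref{rk:CPprop} --- replaces $z$ and yields $\{z^\perp,y,y^{\perp_z}\}\setminus\{0\}\in\D(X,\sqcup)$; applying the same step to $y$ with $\{x,x^{\perp_y}\}\setminus\{0\}$ produces $\{x,x^{\perp_y},y^{\perp_z},z^\perp\}\setminus\{0\}\in\D(X,\sqcup)$, which is the first assertion. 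For the join identity, set $w=x^{\perp_y}\vee y^{\perp_z}$ and coarsen this decomposition --- every coarsening is again in $\D(X,\sqcup)$ by \ref{prop:decompintervals}, hence $\sqcup$-compatible --- to $\{x,w,z^\perp\}\setminus\{0\}$. In its Boolean lattice $x\vee w$ is the complement of $z^\perp$, so $\{x\vee w,z^\perp\}\in\D(X,\sqcup)$ exhibits $x\vee w$ as a $\sch$-complement of $z^\perp$ in $X$; by the symmetry observation $z$ is also one, and uniqueness forces $x\vee w=z$. Then $\{x,w\}$ is $\sqcup$-compatible (a subset of a $\sqcup$-compatible set), with $x\wedge w=0$, $x\vee w=z$ and $h(x)+h(w)=h(z)$, so $w$ is a $\sch$-complement of $x$ in $z$; uniqueness gives $x^{\perp_y}\vee y^{\perp_z}=w=x^{\perp_z}$.

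Parts 1 and 3 are then short. Taking $z=X$ in part 2 gives $x^{\perp_y}\vee y^\perp=x^\perp$ for all $x\le y$, hence $y^\perp\le x^\perp$; thus $x\mapsto x^\perp$ is order-reversing, and it is an involution by the symmetry observation, so it is an anti-automorphism of $\Sub(X,\sqcup)$. For part 3, from $\{x,y\},\{x',y'\}\in\D(X,\sqcup)$ with $x\le x'$ and $y'\le y$ one reads off $y=x^\perp$, $x=y^\perp$ and $x'=(y')^\perp$; applying part 2 to the chain $y'\le y\le X$ yields $\{y',(y')^{\perp_y},x\}\setminus\{0\}\in\D(X,\sqcup)$ together with $(y')^{\perp_y}\vee x=x'$ and $y'\vee(y')^{\perp_y}=y$. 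Inside the Boolean lattice of this decomposition, $x'\wedge y$ exists and equals the common atom $(y')^{\perp_y}$, so $\{x'\wedge y,x,y'\}\setminus\{0\}=\{(y')^{\perp_y},x,y'\}\setminus\{0\}\in\D(X,\sqcup)$, which is exactly (CM).

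I expect the main obstacle to be part 2: performing the two applications of (EX) while keeping straight the ``$\setminus\{0\}$'' bookkeeping and the $\sqcup$-compatibility of the inserted pieces, and then identifying the coarsened join $x^{\perp_y}\vee y^{\perp_z}$ with $x^{\perp_z}$ via the symmetry of $\sch$-complements and the uniqueness hypothesis. Everything else --- the anti-isomorphism in part 1 and the complementation property in part 3 --- is formal once part 2 is in place.
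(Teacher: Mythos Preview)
Your proof is correct. Parts~1 and~2 proceed essentially as in the paper: two applications of (EX) starting from $\{z,z^\perp\}\setminus\{0\}$, followed by an identification of $x^{\perp_y}\vee y^{\perp_z}$ with $x^{\perp_z}$ via uniqueness. You are somewhat more explicit than the paper in recording the symmetry of $\sch$-complementation and using it to see that $\perp$ is an involution.

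Part~3 is where you diverge, and your route is cleaner. The paper proves (CM) by first reducing via uniqueness to the pair $\{x,x^\perp\},\{x',x'^\perp\}$ with $x\le x'$, and then shows $x'\wedge x^\perp$ exists and equals $x^{\perp_{x'}}$ by an order-theoretic argument: for any common lower bound $z$ of $x'$ and $x^\perp$ it uses the anti-isomorphism from part~1 to conclude $z^\perp\ge x\vee x'^\perp$, and then complements back to get $z\le x^{\perp_{x'}}$. You instead apply part~2 directly to the chain $y'\le y\le X$, obtaining the decomposition $\{y',(y')^{\perp_y},x\}\setminus\{0\}$ in which both $x'$ and $y$ appear as joins of atoms, and then read off $x'\wedge y=(y')^{\perp_y}$ from the Boolean lattice structure guaranteed by \ref{def:decompositionsAndPD}(2) (the same fact the paper uses in the proof of \ref{prop:latticeDecompositions}). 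This avoids the detour through part~1 and makes the verification of (CM) a one-line consequence of part~2.
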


\begin{proof}
We first prove item 2.
The assertion is trivial for $z = 0$, so we assume that $z\neq 0$ and let $x\leq y\leq z$ and $\sigma = \{z,z^\perp\}\setminus\{0\} $.
Clearly $\sigma \in \D(X,\sqcup)$ and $\{y,y^{\perp_z}\}\setminus\{0\} \in \D(\Sub(X,\sqcup)_{\leq z})$ is $\sqcup$-compatible in $\Sub(X,\sqcup)$ by definition.
Hence, by property (EX), $\{z^\perp,y,y^{\perp_z}\}\setminus\{0\} \in \D(X,\sqcup)$.
Similarly, $\{x, x^{\perp_y}\}\setminus\{0\}  \in \D(\Sub(X,\sqcup)_{\leq y})$ and it is $\sqcup$-compatible in $\Sub(X,\sqcup)$, so $\{z^\perp,x, x^{\perp_y},y^{\perp_z}\}\setminus\{0\} \in \D(X,\sqcup)$.
Then $z = x\vee x^{\perp_y} \vee y^{\perp_z}$, and by uniqueness we conclude that $x^{\perp_z} =  x^{\perp_y}\vee y^{\perp_z}$.
This establishes item 2.
Moreover, by putting $z=1$ the maximal element, we see that $x^\perp = x^{\perp_y}\vee y^\perp$, i.e., $x^\perp \geq y^\perp$.
Thus, the map $x\mapsto x^\perp$ is an anti-isomorphism, which proves item 1.

For item 3, we can take decompositions $\{x,x^\perp\}$ and $\{x',x'^\perp\}$ where $x\leq x'$ (by uniqueness and item 1).
We show that $x'\wedge x^\perp = x^{\perp_{x'}}$.
Let $z\leq x^\perp,x'$.
By item 1, $z^\perp \geq x,x'^\perp$.
Since $\{x,x^{\perp_{x'}},x'^\perp\}\setminus\{0\}\in\D(X,\sqcup)$ by item 2, $x\vee x'^\perp$ exists and so $z^\perp \geq x\vee x'^\perp$.
Taking complements again by item 1 we see that $z\leq (x\vee x'^\perp)^\perp$, and the latter is equal to $x^{\perp_{x'}}$ by uniqueness, that is $z\leq x^{\perp_{x'}}$.
Therefore $x^\perp \wedge x'$ exists and it is $x^{\perp_{x'}}$.
Thus property (CM) holds.
\end{proof}

\section{Simplicial complexes and posets derived from decompositions}
\label{sec:derivedposets}

In this section, we define additional posets and complexes related to decompositions, most of them inspired by analogous constructions in specific cases.
We continue with our usual assumptions, so $X$ is an object of an ISM-category $\C$ with product $\sqcup$ and such that $\Sub(X,\sqcup)$ has finite height.
At the end of this section, the reader can find a table with the names of the combinatorial structures defined here, except for the ordered versions. See \ref{tab:objects}.

\subsection{Frames and partial bases}

\begin{definition}
[Frame]
  \label{def:frames} 
A \textit{partial frame} of $X$ is a $\sqcup$-partial decomposition $\sigma \in \PD(X,\sqcup)$ where
    all elements of $\sigma$ are atoms of $\Sub(X,\sqcup)$. We write $\PF(X,\sqcup)$ for the subposet of 
    $\PD(X,\sqcup)$ consisting of all partial frames of $X$. 

A partial frame $\sigma$ is called a \textit{full frame} of $X$ if $\sigma \in \D(X,\sqcup)$. 
    We write $\F(X,\sqcup)$ for the subposet of $\PF(X,\sqcup)$ consisting of partial frames
    contained in full frames.
\end{definition}

The following property follows immediately from the definition.

\begin{lemma} 
The posets $\F(X,\sqcup)\subseteq \PF(X,\sqcup)$ are lower intervals of $\PD(X,\sqcup)$.
Moreover, the refinement ordering $\leq$ in $\F(X,\sqcup)$ and $\PF(X,\sqcup)$ coincides with the inclusion ordering $\subseteq$, and hence these posets are simplicial complexes.  
\end{lemma}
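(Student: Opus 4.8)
The plan is to verify the three claims in sequence, all of which follow from unwinding the definition of a partial frame together with \ref{rk:containmentFull} and the structure of $\PD(X,\sqcup)$.

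First I would show that $\F(X,\sqcup)$ and $\PF(X,\sqcup)$ are lower intervals of $\PD(X,\sqcup)$, i.e., downward-closed subposets. For $\PF(X,\sqcup)$: if $\tau \leq \sigma$ in $\PD(X,\sqcup)$ and $\sigma \in \PF(X,\sqcup)$, then every $x \in \tau$ satisfies $x \leq y$ for some $y \in \sigma$; but $y$ is an atom of $\Sub(X,\sqcup)$, so $h(y) = 1$, and since $x \neq 0$ (as $0$ cannot belong to a partial decomposition) and $x \leq y$, we get $x = y$ is also an atom. Hence $\tau \in \PF(X,\sqcup)$. Moreover $\PF(X,\sqcup)$ is nonempty and has $\emptyset$ as minimum, so it is the lower interval $\PD(X,\sqcup)_{\leq \sigma}$ for any maximal partial frame $\sigma$; more cleanly, one observes it is simply the union of all $\PD(X,\sqcup)_{\leq \sigma}$ over full frames $\sigma$ together with isolated partial frames — but the cleanest statement is just that it is downward closed, which is what ``lower interval'' should mean here. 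For $\F(X,\sqcup)$: by definition its elements are partial frames contained in some full frame $\sigma_0 \in \D(X,\sqcup)$; if $\tau \leq \sigma \subseteq \sigma_0$ with $\sigma \in \F(X,\sqcup)$, then each element of $\tau$ equals an element of $\sigma$ (by the atom argument above), so $\tau \subseteq \sigma \subseteq \sigma_0$, whence $\tau \in \F(X,\sqcup)$. So $\F(X,\sqcup) \subseteq \PF(X,\sqcup)$ are both downward-closed in $\PD(X,\sqcup)$.

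Next I would prove that on these subposets the refinement order $\leq$ coincides with inclusion $\subseteq$. One direction is trivial: $\tau \subseteq \sigma$ always implies $\tau \leq \sigma$ (take $y = x$ for each $x \in \tau$). For the converse, suppose $\tau \leq \sigma$ with both in $\PF(X,\sqcup)$. Given $x \in \tau$, pick $y \in \sigma$ with $x \leq y$; since $y$ is an atom and $x \neq 0$, we get $x = y \in \sigma$. Hence $\tau \subseteq \sigma$. Thus $\leq$ and $\subseteq$ agree on $\PF(X,\sqcup)$, and a fortiori on $\F(X,\sqcup)$.

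Finally, since these posets are ordered by inclusion and (by the first step) are downward closed in $\PD(X,\sqcup)$ — in particular closed under taking subsets of their elements — they are abstract simplicial complexes: the vertex set is the collection of atoms occurring in some partial frame, and a finite set of atoms is a face precisely when it lies in $\PF(X,\sqcup)$ (resp.\ $\F(X,\sqcup)$), and every subset of a face is again a face. I do not anticipate a genuine obstacle here; the only mildly delicate point is making sure ``lower interval'' is interpreted as ``downward-closed subposet'' (since $\PD(X,\sqcup)$ has a minimum $\emptyset$ but $\PF(X,\sqcup)$ need not have a single maximal element), and that the atom argument — $x \le y$, $y$ an atom, $x \ne 0 \Rightarrow x = y$ — is applied uniformly, using that $0_{\Sub(X,\sqcup)}$ is never a member of a partial decomposition.
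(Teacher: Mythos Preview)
Your argument is correct and is precisely the verification the paper intends: the authors give no proof, writing only that the statement ``follows immediately from the definition,'' and your unwinding of the atom argument ($x\le y$, $y$ an atom, $x\neq 0 \Rightarrow x=y$) is exactly that immediate check. Your caution about the phrase ``lower interval'' is reasonable---the paper is using it loosely to mean downward-closed subposet (order ideal) rather than $\PD(X,\sqcup)_{\le x}$ for a single $x$---but this is a terminological wrinkle, not a gap in your reasoning.
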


Now we use frames to recover the idea of \textit{bases}.
We propose the following definition for the partial basis complexes, based on inflation complexes.

\begin{definition}
\label{def:inflationComplex}
Let $K$ be a simplicial complex with vertex set $\A$.
For every vertex $x$ of $K$, take a non-empty set $P_x$, and set $P = (P_x)_{x\in \A}$.
The inflation complex $(K,P)$ is the simplicial complex whose vertices are the pairs $(x,a)$, with $a\in P_x$, and simplices are sets of vertices $\{ (x_1,a_1), \ldots, (x_r,a_r)\}$ such that $x_i\neq x_j$ if $i\neq j$ and $\{x_1,\ldots, x_r\}$ is a simplex of $K$.

Write $p_{(K,P)}$ (or simply $p$) for the simplicial map $(K,P)\to K$ obtained by projecting the vertices of $(K,P)$ onto their first variable (the \textit{deflation map}).
\end{definition}

If each $P_{x}$ is finite then $(K,P)$ is an inflation of $K$ in the sense
of \cite[Section 6]{BWW}.
We will see that results analogous to those from \cite{BWW} hold in the general situation, where the sets $P_x$ are not necessarily finite or have the same size.

\begin{definition}
[Partial bases]
    \label{def:vogtmann}
    Let $\A$ be the set of vertices of $\PF(X,\sqcup)$ and
    $(P_x)_{x \in \A}$ be a collection of non-empty sets. 
    A simplex $B$ in the inflation complex $(\PF(X,\sqcup), P)$, is termed a \textit{partial basis} of $X$.
    We say that $B$ extends to a full basis if $p(B)\in \F(X,\sqcup)$.
    Denote by $\PVog(X,\sqcup,P) = (\PF(X,\sqcup), P)$ the \textit{complex of partial bases} with respect to the collection $P$, and by $\Vog(X,\sqcup,P)$ the complex of partial bases that extend to a full basis.
\end{definition}

Intuitively, we think of $P_x$ as a set of suitable bases of the atom $x$.
For example, if $x$ represents a $1$-dimensional vector space in the subspace poset, then $P_x$ can represent the set of non-zero vectors in $x$.
If $x$ represents a $1$-dimensional non-degenerate subspace in the non-degenerate subspace poset of a unitary space, we can take $P_x$ to be the set of normal vectors $v\in x$.

To simplify the notation, we will usually drop the $P$ from the notation and leave the collection implicit.
When the stated results strongly depend on the collection $P$, we will use the notation again.

\subsection{Ordered versions}
We define the ordered versions of the posets attached to $\Sub(X,\sqcup)$.

\begin{definition}
\label{def:orderedVersions}
Let $\T$ be one of the symbols $\D, \PD, \F, \PF, \Vog,\PVog$.
The \textit{ordered poset} $\Ord\T(X,\sqcup)$ consists of distinct-element tuples $(x_1,\ldots,x_r)$, $r\geq 0$, such that $\{x_1,\ldots,x_r\}\in \T(X,\sqcup)$.
The order in $\Ord\T(X,\sqcup)$ is given by order-preserving refinement, that is:
\[ (x_1,\ldots,x_r)\leq (y_1,\ldots, y_s) \quad \Leftrightarrow  \quad \forall \ 1\leq i\leq j\leq r, \ \exists \ k\leq l \tq x_i\leq y_k \text{ and } x_j\leq y_l. \]
We denote by $\Forget_\T:\Ord\T(X,\sqcup)\to \T(X,\sqcup)$ the forgetful poset map.
\end{definition}

\begin{remark} \label{rem:orderedcomplex}
For a simplicial complex $K$, let $\Ord K$ be the complex of injective words on $K$.
That is, $\Ord K$ is a set whose elements are tuples $z = (v_1,\ldots,v_r)$ consisting of distinct vertices of $K$ and such that $F_K(z) = \{v_1,\ldots,v_r\}$ is a simplex of $K$.
Similar to the previous definition, $\Ord K$ is a poset ordered by order-preserving inclusion, and we have a forgetful poset map $F_K:\Ord K\to K$.
However, $\Ord K$ is not a simplicial complex in general.

We see then that the poset $\Ord \T(X,\sqcup)$, for $\T\in \{\F,\PF, \Vog,\PVog\}$, is exactly the complex of injective words over $\T(X,\sqcup)$.
As a consequence, we will be able to invoke results from \cite{JW}. 
\end{remark}

The following example gives us some idea of the structure of the ordered versions:

\begin{example}
Let $X = \GF{2}^2$ be the $2$-dimensional vector space over the field with two elements.
For the monoidal product we take $\sqcup=\oplus$, the direct sum in the 
category of vector spaces over $\GF{2}$.
There are exactly three non-trivial proper subspaces, which we may denote by $U_1, U_2, U_{3}$.
Then $\Sub(X,\sqcup)=\Sub(V)$, the poset of subspaces of $V$, has the following structure:

\begin{figure}
\centering
\begin{tabular}{cc}
\tikzpicture[x=+4143sp, y=+4143sp]
\newdimen\XFigu\XFigu4143sp
\newdimen\XFigu\XFigu4143sp
\filldraw  (5400,0) circle [radius=+45];
\filldraw  (4500,-900) circle [radius=+45];
\filldraw  (5400,-900) circle [radius=+45];
\filldraw  (6300,-900) circle [radius=+45];
\filldraw  (5400,-1800) circle [radius=+45];
\draw (5400,0)--(4500,-900);
\draw (5400,0)--(5400,-900);
\draw (5400,-900)--(5400,-1800);
\draw (5400,0)--(6300,-900);
\draw (6300,-900)--(5400,-1800);
\draw (5400,-1800)--(4500,-900);
\pgftext[base,left,at=\pgfqpointxy{5355}{-2115}] {\fontsize{12}{14.4}$0$}
\pgftext[base,left,at=\pgfqpointxy{4185}{-780}] {\fontsize{12}{14.4}$U_1$}
\pgftext[base,left,at=\pgfqpointxy{5100}{-780}] {\fontsize{12}{14.4}$U_2$}
\pgftext[base,left,at=\pgfqpointxy{6300}{-780}] {\fontsize{12}{14.4}$U_3$}
\pgftext[base,left,at=\pgfqpointxy{5355}{180}] {\fontsize{12}{14.4}$V$}
\endtikzpicture

&

\scalebox{0.85}{
\tikzpicture[x=+4143sp, y=+4143sp]
\newdimen\XFigu\XFigu4143sp
\newdimen\XFigu\XFigu4143sp
\pgfdeclarearrow{
  name = xfiga2,
  parameters = {
    \the\pgfarrowlinewidth \the\pgfarrowlength \the\pgfarrowwidth\ifpgfarrowopen o\fi},
  defaults = {
	  line width=+7.5\XFigu, length=+120\XFigu, width=+60\XFigu},
  setup code = {
    \dimen7 2.6\pgfarrowlength\pgfmathveclen{\the\dimen7}{\the\pgfarrowwidth}
    \dimen7 2\pgfarrowwidth\pgfmathdivide{\pgfmathresult}{\the\dimen7}
    \dimen7 \pgfmathresult\pgfarrowlinewidth
    \pgfarrowssettipend{+\dimen7}
    \pgfarrowssetbackend{+-1.25\pgfarrowlength}
    \dimen9 -\pgfarrowlength\advance\dimen9 by-0.5\pgfarrowlinewidth
    \pgfarrowssetlineend{+\dimen9}
    \dimen9 -\pgfarrowlength\advance\dimen9 by-0.5\pgfarrowlinewidth
    \pgfarrowssetvisualbackend{+\dimen9}
    \pgfarrowshullpoint{+\dimen7}{+0pt}
    \pgfarrowsupperhullpoint{+-1.25\pgfarrowlength}{+0.5\pgfarrowwidth}
    \pgfarrowssavethe\pgfarrowlinewidth
    \pgfarrowssavethe\pgfarrowlength
    \pgfarrowssavethe\pgfarrowwidth
  },
  drawing code = {\pgfsetdash{}{+0pt}
    \ifdim\pgfarrowlinewidth=\pgflinewidth\else\pgfsetlinewidth{+\pgfarrowlinewidth}\fi
    \pgfpathmoveto{\pgfqpoint{-1.25\pgfarrowlength}{-0.5\pgfarrowwidth}}
    \pgfpathlineto{\pgfqpoint{0pt}{0pt}}
    \pgfpathlineto{\pgfqpoint{-1.25\pgfarrowlength}{0.5\pgfarrowwidth}}
    \pgfpathlineto{\pgfqpoint{-\pgfarrowlength}{0pt}}
    \pgfpathclose
    \ifpgfarrowopen\pgfusepathqstroke\else\pgfsetfillcolor{pgfstrokecolor}
	\ifdim\pgfarrowlinewidth>0pt\pgfusepathqfillstroke\else\pgfusepathqfill\fi\fi
  }
}
\definecolor{green1}{rgb}{0,0.56,0}
\definecolor{brown3}{rgb}{0.75,0.38,0}
\tikzset{inner sep=+0pt, outer sep=+0pt}
\pgfsetlinewidth{+7.5\XFigu}
\pgfsetcolor{black}
\filldraw  (1485,-90) circle [radius=+45];
\filldraw  (585,-990) circle [radius=+45];
\filldraw  (1485,-990) circle [radius=+45];
\filldraw  (2385,-990) circle [radius=+45];
\filldraw  (1485,-1890) circle [radius=+45];
\filldraw  (1485,855) circle [radius=+45];
\filldraw  (585,-135) circle [radius=+45];
\filldraw  (2385,-90) circle [radius=+45];
\draw (1485,-90)--(585,-990);
\draw (1485,-990)--(1485,-1890);
\draw (1485,-90)--(2385,-990);
\draw (2385,-990)--(1485,-1890);
\draw (1485,-1890)--(585,-990);
\draw (1485,810)--(1485,-90);
\draw (585,-135)--(1485,-990);
\draw (1485,-990)--(2385,-90);
\draw (2385,-90)--(2385,-990);
\draw (585,-135)--(585,-990);
\draw (585,-135)--(1485,855);
\draw (1485,855)--(2385,-90);
\pgftext[base,left,at=\pgfqpointxy{1440}{-2205}] {\fontsize{12}{14.4}$\emptyset$}
\pgftext[base,left,at=\pgfqpointxy{150}{-1170}] {\fontsize{12}{14.4}$\{U_1\}$}
\pgftext[base,left,at=\pgfqpointxy{1050}{-1170}] {\fontsize{12}{14.4}$\{U_2\}$}
\pgftext[base,left,at=\pgfqpointxy{2430}{-1215}] {\fontsize{12}{14.4}$\{U_3\}$}
\pgftext[base,left,at=\pgfqpointxy{1550}{-90}] 
{\fontsize{12}{14.4}$\{U_1,U_3\}$}
\pgftext[base,left,at=\pgfqpointxy{2500}{-90}] 
{\fontsize{12}{14.4}$\{U_2,U_3\}$}
\pgftext[base,left,at=\pgfqpointxy{-135}{-45}] 
{\fontsize{12}{14.4}$\{U_1,U_2\}$}
\pgftext[base,left,at=\pgfqpointxy{1300}{990}] 
{\fontsize{12}{14.4}$\{V\}$}
\endtikzpicture%
}
\end{tabular}

\bigskip

\scalebox{0.85}{
\tikzpicture[x=+4143sp, y=+4143sp]
\newdimen\XFigu\XFigu4143sp
\newdimen\XFigu\XFigu4143sp
\pgfdeclarearrow{
  name = xfiga2,
  parameters = {
    \the\pgfarrowlinewidth \the\pgfarrowlength \the\pgfarrowwidth\ifpgfarrowopen o\fi},
  defaults = {
	  line width=+7.5\XFigu, length=+120\XFigu, width=+60\XFigu},
  setup code = {
    \dimen7 2.6\pgfarrowlength\pgfmathveclen{\the\dimen7}{\the\pgfarrowwidth}
    \dimen7 2\pgfarrowwidth\pgfmathdivide{\pgfmathresult}{\the\dimen7}
    \dimen7 \pgfmathresult\pgfarrowlinewidth
    \pgfarrowssettipend{+\dimen7}
    \pgfarrowssetbackend{+-1.25\pgfarrowlength}
    \dimen9 -\pgfarrowlength\advance\dimen9 by-0.5\pgfarrowlinewidth
    \pgfarrowssetlineend{+\dimen9}
    \dimen9 -\pgfarrowlength\advance\dimen9 by-0.5\pgfarrowlinewidth
    \pgfarrowssetvisualbackend{+\dimen9}
    \pgfarrowshullpoint{+\dimen7}{+0pt}
    \pgfarrowsupperhullpoint{+-1.25\pgfarrowlength}{+0.5\pgfarrowwidth}
    \pgfarrowssavethe\pgfarrowlinewidth
    \pgfarrowssavethe\pgfarrowlength
    \pgfarrowssavethe\pgfarrowwidth
  },
  drawing code = {\pgfsetdash{}{+0pt}
    \ifdim\pgfarrowlinewidth=\pgflinewidth\else\pgfsetlinewidth{+\pgfarrowlinewidth}\fi
    \pgfpathmoveto{\pgfqpoint{-1.25\pgfarrowlength}{-0.5\pgfarrowwidth}}
    \pgfpathlineto{\pgfqpoint{0pt}{0pt}}
    \pgfpathlineto{\pgfqpoint{-1.25\pgfarrowlength}{0.5\pgfarrowwidth}}
    \pgfpathlineto{\pgfqpoint{-\pgfarrowlength}{0pt}}
    \pgfpathclose
    \ifpgfarrowopen\pgfusepathqstroke\else\pgfsetfillcolor{pgfstrokecolor}
	\ifdim\pgfarrowlinewidth>0pt\pgfusepathqfillstroke\else\pgfusepathqfill\fi\fi
  }
}
\definecolor{green1}{rgb}{0,0.56,0}
\definecolor{brown3}{rgb}{0.75,0.38,0}
\tikzset{inner sep=+0pt, outer sep=+0pt}
\pgfsetlinewidth{+7.5\XFigu}
\pgfsetcolor{black}
\filldraw  (4770,-990) circle [radius=+45];
\filldraw  (5670,-990) circle [radius=+45];
\filldraw  (6570,-990) circle [radius=+45];
\filldraw  (5670,-1890) circle [radius=+45];
\filldraw  (5670,855) circle [radius=+45];
\filldraw  (4275,-135) circle [radius=+45];
\filldraw  (7110,-90) circle [radius=+45];
\filldraw  (4770,-135) circle [radius=+45];
\filldraw  (6570,-90) circle [radius=+45];
\filldraw  (6030,-135) circle [radius=+45];
\filldraw  (5355,-135) circle [radius=+45];
\draw (5355,-135)--(4770,-990);
\draw (5670,-990)--(5670,-1890);
\draw (5355,-135)--(6570,-990);
\draw (6570,-990)--(5670,-1890);
\draw (5670,-1890)--(4770,-990);
\draw (5670,-990)--(4275,-135);%
\draw (5670,-990)--(7110,-90);
\draw (5670,810)--(5355,-135);
\draw (4770,-135)--(5670,-990);
\draw (5670,-990)--(6570,-90);
\draw (6570,-90)--(6570,-990);
\draw (4770,-135)--(4770,-990);
\draw (4770,-135)--(5670,855);
\draw (5670,855)--(6570,-90);
\draw (4275,-135)--(4770,-990);
\draw (4275,-135)--(5670,855);
\draw (7110,-90)--(6570,-990);
\draw (5670,855)--(7110,-90);
\draw (5670,855)--(6030,-135);
\draw (6030,-135)--(4770,-990);
\draw (6030,-135)--(6570,-990);
\pgfsetbeveljoin
\pgfsetarrows{[line width=7.5\XFigu]}
\pgfsetarrowsend{xfiga2}
\pgfsetstrokecolor{green1}
\pgfsetfillcolor{white}
\filldraw (4365,270)--(4364,266)--(4363,259)--(4362,246)--(4359,228)--(4356,206)--(4354,181)
  --(4351,155)--(4350,131)--(4350,108)--(4351,88)--(4353,71)--(4358,57)--(4365,45)
  --(4374,34)--(4386,25)--(4400,17)--(4416,10)--(4433,3)--(4452,-3)--(4471,-8)
  --(4491,-14)--(4510,-19)--(4529,-24)--(4546,-29)--(4562,-35)--(4577,-40)--(4590,-45)
  --(4607,-51)--(4624,-57)--(4642,-63)--(4661,-70)--(4680,-76)--(4698,-81)--(4725,-90);
\filldraw (6525,585)--(6524,583)--(6523,578)--(6521,568)--(6517,555)--(6513,537)--(6507,516)
  --(6501,491)--(6494,465)--(6487,438)--(6480,411)--(6473,386)--(6466,362)--(6460,340)
  --(6453,320)--(6447,302)--(6441,285)--(6435,270)--(6428,253)--(6420,237)--(6413,222)
  --(6405,207)--(6397,192)--(6388,178)--(6380,163)--(6372,149)--(6363,135)--(6355,121)
  --(6346,108)--(6337,94)--(6328,81)--(6319,69)--(6310,57)--(6300,45)--(6288,33)
  --(6276,20)--(6261,8)--(6245,-5)--(6227,-18)--(6208,-31)--(6188,-45)--(6169,-58)
  --(6152,-69)--(6120,-90);
\filldraw (4860,540)--(4860,539)--(4861,536)--(4862,529)--(4865,516)--(4868,499)--(4872,478)
  --(4877,457)--(4881,436)--(4887,416)--(4892,397)--(4898,378)--(4905,360)--(4911,346)
  --(4917,330)--(4925,314)--(4932,298)--(4940,280)--(4948,262)--(4956,244)--(4964,225)
  --(4973,206)--(4981,188)--(4990,170)--(4999,152)--(5009,136)--(5019,120)--(5029,104)
  --(5040,90)--(5052,76)--(5065,63)--(5079,49)--(5096,34)--(5114,19)--(5135,4)
  --(5156,-12)--(5178,-29)--(5199,-44)--(5219,-58)--(5235,-69)--(5265,-90);
\filldraw (7065,270)--(7064,267)--(7062,261)--(7058,252)--(7052,238)--(7045,221)--(7036,202)
  --(7028,182)--(7019,163)--(7010,145)--(7002,129)--(6993,114)--(6984,102)--(6975,90)
  --(6965,79)--(6954,68)--(6942,58)--(6929,47)--(6915,37)--(6901,26)--(6887,16)
  --(6873,6)--(6859,-4)--(6845,-13)--(6831,-22)--(6819,-30)--(6807,-38)--(6795,-45)
  --(6779,-53)--(6763,-60)--(6746,-67)--(6727,-72)--(6707,-78)--(6689,-83)--(6660,-90);
\pgfsetfillcolor{black}
\pgftext[base,left,at=\pgfqpointxy{5610}{-2185}] {\fontsize{12}{14.4}$()$}
\pgftext[base,left,at=\pgfqpointxy{4365}{-1170}] {\fontsize{12}{14.4}$(U_1)$}
\pgftext[base,left,at=\pgfqpointxy{5265}{-1170}] {\fontsize{12}{14.4}$(U_2)$}
\pgftext[base,left,at=\pgfqpointxy{6615}{-1215}] {\fontsize{12}{14.4}$(U_3)$}
\pgftext[base,left,at=\pgfqpointxy{3550}{-90}] 
{\fontsize{12}{14.4}$(U_1,U_2)$}
\pgftext[base,left,at=\pgfqpointxy{7155}{-45}] 
{\fontsize{12}{14.4}$(U_2,U_3)$}
\pgftext[base,left,at=\pgfqpointxy{3825}{330}] 
{\fontsize{12}{14.4}$(U_2,U_1)$}
\pgftext[base,left,at=\pgfqpointxy{6165}{655}] 
{\fontsize{12}{14.4}$(U_1,U_3)$}
\pgftext[base,left,at=\pgfqpointxy{4400}{655}] 
{\fontsize{12}{14.4}$(U_3,U_1)$}
\pgftext[base,left,at=\pgfqpointxy{5580}{990}] 
{\fontsize{12}{14.4}$V$}
\pgftext[base,left,at=\pgfqpointxy{6795}{330}] 
{\fontsize{12}{14.4}$(U_3,U_2)$}
\endtikzpicture%
}

\medskip

 \caption{Poset of subspaces of $V = \GF{2}^2$ (top left), poset of 
 partial decompositions of $V$ (top right) and poset of ordered partial decompositions of $V$ (bottom). Here $U_1 = \gen{(1,0)}$, $U_2 = \gen{(1,1)}$, and $U_3 = \gen{(0,1)}$}
    \label{fig:GF(2)dim2}
\end{figure}

From the Hasse diagram of $\Sub(V)$ we see that there are three full decompositions
\[ \{U_1,U_2\}, \ \{U_1, U_{3}\}, \ \{U_2, U_{3}\}.\]
From this, one easily determines the partial decomposition poset $\PD(X,\sqcup)=\PD(V)$ and its ordered version.
See \ref{fig:GF(2)dim2}.
\end{example}

\begin{example}
\label{ex:orderedPartitions}
Let $X = \{1,\ldots, n\}$ be in the category of sets with $\sqcup = \coprod$ the disjoint union.
Then $\Sub(X,\sqcup)$ is the Boolean lattice, $\D(X,\sqcup) = \Pi(n)$ is the partition lattice, and $\PD(X,\sqcup)$ is the partial partition lattice.
The ordered partial partition poset $\O\PD(X,\sqcup)$ for the case $n=3$ is given in \ref{fig:ordpart3}. 
The blue subposet shows the ordered partition poset
$\O\D(X,\sqcup)$.

Proposition 1.4 of \cite{BS} states that the ordered partition lattice is the face lattice of the permutohedron $P_n$ of order $n$. 
\end{example}

\begin{figure}
\centering
\tikzpicture[x=+4143sp, y=+4143sp]
\newdimen\XFigu\XFigu4143sp
\newdimen\XFigu\XFigu4143sp
\definecolor{blue3}{rgb}{0,0,0.82}
\definecolor{brown3}{rgb}{0.75,0.38,0}
\clip(-1095,-2220) rectangle (4255,2595);
\tikzset{inner sep=+0pt, outer sep=+0pt}
\pgfsetlinewidth{+7.5\XFigu}

\pgfsetstrokecolor{black}
\draw (1125,-135)--(585,-990);
\draw (1485,-990)--(1485,-1890);
\draw (1125,-135)--(2385,-990);
\draw (2385,-990)--(1485,-1890);
\draw (1485,-1890)--(585,-990);
\draw (0,-135)--(1485,-990);
\draw (1485,-990)--(2430,-135);
\draw (2430,-135)--(2385,-990);
\draw (0,-135)--(585,-990);
\draw (585,-135)--(585,-990);
\draw (585,-135)--(1485,-990);
\draw (1845,-135)--(585,-990);
\draw (1845,-135)--(2385,-990);
\draw (1485,-990)--(3105,-135);
\draw (3105,-135)--(2385,-990);
\draw (-630,675)--(0,-135);
\draw (-630,675)--(1125,-135);
\draw (-630,675)--(2430,-135);
\draw (-90,675)--(0,-135);
\draw (-90,675)--(585,-135);
\draw (495,675)--(585,-135);
\draw (495,675)--(1125,-135);
\draw (495,675)--(2430,-135);
\draw (1080,675)--(1125,-135);
\draw (0,-135)--(1080,675);
\draw (1080,675)--(3105,-135);
\draw (1485,675)--(1125,-135);
\draw (1485,675)--(1845,-135);
\draw (1935,675)--(1845,-135);
\draw (0,-135)--(1935,675);
\draw (1485,675)--(3105,-135);
\draw (1845,-135)--(2610,675);
\draw (2610,675)--(2430,-135);
\draw (585,-135)--(2610,675);
\draw (2430,-135)--(3240,675);
\draw (3240,675)--(3105,-135);
\draw (3735,675)--(3105,-135);
\draw (-315,1530)--(-90,675);
\draw (495,1530)--(-90,675);
\draw (1170,1530)--(1485,675);
\draw (1485,675)--(1890,1530);
\draw (3240,675)--(2565,1530);
\draw (3240,1530)--(3240,675);
\pgfsetstrokecolor{blue3}
\draw (-630,675)--(2565,1530);
\draw (495,675)--(1890,1530);
\draw (1080,675)--(2565,1530);
\draw (-315,1530)--(1575,2385);
\draw (-315,1530)--(-630,675);
\draw (-315,1530)--(495,675);
\draw (1170,1530)--(1575,2385);
\draw (495,1530)--(1575,2385);
\draw (495,1530)--(1935,675);
\draw (495,1530)--(3735,675);
\draw (1170,1530)--(1080,675);
\draw (1170,1530)--(1935,675);
\draw (1575,2385)--(1890,1530);
\draw (1575,2385)--(2565,1530);
\draw (1575,2385)--(3240,1530);
\draw (3240,1530)--(3735,675);
\draw (2610,675)--(3240,1530);
\draw (2610,675)--(1890,1530);

\pgfsetcolor{black}
\filldraw  (585,-990) circle [radius=+45];
\filldraw  (1485,-990) circle [radius=+45];
\filldraw  (2385,-990) circle [radius=+45];
\filldraw  (1485,-1890) circle [radius=+45];
\filldraw  (0,-135) circle [radius=+45];
\filldraw  (585,-135) circle [radius=+45];
\filldraw  (1845,-135) circle [radius=+45];
\filldraw  (2430,-135) circle [radius=+45];
\filldraw  (3105,-135) circle [radius=+45];
\filldraw  (1125,-135) circle [radius=+45];
\filldraw  (-90,675) circle [radius=+45];
\filldraw  (1485,675) circle [radius=+45];
\filldraw  (3240,675) circle [radius=+45];
\pgfsetstrokecolor{blue3}
\pgfsetfillcolor{blue3!85!black}
\filldraw  (1575,2385) circle [radius=+45];
\filldraw  (-315,1530) circle [radius=+45];
\filldraw  (495,1530) circle [radius=+45];
\filldraw  (1170,1530) circle [radius=+45];
\filldraw  (1890,1530) circle [radius=+45];
\filldraw  (2565,1530) circle [radius=+45];
\filldraw  (3240,1530) circle [radius=+45];
\filldraw  (3735,675) circle [radius=+45];
\filldraw  (2610,675) circle [radius=+45];
\filldraw  (1935,675) circle [radius=+45];
\filldraw  (1080,675) circle [radius=+45];
\filldraw  (495,675) circle [radius=+45];
\filldraw  (-630,675) circle [radius=+45];

\pgfsetfillcolor{black}
\pgftext[base,left,at=\pgfqpointxy{1440}{-2205}] {\fontsize{12}{14.4}$\emptyset$}
\pgftext[base,left,at=\pgfqpointxy{405}{-1125}] {\fontsize{12}{14.4}1}
\pgftext[base,left,at=\pgfqpointxy{1305}{-1125}] {\fontsize{12}{14.4}2}
\pgftext[base,left,at=\pgfqpointxy{2475}{-1125}] {\fontsize{12}{14.4}3}
\pgftext[base,left,at=\pgfqpointxy{-360}{-180}] {\fontsize{12}{14.4}1\textbar 2}
\pgftext[base,left,at=\pgfqpointxy{270}{-180}] {\fontsize{12}{14.4}2\textbar 1}
\pgftext[base,left,at=\pgfqpointxy{810}{-180}] {\fontsize{12}{14.4}1\textbar 3}
\pgftext[base,left,at=\pgfqpointxy{1530}{-180}] {\fontsize{12}{14.4}3\textbar 1}
\pgftext[base,left,at=\pgfqpointxy{2520}{-180}] {\fontsize{12}{14.4}2\textbar 3}
\pgftext[base,left,at=\pgfqpointxy{3195}{-180}] {\fontsize{12}{14.4}3\textbar 2}
\pgftext[base,left,at=\pgfqpointxy{-360}{630}] {\fontsize{12}{14.4}12}
\pgftext[base,left,at=\pgfqpointxy{1215}{630}] {\fontsize{12}{14.4}13}
\pgftext[base,left,at=\pgfqpointxy{3330}{630}] {\fontsize{12}{14.4}23}
\pgfsetfillcolor{blue3}
\pgftext[base,left,at=\pgfqpointxy{1215}{2430}] {\fontsize{12}{14.4}123}
\pgftext[base,left,at=\pgfqpointxy{90}{1485}] 
{\fontsize{12}{14.4}3\textbar 12}
\pgftext[base,left,at=\pgfqpointxy{-720}{1485}] {\fontsize{12}{14.4}12\textbar 3}
\pgftext[base,left,at=\pgfqpointxy{-1080}{630}] {\fontsize{12}{14.4}1\textbar 2\textbar 3}
\pgftext[base,left,at=\pgfqpointxy{45}{630}] 
{\fontsize{12}{14.4}2\textbar 1\textbar 3}
\pgftext[base,left,at=\pgfqpointxy{630}{630}] 
{\fontsize{12}{14.4}1\textbar 3\textbar 2}
\pgftext[base,left,at=\pgfqpointxy{1980}{1485}] {\fontsize{12}{14.4}2\textbar 13}
\pgftext[base,left,at=\pgfqpointxy{2655}{1485}] {\fontsize{12}{14.4}1\textbar 23}
\pgftext[base,left,at=\pgfqpointxy{3330}{1485}] {\fontsize{12}{14.4}23\textbar 1}
\pgftext[base,left,at=\pgfqpointxy{3825}{630}] {\fontsize{12}{14.4}3\textbar 2\textbar 1}
\pgftext[base,left,at=\pgfqpointxy{2745}{630}] {\fontsize{12}{14.4}2\textbar 3\textbar 1}
\pgftext[base,left,at=\pgfqpointxy{2025}{630}] {\fontsize{12}{14.4}3\textbar 1\textbar 2}
\pgftext[base,left,at=\pgfqpointxy{765}{1485}] {\fontsize{12}{14.4}13\textbar 2}
\endtikzpicture%
    \caption{Ordered partial and full partition poset}
\label{fig:ordpart3}
\end{figure}

Since we aim to relate later the homotopy type of $\Ord \T(X,\sqcup)$ with that of $\T(X,\sqcup)$, in the following lemmas we take a closer look at the fibers by the canonical forgetful maps and the intervals in the ordered versions. 

\begin{lemma}
\label{lm:fibersForgetfulMap}
The following hold:
\begin{enumerate}
    \item $\Forget_\D^{-1}( \D(X,\sqcup)_{\geq \sigma} )$ is the ordered partition lattice on the set $\sigma$, which is the face poset of a permutohedron of order $|\sigma|$.
    Thus $\Forget_\D^{-1}( \redm{\D(X,\sqcup)}_{\geq \sigma} )$ is a sphere of dimension $|\sigma|-2$ (and also Cohen-Macaulay).
    
    \item If $\sigma \in \red{\PD(X,\sqcup)}\setminus \redm{\D(X,\sqcup)}$ then $\Forget_{\PD}^{-1}\big( \ (\red{\PD(X,\sqcup)})_{\geq \sigma} \ \big)$ is contractible.

    \item For a simplicial complex $K$ and $\sigma\in K$,  $\Forget_{K}^{-1}( K_{\leq \sigma} )$ is the ordered version of the Boolean lattice, namely the poset of injective words.
    In particular, $\Forget_{K}^{-1}( K_{\leq \sigma} )$ is shellable of dimension $|\sigma|-1$ and the number of spheres is the number of derangements on $|\sigma|$ points:
    \[ \sum_{i=0}^{|\sigma|} (-1)^{i} \frac{|\sigma|!}{i!}.\]
\end{enumerate}
\end{lemma}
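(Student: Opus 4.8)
The plan is to treat the three parts separately; in each the relevant fiber is identified with a combinatorial object of known homotopy type, and only part (2) requires actual work. For part (1): by \ref{prop:decompintervals} we have $\D(X,\sqcup)_{\geq\sigma}\cong\Pi(\sigma)$, under which $\tau\geq\sigma$ corresponds to the partition $\{\sigma_y : y\in\tau\}$ of $\sigma$ with $\sigma_y=\{x\in\sigma : x\leq y\}$ (a genuine partition, since distinct members of a decomposition meet in $0$). An element of $\Forget_{\D}^{-1}\bigl(\D(X,\sqcup)_{\geq\sigma}\bigr)$ is a tuple $(x_1,\dots,x_r)$ with underlying set such a $\tau$; recording it as the ordered partition $(\sigma_{x_1},\dots,\sigma_{x_r})$ of $\sigma$ and using that $\bigvee B\leq\bigvee C$ for $B,C\subseteq\sigma$ holds iff $B\subseteq C$, one unwinds \ref{def:orderedVersions} to a poset isomorphism onto the ordered partition lattice on $\sigma$. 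By \ref{ex:orderedPartitions} (that is, \cite[Prop.~1.4]{BS}) this is the face lattice of the permutohedron $P_{|\sigma|}$, and removing its maximum $\{X\}$ — i.e.\ passing to $\redm{\D(X,\sqcup)}$ — leaves the face poset of $\partial P_{|\sigma|}$, whose order complex is a subdivision of $\partial P_{|\sigma|}\cong S^{|\sigma|-2}$; spheres are Cohen--Macaulay, which gives the parenthetical claim.

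For part (3): $K_{\leq\sigma}=2^{\sigma}$ is the Boolean lattice on the vertex set $\sigma$, so by \ref{rem:orderedcomplex} the poset $\Forget_{K}^{-1}(K_{\leq\sigma})$ is the poset of injective words over the $|\sigma|$-letter alphabet $\sigma$ under the subword order. That this poset is shellable of dimension $|\sigma|-1$ and homotopy equivalent to a wedge of $(|\sigma|-1)$-spheres whose number is the derangement number $\sum_{i=0}^{|\sigma|}(-1)^{i}\,|\sigma|!/i!$ is classical, and I would cite \cite{JW,ReW}.

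Part (2) is the substantive case. Write $Q=\Forget_{\PD}^{-1}\bigl((\red{\PD(X,\sqcup)})_{\geq\sigma}\bigr)$. Since $\sigma\in\red{\PD(X,\sqcup)}\setminus\redm{\D(X,\sqcup)}$ and $\sigma\neq\{X\}$, we get $\sigma\notin\D(X,\sqcup)$, hence $v:=\Phi(\sigma)=\bigvee\sigma<X$ by \ref{lm:PhiSpanMap}. Call an entry $x_k$ of a tuple $z=(x_1,\dots,x_r)\in Q$ \emph{$\sigma$-relevant} if $\sigma_{x_k}:=\{s\in\sigma : s\leq x_k\}\neq\emptyset$; as the underlying set of $z$ refines $\sigma$, the sets $\sigma_{x_k}$ over $\sigma$-relevant indices $k$ partition $\sigma$. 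Define $r\colon Q\to Q$ by discarding the non-$\sigma$-relevant entries of $z$ and replacing each surviving $x_k$ by $\bigvee\sigma_{x_k}$, in the same order. Then $r(z)$ has underlying set a coarsening of $\sigma$ — hence an element of $\PD(X,\sqcup)$, any coarsening of $\sigma$ extending to a coarsening of a full decomposition containing $\sigma$, which is a decomposition by \ref{lm:upperIntervalsDecomp} — it lies above $\sigma$, it is $\neq\{X\}$ because its join is $v<X$, and it is $\leq z$ because it is a subtuple of $z$ with entries shrunk; moreover $r\circ r=r$ since $s\leq\bigvee B\iff s\in B$ for $B\subseteq\sigma$. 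The image of $r$ is the set $Q_{2}$ of ordered partitions of $\sigma$, which by the identification in part (1) is the ordered partition lattice on $\sigma$ and has maximum $(v)$ (and $(v)\in Q$, since $v<X$). Granting that $r$ is order-preserving, $\Delta(Q)$ deformation retracts onto $\Delta(Q_{2})$ — the usual consequence of having an order-preserving $r\leq\mathrm{id}$ that restricts to the identity on its image — and $\Delta(Q_{2})$ is a cone with apex $(v)$, so $Q$ is contractible.

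The one remaining point, which is the main obstacle, is order-preservation of $r$: if $w\leq z$ in $Q$, then each $\sigma$-relevant entry $y$ of $w$ satisfies $y\leq x$ for some (then necessarily $\sigma$-relevant) entry $x$ of $z$ with $\sigma_{y}\subseteq\sigma_{x}$, whence $\bigvee\sigma_{y}\leq\bigvee\sigma_{x}$, and the index pairs witnessing $w\leq z$ in \ref{def:orderedVersions} restrict to index pairs among the $\sigma$-relevant entries witnessing $r(w)\leq r(z)$. This is a short index-chase, and everything else in part (2), as well as all of parts (1) and (3), is formal or a citation.
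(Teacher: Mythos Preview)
Your proposal is correct and follows essentially the same approach as the paper: parts (1) and (3) are handled by the same identifications (ordered partition lattice via \ref{prop:decompintervals} and the permutohedron result of \cite{BS}; injective words over $\sigma$), and part (2) uses the same retraction $r$ replacing each entry $x_k$ by $\bigvee\{s\in\sigma:s\leq x_k\}$ and dropping empty entries, yielding the zigzag $z\geq r(z)\leq (v)$. Your treatment is more explicit on well-definedness and order-preservation of $r$ than the paper's, and for part (3) the paper cites \cite{BW} rather than \cite{JW,ReW}, but these are cosmetic differences.
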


\begin{proof}
Since $\D(X,\sqcup)_{\geq \sigma}$ is the partition lattice on the set $\sigma$ by \ref{lm:upperIntervalsDecomp}, it is clear that 
$\Forget_\D^{-1}( \D(X,\sqcup)_{\geq \sigma} )$ is the ordered partition lattice on the set $\sigma$.
Now, by Proposition 1.4 of \cite{BS}, this is the face lattice of the permutahedron of order $|\sigma|$.
The second part of item 1 now follows since the order complex of the face lattice of a polytope is a sphere.

Item 3 follows similarly, and the shellability is implied by Theorem 6.1 of \cite{BW}.

For item 2, we construct a homotopy between the identity and a constant map.
Since $\sigma$ is a non-empty $\sqcup$-partial decomposition (not full), there exists $x = \bigvee_{y\in \sigma} y$ and this is different from $1$ and $0$.
Let $\tau\in \Forget^{-1}_{\PD}( (\red{\PD(X,\sqcup)})_{\geq \sigma})$, and write $\tau = (z_1,\ldots,z_s)$.
Define
\[ z_i' = \bigvee_{y\in \sigma \tq y\leq z_i} y.\]
and let $r(\tau)$ be the tuple $(z_1',\ldots,z_s')$ after removing the $0$ elements.
We see that $\Forget_{\PD}(r(\tau))$ is a $\sqcup$-partial decomposition obtained by joining elements of $\sigma$, so $\Forget_{\PD}(r(\tau))\geq \sigma$.
Thus we have well-defined order-preserving maps
\[ \tau \geq r(\tau) \leq (x).\]
This implies that $\Forget^{-1}_{\PD}( (\red{\PD(X,\sqcup)})_{\geq\sigma})$ is contractible.
\end{proof}

We describe now some intervals in the ordered versions.
Recall property (LI) from \ref{def:propLI}.
Here $|z|$ denotes the size of an ordered tuple $z$, that is, the number of (distinct) elements of the tuple.

\begin{lemma}
\label{lm:intervalsOrdered}
    The following hold:
    \begin{enumerate}
        \item For $z\in \Ord\D(X,\sqcup)$, $\Ord\D(X,\sqcup)_{\geq z}$ is the Boolean lattice on a set of size $|z|-1$.
        In particular, if $\D(X,\sqcup)$ is finite then
        \[\tilde{\chi}\left(\redm{\Ord\D(X,\sqcup)}\right) = \sum_{k\geq 1} (-1)^{k} \cdot k! \cdot \# \{\sigma\in \D(X,\sqcup) \tq |\sigma| = k\}.\]
        \item Suppose that property (LI) holds.
        Then for $z\in \Ord\PD(X,\sqcup)$ we have
        \[ 
        \Ord\PD(X,\sqcup)_{\leq z} \cong \prod_{y\in z} \Ord \PD(Y,\sqcup),
        \]
        and if $z\in \Ord\D(X,\sqcup)$ then also
        \[ 
        \Ord\D(X,\sqcup)_{\leq z} \cong \prod_{y\in z} \Ord \D(Y,\sqcup).
        \]
        Here $(Y,i_y)\in y$ denote suitable representatives for $y\in z$.
        \item Let $K$ be a simplicial complex and $z\in \Ord K$. Then $\Ord K_{>z} \cong \Ord\Lk_{K}(z)$ and $\Ord K_{\leq z}$ is the Boolean lattice on $F_{K}(z)$.
    \end{enumerate}
\end{lemma}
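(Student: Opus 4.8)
\textit{Overview.} The plan is to reduce each of the three interval descriptions to a purely combinatorial statement about the corresponding \emph{unordered} interval, for which the needed facts are already in hand (\ref{prop:decompintervals}, \ref{lm:upperIntervalsDecomp}, and the fact that a simplicial complex is closed under passing to subsets), and then to ``decorate'' that statement with the extra ordering data.

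\textit{Item 1.} Fix $z=(x_1,\dots,x_r)$ with $\sigma=\{x_1,\dots,x_r\}\in\D(X,\sqcup)$. By \ref{prop:decompintervals} any $w\geq z$ in $\Ord\D(X,\sqcup)$ has underlying set a coarsening of $\sigma$, so $w$ is recorded by an \emph{ordered} set partition $(p_1,\dots,p_s)$ of $\sigma$ via the rule that the $k$-th entry of $w$ is $\bigvee_{x\in p_k}x$. Since inside the Boolean sublattice spanned by $\sigma$ one has $x_i\leq\bigvee_{x\in p_k}x$ iff $x_i\in p_k$, unwinding \ref{def:orderedVersions} shows that $z\leq w$ is equivalent to the ``block index'' map $i\mapsto(\text{the }k\text{ with }x_i\in p_k)$ being weakly increasing; that is, $p_1,\dots,p_s$ are exactly the cells of a cut of the linearly ordered tuple $(x_1,\dots,x_r)$. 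Hence $\Ord\D(X,\sqcup)_{\geq z}$ is order-isomorphic to the poset of such cuts -- i.e.\ to subsets of $\{1,\dots,r-1\}$ under reverse inclusion -- which is the Boolean lattice on $r-1=|z|-1$ elements, and in particular $\mu_{\Ord\D(X,\sqcup)}(z,\widehat{1})=(-1)^{|z|-1}$, where $\widehat{1}=(X)$ is the unique maximum of $\Ord\D(X,\sqcup)$. Adjoining a formal bottom $\widehat{0}$ and invoking Philip Hall's theorem then yields $\tilde{\chi}(\redm{\Ord\D(X,\sqcup)})=\mu(\widehat{0},\widehat{1})=-\sum_{z\in\Ord\D(X,\sqcup)}\mu(z,\widehat{1})=-\sum_{z}(-1)^{|z|-1}=\sum_{z}(-1)^{|z|}$, which upon collecting the $k!$ orderings of each size-$k$ decomposition is the stated identity.

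\textit{Item 2.} Assume property~(LI), fix $z=(y_1,\dots,y_r)\in\Ord\PD(X,\sqcup)$ and representatives $(Y_k,i_{y_k})\in y_k$. If $w=(w_1,\dots,w_s)\leq z$, then the case $i=j$ of \ref{def:orderedVersions} forces each $w_i$ below some $y_k$, and (LI) -- in the form of conditions (E1)--(E2) of \ref{lm:lowerIntervalsPD} -- forces this $y_k=y_{f(i)}$ to be unique. Plugging this uniqueness back into \ref{def:orderedVersions}, the relation $w\leq z$ becomes exactly ``$f$ is weakly increasing'', so the positions with $f(i)=k$ form a contiguous block and $w$ is the concatenation $w^{(1)}\frown\cdots\frown w^{(r)}$ of its blocks $w^{(k)}=(w_i:f(i)=k)$; since $\{w_i:f(i)=k\}$ is the $Y_k$-component of $\{w_1,\dots,w_s\}$ under (LI), we get $w^{(k)}\in\Ord\PD(Y_k,\sqcup)$. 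The map $w\mapsto(w^{(1)},\dots,w^{(r)})$ is the claimed isomorphism, with inverse given by concatenation (landing in $\Ord\PD(X,\sqcup)_{\leq z}$ again by (LI)); order-preservation in both directions follows once one observes, using the same uniqueness, that a refinement $w\leq w'$ can only match an entry of $w^{(k)}$ to an entry of $w'^{(k)}$, so that $w\leq w'$ holds iff $w^{(k)}\leq w'^{(k)}$ for every $k$. The $\D$-version follows by restriction: when $z\in\Ord\D(X,\sqcup)$ we have $\sigma\in\D(X,\sqcup)$, hence $\sum_k h(Y_k)=h(X)$, while $\sum_{f(i)=k}h(w_i)\leq h(Y_k)$; thus a partial decomposition $w$ below $z$ is full exactly when all these inequalities are equalities, i.e.\ exactly when each $w^{(k)}$ is a full decomposition of $Y_k$.

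\textit{Item 3, and the main obstacle.} For $z=(v_1,\dots,v_r)\in\Ord K$, \ref{def:orderedVersions} (with the vertex set of $K$ carrying the discrete order) identifies $\Ord K_{\leq z}$ with the subwords of $z$; a subword is determined by the set of letters occurring in it, every subset of $F_K(z)$ does occur because $K$ is closed under subsets, and this correspondence is an order isomorphism onto the Boolean lattice on $F_K(z)$. The isomorphism $\Ord K_{>z}\cong\Ord\Lk_K(z)$ is obtained by the same kind of direct translation, matching a word properly extending $z$ with the induced word on the link of $F_K(z)$, exactly mirroring the unordered equivalence $K_{>\sigma}\cong\Lk_K(\sigma)\setminus\{\emptyset\}$. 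I expect item 2 to be the only step requiring genuine care, precisely because of the interplay between the order-preserving refinement order and the product decomposition coming from (LI): everything there hinges on the uniqueness of the element of $z$ dominating a given subobject, whereas items 1 and 3 are essentially bookkeeping once the underlying unordered intervals have been identified.
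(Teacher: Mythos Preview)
Your arguments for items 1 and 2, and for the lower-interval half of item 3, are correct and considerably more detailed than the paper's one-word proof. The identification in item~1 of $\Ord\D(X,\sqcup)_{\geq z}$ with subsets of the $|z|-1$ cut positions is exactly right, and in item~2 the key uniqueness of the $y_{f(i)}$ dominating each $w_i$ follows already from the Boolean-lattice condition in the definition of a decomposition (distinct blocks have meet $0$), so your reasoning is sound even if the citation of (E1)--(E2) is slightly roundabout.

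There is, however, a genuine gap in the upper-interval half of item~3. Your ``direct translation'' --- delete the letters of $z$ from $w$ --- is not injective, and in fact the claimed isomorphism $\Ord K_{>z}\cong\Ord\Lk_K(F_K(z))$ is false in general. Take $K$ the $2$-simplex on $\{a,b,c\}$ and $z=(a)$: the words $(a,b)$ and $(b,a)$ both lie in $\Ord K_{>z}$ but induce the same word $(b)$ on the link, and indeed $|\Ord K_{>(a)}|=10$ while $|\Ord\Lk_K(\{a\})|=5$. The two posets are not even homotopy equivalent (the former is a connected height-$1$ poset with $\tilde\chi=-3$, the latter is contractible). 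The point is that an extension $w>z$ records not only which link-vertices are added but also into which of the $|z|+1$ gaps each is inserted; your map forgets this placement data. This appears to be an inaccuracy in the paper's statement itself --- note that the paper offers no argument, and the proof of \ref{thm:orderedVersions}(3) works directly with the fibres $F_K^{-1}(K_{\leq\sigma})$ rather than invoking any such isomorphism.
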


\begin{proof}
These are straightforward.
\end{proof}

Finally, we establish a connection between the ordered version of the poset of decompositions and the subposet of $\sch$-complemented subobjects.

\begin{definition}
\label{def:GMapOrdDtoS}
Let $G:\red{\Ord\PD(X,\sqcup)} \to {(\Delta \red{\Sub_h(X,\sqcup)})}^{\op} \cup \{\emptyset\}$ be the function
\[G(x_1,\ldots,x_r) = \{x_1 < x_1\vee x_2 < \cdots < x_1\vee\cdots \vee x_{r-1} \}.\]
\end{definition}

It is clear that if $\{x_1,\ldots,x_r\}$ is a $\sqcup$-partial decomposition, then any join of its elements is $\sch$-complemented, and hence belongs to $\Sub_h(X,\sqcup)$.
Thus $G$ is a well-defined function.
However, $G$ is not order-preserving for partial decompositions: for example, if $x\leq y$ and $\{x,z\}, \{y,z\}$ are $\sqcup$-partial decompositions, then $(x,z)\leq (y,z)$ but $G(x,z) = \{x\}\not\subseteq \{y\} = G(y,z) $.
Nevertheless, this is a poset map when restricted to decompositions:

\begin{lemma}
\label{lm:GMapOrdDtoS}
The restriction $G: \redm{\Ord\D(X,\sqcup)}\to {\left(\Delta \big( \red{\Sub_h(X,\sqcup)}\big) \right)}^{\op}$ is an order-preserving map.
\end{lemma}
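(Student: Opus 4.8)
The plan is to unwind the two order relations involved and reduce the statement to an elementary observation about coarsenings of ordered decompositions.

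First I would pin down the domain and codomain. The only decomposition of size $1$ is $\{X\}$ (a Boolean lattice on one element has top $1_{\S}$), so an element $z=(x_1,\dots,x_r)$ of $\redm{\Ord\D(X,\sqcup)}$ has $r\ge 2$, whence $G(z)$ is a \emph{non-empty} chain, of cardinality $r-1$. A short check shows $G(z)\subseteq\red{\Sub_h(X,\sqcup)}$: each partial join equals $\Phi(\{x_1,\dots,x_j\})$, hence lies in $\Sub_h(X,\sqcup)=\Im\Phi$ by \ref{lm:PhiSpanMap}; it is $\ne 0$ since $x_1>0$; and for $j\le r-1$ it is $\ne X$ because the height of a decomposition is additive, so $h(x_1\vee\cdots\vee x_j)=\sum_{i\le j}h(x_i)<\sum_{i\le r}h(x_i)=h(X)$ (here $h(x_i)\ge1$ as $x_i>0$). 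Thus $G$ is a well-defined function into $(\Delta(\red{\Sub_h(X,\sqcup)}))^{\op}$, and order-preservation — the actual content — amounts to showing that $z=(x_1,\dots,x_r)\le w=(y_1,\dots,y_s)$ in $\Ord\D(X,\sqcup)$ forces $G(w)\subseteq G(z)$ as simplices.

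The key step is to describe $w$ relative to $z$. Since $z\le w$ implies $\{x_1,\dots,x_r\}\le\{y_1,\dots,y_s\}$ in $\D(X,\sqcup)$, by \ref{lm:upperIntervalsDecomp} (equivalently \ref{prop:decompintervals}) the decomposition $\{y_1,\dots,y_s\}$ is a coarsening of $\{x_1,\dots,x_r\}$: setting $B_k=\{\,i:x_i\le y_k\,\}$, these sets partition $[r]$ — each $x_i$ lies below at least one $y_k$, and below at most one since the $y_k$ are the atoms of the Boolean lattice they span and hence pairwise meet in $0$ — and $y_k=\bigvee_{i\in B_k}x_i$. Because $x_i\le y_k$ holds exactly when $i\in B_k$, the ordered-refinement condition defining $z\le w$ says precisely that the map $[r]\to[s]$ sending $i$ to the index of its block is weakly monotone. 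A weakly monotone surjection of finite chains has intervals as fibres, so the $B_k$ are consecutive blocks $B_k=\{a_{k-1}+1,\dots,a_k\}$ with $0=a_0<a_1<\cdots<a_s=r$.

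Finally, using associativity of joins inside the Boolean lattice spanned by $\{x_1,\dots,x_r\}$, for $1\le m\le s-1$ one gets
\[ y_1\vee\cdots\vee y_m=\bigvee_{i\in B_1\cup\cdots\cup B_m}x_i=x_1\vee\cdots\vee x_{a_m}. \]
Since $1\le a_1<\cdots<a_{s-1}\le r-1$, every vertex of $G(w)=\{\,x_1\vee\cdots\vee x_{a_m}:1\le m\le s-1\,\}$ is a vertex of $G(z)=\{\,x_1\vee\cdots\vee x_j:1\le j\le r-1\,\}$, so $G(w)\subseteq G(z)$; passing to the opposite of the face poset this reads $G(z)\le G(w)$, which is order-preservation. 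The one delicate point — and the reason $G$ fails to be order-preserving on partial decompositions, as the example just before the lemma shows — is the passage from ``weakly monotone'' to ``consecutive intervals'': it is precisely the \emph{order}-preserving half of the refinement relation that forces the blocks of a coarsening to be intervals, and this is exactly what makes the partial joins of $w$ occur among those of $z$.
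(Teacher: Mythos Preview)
Your proof is correct and follows essentially the same approach as the paper: both observe that an element above $(x_1,\ldots,x_r)$ in $\Ord\D(X,\sqcup)$ is obtained by joining \emph{consecutive} blocks of the $x_i$, and hence its partial-join chain is contained in that of $(x_1,\ldots,x_r)$. Your write-up is more explicit (you carefully verify well-definedness and spell out why the ordered-refinement condition forces the block map to be weakly monotone with interval fibres), but the argument is the same.
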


\begin{proof}
This follows since elements of $\Ord\D(X,\sqcup)_{>\tau}$ are obtained by joining adjacent blocks of an ordered decomposition $\tau$.
That is, if $\sigma>\tau$ and $\tau = (x_1,\ldots,x_r)$, we can write
\[ \sigma = \bigg( {\bigvee_{i=1}^{t_1}}x_i, \bigvee_{i=t_1+1}^{t_2} x_i,\ldots, \bigvee_{i=t_{s-1}+1}^{r}x_i\bigg)\]
for suitable $1 \leq t_1 < t_2 < \cdots < t_s = r$.
Thus $G(\tau)$ contains $G(\sigma)$.
\end{proof}

We will see later that this map turns out to be an isomorphism under unique complementation and suitable extension properties.
See \ref{lm:isoGMap} and \ref{lm:GMapSurjective}.

\subsection{The augmented Bergman complex}

We introduce now a (simplified) version of the augmented Bergman complex in our context.
This complex was first defined for matroids and it 
gained substantial attention due to the work \cite{BHMPW} where its Stanley-Reisner ring is a key object in the proofs of the Top-Heavy conjecture (raised by Dowling and Wilson), and the non-negativity of the coefficients of Kazhdan-Lusztig polynomials for all matroids.
Its topology in the matroid case was carefully analyzed in \cite{BKR}.


\begin{definition}
[Bergman]
  \label{def:bergman}
  Take disjoint copies  
   of the vertices of $\F(X,\sqcup)$ and the atoms of $\Sub(X,\sqcup)$. 
   
  In $\Bergman(X,\sqcup)$ we collect all sets
  $\sigma \cup \{ x_1 \lneq x_2 \lneq \cdots \lneq x_r\}$ where 
  $\sigma \in \F(X,\sqcup)$ and $\Phi(\sigma) \leq x_1,\ldots , x_r \in \redm{\Sub(X,\sqcup)}$. 
  We call $\Bergman(X,\sqcup)$ the \textit{augmented Bergman complex} of $X$. 
\end{definition}

It is a direct consequence of the definition that the augmented Bergman complex $\Bergman(X,\sqcup)$ is indeed a simplicial complex.
Note that if $\sigma\in\F(X,\sqcup)$ is a full frame, then $\sigma\in \Bergman(X,\sqcup)$ is a maximal simplex.
Also note that $0 \in \Sub(X,\sqcup)$ is a vertex of $\Bergman(X,\sqcup)$, and a simplex containing $0$ must be a flag of $\redm{\Sub(X,\sqcup)}$.

\subsection{The Charney poset}

A different version of the ordered decompositions was introduced by R. Charney in \cite{Charney}.
There she works with the poset of summands of a finitely generated free module over a Dedekind domain, and defines a poset whose elements are ordered direct sum decompositions of size two with a zigzag ordering.
Another version of Charney's poset appears in the context of split buildings for spaces with a certain form, and we interpret decompositions of size two there as direct sum decomposition pairs $(V_1,V_2)$ such that $V_1$ and the orthogonal complement of $V_2$ are totally isotropic subspaces (see \cite{LR}).

We adapt the definition of this poset to our context.

\begin{definition}
\label{def:charney}
The Charney poset associated with $X$ is the poset $\Charney(X,\sqcup)$ whose underlying set consists of ordered decompositions of size two:
\[ \Charney(X,\sqcup) =  \{ z \in \OD(X,\sqcup) \tq |z| = 2\},\]
and the ordering is given by
\[ (x,y)\leq (x',y') \quad \text{ if } \quad  x\leq x' \text{ and } y\geq y'.\]
\end{definition}

The following proposition relates the ordered decompositions to the Charney complex.

\begin{proposition}
\label{prop:charney}
Let $\beta$ be the function
\[ \beta :\left\{ \begin{array}{ccc} {\redm{\OD(X,\sqcup)}}^\op & \to & \Delta \Charney(X,\sqcup)\\
  \tau=(z_0,z_1,\ldots,z_r) & \mapsto & \{ \big(\bigvee_{i=0}^j z_i, \bigvee_{k=j+1}^r z_k\big) \tq 0\leq j <r \} \end{array} \right.\]
Then $\beta$ is a rank-preserving poset embedding whose image is downward closed.

Moreover, if properties (EX) and (CM) from \ref{def:propsEXCM} hold, then $\beta$ is surjective and therefore an isomorphism.
\end{proposition}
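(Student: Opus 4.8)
The plan is to analyze the map $\beta$ in three stages: first that it is well-defined and lands in $\Delta\Charney(X,\sqcup)$; then that it is an order-preserving, rank-preserving embedding with downward-closed image; and finally that, under (EX) and (CM), it is surjective. For well-definedness, start from $\tau = (z_0,\ldots,z_r)\in\redm{\OD(X,\sqcup)}$, so $\{z_0,\ldots,z_r\}\in\D(X,\sqcup)$, and for each $0\le j<r$ consider the pair $\big(\bigvee_{i\le j}z_i,\ \bigvee_{k>j}z_k\big)$. Because the underlying set $\{z_0,\ldots,z_r\}$ is a $\sqcup$-decomposition, the join of any subset exists, is $\sqcup$-compatible, and the two complementary joins $\bigvee_{i\le j}z_i$ and $\bigvee_{k>j}z_k$ form a size-two decomposition of $X$ by the height condition in \ref{def:decompositionsAndPD}; hence each pair lies in $\Charney(X,\sqcup)$. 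The $j$-th and $(j{+}1)$-th pairs are comparable in $\Charney(X,\sqcup)$ (the first coordinate grows, the second shrinks as $j$ increases), so the whole collection $\beta(\tau)$ is a chain, i.e.\ a simplex of $\Delta\Charney(X,\sqcup)$; distinctness of the $r$ pairs follows from \ref{rk:containmentFull}, since two equal pairs would force two equal-size comparable decompositions.

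Next, order-preservation: if $\sigma\le\tau$ in $\redm{\OD(X,\sqcup)}^{\op}$, i.e.\ $\tau\le\sigma$ in $\OD(X,\sqcup)$, then $\sigma$ is obtained from $\tau$ by merging consecutive blocks, exactly as in the proof of \ref{lm:GMapOrdDtoS}: $\sigma = \big(\bigvee_{i\le t_1}z_i,\ \bigvee_{t_1<i\le t_2}z_i,\ldots\big)$ for some $1\le t_1<\cdots<t_s=r$. Then $\beta(\sigma)$ consists of the pairs at the "cut points" $t_1,\ldots,t_{s-1}$, which is a sub-collection of the pairs of $\beta(\tau)$ (those at cuts $0,1,\ldots,r-1$), so $\beta(\sigma)\subseteq\beta(\tau)$, as required for a poset map into an order complex. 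Rank-preservation is immediate: $|\beta(\tau)| = r = |\tau|-1$ equals the rank of $\tau$ in $\redm{\OD(X,\sqcup)}^{\op}$ (here rank is counted so that a chain of $r$ pairs has dimension $r-1$; the statement "rank-preserving" should be read with the convention matching \ref{lm:intervalsOrdered}(1), where $\Ord\D_{\ge z}$ is Boolean of size $|z|-1$). Injectivity: $\beta(\tau)$ records, for each $j$, the initial join $\bigvee_{i\le j}z_i$, and these determine the blocks $z_j$ as the unique $\sch$-complement of $\bigvee_{i<j}z_i$ inside $\bigvee_{i\le j}z_i$ — but even without invoking uniqueness one can recover $z_j$ since $\{z_0,\ldots,z_j\}$ and $\{z_0,\ldots,z_{j-1}\}$ are both decompositions and the meet description from \ref{prop:latticeDecompositions} pins down each block; hence $\tau$ is reconstructed, so $\beta$ is an embedding. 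For the image being downward closed: a face of $\beta(\tau)$ is a sub-chain of pairs at cut points $t_1<\cdots<t_{s-1}$, which is precisely $\beta$ of the coarsening of $\tau$ at those cuts; so every face of something in the image is again in the image.

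Finally, surjectivity under (EX) and (CM). A simplex of $\Delta\Charney(X,\sqcup)$ is a chain $(x_1,y_1)<(x_2,y_2)<\cdots<(x_r,y_r)$ with $x_1\le x_2\le\cdots\le x_r$ and $y_1\ge y_2\ge\cdots\ge y_r$, each $\{x_j,y_j\}$ a size-two $\sqcup$-decomposition of $X$. I want to produce an ordered decomposition $\tau=(z_0,\ldots,z_r)$ with $\beta(\tau)$ equal to this chain; set $x_0 = 0$ and $y_0 = X$, and I will take $z_j$ to be the "new piece" between consecutive levels. Concretely, I would apply (CM) repeatedly: from $\{x_j,y_j\}$ and $\{x_{j+1},y_{j+1}\}$ with $x_j\le x_{j+1}$ and $y_{j+1}\le y_j$, (CM) yields that $x_{j+1}\wedge y_j$ exists and $\{x_{j+1}\wedge y_j,\ x_j,\ y_{j+1}\}\setminus\{0\}\in\D(X,\sqcup)$; iterating and using (EX) to glue these size-three decompositions along the nested chain, one assembles a single decomposition $\{z_0,z_1,\ldots,z_r\}$ where $z_0 = x_1$, $z_j = x_{j+1}\wedge y_j$ for $1\le j\le r-1$, and $z_r = y_r$, together with the identities $\bigvee_{i\le j}z_i = x_{j+1}$ (and $=x_j$ when re-indexed to match $\beta$) and $\bigvee_{k>j}z_k = y_{j+1}$. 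Ordering the blocks as the tuple $(z_0,\ldots,z_r)$ gives $\tau\in\redm{\OD(X,\sqcup)}$ with $\beta(\tau)$ the prescribed chain. The induction step — verifying that the size-three decompositions produced by (CM) at consecutive levels are mutually compatible and patch together via (EX) into one honest decomposition of $X$, with the join identities holding at every level — is the technical heart of the argument and the step I expect to be the main obstacle; the bookkeeping of $\sqcup$-compatibility (as opposed to mere poset joins) across the whole chain must be tracked carefully, leaning on \ref{rk:CPprop} for the fact that $\{x'\wedge y, y'\}$ is automatically a $\sqcup$-compatible decomposition of $y$ by the height count. Once surjectivity is established, combined with the embedding property already proved, $\beta$ is an isomorphism onto $\Delta\Charney(X,\sqcup)$.
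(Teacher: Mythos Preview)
Your approach matches the paper's: the same map $\beta$, the same merging-consecutive-blocks argument for order-preservation (as in \ref{lm:GMapOrdDtoS}), the same downward-closed-via-coarsenings observation, and the same (CM)-then-(EX) induction for surjectivity, with the same candidate blocks $z_j = x_{j}\wedge y_{j-1}$ up to an index shift.

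Two places need sharpening. First, your injectivity argument does not quite work as written: unique $\sch$-complements are not assumed here, and \ref{prop:latticeDecompositions} concerns meets of full decompositions of $X$, not recovery of a single block from a chain of partial joins. The clean fix (which the paper uses) is to note that $\beta(\tau)$ records \emph{both} coordinates of each pair, so with the conventions $y_{-1}=1$ and $x_r=1$ one has $z_j = x_j\wedge y_{j-1}$, the meet existing inside the Boolean sublattice spanned by the decomposition $\{z_0,\ldots,z_r\}$. Second, you prove injectivity and order-preservation but do not explicitly verify order-\emph{reflection}, i.e.\ that $\beta(\tau)\supseteq\beta(\sigma)$ forces $\tau\le\sigma$ in $\redm{\OD(X,\sqcup)}^{\op}$; this does follow from your downward-closed description combined with injectivity (every face of $\beta(\tau)$ is $\beta$ of a unique coarsening of $\tau$), but since ``embedding'' means more than ``injective and order-preserving'' it is worth stating the implication.
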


\begin{proof}
It is clear that $\beta$ is a well-defined, order-preserving and rank-preserving map, and also injective: if $\beta(z_0,\ldots,z_r) = \{ (x_0,y_0) < \cdots < (x_{r-1}, y_{r-1}) \}$ then
\begin{equation}
\label{eq:charneyInverse}
    z_j = (\bigvee_{i=0}^j z_i\big) \wedge \big( \bigvee_{k=j}^r z_k\big) = x_j \wedge y_{j-1}.
\end{equation}
Note that this meet always exists by definition of a decomposition.
By convention, we set $x_{-1}=0, y_{-1}=1$ and $x_r=1,y_r=0$.

We show now that $\beta$ is an embedding.
Suppose that $\beta(\tau)\supseteq \beta(\sigma)$ with
\[ \beta(\tau) = \beta(z_0,\ldots,z_r) = \{ (x_0,y_0) < \cdots < (x_{r-1}, y_{r-1}) \}, \]
\[ \beta(\sigma) = \beta(w_0,\ldots,w_s) = \{ (u_0,v_0) < \cdots < (u_{s-1}, v_{s-1}) \}.\]
For $0\leq i\leq s-1$ we take the unique index $0\leq j_i\leq r-1$ such that $(u_i,v_i) = (x_{j_i},y_{j_i})$.
Since $(u_{-1},v_{-1}) = (x_{-1},y_{-1})$ and $(u_s,v_s) = (x_r,y_r)$, we set $j_{-1} = -1$ and $j_s = r$.
It is clear that $(j_i)_{i=-1}^{s}$ is a strictly monotone increasing sequence.
Then, for all $0\leq i\leq s$ we have
\[ w_i = u_i\wedge v_{i-1} = x_{j_i}\wedge y_{j_{i-1}} = (\bigvee_{k=0}^{j_i} z_k\big) \wedge \big( \bigvee_{k=j_{i-1}+1}^r z_k\big) = \bigvee_{k=j_{i-1}+1}^{j_i}z_k.\]
This implies that $\tau\leq \sigma$, so $\beta$ is a poset embedding.
It is also clear that if $\omega \subseteq \beta(z)$ then $\omega \in \Im(\beta)$.

Finally, we show that $\beta$ is surjective under properties (EX) and (CM).
We argue by induction on the size $r$ of a chain $s \in \Delta \Charney(X,\sqcup)$ and in view of \ref{eq:charneyInverse}, we show that if
\[ s = \{ (x_0,y_0) < \cdots < (x_r,y_r) \} \]
then $y_{i-1}\wedge x_i$ exists for all $1\leq i\leq r$ and, if we set $y_{-1} = 1$ and $x_{r+1} = 1$, then $\{x_i \wedge y_{i-1} \tq 0\leq i\leq r+1\}\in \D(X,\sqcup)$.

The case $r = 0$ is trivial.
Suppose then that $r > 0$ and that $s = \{ (x_0,y_0) < \cdots < (x_r,y_r)\}$.
By induction, we have a $\sqcup$-decomposition 
\[ \sigma = \{ x_0, x_0\wedge y_1,\ldots, x_{r-1}\wedge y_{r-2}, y_{r-1}\}\in \D(X,\sqcup).\]
By property (CM) applied to $(x_{r-1},y_{r-1}) < (x_r,y_r)$, we have  $\tau := \{x_r\wedge y_{r-1}, x_{r-1}, y_{r}\}\in \D(X,\sqcup)$.
Hence $\{x_r\wedge y_{r-1},y_r\}\in\D( \Sub(X,\sqcup)_{\leq y_{r-1}})$ by \ref{rk:CPprop} and it is $\sqcup$-compatible in $\Sub(X,\sqcup)$, so by property (EX) we have
\[ \sigma' := \left( \sigma \setminus \{y_{r-1}\} \right) \cup \tau \in \D(X,\sqcup).\]
But note that
\[ \sigma' = \{x_0, x_1\wedge y_0,\ldots, x_{r-1}\wedge y_{r-2}, x_r\wedge y_{r-1}, y_r\} = \{ x_i\wedge y_{i-1} \tq 0\leq i\leq r+1\}.\]
This concludes the induction, and thus $\beta$ is surjective since
\[ \beta(x_0, x_1\wedge y_0,\ldots, x_r\wedge y_{r-1}, y_r) = \{ (x_0,y_0) < \cdots < (x_r,y_r) \}. \]
\end{proof}

We summarize the non-ordered constructions in \ref{tab:objects}.

\renewcommand{\arraystretch}{1.5}
\begin{table}[ht]
\centering
    \centering
    \begin{tabular}{|c|c|c|}
    \hline
    Structure & Name & Reference\\
    \hline
    $\Sub(X)$ & \parbox{8cm}{\centering poset of subobjects} & \ref{sub:monoidalCat} \\
    $\Sub(X,\sqcup)$ & \parbox{8cm}{\centering poset of $\sqcup$-complemented subobjects} & \ref{def:decompositionsCategory}\\
    $\D(X,\sqcup)$ & \parbox{8cm}{\centering poset of $\sqcup$-decompositions} & \ref{def:decompositionsCategory}\\
    $\PD(X,\sqcup)$ & \parbox{8cm}{\centering poset of $\sqcup$-partial  decomposition} & \ref{def:decompositionsCategory}\\
    $\PF(X,\sqcup)$ & \parbox{8cm}{\centering complex of partial frames} & \ref{def:frames}\\
    $\F(X,\sqcup)$ & \parbox{8cm}{\centering frame complex} & \ref{def:frames}\\
    $\PVog(X,\sqcup,P)$ & \parbox{8cm}{\centering complex of partial bases} & \ref{def:vogtmann}\\
    $\Vog(X,\sqcup,P)$ & \parbox{8cm}{\centering (pure) complex of partial bases} & \ref{def:vogtmann}\\
    $\Bergman(X,\sqcup)$ & \parbox{8cm}{\centering augmented Bergman complex} & \ref{def:bergman}\\
    $\Charney(X,\sqcup)$ & \parbox{8cm}{\centering Charney poset} & \ref{def:charney}\\
    \hline
\end{tabular}
\smallskip
\caption{Structures associated to an object $X$ of an ISM-category $(\C,\sqcup)$ for which $\Sub(X,\sqcup)$ has finite height}
\label{tab:objects}
\end{table}

\begin{remark}
\label{rk:posetApproach}
Most of the structures we defined for an object of an ISM-category, along with their corresponding properties, can be translated to the context of posets according to \ref{def:decompositionsAndPD}.
In the case of lattices, they coincide with the definitions in the categorical context as discussed in \ref{ex:latticeAsCategoryApproach}.
However, we will not present those analogous results here; the interested reader can verify the validity of the results presented in this article within the context of posets.
\end{remark}

\section{Homotopy equivalences and isomorphisms}
\label{sec:homotopyposets}

\subsection{General homotopy formulas}

In this section, we first recall some known facts about homotopy types of posets and then apply them to
the posets we defined in the previous sections. When writing down explicit formulas for the homotopy
type, we identify the poset with its order complex. For example $\T \simeq \S \vee \S$ means that the
geometric realization of the order complex of $\T$ is homotopy equivalent to a wedge sum of two
copies of the geometric realization of the order complex of $\S$. If we need to emphasize the topological 
spaces behind our results and constructions, we write $|\S|$ for the geometric realization of the order
complex of a poset $\S$.

Assume that $\S$ is a poset of finite height $n$.
We say that $\S$ is spherical if it is $(n-1)$-connected or, equivalently, homotopy equivalent to a wedge of $n$-spheres.
The poset $\S$ is Cohen-Macaulay if $\S$ is spherical and for all $x,y\in \S$, the intervals $\S_{>x}$, $\S_{<y}$ and $\S_{> x} \cap \S_{< y}$ (if $x<y$) are spherical of height $n-h(x)-1$, $h(y)-1$ and $h(y)-h(x)-2$ respectively.
This is the classical notion of homotopically Cohen-Macaulayness of a poset or simplicial complex.
This implies Cohen-Macaulayness over any field, which is defined by the corresponding high homological connectedness.

Recall that if $f,g:\T\to\S$ are order-preserving maps such that $f(x)\leq g(x)$ for all $x\in \T$, then $f$ and $g$ are homotopy equivalent.
We will repeatedly use this fact throughout this paper to obtain homotopy equivalences and establish the contractibility of some posets.

The first tool we will need is a consequence of Propositions 3.1, 3.5, and 3.7 from \cite{WZZ}.
Note that in \cite{WZZ}, Proposition 3.1 was formulated only for diagrams over finite posets, but the proof 
only relies on paracompactness of the spaces involved (see e.g., \cite{S}). As a consequence, the
results from \cite{WZZ} can be applied to finite height posets.

\begin{proposition}
\label{prop:wedgeLemma}
  Let $\S,\T$ be posets of finite height and $f:\T\to \S$ a poset map such that for all $x\in \S$, 
  the inclusion $f^{-1}\big(\,\S_{<x}\,\big) \hookrightarrow f^{-1}\big(\,\S_{\leq x}\,\big)$ is homotopy 
  equivalent to the constant map with image $c_x\in f^{-1}(\S_{\leq x})$.
  Then there is a homotopy equivalence
  \[ \T \simeq \S \vee \bigvee_{x\in \S} f^{-1}\big(\,\S_{\leq x}\,\big) * \S_{>x},\]
  where the wedge identifies $x\in \S$ with $c_x\in f^{-1}(\S_{\leq x})$.
\end{proposition}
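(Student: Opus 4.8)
The statement is essentially a generalized Wedge Lemma of the type appearing in work of Björner--Wachs--Welker and related papers. The plan is to reduce to \cite{WZZ} by a homotopy colimit argument. First I would consider the poset map $f\co \T \to \S$ and form the Grothendieck-type diagram: to each $x \in \S$ associate the space $f^{-1}(\S_{\leq x})$, with the maps $f^{-1}(\S_{\leq x}) \hookrightarrow f^{-1}(\S_{\leq y})$ for $x \leq y$. The homotopy colimit of this diagram over $\S$ is a standard model that maps to $\T$; since the fibers $f^{-1}(\S_{\leq x})$ cover $\T$ and are compatible with the order, the canonical map $\hocolim \to \T$ is a homotopy equivalence. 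This is where paracompactness enters, and as noted in the excerpt, the relevant propositions of \cite{WZZ} (3.1, 3.5, 3.7) are valid for finite-height posets by the argument in \cite{S}, so I may invoke them directly.

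The second step is to analyze the diagram $x \mapsto f^{-1}(\S_{\leq x})$ using the hypothesis. The key point is that $f^{-1}(\S_{\leq x})$ is built from $f^{-1}(\S_{< x})$ by ``coning off'': the hypothesis says the inclusion $f^{-1}(\S_{<x}) \hookrightarrow f^{-1}(\S_{\leq x})$ is homotopic to a constant map $c_x$. By a mapping-cylinder / pushout argument, $f^{-1}(\S_{\leq x})$ is homotopy equivalent, rel $f^{-1}(\S_{<x})$, to the (unreduced) mapping cone of the inclusion, which since the inclusion is nullhomotopic is $f^{-1}(\S_{\leq x}) \simeq (f^{-1}(\S_{<x}) * \{pt\}) \cup \dots$; more precisely one gets that the diagram is equivalent to one where the ``new'' contribution at $x$ is a point glued to the rest. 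Feeding this into the homotopy colimit decomposition from \cite{WZZ} — which expresses $\hocolim$ over $\S$ of such a diagram as a wedge of $\S$ with joins $f^{-1}(\S_{\leq x}) * \S_{> x}$ — yields exactly the claimed formula, with the wedge point at $x$ identified with $c_x \in f^{-1}(\S_{\leq x})$.

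More concretely, the decomposition in \cite{WZZ} works by filtering $\S$ by height (or by any linear extension) and inductively building up $\T$: at each stage one attaches the ``new'' part of the fiber over $x$, which by hypothesis is attached along a nullhomotopic map, hence splits off as a wedge summand after suspension-type bookkeeping. The join $\S_{>x}$ appears because attaching the cone on $f^{-1}(\S_{<x})$ along all of $\S_{\geq x}$ contributes $f^{-1}(\S_{\leq x}) * \S_{>x}$ to the wedge — this is the standard ``links become joins'' phenomenon in these colimit formulas. I would organize this as: (i) cite \cite{WZZ} Prop.~3.1 to get $\T \simeq \hocolim_{x \in \S} f^{-1}(\S_{\leq x})$; (ii) cite \cite{WZZ} Prop.~3.5 and 3.7 (the wedge/join decomposition of such homotopy colimits under the nullhomotopy hypothesis on the structure maps out of the ``$<x$'' part) to conclude.

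\textbf{Main obstacle.} The technical heart is verifying that the hypotheses of the \cite{WZZ} propositions are met — in particular that the nullhomotopy condition stated here (inclusion $f^{-1}(\S_{<x}) \hookrightarrow f^{-1}(\S_{\leq x})$ homotopic to a \emph{constant}) matches the precise hypothesis used there, and that one may pass freely between the comma-category fibers $f^{-1}(\S_{\leq x})$, $f^{-1}(\S_{<x})$ and their order complexes without finiteness. The paracompactness remark in the excerpt is doing exactly this work, so the ``proof'' is really a careful citation: once \cite{WZZ} is granted for finite-height posets, the formula is immediate. I expect no genuinely new argument is needed beyond correctly matching notation and the identification of wedge points $x \leftrightarrow c_x$.
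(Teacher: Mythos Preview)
Your approach is correct and matches the paper's exactly: the paper does not give an independent proof but simply states that the proposition is a consequence of Propositions~3.1, 3.5, and 3.7 of \cite{WZZ}, together with the remark that those results extend from finite posets to finite-height posets via paracompactness (citing \cite{S}). Your proposal is a faithful unpacking of precisely this citation, so nothing further is needed.
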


The following well-known fact will be useful for verifying the requirements of 
\ref{prop:wedgeLemma}.

\begin{lemma}
\label{lm:nullHomotopicInclusion}
  Let $\Q\subseteq \S$ be posets such that $\S$ is $k$-connected and $\Q$ has dimension at most 
  $k$.
  Then the inclusion $\Q\hookrightarrow \S$ is null-homotopic.
\end{lemma}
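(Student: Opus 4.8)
The plan is to pass to geometric realizations and invoke the classical principle that every map from a CW complex of dimension at most $k$ into a $k$-connected space is null-homotopic. The inclusion $\Q\hookrightarrow\S$ of posets induces an inclusion of order complexes $\Delta(\Q)\subseteq\Delta(\S)$, and on geometric realizations this is a map $\iota\colon|\Delta(\Q)|\to|\Delta(\S)|$ of CW complexes with $\dim|\Delta(\Q)|\le k$ and with $|\Delta(\S)|$ $k$-connected by hypothesis. If $k<0$ then $\Q=\emptyset$ and there is nothing to prove, so I would assume $k\ge 0$; in particular $|\Delta(\S)|$ is non-empty and path-connected, and I fix a basepoint $y_0$ in it.

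To produce a null-homotopy of $\iota$ I would solve the associated extension problem. Equip $|\Delta(\Q)|\times[0,1]$ with the product CW structure; then $A:=|\Delta(\Q)|\times\{0,1\}$ is a subcomplex on which a map to $|\Delta(\S)|$ is already prescribed, namely $\iota$ on $|\Delta(\Q)|\times\{0\}$ and the constant map at $y_0$ on $|\Delta(\Q)|\times\{1\}$. The relative CW complex $\big(|\Delta(\Q)|\times[0,1],\,A\big)$ has exactly one $(j+1)$-cell, namely $e\times(0,1)$, for each $j$-cell $e$ of $|\Delta(\Q)|$, hence cells only in dimensions $1,\dots,k+1$. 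I would extend the map inductively on dimension: once it is defined on the $j$-skeleton of the pair, extending over a $(j+1)$-cell amounts to extending a map $S^{j}\to|\Delta(\S)|$ (its restriction to the boundary of that cell) over the disk $D^{j+1}$, which is possible because $\pi_j\big(|\Delta(\S)|\big)=0$ for $0\le j\le k$ (the case $j=0$ being path-connectedness). After extending over all cells one obtains a map $|\Delta(\Q)|\times[0,1]\to|\Delta(\S)|$ restricting to $\iota$ at one end and to a constant at the other, i.e.\ a null-homotopy of $\iota$.

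There is essentially no obstacle here beyond invoking the correct classical input; the only point worth a moment's care is that $|\Delta(\S)|$ may be infinite-dimensional (for instance when $\S$ has infinite height), so the argument is arranged to use the finite-dimensionality of the source $|\Delta(\Q)|$ only, and not of the target. As an alternative to writing out the skeletal extension, one may simply cite the standard fact (e.g.\ via obstruction theory) that $[X,Y]$ is a single point whenever $X$ is a CW complex with $\dim X\le k$ and $Y$ is $k$-connected, applied to $X=|\Delta(\Q)|$ and $Y=|\Delta(\S)|$.
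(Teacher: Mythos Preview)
Your proof is correct and is precisely the standard obstruction-theoretic argument one would give for this classical fact. The paper itself does not prove this lemma: it simply states it as a ``well-known fact'' and moves on. So there is nothing to compare against; you have supplied the expected justification that the paper omits.
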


Then from \ref{prop:wedgeLemma} and \ref{lm:nullHomotopicInclusion} we can derive the following version of Quillen's fiber theorem, which we will use repeatedly.

\begin{theorem}[Quillen's fiber theorem]
\label{thm:quillenFiber}
  Let $f:\T\to \S$ be a map between posets of finite height.
  Let $m\geq -1$, and suppose that for all $x\in \S$, $f^{-1}(\S_{\leq x}) * \S_{>x}$ is $m$-connected.
  Then $f$ is an $(m+1)$-equivalence. 
\end{theorem}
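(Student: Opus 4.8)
This is the connectivity form of Quillen's fiber lemma, and the plan is to deduce it from the wedge lemma \ref{prop:wedgeLemma} together with \ref{lm:nullHomotopicInclusion}. First I would record why the wedge lemma delivers exactly the right output. Assume momentarily that for every $x\in\S$ the inclusion $f^{-1}(\S_{<x})\hookrightarrow f^{-1}(\S_{\leq x})$ is null-homotopic; then \ref{prop:wedgeLemma} gives
\[ \T \;\simeq\; \S \vee \bigvee_{x\in\S}\bigl(f^{-1}(\S_{\leq x}) * \S_{>x}\bigr), \]
and, unwinding the construction, $f$ corresponds under this equivalence (up to homotopy) to the retraction of the wedge onto its $\S$-summand, sending each join $f^{-1}(\S_{\leq x}) * \S_{>x}$ to the wedge point $c_x$. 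Since each such summand is $m$-connected by hypothesis, this retraction is an isomorphism on $\pi_i$ for $i\leq m$ and an epimorphism on $\pi_{m+1}$: the inclusion of the $\S$-summand splits it, the discarded summands contribute nothing to homotopy in degrees $\leq m$, and the lowest Whitehead products pairing $\S$ with an $m$-connected summand lie in degree $\geq m+2$. Hence $f$ is an $(m+1)$-equivalence.

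The substance is therefore to arrange the hypothesis of \ref{prop:wedgeLemma}, which in general fails for $f$ itself. The route I would take is to reformulate $|\T|$ as a homotopy colimit: the poset $\T$ is homotopy equivalent, via an explicit section $t\mapsto(f(t),t)$ and a closure argument (so no fiber theorem is needed here), to the Grothendieck construction $\{(x,t)\in\S\times\T \tq f(t)\leq x\}$, whence by the comparison results of \cite{WZZ} underpinning \ref{prop:wedgeLemma} one gets $|\T|\simeq\operatorname{hocolim}_{x\in\S}|f^{-1}(\S_{\leq x})|$, compatibly with the maps to $|\S|\simeq\operatorname{hocolim}_{x\in\S}(\ast)$ (equivalently, one can track $|\T|$ inside the non-Hausdorff mapping cylinder of $f$). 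Filtering this homotopy colimit by the simplicial degree of $\Delta(\S)$, the space attached at stage $p$ is, after grouping the chains $x_0<\cdots<x_p$ of $\S$ by their bottom element $x_0$, a wedge of suspensions of the joins $f^{-1}(\S_{\leq x_0}) * \S_{>x_0}$. These are $m$-connected by hypothesis, so the induced map from the $p$-th graded piece to the corresponding graded piece of the filtration of $|\S|$ (where the fibers have been collapsed to points) is $(m+1)$-connected; by \ref{lm:nullHomotopicInclusion} and induction on $p$ — of which there are finitely many, since $\S$ has finite height — it follows that $|\T|\to|\S|$ is an $(m+1)$-equivalence.

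The hard part is precisely this second step: the wedge lemma cannot be applied to $f$ on the nose, and the honest derivation must pass through the homotopy-colimit picture and carry out the bookkeeping that matches the graded pieces of the $\Delta(\S)$-filtration with the joins $f^{-1}(\S_{\leq x}) * \S_{>x}$ and checks that the $m$-connectivity estimates are preserved along the filtration; the wedge formula of the first paragraph is only the shape of the conclusion, not where the work lies. A further point, innocuous in the finite-height setting here but worth keeping in mind, is that the comparison lemmas of \cite{WZZ} are being applied beyond the finite case, so one relies on paracompactness of the geometric realizations, exactly as flagged before the statement of \ref{prop:wedgeLemma}.
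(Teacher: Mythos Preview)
Your diagnosis is right: the hypothesis of \ref{prop:wedgeLemma} (that each inclusion $f^{-1}(\S_{<x})\hookrightarrow f^{-1}(\S_{\leq x})$ be null-homotopic) is \emph{not} implied by the connectivity of the joins $f^{-1}(\S_{\leq x})*\S_{>x}$, so one cannot simply plug into the wedge formula. The paper does not spell out a proof at all; it only gestures at \ref{prop:wedgeLemma} and \ref{lm:nullHomotopicInclusion}, really meaning the underlying homotopy-colimit comparison from \cite{WZZ} that yields both. Your instinct to pass to the model $|\T|\simeq\operatorname{hocolim}_{x\in\S}|f^{-1}(\S_{\leq x})|$ and compare with $|\S|\simeq\operatorname{hocolim}_{x\in\S}(\ast)$ is exactly that machinery, and is the correct route.

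Where your argument breaks is the identification of the filtration quotients. The skeletal cofiber at stage $p$ of the simplicial replacement is
\[
\mathrm{sk}_p/\mathrm{sk}_{p-1}\;\simeq\;\bigvee_{x_0<\cdots<x_p}\Sigma^p\,|f^{-1}(\S_{\leq x_0})|_+,
\]
a wedge over \emph{$p$-chains}, not over bottom elements $x_0$; grouping by $x_0$ only captures the $(p-1)$-simplices of $\S_{>x_0}$, not the full join $f^{-1}(\S_{\leq x_0})*\S_{>x_0}$. So the claim ``the space attached at stage $p$ is a wedge of suspensions of the joins'' is false as stated, and the appeal to \ref{lm:nullHomotopicInclusion} at this point is left hanging --- you never say what inclusion is being trivialized. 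A correct version compares the \emph{map} of simplicial spaces levelwise: the $p$-th relative cofiber is $\bigvee_{x_0<\cdots<x_p}\Sigma^{p+1}|f^{-1}(\S_{\leq x_0})|$, and one must then argue (e.g.\ via the hocolim comparison lemma in \cite{WZZ}, or an inductive gluing over a height filtration of $\S$ rather than the simplicial degree) that the join hypothesis controls these in the right degrees. Your first paragraph, being conditional on an unverified assumption, is motivation rather than proof; it can be dropped.
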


We will also use the non-Hausdorff mapping cylinder (see \cite{BM}) of a map $f:\T\to \S$  between posets. This is 
the poset $M_f = \T\coprod \S$ such that its ordering $\leq$ restricts to the given ordering in $\T$ and $\S$, and 
for $x\in \T$ and $y\in \S$ we have $x < y$ if $f(x)\leq y$.
Note that if $\S$ has a unique maximal element $1_{\S}$ then this is also the unique maximal element of $M_f$, and we can write $\redm{M_f} = M_f \setminus \{1_{\S}\}$.

\begin{example}
\label{ex:nonHausdorff}
Let $\T$ be the poset with points $(0,0),(0,1),(1,0),(1,1)$, with $(a,b)\leq (c,d)$ if and only if $a\leq c$.
Let $\S$ be the poset $\{0 < 1\}$, and consider the projection $f:\T\to \S$ defined by $f(a,b)=a$.
Then the non-Hausdorff mapping cylinder of $f$ is the poset displayed in \ref{fig:nonHausdorff1}.

\begin{figure}[ht]
    \centering
    \begin{tikzpicture}
	\draw[draw=black, fill=black, thin, solid] (-3.00,-1.00) circle (0.1);
	\draw[draw=black, fill=black, thin, solid] (-3.00,1.00) circle (0.1);
	\draw[draw=black, fill=black, thin, solid] (-1.00,1.00) circle (0.1);
	\draw[draw=black, fill=black, thin, solid] (-1.00,-1.00) circle (0.1);
	\draw[draw=black, thin, solid] (-3.00,-1.00) -- (-1.00,1.00);
	\draw[draw=black, thin, solid] (-3.00,1.00) -- (-3.00,-1.00);
	\draw[draw=black, thin, solid] (-3.00,1.00) -- (-1.00,-1.00);
	\draw[draw=black, thin, solid] (-1.00,-1.00) -- (-1.00,1.00);
	\draw[draw=black, fill=black, thin, solid] (1.00,2.50) circle (0.1);
	\draw[draw=black, fill=black, thin, solid] (1.00,4.50) circle (0.1);
	\draw[draw=black, thin, solid] (-3.00,-1.00) -- (1.00,2.50);
	\draw[draw=black, thin, solid] (-1.00,-1.00) -- (1.00,2.50);
	\draw[draw=black, thin, solid] (-1.00,1.00) -- (1.00,4.50);
	\draw[draw=black, thin, solid] (-3.00,1.00) -- (1.00,4.50);
	\draw[draw=black, thin, solid] (1.00,2.50) -- (1.00,4.50);
	\node[black, anchor=south west] at (-4.08,-1.60) {(0,0)};
	\node[black, anchor=south west] at (-2.08,-1.60) {(0,1)};
	\node[black, anchor=south west] at (-2.08,0.78) {(1,1)};
	\node[black, anchor=south west] at (-4.08,0.78) {(1,0)};
	\node[black, anchor=south west] at (0.76,4.61) {1};
	\node[black, anchor=south west] at (0.76,1.71) {0};
\end{tikzpicture}
    \caption{Hasse diagram of the non-Hausdorff mapping cylinder of the poset map $f:\T\to \S$}
    \label{fig:nonHausdorff1}
\end{figure}
\end{example}

The adjective non-Hausdorff is motivated by the
non-Hausdorffness of the construction when considering finite posets as finite topological spaces.
We refer the reader to \cite{BM} for a more detailed discussion.

\begin{lemma}
\label{lm:mappingCylinder}
    Let $f:\T\to \S$ be a map of posets of finite height, where $\S$ is bounded. If $M := \{ x \in \T~|~f(x) = 1_{\S}\}$ is an antichain then 
    $\redm{M_f} = M_f \setminus \{1_{\S}\}$ is homotopy equivalent to
    $$\bigvee_{x \in M} \Sigma \T_{< x}.$$
\end{lemma}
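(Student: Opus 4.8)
Here is the plan. Write $\redm{\S}=\S\setminus\{1_\S\}$, $\T_0=\T\setminus M=\{x\in\T\tq f(x)\neq 1_\S\}$, so that as a set $\redm{M_f}=\T\sqcup\redm{\S}$. First I would record the structural features forced by the hypotheses: since $1_\S$ is the maximum of $\S$ and $f$ is order-preserving, $M=f^{-1}(1_\S)$ is automatically an up-set of $\T$, and combining this with the hypothesis that $M$ is an antichain shows that every $x\in M$ is a maximal element of $\T$ and that $\T_{<x}\cap M=\emptyset$, i.e.\ $\T_{<x}\subseteq\T_0$. Inside $\redm{M_f}$ no element of $\redm{\S}$ lies below an element of $\T$, and $f(x)=1_\S\not\leq y$ for any $y\in\redm{\S}$; hence each $x\in M$ is maximal in $\redm{M_f}$ and $(\redm{M_f})_{<x}=\T_{<x}$. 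Finally I would observe that the full subposet $N$ of $\redm{M_f}$ on $\T_0\cup\redm{\S}$ is, straight from the definitions, exactly the non-Hausdorff mapping cylinder $M_{f_0}$ of the restriction $f_0\co\T_0\to\redm{\S}$ (which does land in $\redm{\S}$).

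The strategy is then: show $\Delta(M_{f_0})$ is contractible, and exhibit $\redm{M_f}$ as $M_{f_0}$ with the subcomplexes $\Delta(\T_{<x})$, $x\in M$, coned off. For contractibility I would first dispose of the degenerate case $\S=\{1_\S\}$ (where $\redm{M_f}=\T$ is an antichain) and then assume $0_\S\neq 1_\S$, so that $0_\S\in\redm{\S}$ is a minimum of $\redm{\S}$ and $\Delta(\redm{\S})$ is a cone, hence contractible. The inclusion $\iota\co\redm{\S}\hookrightarrow M_{f_0}$ has the retraction $r\co M_{f_0}\to\redm{\S}$ with $r|_{\redm{\S}}=\id$ and $r(x)=f_0(x)$ for $x\in\T_0$; one checks $r$ is order-preserving, $r\circ\iota=\id$, and $\iota\circ r\geq\id_{M_{f_0}}$ (because $x\leq f_0(x)$ in $M_{f_0}$), so $\iota$ and $r$ are mutually inverse homotopy equivalences and $\Delta(M_{f_0})\simeq\Delta(\redm{\S})$ is contractible. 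For the coning-off step, since every chain in $\redm{M_f}$ either lies in $N$ or contains a unique element of $M$, I would write
\[
\Delta(\redm{M_f})=\Delta(M_{f_0})\cup\bigcup_{x\in M}\bigl(\{x\}*\Delta(\T_{<x})\bigr),
\]
where each cone $\{x\}*\Delta(\T_{<x})$ meets $\Delta(M_{f_0})$ precisely in $\Delta(\T_{<x})$ and two such cones meet only inside $\Delta(M_{f_0})$, along $\Delta(\T_{<x}\cap\T_{<x'})$. Collapsing the contractible subcomplex $\Delta(M_{f_0})$ of the CW complex $\Delta(\redm{M_f})$ is a homotopy equivalence, and the quotient is the wedge (at the image point of $\Delta(M_{f_0})$) of the spaces $\bigl(\{x\}*\Delta(\T_{<x})\bigr)/\Delta(\T_{<x})$; since coning over $\Delta(\T_{<x})$ and then collapsing the base is the unreduced suspension, this yields
\[
\redm{M_f}\simeq\bigvee_{x\in M}\Sigma\,\T_{<x}.
\]

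The main point requiring care is the contractibility of $M_{f_0}$: this is exactly where boundedness of $\S$ enters, via the minimum of $\redm{\S}$, and it is the step that could fail for a non-bounded target. The antichain hypothesis on $M$ is used in two places that I would want to state carefully: it guarantees $\T_{<x}\subseteq\T_0$ so that the bases of the cones lie in $M_{f_0}$, and it guarantees that the simplices of $\Delta(\redm{M_f})$ outside $\Delta(M_{f_0})$ are organized into a disjoint family of cones indexed by $M$. (Alternatively, one could phrase the last step as recognizing $\redm{M_f}$ as the homotopy pushout of $M_{f_0}\longleftarrow\coprod_{x\in M}\T_{<x}\longrightarrow\coprod_{x\in M}\{x\}*\Delta(\T_{<x})$ and replacing the two contractible corners by points, but the subcomplex-collapse argument avoids worrying about which leg is a cofibration and works for arbitrary, possibly infinite, $M$.)
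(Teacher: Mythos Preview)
Your proposal is correct and follows essentially the same route as the paper. Both arguments show that $\redm{M_f}\setminus M$ deformation retracts onto $\redm{\S}$ via the map $r$ sending $x\in\T_0$ to $f(x)$ and fixing $\redm{\S}$ (using that $0_\S$ makes $\redm{\S}$ a cone), then collapse this contractible subcomplex and identify the quotient as the wedge of suspensions of the links $\T_{<x}$; your write-up simply supplies more of the bookkeeping (maximality of $x\in M$, the explicit cone decomposition, the degenerate case) that the paper leaves implicit.
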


\begin{proof}
  Note that $\redm{M_f}\setminus M$ is contractible since $\redm{\S}$ is contractible and the map $r:\redm{M_f}\setminus M \to \redm{\S}$, defined by 
  $r(x) = f(x)$ if $x\in \T$ or $r(x)=x$ if $x\in \S$, is a homotopy equivalence with inverse given by the natural inclusion of $\redm{\S}$ in $\redm{M_f}$.
  Therefore, $|\redm{M_f}| \simeq |\redm{M_f}|/|\redm{M_f}\setminus M|$, and since $M$ is an antichain, we conclude 
  that $|\redm{M_f}|/|\redm{M_f}\setminus M|$ is homotopy equivalent to the wedge of the suspension of the 
  links in $|\redm{M_f}|$ of $x\in M$. The fact that $\Lk_{|\redm{M_f}|}(x) = |\T_{<x}|$ then concludes the proof.
\end{proof}

Let us apply \ref{lm:mappingCylinder} to the map $f$ from \ref{ex:nonHausdorff}.

\begin{example}
Let $\T$, $\S$ and $f$ be as in \ref{ex:nonHausdorff}.
Note that $\S$ is bounded.
Following the notation of \ref{lm:mappingCylinder}, we have that $M = \{ (1,0), (1,1)\}$ is an antichain.
Thus $\redm{M_f}$ is the poset obtained from $\T$ after adding an extra element $0$ above $(0,0)$ and $(0,1)$.
The order complex of this poset is depicted in \ref{fig:nonHausdorff2}, which clearly has the homotopy type of $S^1\vee S^1$.
This coincides with the description given in \ref{lm:mappingCylinder}.

\begin{figure}[ht]
    \centering
    \begin{tikzpicture}
	\draw[draw=black, fill=black, thin, solid] (-4.00,0.00) circle (0.1);
	\draw[draw=black, fill=black, thin, solid] (0.00,0.00) circle (0.1);
	\draw[draw=black, fill=black, thin, solid] (-2.00,0.00) circle (0.1);
	\draw[draw=black, fill=black, thin, solid] (-2.00,2.00) circle (0.1);
	\draw[draw=black, fill=black, thin, solid] (-2.00,-2.00) circle (0.1);
	\draw[draw=black, thin, solid] (-4.00,0.00) -- (0.00,0.00);
	\draw[draw=black, thin, solid] (-4.00,0.00) -- (-2.00,-2.00);
	\draw[draw=black, thin, solid] (-2.00,-2.00) -- (0.00,0.00);
	\draw[draw=black, thin, solid] (0.00,0.00) -- (-2.00,2.00);
	\draw[draw=black, thin, solid] (-2.00,2.00) -- (-4.00,0.00);
	\node[black, anchor=south west] at (-5.19,-0.35) {(0,0)};
	\node[black, anchor=south west] at (0.1,-0.37) {(0,1)};
	\node[black, anchor=south west] at (-2.56,-2.75) {(1,1)};
	\node[black, anchor=south west] at (-2.52,2.05) {(1,0)};
	\node[black, anchor=south west] at (-2.22,-0.73) {0};
\end{tikzpicture}
    \caption{Geometric realization of the poset $\redm{M_f}$ from \ref{ex:nonHausdorff}}
    \label{fig:nonHausdorff2}
\end{figure}
\end{example}

In general, we also have the following formula relating the Euler characteristic of posets related by an 
order-preserving map (see e.g. Corollary 3.2 of \cite{Walker1}).

\begin{lemma}
Let $f:\T\to \S$ be a map of finite posets.
Then 
\[  \RE{\T} = \RE{\S} - \sum_{x\in \S} \RE{f^{-1}(\S_{\leq x})}\RE{\S_{>x}}.\]
\end{lemma}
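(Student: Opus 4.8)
The plan is to deduce the formula from a single chain count on the non-Hausdorff mapping cylinder $M_f = \T \coprod \S$ introduced above. Throughout, for a finite poset $P$ I use that its reduced Euler characteristic equals $\sum_{\gamma}(-1)^{|\gamma|-1}$, where $\gamma$ ranges over all chains of $P$ \emph{including} the empty chain (so $\RE{\emptyset}=-1$), together with the elementary reformulation $\sum_{\gamma}(-1)^{|\gamma|}=-\RE{P}$; I also use the standard fact that the reduced Euler characteristic vanishes on any poset possessing a least or a greatest element (its order complex is a cone).

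\emph{Step 1: $\RE{M_f}=\RE{\S}$.} The order-preserving retraction $r\co M_f\to\S$ with $r|_{\S}=\id_{\S}$ and $r|_{\T}=f$ satisfies $r\circ\iota=\id_{\S}$ for the inclusion $\iota\co\S\hookrightarrow M_f$, and $\id_{M_f}\leq\iota\circ r$ (for $t\in\T$ one has $t\leq f(t)$ in $M_f$ because $f(t)\leq f(t)$ in $\S$); hence $\iota\circ r\simeq\id_{M_f}$ and $M_f$ is homotopy equivalent to $\S$, so it has the same reduced Euler characteristic. (Alternatively one counts directly: every chain of $M_f$ meeting $\T$ splits as $\sigma'\cup\{t\}\cup\tau$ with $t=\max(\gamma\cap\T)$, $\sigma'$ a chain of $\T_{<t}$ and $\tau$ a chain of $\S_{\geq f(t)}$, so the total contribution of such chains is $\sum_{t\in\T}(-\RE{\T_{<t}})(-\RE{\S_{\geq f(t)}})=0$ since $\S_{\geq f(t)}$ has a minimum; thus $\RE{M_f}=\RE{\S}$.)

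\emph{Step 2: expand $\RE{M_f}-\RE{\T}$.} This difference is $\sum_{\gamma}(-1)^{|\gamma|-1}$ over the chains $\gamma$ of $M_f$ with $\gamma\cap\S\neq\emptyset$. For such $\gamma$ set $y:=\min(\gamma\cap\S)$, $\tau:=(\gamma\cap\S)\setminus\{y\}$ and $\sigma:=\gamma\cap\T$. Since the only order relations of $M_f$ involving both a $\T$-element and an $\S$-element have the form (element of $\T$) $<$ (element of $\S$), and since $f$ is order-preserving (so a chain $\sigma$ of $\T$ lies in $f^{-1}(\S_{\leq y})$ iff $f(\max\sigma)\leq y$), the requirement that $\gamma$ be a chain of $M_f$ is equivalent to: $\tau$ is a chain of $\S_{>y}$ and $\sigma$ is a chain of $f^{-1}(\S_{\leq y})$, with $\sigma$ and $\tau$ otherwise unconstrained. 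The assignment $\gamma\mapsto(y,\sigma,\tau)$ is then a bijection, and $|\gamma|=|\sigma|+|\tau|+1$, so
\[
\RE{M_f}-\RE{\T}=\sum_{y\in\S}\Big(\sum_{\sigma}(-1)^{|\sigma|}\Big)\Big(\sum_{\tau}(-1)^{|\tau|}\Big)=\sum_{y\in\S}\RE{f^{-1}(\S_{\leq y})}\,\RE{\S_{>y}},
\]
the inner sums ranging over chains of $f^{-1}(\S_{\leq y})$ and of $\S_{>y}$ respectively. Combining Steps 1 and 2 gives $\RE{\T}=\RE{\S}-\sum_{x\in\S}\RE{f^{-1}(\S_{\leq x})}\RE{\S_{>x}}$.

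The argument is essentially bookkeeping; the only point that needs care is the treatment of the empty chain and the convention $\RE{\emptyset}=-1$, which is precisely what makes the extreme cases $\S_{>y}=\emptyset$ and $f^{-1}(\S_{\leq y})=\emptyset$ come out correctly, together with the vanishing of the reduced Euler characteristic on cones used in Step 1.
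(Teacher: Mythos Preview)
Your proof is correct. The paper does not actually prove this lemma: it simply records the identity as a known fact and cites Walker's paper \cite{Walker1} (Corollary 3.2 there) for a reference, so there is no ``paper's own proof'' to compare against.

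That said, your route is worth a comment. Walker's original argument is essentially a direct M\"obius-function calculation on chains, whereas you package the same chain count through the non-Hausdorff mapping cylinder $M_f$ that the paper introduced just above. This is a pleasant touch: Step~1 reuses the deformation retraction $M_f\simeq\S$ already implicit in the paper's discussion of $M_f$, and Step~2 is then a clean bijection on chains of $M_f$ meeting $\S$, split at the minimal $\S$-element. The two approaches are formally equivalent (both reduce to the same alternating chain sums), but yours makes explicit why the mapping-cylinder construction is the natural home for this identity. Your handling of the empty chain and the convention $\RE{\emptyset}=-1$ is correct and is exactly what is needed for the boundary cases $\S_{>y}=\emptyset$ and $f^{-1}(\S_{\leq y})=\emptyset$.
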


\subsection{Homotopy formulas and results for posets from \texorpdfstring{\ref{sec:definitionsposet}}{Section 2} - \texorpdfstring{\ref{sec:derivedposets}}{Section 4}}

Our first homotopy equivalence relates the strictly partial decomposition with the subposet of $\sch$-complemented subobjects.
We refer the reader to \ref{def:hcomplementedForObjects} for the different notions of complementation employed here, as well as for the definition of the subposet $\Sub_h(X,\sqcup)$.
Recall that $\Phi(\sigma) = \bigvee_{x\in \sigma} x$ for a partial decomposition $\sigma \in \PD(X,\sqcup)$.

The main applications of the results in this subsection can be found in \ref{sec:examples}.
Here we will give just some small examples. 

\begin{proposition}
\label{prop:PhiEquivalence}
The map $\Phi:\red{\PD(X,\sqcup)} \setminus \D(X,\sqcup)\to \red{\Sub_h(X,\sqcup)}$ is a homotopy equivalence with inverse $i:\red{\Sub_h(X,\sqcup)}\to \red{\PD(X,\sqcup)} \setminus \D(X,\sqcup)$ given by $i(x) = \{x\}$.
In particular, $\red{\Sub_h(X,\sqcup)}$ is a deformation retract of $\red{\PD(X,\sqcup)} \setminus \D(X,\sqcup)$.

Moreover, $\red{\Ord\PD(X,\sqcup)}\setminus \Ord\D(X,\sqcup)$ is also homotopy equivalent to $\red{\Sub_h(X,\sqcup)}$.
\end{proposition}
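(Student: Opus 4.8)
The plan is to run the ordered analogue of the argument proving the first part of the proposition. I would introduce the composite order-preserving map
\[
\Psi \;=\; \Phi\circ\Forget_{\PD}\,:\;\red{\Ord\PD(X,\sqcup)}\setminus\Ord\D(X,\sqcup)\longrightarrow\red{\Sub_h(X,\sqcup)},
\]
which is order-preserving since $\Forget_{\PD}$ is a poset map (\ref{def:orderedVersions}) and $\Phi$ is order-preserving with image $\Sub_h(X,\sqcup)$ (\ref{lm:PhiSpanMap}); together with the map $i$ sending $w\in\red{\Sub_h(X,\sqcup)}$ to the length-one tuple $(w)$. The claim to establish is that $\Psi$ and $i$ are mutually inverse homotopy equivalences.

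First I would check well-definedness on the indicated subposets. If $z=(x_1,\ldots,x_r)$ lies in $\red{\Ord\PD(X,\sqcup)}\setminus\Ord\D(X,\sqcup)$, then $z\neq()$ forces $\Forget_{\PD}(z)=\{x_1,\ldots,x_r\}\neq\emptyset$, so $\Psi(z)=\bigvee_i x_i\neq 0$ because $\Phi^{-1}(0)=\{\emptyset\}$; and $z\notin\Ord\D(X,\sqcup)$ means $\{x_1,\ldots,x_r\}\notin\D(X,\sqcup)$, so $\Psi(z)\neq 1$ because $\Phi^{-1}(1)=\D(X,\sqcup)$. Hence $\Psi(z)\in\red{\Sub_h(X,\sqcup)}$. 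Conversely, if $w\in\red{\Sub_h(X,\sqcup)}$ then $w$ admits a $\sch$-complement, so $\{w\}$ is a $\sqcup$-partial decomposition which is not full because $w\neq 1$; therefore $(w)\in\red{\Ord\PD(X,\sqcup)}\setminus\Ord\D(X,\sqcup)$, so $i$ is well-defined, and it is clearly order-preserving.

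Next I would verify the two relations forcing $\Psi$ and $i$ to be homotopy inverse, exactly as in the unordered case: on the one hand $\Psi\circ i(w)=\Phi\bigl(\Forget_{\PD}((w))\bigr)=\Phi(\{w\})=w$, so $\Psi\circ i=\id$; on the other hand, for $z=(x_1,\ldots,x_r)$ we have $i\circ\Psi(z)=(x_1\vee\cdots\vee x_r)$, and since this is a tuple of length one the refinement order of \ref{def:orderedVersions} requires only $x_i\leq x_1\vee\cdots\vee x_r$ for each $i$, which holds, so $z\leq i\circ\Psi(z)$. Thus $i\circ\Psi$ and $\id$ are comparable order-preserving self-maps, hence homotopic, and combined with $\Psi\circ i=\id$ this yields $\red{\Ord\PD(X,\sqcup)}\setminus\Ord\D(X,\sqcup)\simeq\red{\Sub_h(X,\sqcup)}$.

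I do not expect a real obstacle: the argument is the ordered version of the already-established first statement, and the only delicate points are the two well-definedness checks and the observation that comparing an ordered tuple with the singleton tuple consisting of the join of its entries is immediate, since a target tuple of length one automatically satisfies the order-preserving part of the refinement condition. One could instead try to transport the conclusion along $\Forget_{\PD}$ using the first part of the proposition, but this fails directly because the fibers of $\Forget_{\PD}$ are posets of injective words and thus typically not contractible; the direct argument above avoids this issue entirely.
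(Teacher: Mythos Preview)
Your argument is correct and is precisely the ordered analogue of the paper's (very terse) proof, which just cites \ref{lm:PhiSpanMap} and the relations $\Phi\circ i=\id$ and $i\circ\Phi\geq\id$; you have simply written out in full the well-definedness checks and the comparison $z\leq i\circ\Psi(z)$ that the paper leaves implicit. Your closing remark about why transporting along $\Forget_{\PD}$ would not work directly is also accurate and shows you understand why the retraction argument is the right one here.
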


\begin{proof}
This follows from \ref{lm:PhiSpanMap}, the definition of $\Sub_h(X,\sqcup)$ and the relations $\Phi (i(x)) = x$ and $i(\Phi(\sigma)) \geq \sigma$.
\end{proof}

Now we apply the wedge-decomposition result to relate the ordered and unordered versions of the posets $\D,\PD,\F,\PF, \Vog, \PVog$. Recall the definition of $\Ord K$ for a simplicial complex $K$ from \ref{rem:orderedcomplex}.
We will sometimes write CM for Cohen-Macaulay (do not confuse with property (CM) from \ref{def:propsEXCM}).

\begin{theorem}
\label{thm:orderedVersions}
If $\Sub(X,\sqcup)$ has finite height, the following hold:
\begin{enumerate}
    \item $$\Ord\redm{\D(X,\sqcup)} \simeq \redm{\D(X,\sqcup)} \vee \bigvee_{\sigma \in \redm{\D(X,\sqcup)}} S^{|\sigma|-2} * \redm{\D(X,\sqcup)}_{< \sigma}.$$
    Hence $\Ord\D(X,\sqcup)$ is Cohen-Macaulay if $\D(X,\sqcup)$ is.
    \item $$\Ord\red{\PD(X,\sqcup)} \simeq \red{\PD(X,\sqcup)} \vee \bigvee_{\sigma \in \redm{\D(X,\sqcup)}} S^{|\sigma|-2} * (\red{\PD(X,\sqcup)})_{< \sigma}.$$
    Hence $\Ord\PD(X,\sqcup)$ is Cohen-Macaulay if $\PD(X,\sqcup)$ is.
    \item If $K$ is a finite-dimensional simplicial complex, then
    $$\Ord K \simeq K \vee \bigvee_{\sigma \in K} \bigg( \bigvee_{D(|\sigma|)} S^{|\sigma|-1} \bigg) * \Lk_{K}(\sigma),$$
    where $D(n) = \sum_{i=0}^n(-1)^i \frac{n!}{i!}$.
    Moreover, $K$ is homotopically CM (resp. CM over a ring $R$) if and only if $\Ord K$ is homotopically CM (resp. CM over $R$).
    In particular, this applies to the complexes $\F, \PF, \Vog$ or $\PVog$.
\end{enumerate}
\end{theorem}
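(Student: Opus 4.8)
I would deduce all three wedge formulas from the wedge lemma \ref{prop:wedgeLemma}, applied to the forgetful maps of \ref{def:orderedVersions} and \ref{rem:orderedcomplex}, using the fibre computations of \ref{lm:fibersForgetfulMap} and verifying the hypothesis of \ref{prop:wedgeLemma} via the connectivity estimate \ref{lm:nullHomotopicInclusion}. First I would note that the relevant posets have finite height: if $\Sub(X,\sqcup)$ has height $n$, every decomposition has at most $n$ elements by \ref{prop:heightDandPD}, so tuples in $\Ord\D(X,\sqcup)$ and $\Ord\PD(X,\sqcup)$ have length at most $n$ and these posets — as well as $\Ord K$ for finite-dimensional $K$ — have finite height, so \ref{prop:wedgeLemma} applies. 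I would also record that $\Ord\redm{\D(X,\sqcup)}$ and $\Ord\red{\PD(X,\sqcup)}$ are the order-theoretic proper parts of the bounded posets $\Ord\D(X,\sqcup)$ and $\Ord\PD(X,\sqcup)$, since the only tuple over $\{X\}$ is $(X)$ and the only one over $\emptyset$ is the empty tuple.

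For (1) I would apply \ref{prop:wedgeLemma} to the order-preserving map $\Forget_\D^{\op}\colon\big(\Ord\redm{\D(X,\sqcup)}\big)^{\op}\to\big(\redm{\D(X,\sqcup)}\big)^{\op}$. For $\sigma\in\redm{\D(X,\sqcup)}$ one has $\big(\redm{\D(X,\sqcup)}\big)^{\op}_{\le\sigma}=(\redm{\D(X,\sqcup)})_{\ge\sigma}$ and $\big(\redm{\D(X,\sqcup)}\big)^{\op}_{>\sigma}=(\redm{\D(X,\sqcup)})_{<\sigma}$, and by \ref{lm:fibersForgetfulMap}(1) the preimage of the up-set is the sphere $S^{|\sigma|-2}$ (the boundary of the permutohedron of order $|\sigma|$); this turns the $\sigma$-term of \ref{prop:wedgeLemma} into $S^{|\sigma|-2}*(\redm{\D(X,\sqcup)})_{<\sigma}$, as claimed. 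The hypothesis of \ref{prop:wedgeLemma} holds because $\Forget_\D^{-1}\big((\redm{\D(X,\sqcup)})_{>\sigma}\big)$ is the face poset of that permutohedron with the vertices and the top face deleted, whose order complex has dimension $|\sigma|-3$, while $S^{|\sigma|-2}$ is $(|\sigma|-3)$-connected, so the inclusion is null-homotopic by \ref{lm:nullHomotopicInclusion}; passing to geometric realizations of order complexes (unchanged by $(-)^{\op}$) yields the formula. Part (2) is handled identically via $\Forget_{\PD}^{\op}\colon\big(\Ord\red{\PD(X,\sqcup)}\big)^{\op}\to\big(\red{\PD(X,\sqcup)}\big)^{\op}$: for $\sigma\in\redm{\D(X,\sqcup)}$ the fibre $\Forget_{\PD}^{-1}\big((\red{\PD(X,\sqcup)})_{\ge\sigma}\big)$ is again $S^{|\sigma|-2}$ by \ref{lm:fibersForgetfulMap}(1) (here $(\red{\PD(X,\sqcup)})_{\ge\sigma}$ is a partition lattice with its top removed, since $\sigma\in\D(X,\sqcup)$ forces every partial decomposition above it to be full), while for a strictly partial $\sigma$ the fibre is contractible by \ref{lm:fibersForgetfulMap}(2); the hypothesis of \ref{prop:wedgeLemma} is then automatic over strictly partial $\sigma$ and is checked as above over $\sigma\in\redm{\D(X,\sqcup)}$, so only the latter terms contribute to the wedge.

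For (3) I would apply \ref{prop:wedgeLemma} directly, with no dualization, to $\Forget_K\colon\Ord K\to K$, treating $K$ as its face poset. For a nonempty face $\sigma\in K$, $\Forget_K^{-1}(K_{\le\sigma})$ is the complex of injective words on the vertex set of $\sigma$, which by \ref{lm:fibersForgetfulMap}(3) is homotopy equivalent to $\bigvee_{D(|\sigma|)}S^{|\sigma|-1}$ with $D(n)=\sum_{i=0}^n(-1)^i\frac{n!}{i!}$, while $K_{>\sigma}\cong\Lk_K(\sigma)$ (the void complex when $\sigma$ is a facet, so that $\big(\bigvee_{D(|\sigma|)}S^{|\sigma|-1}\big)*\Lk_K(\sigma)$ reduces to $\bigvee_{D(|\sigma|)}S^{|\sigma|-1}$); the terms with $|\sigma|=1$ are cones over links, hence contractible, so the sum may be taken over all $\sigma$ as stated. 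The hypothesis of \ref{prop:wedgeLemma} holds because $\Forget_K^{-1}(K_{<\sigma})$, the injective words on $\sigma$ of length at most $|\sigma|-1$, has order-complex dimension $|\sigma|-2$ and therefore embeds null-homotopically into the $(|\sigma|-2)$-connected wedge $\bigvee_{D(|\sigma|)}S^{|\sigma|-1}$, again by \ref{lm:nullHomotopicInclusion}.

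Finally, to obtain the Cohen--Macaulay assertions — a statement about links rather than homotopy type — I would invoke the Cohen--Macaulay refinement of the poset fibre theorem (the Cohen--Macaulay version of \ref{prop:wedgeLemma}; see \cite{WZZ}). For (1) this applies verbatim: every fibre $\Forget_\D^{-1}\big((\redm{\D(X,\sqcup)})_{\ge\sigma}\big)\cong S^{|\sigma|-2}$ is Cohen--Macaulay of dimension $|\sigma|-2=\dim(\redm{\D(X,\sqcup)})_{\ge\sigma}$, so $\Ord\D(X,\sqcup)$ is Cohen--Macaulay whenever $\D(X,\sqcup)$ is. For (3), one implication is the same, and the converse follows by induction on $\dim K$ from the wedge formula together with the isomorphism $\Ord\Lk_K(\sigma)\cong(\Ord K)_{>z}$ of \ref{lm:intervalsOrdered}(3) and the fact that intervals of Cohen--Macaulay posets are Cohen--Macaulay; the whole equivalence for complexes of injective words may also be quoted from \cite{JW}. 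I expect the genuinely delicate point to be the Cohen--Macaulay direction of (2): there the fibres of $\Forget_{\PD}$ over strictly partial partial decompositions are contractible rather than spheres of the expected dimension, so the cleanest form of the Cohen--Macaulay poset fibre theorem does not literally apply; I would treat this either with the refinement that allows such fibres over interior elements, or by arguing on the explicit wedge decomposition of (2) using the interval descriptions of \ref{lm:intervalsOrdered} and the purity of $\red{\PD(X,\sqcup)}$. Everything else is routine bookkeeping — matching the fibres of \ref{lm:fibersForgetfulMap} to the summands of \ref{prop:wedgeLemma} and the dimension-versus-connectivity estimates feeding \ref{lm:nullHomotopicInclusion}.
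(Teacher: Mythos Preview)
Your proof of the three wedge formulas is exactly the paper's: apply \ref{prop:wedgeLemma} to the forgetful maps (dualized for (1) and (2)), identify the fibres via \ref{lm:fibersForgetfulMap}, and discharge the null-homotopy hypothesis using the dimension count and \ref{lm:nullHomotopicInclusion}. Your careful bookkeeping of the opposite posets and dimensions is correct and more explicit than what the paper writes.

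For the Cohen--Macaulay assertions there are two genuine differences worth noting. First, for the ``if'' direction in (3), the paper does \emph{not} use your inductive argument via $(\Ord K)_{>z}\cong\Ord\Lk_K(\sigma)$; instead it builds an explicit section $H\colon\Lk_K(\sigma)\to(\Ord K)_{>z}$ (ordering vertices of the link) whose left inverse is the forget-and-delete map, and deduces sphericity of $\Lk_K(\sigma)$ directly from surjectivity on homotopy. Your induction is cleaner and avoids choosing an ordering; the paper's argument is more hands-on but self-contained. Second, for the CM transfer in (1)--(2) and the ``only if'' direction of (3), the paper cites Corollary~9.7 of \cite{Qui78} and Theorem~5.1 of \cite{BWW} rather than a CM refinement from \cite{WZZ}; these are the same underlying mechanism. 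Your hesitation about (2) is well placed: the paper simply invokes Quillen's corollary together with \ref{lm:fibersForgetfulMap}(2), without pausing over whether the contractible fibres over strictly partial $\sigma$ are Cohen--Macaulay of the required height, so your flagging of this as the delicate point is a sharper reading than the paper's own proof gives.
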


\begin{proof}
These conclusions follow from the description of the fibers in \ref{lm:fibersForgetfulMap} in conjunction with \ref{lm:nullHomotopicInclusion} and \ref{prop:wedgeLemma}.
Note that, when $K$ is a simplicial complex regarded as a poset, then $K_{>\sigma}$ is isomorphic to $\Lk_K(\sigma)$ for any simplex $\sigma\in K$.
The Cohen-Macaulay implication in items 1 and 2 follows from \ref{lm:fibersForgetfulMap} items 1 and 2, respectively, jointly with Corollary 9.7 of \cite{Qui78}.

The ``only if" implication of the Cohen-Macaulay part of item 3 follows by an application of Theorem 5.1 of \cite{BWW} by noting that if $\sigma \in K$, then $F_{K}^{-1}(K_{\leq \sigma})$ is Cohen-Macaulay.
That is, $F_K^{-1}(K_{\leq \sigma})$ is the poset of injective words on the alphabet $\sigma$, and this was proved to be shellable (and hence Cohen-Macaulay) by \cite{BW}.

For the ``if" implication, it remains to prove that if $\Ord K$ is Cohen-Macaulay, then the links $\Lk_K(\sigma)$ are spherical.
It is not hard to show that they have the correct dimension.
Now fix $\sigma\in K$, and let $z\in F_{K}^{-1}(\sigma)$.
Let $(v_i)_{i\in I}$ be a total ordering of the vertices of $\Lk_K(\sigma)$.
Consider the map $H:\Lk_K(\sigma)\to \Ord K_{>z}$ given by $H(\tau) = (z,v_{i_1},\ldots,v_{i_k})$ if $\tau = \{v_{i_1}<\cdots<v_{i_k}\}$.
Clearly, $H$ is a well-defined order-preserving poset map.
Let $F':\Ord K_{>z}\to \Lk_K(\sigma)$ be the map that forgets the ordering and deletes the vertices in $\sigma$, that is, $F'(w) = F_{K}(w) \setminus \sigma$.
Thus $F'H(\tau) = \tau$ is the identity map, so $F'$ induces a surjective map on the homotopy (and homology) groups.
Since $\Ord K_{>z}$ is spherical of the same dimension than $\Lk_K(\sigma)$, we conclude that $\Lk_K(\sigma)$ is spherical.
This concludes the proof.
\end{proof}


We study now natural maps between our posets that might be useful to compute the homotopy type relations between them.
We begin with the relation between the complex of partial bases $\Vog(X,\sqcup,P)$ and the frame complex.
Since these are inflation complexes in the sense of \cite{BWW} (see \ref{def:inflationComplex}), by their Theorem 6.2 adapted to the infinite situation, we can prove:

\begin{proposition}
\label{prop:inflationDecompositionCM}
Let $K$ be a finite-dimensional simplicial complex and $P = (P_x)_{x\in \A}$ a collection of non-empty sets $P_x$ indexed by the set of vertices $\A$ of $K$.
Let $p:(K,P)\to K$ be the deflation map.
Then we have the following homotopy equivalence:
\[ (K,P) \simeq K \vee \bigvee_{\sigma\in K} \bigg( \bigvee_{\prod_{x\in \sigma} |P_x|-1} S^{|\sigma|-1} \bigg) * \Lk_K(\sigma),\]
Furthermore, $p^{-1}(K_{\leq\sigma}) = P_{x_1} * \cdots * P_{x_r}$ for $\sigma = \{x_1,\ldots, x_r\}\in K$ (where the $P_x$ are regarded as discrete complexes), and $K$ is homotopically CM if and only if $(K,P)$ is homotopically CM.
The same conclusions hold for Cohen-Macaulayness over a ring.

In particular, this applies to $K = \PF(X,\sqcup)$ or $\F(X,\sqcup)$ with $(K,P) = \PVog(X,\sqcup,P)$ or $\Vog(X,\sqcup,P)$ respectively.
\end{proposition}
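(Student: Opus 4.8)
The homotopy equivalence will be deduced from \ref{prop:wedgeLemma} applied to the deflation map $p\colon(K,P)\to K$, with $K$ regarded as its poset of nonempty faces. The first step is to identify the fibers. By \ref{def:inflationComplex}, a simplex of $(K,P)$ lying over a face $\tau\subseteq\sigma=\{x_1,\dots,x_r\}$ is an arbitrary set $\{(y,b_y)\}_{y\in\tau}$ with $b_y\in P_y$, and every subset of $\sigma$ is a face of $K$; hence $p^{-1}(K_{\leq\sigma})$ equals the simplicial join $P_{x_1}\ast\cdots\ast P_{x_r}$, each $P_{x_i}$ being viewed as a discrete complex. This already gives the ``furthermore'' clause. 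Now a join of $r$ nonempty complexes is $(r-2)$-connected, while $p^{-1}(K_{<\sigma})$ --- the union of the sub-joins obtained by omitting one factor --- has dimension at most $r-2$; so by \ref{lm:nullHomotopicInclusion} the inclusion $p^{-1}(K_{<\sigma})\hookrightarrow p^{-1}(K_{\leq\sigma})$ is null-homotopic. Hence \ref{prop:wedgeLemma} applies and yields
\[ (K,P)\;\simeq\;K\vee\bigvee_{\sigma\in K}p^{-1}(K_{\leq\sigma})\ast K_{>\sigma}, \]
using $K_{>\sigma}\cong\Lk_K(\sigma)$.

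To bring the summands into the stated shape, the plan is to use that a set with $n$ points is homotopy equivalent to $\bigvee_{n-1}S^{0}$ and that joins distribute over wedges, $(\bigvee_\alpha A_\alpha)\ast B\simeq\bigvee_\alpha(A_\alpha\ast B)$ (legitimate for simplicial complexes, which are well-pointed): iterating gives $P_{x_1}\ast\cdots\ast P_{x_r}\simeq\bigvee_{\prod_i(|P_{x_i}|-1)}S^{|\sigma|-1}$, and distributing once more over the join with $\Lk_K(\sigma)$ produces the displayed formula. The basepoint used in the wedge identification of \ref{prop:wedgeLemma} is immaterial here because each summand $\bigl(\bigvee S^{|\sigma|-1}\bigr)\ast\Lk_K(\sigma)$ is connected, and simply connected once $|\sigma|\geq2$. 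The last sentence of the proposition is then immediate, since by \ref{def:vogtmann} the complexes $\PVog(X,\sqcup,P)$ and $\Vog(X,\sqcup,P)$ are by definition the inflations $(\PF(X,\sqcup),P)$ and $(\F(X,\sqcup),P)$.

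For the Cohen-Macaulay equivalence, the key observation is that links in an inflation are again inflations: if $B\in(K,P)$ with $p(B)=\sigma$, then a vertex $(y,b)$ lies in $\Lk_{(K,P)}(B)$ exactly when $y\notin\sigma$ and $\sigma\cup\{y\}\in K$, so $\Lk_{(K,P)}(B)=(\Lk_K(\sigma),P')$ for the restricted collection $P'$, and $\dim(K,P)=\dim K$. Feeding the homotopy formula just proved into this observation and inducting on dimension, $\Lk_K(\sigma)$ is spherical of the expected dimension $\dim K-|\sigma|$ if and only if $\Lk_{(K,P)}(B)$ is: in one direction a wedge of $(|\tau|-1)$-spheres joined with the spherical link $\Lk_K(\sigma\cup\tau)$ is again spherical of dimension $\dim K-|\sigma|$ by the standard connectivity estimate for joins; in the other direction $\wt H_*(\Lk_K(\sigma))$ is a retract of $\wt H_*(\Lk_{(K,P)}(B))$, and $\pi_1(\Lk_K(\sigma))$ vanishes in the relevant range by van Kampen. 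This gives both implications of the homotopically Cohen-Macaulay equivalence; replacing connectivity by the vanishing of reduced homology with coefficients in $R$ throughout gives the version over a ring $R$. Alternatively, once the fibers $p^{-1}(K_{\leq\sigma})$ have been identified, the Cohen-Macaulay part is precisely Theorem~6.2 of \cite{BWW}, whose proof uses finiteness of the $P_x$ only through the homotopy type of these joins.

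The only genuine difficulty I anticipate is organizational: bookkeeping the basepoints in the wedge decomposition (settled by the connectivity of the join summands) and making the ``if'' direction of the Cohen-Macaulay equivalence precise, which requires the routine Hurewicz/van Kampen reductions to pass from the wedge formula back to connectivity of $K$ and its links --- or, more economically, the appeal to \cite{BWW}.
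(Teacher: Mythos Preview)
Your proof is correct and follows essentially the same approach as the paper: identify the fibers $p^{-1}(K_{\leq\sigma})$ as joins of discrete sets, apply \ref{prop:wedgeLemma} for the wedge decomposition, and then use that links in an inflation are again inflations to handle the Cohen--Macaulay equivalence. The only organizational difference is that the paper invokes \cite[Corollary~9.7]{Qui78} directly for the implication ``$K$ CM $\Rightarrow$ $(K,P)$ CM'', whereas you argue it from the wedge formula and connectivity of joins; both work. (Incidentally, your count $\prod_i(|P_{x_i}|-1)$ for the number of spheres is the correct reading of the subscript in the displayed formula.)
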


\begin{proof}
The homotopy equivalence is a consequence of \cite[Theorem 6.2]{BWW} adapted to the infinite case, regarding a simplicial complex as a poset via its face poset.
Alternatively, this follows from \ref{prop:wedgeLemma} since $p^{-1}(K_{\leq \sigma}) = P_{x_1} * \cdots * P_{x_r}$ for $\sigma = \{x_1,\ldots,x_r\} \in K$.
Also recall that $K_{>\sigma} \cong \Lk_K(\sigma)$.

Then by \cite[Corollary 9.7]{Qui78}, since the lower fibers $p^{-1}(K_{\leq \sigma})$ are homotopically CM of the correct dimension, if $K$ is homotopically CM then $(K,P)$ is homotopically CM as well.

Conversely, suppose that $(K,P)$ is homotopically CM.
By the wedge decomposition, $K$ is spherical.
It remains to prove that the links of its simplices are spherical of the correct dimension.
Let $\sigma \in K$ and consider a simplex $B\in (K,P)$ with $p(B) = \sigma$.
Then $\Lk_{(K,P)}(B)$ is isomorphic to the inflation complex $(\Lk_{K}(\sigma), P^B)$, where $P^B:=(P_x)_{x \in \A_{\sigma}}$ and $\A_{\sigma}$ is the set of vertices of $\Lk_{K}(\sigma)$.
Since $\Lk_{(K,P)}(B)$ is spherical by Cohen-Macaulayness, by the wedge decomposition again we conclude that $\Lk_{K}(\sigma)$ is spherical.

The homological version of CM over a given ring is obtained by a similar argument.
\end{proof}

We refer to \ref{sec:examples} for concrete applications of these results.
For instance, in the case of (finite) matroids (see \ref{sub:matroids}), it is well-known that the independence complex is shellable and hence homotopically Cohen-Macaulay.
In our language, this is the partial basis complex associated with the lattice of flats.
Therefore, as a consequence of the previous result, the corresponding frame complex is homotopically Cohen-Macaulay.


We derive some results on the augmented Bergman complex using the non-Hausdorff mapping cylinder.

\begin{lemma}
$\Bergman(X,\sqcup)$ is homotopy equivalent to $\redm{M_\Phi}$, where $M_{\Phi}$ is the non-Hausdorff mapping cylinder of the map $\Phi:\F(X,\sqcup)\to \Sub(X,\sqcup)$.
\end{lemma}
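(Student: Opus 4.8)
The plan is to identify $\Bergman(X,\sqcup)$ with the order complex of $\redm{M_\Phi}$ by comparing faces on both sides. Recall that $M_\Phi = \F(X,\sqcup) \coprod \Sub(X,\sqcup)$ with $x < y$ for $x\in \F(X,\sqcup)$, $y\in \Sub(X,\sqcup)$ whenever $\Phi(x)\leq y$. Since $\Sub(X,\sqcup)$ has unique maximal element $X = 1$, we have $\redm{M_\Phi} = M_\Phi\setminus\{1\}$. First I would describe a chain in $\redm{M_\Phi}$: such a chain splits as a chain $\sigma_0 \lneq \sigma_1 \lneq \cdots \lneq \sigma_s$ in $\F(X,\sqcup)$ (ordered by inclusion, as $\F$ is a simplicial complex) followed by a chain $x_1 \lneq x_2 \lneq \cdots \lneq x_r$ in $\Sub(X,\sqcup)\setminus\{1\}$, subject to the compatibility condition $\Phi(\sigma_s) \leq x_1$. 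Here either part may be empty.

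Next I would observe that a chain $\sigma_0\lneq\cdots\lneq\sigma_s$ in $\F(X,\sqcup)$ is the same datum as the single maximal face $\sigma_s\in\F(X,\sqcup)$ together with a flag of subsets of it; passing to geometric realizations, the order complex of $\F(X,\sqcup)$ regarded as a poset is the barycentric subdivision of $\F(X,\sqcup)$, hence homeomorphic to $\F(X,\sqcup)$ itself. So the relevant comparison is really between $\Delta(\redm{M_\Phi})$ and the barycentric subdivision of $\Bergman(X,\sqcup)$ restricted appropriately — but it is cleaner to argue directly. The key point is that a simplex $\sigma\cup\{x_1\lneq\cdots\lneq x_r\}$ of $\Bergman(X,\sqcup)$ with $\sigma\in\F(X,\sqcup)$ and $\Phi(\sigma)\leq x_1,\ldots,x_r\in\redm{\Sub(X,\sqcup)}$ corresponds, after barycentrically subdividing the $\sigma$-part, exactly to a chain in $\redm{M_\Phi}$ of the above form. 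Thus $\Delta(\redm{M_\Phi})$ is isomorphic, as a simplicial complex, to the barycentric subdivision $\sd(\Bergman(X,\sqcup))$ of $\Bergman(X,\sqcup)$; here one uses that the ``frame part'' of a simplex of $\Bergman$ is a face of a simplicial complex while the ``subobject part'' is a chain, and subdividing only the former produces precisely the face poset of $M_\Phi$ minus its top. Since $|\sd(\Bergman(X,\sqcup))| \cong |\Bergman(X,\sqcup)|$, this yields the desired homotopy equivalence — in fact a homeomorphism.

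I expect the main obstacle to be bookkeeping: making precise the identification between ``a face of the simplicial complex $\F(X,\sqcup)$ plus a chain in $\redm{\Sub(X,\sqcup)}$ above its span'' and ``a chain in the poset $M_\Phi$ minus its top element,'' and checking that the compatibility condition $\Phi(\sigma)\leq x_1$ matches the covering-type relation $x<y \iff \Phi(x)\leq y$ in $M_\Phi$ on the nose (including the edge cases where $\sigma=\emptyset$, so $\Phi(\sigma)=0\leq x_1$ automatically, or where no $x_i$ appear). One must also confirm that removing $1_{\Sub(X,\sqcup)}$ on the mapping-cylinder side corresponds exactly to the requirement $x_i\in\redm{\Sub(X,\sqcup)}$ in \ref{def:bergman}, and that a full frame $\sigma$ (with $\Phi(\sigma)=1$) contributes the maximal simplex $\sigma$ with no $x_i$'s, consistent with the remark following \ref{def:bergman}. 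Once this dictionary is set up, the statement follows formally from the standard fact that a poset and its barycentric subdivision (equivalently, a simplicial complex and its) have homeomorphic geometric realizations.
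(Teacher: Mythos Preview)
Your approach is genuinely different from the paper's. The paper defines an explicit poset map $\varphi:\Bergman(X,\sqcup)\to\redm{M_\Phi}$, sending a face $\sigma\cup\{x_1<\cdots<x_r\}$ to $\sigma$ if $r=0$ and to $x_r$ if $r>0$, checks that every lower fiber $\varphi^{-1}\big((\redm{M_\Phi})_{\leq y}\big)$ is contractible, and invokes Quillen's fiber theorem. Your route instead aims for a direct homeomorphism by recognising $|\Delta(\redm{M_\Phi})|$ as a subdivision of $|\Bergman(X,\sqcup)|$; this is valid and in fact yields a stronger conclusion.

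There is, however, a misstatement to correct. The claim $\Delta(\redm{M_\Phi})\cong\sd(\Bergman(X,\sqcup))$ is false: the full barycentric subdivision of $\Bergman$ has a vertex for every simplex $\sigma\cup c$, whereas $\redm{M_\Phi}$ has only one element per non-empty frame $\sigma$ and one per $x\in\redm{\Sub(X,\sqcup)}$, which is strictly fewer. What you actually describe in the parenthetical (``subdividing only the former'') is the correct picture: the subdivision of $\Bergman$ that barycentrically subdivides \emph{only the frame direction}, replacing each simplex $\sigma\cup c$ by $\sd(\sigma)*c$ while leaving the chain part untouched. The map sending a non-empty frame $\sigma$ to the barycenter of the simplex $\sigma$ in $|\Bergman|$, and each $x\in\redm{\Sub(X,\sqcup)}$ to the corresponding vertex, then extends affinely to a homeomorphism $|\Delta(\redm{M_\Phi})|\to|\Bergman|$. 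You should still spell out why this is a homeomorphism: on each closed simplex $|\sigma\cup c|$ it restricts to the standard homeomorphism $|\sd(\sigma)|*|c|\to|\sigma|*|c|$, and these agree on faces because the barycentric-subdivision homeomorphism is functorial under face inclusions. One further bookkeeping point: for $\redm{M_\Phi}$ not to have a minimum (hence not be contractible), the empty frame must be excluded from $\F(X,\sqcup)$ in this construction; the case $\sigma=\emptyset$ on the $\Bergman$ side then corresponds on the $\redm{M_\Phi}$ side to chains lying entirely in $\redm{\Sub(X,\sqcup)}$, not to chains starting at $\emptyset$.
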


\begin{proof}
Consider the map $\varphi: \Bergman(X,\sqcup) \to \redm{M_\Phi}$ given by
\[ \varphi(\sigma \cup \{x_1,\ldots,x_r\}) = \begin{cases}
    \sigma & r = 0,\\
    x_r & r > 0.
\end{cases}\]
Clearly, $\varphi$ is a well-defined order-preserving map.
To prove it is a homotopy equivalence, we show that the lower homotopy fibers are contractible.
Indeed, for $\sigma \in \F(X,\sqcup)$ we have
\[ \varphi^{-1} \big( {\redm{M_\Phi}}_{\leq \sigma} \big) = \Bergman(X,\sqcup)_{\leq \sigma}\simeq *.\]
On the other hand, for $x\in \redm{\Sub(X,\sqcup)}$ we get
\[ \sigma \cup \{x_1,\ldots,x_r\} \in \varphi^{-1}\big( {\redm{M_\Phi}}_{\leq x} \big) \, \Longleftrightarrow \, \Phi(\sigma) \leq x \text{ and } x_r\leq x.\]
When $r>0$, the condition $\Phi(\sigma)\leq x$ follows immediately from $x_r\leq x$.
In this way, we can construct a contraction homotopy inside this fiber:
\[ \sigma \cup \{x_1,\ldots,x_r\}\leq \sigma\cup \{x_1,\ldots,x_r,x\} \geq \{x\}.\]
Note that all these terms lie in $\varphi^{-1} \big( {\redm{M_\Phi}}_{\leq \sigma} \big)$.

Since the lower homotopy fibers are contractible, by Quillen's fiber theorem (see \ref{thm:quillenFiber}) we conclude that $\varphi$ is a homotopy equivalence.
\end{proof}

Since the elements of the poset $\F(X,\sqcup)$ whose join is the unique maximal element of $\Sub(X,\sqcup)$ consist of exactly the set of full frames, they form an antichain.
In conjunction with \ref{lm:mappingCylinder} this immediately implies the following result, which is well-known in the matroid case (see \cite{BKR}).

\begin{proposition} \label{prop:bergmantop}
Let $M$ be the set of full frames of $\Sub(X,\sqcup)$.
Then $\Bergman(X,\sqcup)$ is homotopy equivalent to 
$$\bigvee_{\sigma \in M} S^{|\sigma|-1}.$$
In particular, $\Bergman(X,\sqcup)$ is spherical, and if $M$ is empty (i.e., there are no full frames) then $\Bergman(X,\sqcup)$ is contractible.
\end{proposition}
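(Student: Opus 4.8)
The plan is to deduce the statement directly from the preceding lemma, which identifies $\Bergman(X,\sqcup)$ with $\redm{M_\Phi}$ for $M_\Phi$ the non-Hausdorff mapping cylinder of $\Phi\colon \F(X,\sqcup)\to \Sub(X,\sqcup)$, combined with \ref{lm:mappingCylinder}. First I would check the hypotheses of \ref{lm:mappingCylinder} for $f=\Phi$, $\T=\F(X,\sqcup)$ and $\S=\Sub(X,\sqcup)$: the target is bounded (maximum $X$, minimum $0$) and of finite height by our standing assumption, and $\F(X,\sqcup)$ has finite height as well. The content lies in identifying the fibre $\{\sigma\in\F(X,\sqcup)\mid \Phi(\sigma)=X\}$: by \ref{lm:PhiSpanMap} we have $\Phi^{-1}(1)=\D(X,\sqcup)$, so this fibre equals $\F(X,\sqcup)\cap\D(X,\sqcup)$, which is precisely the set $M$ of full frames. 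Moreover $M$ is an antichain, since if $\sigma\subseteq\tau$ are both full frames then both are $\sqcup$-decompositions and \ref{rk:containmentFull} forces $\sigma=\tau$.

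With these checks in place, \ref{lm:mappingCylinder} gives
\[ \Bergman(X,\sqcup)\;\simeq\;\redm{M_\Phi}\;\simeq\;\bigvee_{\sigma\in M}\Sigma\,\F(X,\sqcup)_{<\sigma}. \]
It then remains to compute $\F(X,\sqcup)_{<\sigma}$ for a full frame $\sigma$. Every nonempty proper subset of $\sigma$ is a partial frame contained in the full frame $\sigma$, and conversely any element of $\F(X,\sqcup)$ strictly below $\sigma$ is such a subset; hence $\F(X,\sqcup)_{<\sigma}$ is the poset of proper nonempty faces of the simplex on $\sigma$, whose order complex is the barycentric subdivision of $\partial\Delta^{|\sigma|-1}$ and thus homeomorphic to $S^{|\sigma|-2}$ (with the convention $S^{-1}=\emptyset$ when $|\sigma|=1$). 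Therefore $\Sigma\,\F(X,\sqcup)_{<\sigma}\simeq S^{|\sigma|-1}$, which yields $\Bergman(X,\sqcup)\simeq\bigvee_{\sigma\in M}S^{|\sigma|-1}$.

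For the remaining assertions: if $M=\emptyset$ the wedge is empty, so $\Bergman(X,\sqcup)$ is a point, hence contractible. For sphericity I would first observe that all full frames have the same size $n:=h(\Sub(X,\sqcup))$: applying condition~(1) of \ref{def:decompositionsAndPD} with $\tau=\sigma$ gives $h(X)=h(\bigvee_{x\in\sigma}x)=\sum_{x\in\sigma}h(x)=|\sigma|$, since atoms have height $1$. A routine count of vertices bounds $\dim\Bergman(X,\sqcup)$ by $n-1$, with equality once $M\neq\emptyset$ (a full frame is then a maximal simplex on $n$ vertices); combined with $\Bergman(X,\sqcup)\simeq\bigvee_M S^{n-1}$ this gives sphericity.

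I do not expect a genuine obstacle here: the argument is essentially bookkeeping on top of \ref{lm:mappingCylinder} and the preceding lemma. The only points that require care are treating $\F(X,\sqcup)_{<\sigma}$ without a formal minimum $\emptyset$ (so that its realization is an honest $(|\sigma|-2)$-sphere rather than a contractible cone) and matching the sphere dimensions $|\sigma|-1=n-1$ with the height of $\Bergman(X,\sqcup)$.
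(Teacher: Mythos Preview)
Your proposal is correct and follows exactly the paper's approach: the paper simply notes that the full frames form an antichain in $\F(X,\sqcup)$ and invokes \ref{lm:mappingCylinder} together with the preceding lemma, while you spell out the verification of the hypotheses, the identification of $\F(X,\sqcup)_{<\sigma}$ with $\partial\Delta^{|\sigma|-1}$, and the sphericity argument. Your care about excluding the empty frame from $\F(X,\sqcup)_{<\sigma}$ is well placed and is indeed the only delicate point.
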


We have the following partial result on the connectivity of $\red{\PD(X,\sqcup)}$ if we have information on the lower intervals of both $\Sub_h(X,\sqcup)$ and the partial decompositions.
We will use property (LI) from \ref{def:propLI} to prove the following theorem.

\begin{theorem}
\label{thm:highConnectivity}
Let $n$ denote the height of $\Sub(X,\sqcup)$.
Assume that property (LI) holds and that for all $y\in \Sub_h(X,\sqcup)$, the poset $\red{\Sub_h(Y,\sqcup)}$ is spherical of dimension $h(y)-2$.
Then $\red{\PD(X,\sqcup)}$ is $(n-3)$-connected.
Moreover, the inclusion $\red{\Sub_h(X,\sqcup)}\hookrightarrow \red{\PD(X,\sqcup)}$ is an $(n-2)$-equivalence.
\end{theorem}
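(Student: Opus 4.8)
The plan is to prove the two assertions together by induction on $n = h(\Sub(X,\sqcup))$, using the map $\Phi:\red{\PD(X,\sqcup)} \to \Sub_h(X,\sqcup)$ from \ref{lm:PhiSpanMap} together with the wedge/fiber machinery (\ref{prop:wedgeLemma}, \ref{thm:quillenFiber}, \ref{lm:nullHomotopicInclusion}) and the deformation retraction of \ref{prop:PhiEquivalence}. Note first that $\Phi^{-1}(1) = \D(X,\sqcup)$, so $\Phi$ restricts to a map $\red{\PD(X,\sqcup)}\setminus\D(X,\sqcup)\to\red{\Sub_h(X,\sqcup)}$ which, by \ref{prop:PhiEquivalence}, is a homotopy equivalence with section $x\mapsto\{x\}$. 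Hence the inclusion $\red{\Sub_h(X,\sqcup)}\hookrightarrow \red{\PD(X,\sqcup)}\setminus\D(X,\sqcup)$ is already a homotopy equivalence. So the content of the theorem is really about the effect of reattaching the top piece $\D(X,\sqcup)$, i.e.\ of passing from $\red{\PD(X,\sqcup)}\setminus\D(X,\sqcup)$ to $\red{\PD(X,\sqcup)}$, and about the connectivity of $\red{\Sub_h(X,\sqcup)}$ itself.

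The first step is to bound the connectivity of $\red{\Sub_h(X,\sqcup)}$. I would argue this via Quillen's fiber theorem applied to $\Phi:\red{\PD(X,\sqcup)}\setminus\D(X,\sqcup)\to\red{\Sub_h(X,\sqcup)}$ — or more directly, I would build $\red{\Sub_h(X,\sqcup)}$ up from atoms using the lower-interval hypothesis. By the hypothesis, for each atom $a$ we have $\Sub_h(X,\sqcup)_{>a}\cup\{1\}$... actually the cleaner route: apply \ref{prop:wedgeLemma} (or \ref{thm:quillenFiber}) to the forgetful/span map and feed in property (LI). Concretely, property (LI) gives $\PD(X,\sqcup)_{\leq\sigma}\cong\prod_{y\in\sigma}\PD(Y,\sqcup)$; for $\sigma=\{x\}$ a single atom or more generally $\sigma\in\PD(X,\sqcup)$ with $\Phi(\sigma)=y$, the lower fiber $\Phi^{-1}(\Sub_h(X,\sqcup)_{\leq y})$ deformation retracts (via $\sigma'\mapsto\{y\}$, cf.\ \ref{prop:PhiEquivalence}) onto a point through $\red{\Sub_h(Y,\sqcup)}$, or at worst is built from the spherical posets $\red{\Sub_h(Z,\sqcup)}$ for $z\leq y$. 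The hypothesis that $\red{\Sub_h(Y,\sqcup)}$ is spherical of dimension $h(y)-2$ for every $y$ is exactly what makes the joins $f^{-1}(\S_{\leq x})*\S_{>x}$ in \ref{prop:wedgeLemma} have the right connectivity, and an inductive/Mayer--Vietoris count over atoms then yields that $\red{\Sub_h(X,\sqcup)}$ is $(n-3)$-connected (it is spherical of dimension $n-2$ when $X$ itself is $\sqcup$-decomposable down to atoms, but we only claim connectivity).

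The second step is to compare $\red{\PD(X,\sqcup)}$ with $\red{\Sub_h(X,\sqcup)}$. I would again use \ref{thm:quillenFiber} applied to $\Phi:\red{\PD(X,\sqcup)}\to\Sub_h(X,\sqcup)$, but now over the \emph{whole} codomain including $1$. For $x\neq 1$ the lower fiber $\Phi^{-1}(\Sub_h(X,\sqcup)_{\leq x})$ is contractible by the retraction argument above (it retracts onto $\{x\}$), so the term $\Phi^{-1}(\Sub_h(X,\sqcup)_{\leq x})*\Sub_h(X,\sqcup)_{>x}$ is contractible and contributes nothing. For $x = 1$ the fiber is $\Phi^{-1}(\Sub_h(X,\sqcup)_{\leq 1}) = \PD(X,\sqcup)$ restricted appropriately, i.e.\ essentially all of $\red{\PD(X,\sqcup)}$, which is a cone (contractible, being a bounded poset's proper part... no — rather $\PD(X,\sqcup)$ has a maximum $\{X\}$, so one must be careful), and $\Sub_h(X,\sqcup)_{>1}=\emptyset$ so $\Sub_h(X,\sqcup)_{>1}$ contributes a $(-1)$-sphere, i.e.\ the join is just $\Phi^{-1}(\cdots)$ which we need to be $(n-3)$-connected. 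Running this through \ref{thm:quillenFiber} gives that $\Phi$ is an $(m+1)$-equivalence where $m$ is the minimum connectivity over the fibers-joins; combining with the already-established $(n-3)$-connectivity of $\red{\Sub_h(X,\sqcup)}$ and the fact that the only non-contractible fiber-join occurs at the maximum, one concludes that the inclusion $\red{\Sub_h(X,\sqcup)}\hookrightarrow\red{\PD(X,\sqcup)}$ is an $(n-2)$-equivalence and in particular $\red{\PD(X,\sqcup)}$ is $(n-3)$-connected.

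\textbf{The main obstacle.} The delicate point is the bookkeeping at the maximal element and the precise verification that the lower fibers of $\Phi$ over $\Sub_h(X,\sqcup)$ genuinely deformation retract as claimed — this needs property (LI) to split $\PD(X,\sqcup)_{\leq\sigma}$ as a product, and then an argument (à la \ref{prop:PhiEquivalence}, using $\sigma\mapsto\{\Phi(\sigma)\}$ and $\sigma\leq\{\Phi(\sigma)\}$ when $\Phi(\sigma)\ne 1$) to collapse each factor. One has to be careful that $\Phi(\sigma)\ne 1$ on the relevant fiber so the retraction lands in the fiber; this is automatic since $\Sub_h(X,\sqcup)_{\le x}$ for $x\ne 1$ consists of elements $\le x$. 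The other subtlety is that $\red{\PD(X,\sqcup)}$ and $\red{\Sub_h(X,\sqcup)}$ may differ in dimension (by \ref{prop:heightDandPD}, $h(\PD)$ can be as large as $2n-1$), so ``$(n-2)$-equivalence'' is the sharp statement one should aim for rather than a homotopy equivalence, and the connectivity claim $(n-3)$ follows since an $(n-2)$-equivalence preserves $\pi_k$ for $k\le n-3$.
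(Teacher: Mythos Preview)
Your proposal has a genuine circularity and misses the key mechanism of the argument.

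First, the ``first step'' is unnecessary: the hypothesis already says $\red{\Sub_h(Y,\sqcup)}$ is spherical of dimension $h(y)-2$ for \emph{all} $y\in\Sub_h(X,\sqcup)$, in particular for $y=X$, so $\red{\Sub_h(X,\sqcup)}$ is $(n-3)$-connected by assumption. There is nothing to prove here.

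Second, and more seriously, your ``second step'' applies Quillen's fiber theorem to $\Phi:\red{\PD(X,\sqcup)}\to\Sub_h(X,\sqcup)$ and then confronts the fiber over the maximum $1$. As you yourself notice, $\Phi^{-1}(\Sub_h(X,\sqcup)_{\leq 1})$ is all of $\red{\PD(X,\sqcup)}$, and $\Sub_h(X,\sqcup)_{>1}=\emptyset$, so the join is just $\red{\PD(X,\sqcup)}$. You then need this to be $(n-3)$-connected, which is exactly the conclusion you are trying to establish. Induction on $n$ does not help: this fiber involves the same $X$, not a smaller object.

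The paper's proof avoids this by working with the \emph{inclusion} $i:\red{\PD(X,\sqcup)}\setminus\D(X,\sqcup)\hookrightarrow\red{\PD(X,\sqcup)}$ instead of $\Phi$. For $i$, the only nontrivial fibers are at elements $\sigma_0\in\redm{\D(X,\sqcup)}$, and \emph{this} is where property (LI) is actually used: it identifies the lower fiber $i^{-1}((\red{\PD(X,\sqcup)})_{\leq\sigma_0})$ with $\red{\bigl(\prod_{y\in\sigma_0}\PD(Y,\sqcup)\bigr)}\setminus\prod_{y\in\sigma_0}\D(Y,\sqcup)$. A factorwise version of the $\Phi$-retraction then shows this fiber is homotopy equivalent to $\red{\bigl(\prod_{y\in\sigma_0}\Sub_h(Y,\sqcup)\bigr)}$, whose connectivity is computed from the hypothesis via the standard suspension/join formula for proper parts of products. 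Combined with the fact that $(\red{\PD(X,\sqcup)})_{>\sigma_0}$ is the proper part of the partition lattice on $\sigma_0$, this gives the required connectivity of the fiber-join and yields that $i$ is an $(n-2)$-equivalence. Your sketch gestures at (LI) but never actually deploys it to analyze fibers at full decompositions, which is the heart of the argument.
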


\begin{proof}
Note that $\red{\Sub_h(X,\sqcup)}$ is spherical of dimension $n-2$ by hypothesis, and that the inclusion $\red{\Sub_h(X,\sqcup)}\hookrightarrow \red{\PD(X,\sqcup)}\setminus \D(X,\sqcup)$ is a homotopy equivalence by \ref{prop:PhiEquivalence}.
Therefore, it is enough to show that the inclusion $i:\red{\PD(X,\sqcup)}\setminus \D(X,\sqcup) \hookrightarrow \red{\PD(X,\sqcup)}$ is an $(n-2)$-equivalence.

We analyze the lower fibers of full decompositions for the map $i$ and apply Quillen's fiber theorem later.
Let $\sigma_0\in \redm{\D(X,\sqcup)}$.
By property (LI), the lower fiber $i^{-1}(\redm{\D(X,\sqcup)})$ can be identified with the poset
\[ F_{\sigma_0} := \red{\big(\prod_{y\in \sigma} \PD(Y,\sqcup) \big)} \setminus \prod_{y\in \sigma} \D(Y,\sqcup).\]
Write $\sigma_0 = \{y_1,\ldots,y_r\}$, and $\PD(y_i) := \PD(Y_i,\sqcup)$, $\D(y_i) := \D(Y_i,\sqcup)$ and $\Sub_h(y_i) := \Sub_h(Y_i,\sqcup)$.
We claim that $F_{\sigma_0}$ is homotopy equivalent to $\T := \red{ \big( \prod_{i=1}^r \Sub_h(y_i) \big)}$.

Let $\Psi:F_{\sigma_0}\to \T$ be the map such that 
\[
\Psi(\tau_1,\ldots,\tau_r) = (\Phi(\tau_1),\ldots,\Phi(\tau_r)).
\]
Clearly, $\Psi$ is well-defined and order-preserving.

On the other hand, we have a map $\Gamma : \T \to F_{\sigma_0}$ which is just the canonical inclusion $\Gamma(x_1,\ldots,x_r) = ( \{x_1\} \setminus\{0\}, \ldots, \{x_r\}\setminus\{0\})$.
Then $\Psi\circ\Gamma$ is the identity map on $\T$, and
\[ \Gamma( \Psi(\tau_1,\ldots,\tau_r) )= (\{\Phi(\tau_1)\}\setminus\{0\},\ldots,\{\Phi(\tau_r)\}\setminus\{0\}) \geq (\tau_1,\ldots,\tau_r).\]
Now, $\T$ is the proper part of the direct product of bounded posets, and hence it is a standard fact that the geometric realization of $\T$ is isomorphic to an iterated  suspension of a join of the spaces $|\red{\Sub_h(y_i)}|$:
\[ |\T| \cong \Sigma( \cdots \Sigma\big(\Sigma\big(|\red{\Sub_h(y_1)}|*|\red{\Sub_h(y_2)}|\big)*|\red{\Sub_h(y_3)}|\big) \cdots * |\red{\Sub_h(y_r)}|\big).\]
See \cite{Walker2}.
Since $\red{\Sub_h(y_i)}$ is $(h(y_i)-3)$-connected by hypothesis, we conclude that $\T$ is
\[ 3(r-1) + \sum_i (h(y_i) -3) = 3(r - 1) + n - 3r = n - 3\]
connected.
On the other hand, we know that $\redm{\D(X,\sqcup)}_{>\sigma_0}$ is the proper part of the partition lattice on $\sigma_0$, so it is spherical of dimension $r-3$, and hence $(r-4)$-connected (the empty space is $(-2)$-connected by convention).
Thus $F_{\sigma_0} * \redm{\D(X,\sqcup)}_{>\sigma_0}$ is
\[ (n-3) + (r-4) + 2 = n+r-5\]
connected.
In particular, since $r\geq 2$, these are $(n-3)$-connected.
By \ref{thm:quillenFiber}, we conclude that $i$ is an $(n-2)$-equivalence, and therefore $\red{\PD(X,\sqcup)}$ is $(n-3)$-connected.
\end{proof}

\begin{example}
Let $L$ be the lattice of subsets of $\{1,2,3,4\}$ of size different from three.
In \ref{sub:matroids} we will see that this is the lattice of flats of the uniform matroid $U_{4,3}$.
Then, by \ref{ex:latticeAsCategoryApproach}, $L = \Sub(X,\sqcup)$ if $L$ is regarded as a category with monoidal product $\sqcup=\vee$, and $X=\{1,2,3,4\}$ the maximal element.
Note $L$ has height three, and $\PD(L) = \PD(X,\sqcup)$.
Also, $L = \Sub_h(X,\sqcup)$, and by the previous theorem we conclude that the inclusion $i:\red{L} \hookrightarrow \red{\PD(L)}$ is a $1$-equivalence.
Since $\red{L}$ is connected, we see that $\red{\PD(L)}$ is connected.
Now we show that $\red{\PD(L)}$ is simply connected, so the inclusion $i$ induces the trivial map in the fundamental groups.

If we regard $\red{L}$ as a $1$-dimensional simplicial complex, it is not hard to see that $\red{L}$ becomes simply connected after attaching $2$-cells to the cycles of the form
\begin{equation}
\label{eq:apartmentDim2}
\{1\}<\{1,2\}>\{2\}<\{2,3\}>\{3\}<\{1,3\}>\{1\}. 
\end{equation}
Thus, $\red{\PD(L)}$ is simply connected if each such cycle is null-homotopic when included in $\red{\PD(L)}$.
Let $C$ denote the cycle from \ref{eq:apartmentDim2}, seen as a subposet of $\red{L}$.
Then $i:C\to \red{\PD(L)}$ is null-homotopic via the following equivalences of maps:
\[ i(x)  = \{x\} \geq \{\{k\} \tq k\in \{1,2,3\}\cap x\} \leq \{\{1\},\{2\},\{3\}\}.\]

From this we can conclude that $\red{\PD(L)}$ is at least $1$-connected.
Moreover, it is not hard to show that $\red{\PD(L)}$ collapses to the subposet whose elements are the non-empty subsets of size at most three of $\{1,2,3,4\}$ (which is the proper part of the Boolean lattice of rank four).
Since this poset has dimension two and it is $1$-connected, we conclude that $\red{\PD(L)}$ has the homotopy type of a wedge of $2$-spheres.
\end{example}

In the following two results, we will see that the situation in the previous example actually generalizes to a more general setting.

\begin{proposition}
\label{prop:topHomology}
Let $n$ be the height of $\Sub(X,\sqcup)$ and suppose $\sigma\in \F(X,\sqcup)$ is a full frame (i.e., a decomposition of size $n$).
Then the \textit{Coxeter complex} $S_{\sigma}$ associated with $\sigma$ gives a non-zero class $[S_\sigma]\in \widetilde{H}_{n-2}( \red{\Sub_h(X,\sqcup)},\ZZ) \subseteq \widetilde{H}_{n-2}( \red{\Sub(X,\sqcup)},\ZZ)$.

Moreover, $S_{\sigma}$ is null-homotopic when embedded in $\red{\PD(X,\sqcup)}$ via the canonical inclusion $\red{\Sub_h(X,\sqcup)}\hookrightarrow \red{\PD(X,\sqcup)}$.
\end{proposition}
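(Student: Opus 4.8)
The plan is to prove the two assertions separately. The homology statement will follow from a dimension count together with a chain-level observation, and the null-homotopy statement from two explicit poset homotopies inside $\red{\PD(X,\sqcup)}$.

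First I would pin down $S_\sigma$. Writing $\sigma=\{a_1,\dots,a_n\}$, condition~(2) of \ref{def:decompositionsAndPD} makes $I\mapsto\bigvee_{i\in I}a_i$ an order-embedding of the Boolean lattice $B_n$ into $\Sub(X,\sqcup)$, and by \ref{prop:decompintervals} each $\bigvee_{i\in I}a_i$ with $\emptyset\neq I\subsetneq[n]$ is a member of the $\sqcup$-decomposition of $X$ obtained from $\sigma$ by the partition $\{I,[n]\setminus I\}$, hence lies in $\Sub_h(X,\sqcup)$; since it has height $|I|\in\{1,\dots,n-1\}$ it is distinct from $0$ and from $X$. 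Thus $\red{B_n}$ is a subposet of $\red{\Sub_h(X,\sqcup)}$, and $S_\sigma:=\Delta(\red{B_n})$ --- the order complex of the proper part of $B_n$, i.e.\ the barycentric subdivision of $\partial\Delta^{n-1}$, a triangulated $(n-2)$-sphere (the Coxeter complex of type $A_{n-1}$) --- sits inside $\Delta(\red{\Sub_h(X,\sqcup)})$.

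For the homology claim I would first observe that, as $\Sub(X,\sqcup)$ has height $n$, every chain in $\red{\Sub(X,\sqcup)}$ has at most $n-1$ elements, so both $\Delta(\red{\Sub(X,\sqcup)})$ and its subcomplex $\Delta(\red{\Sub_h(X,\sqcup)})$ have dimension exactly $n-2$ (at most, by the bound; exactly, because $S_\sigma$ already has this dimension). In the top degree of an $(n-2)$-dimensional simplicial complex, reduced homology equals the group of reduced $(n-2)$-cycles, a subgroup of the free abelian chain group $C_{n-2}$; since a reduced cycle of a subcomplex is one of the ambient complex, the inclusion-induced map is injective in degree $n-2$, giving the stated $\widetilde{H}_{n-2}(\red{\Sub_h(X,\sqcup)},\ZZ)\hookrightarrow\widetilde{H}_{n-2}(\red{\Sub(X,\sqcup)},\ZZ)$. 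Finally, the fundamental cycle $z_\sigma$ of $S_\sigma\cong S^{n-2}$ is a $\pm1$-combination of the maximal simplices of $S_\sigma$, which are pairwise distinct $(n-2)$-simplices of $\Delta(\red{\Sub_h(X,\sqcup)})$; hence $z_\sigma$ is a nonzero chain, so it represents a nonzero class $[S_\sigma]\in\widetilde{H}_{n-2}(\red{\Sub_h(X,\sqcup)},\ZZ)$, which stays nonzero in $\widetilde{H}_{n-2}(\red{\Sub(X,\sqcup)},\ZZ)$.

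For the null-homotopy, recall from \ref{prop:PhiEquivalence} that the canonical inclusion $\red{\Sub_h(X,\sqcup)}\hookrightarrow\red{\PD(X,\sqcup)}$ is $x\mapsto\{x\}$; let $i$ be its restriction to $S_\sigma$. I would define an order-preserving map $c\colon\red{B_n}\to\PD(X,\sqcup)$ by $c(x)=\{a\in\sigma : a\leq x\}$; faithfulness of $B_n\hookrightarrow\Sub(X,\sqcup)$ gives $c(\bigvee_{i\in I}a_i)=\{a_i : i\in I\}$, a non-empty proper subset of the full frame $\sigma$, hence a non-full $\sqcup$-partial decomposition lying in $\red{\PD(X,\sqcup)}$. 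Since each $a_i\leq x$ we have $c(x)\leq i(x)$, so $c\simeq i$; and since $c(x)\subseteq\sigma\in\red{\PD(X,\sqcup)}$ (using $|\sigma|=n\geq2$, so $\sigma\neq\{X\}$) we have $c(x)\leq\sigma$, so $c$ is homotopic to the constant map at $\sigma$. Composing, $i\simeq c\simeq\operatorname{const}_\sigma$, so $S_\sigma$ is null-homotopic in $\red{\PD(X,\sqcup)}$ (the degenerate range $n\leq1$, where $S_\sigma=\emptyset$, being excluded or handled trivially). The only step that genuinely needs an idea is the non-vanishing of $[S_\sigma]$: instead of constructing a cocycle pairing nontrivially with $S_\sigma$, or a retraction of $\red{\Sub_h(X,\sqcup)}$ onto an apartment --- neither of which is available in this generality --- one exploits that a complex of the smallest dimension able to contain an $(n-2)$-sphere has its top homology equal to its cycle group, so any geometric $(n-2)$-sphere inside it survives automatically; the remaining verifications (the Boolean-lattice embedding, the identification of $S_\sigma$ with a sphere, and the two homotopies) are routine.
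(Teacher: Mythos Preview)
Your proof is correct. For the homology claim you make explicit the dimension argument that the paper only sketches (top-degree homology equals top-degree cycles in an $(n-2)$-dimensional complex), and the identification of $S_\sigma$ matches the paper's.

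For the null-homotopy you take a more direct route than the paper. The paper observes that the image of $S_\sigma$ under $x\mapsto\{x\}$ lands in the subposet $T_\sigma$ of partial decompositions contained in some coarsening of $\sigma$, identifies $T_\sigma$ with $\red{\PD}$ of the Boolean lattice on $\sigma$, and then invokes \ref{coro:minElementD} to conclude $T_\sigma$ is contractible. Your two-step zigzag $i(x)=\{x\}\geq c(x)\leq\sigma$ accomplishes the same thing in one line, without appealing to that corollary; in fact the contraction built inside the proof of \ref{coro:minElementD} is essentially the same zigzag read from the other side. Your argument is more elementary and self-contained; the paper's has the minor advantage of naming the contractible ambient poset $T_\sigma$, which is conceptually pleasant but not needed here.
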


\begin{proof}
There is an $(n-2)$-dimensional sphere embedded in the order complex of $\Sub_h(X,\sqcup)$ which is obtained by taking the usual apartment of a basis, here regarded as a frame.
Concretely, this sphere is a Coxeter complex obtained as the order complex of the subposet
\[ S_{\sigma} := \big\{\bigvee_{x\in \tau} x \tq \emptyset\neq \tau\subsetneq \sigma \big\}.\]
Since $S_{\sigma}$ is an $(n-2)$-dimensional sphere, its class gives a non-zero cycle for the $(n-2)$-homology group of $\red{\Sub_h(X,\sqcup)}$, which is naturally included in $\widetilde{H}_{n-2}( \red{\Sub(X,\sqcup)},\ZZ)$.

On the other hand, note that the image of $S_{\sigma}$ under the canonical map $\red{\Sub_h(X,\sqcup)}\to \red{\PD(X,\sqcup)}$
is contained in the subposet
\[T_{\sigma} = \{ \tau \tq \emptyset\neq \tau\subseteq \tau'\in\redm{\D(X,\sqcup)}_{\geq \sigma} \}.\]
Then $T_{\sigma}$ is the proper part of the partial decomposition poset of the Boolean lattice on $\sigma$.
In particular, by \ref{coro:minElementD}, the poset $T_{\sigma}$ is contractible.
Hence the inclusion of the sphere $S_{\sigma} \hookrightarrow \red{\PD(X,\sqcup)}$ is null-homotopic (and thus trivial in homology).
\end{proof}

\begin{corollary}
\label{coro:highConnectivity}
Let $n$ be the height of $\Sub(X,\sqcup)$, and assume the following conditions hold:
\begin{enumerate}
    \item[$\bullet$] property (LI);
    \item[$\bullet$] for all $y\in \Sub_h(X,\sqcup)$, the poset $\red{\Sub_h(Y,\sqcup)}$ is spherical of dimension $h(y)-2$;
    \item[$\bullet$] the spheres $S_{\sigma}$, for $\sigma$ a full frame, generate the $(n-2)$th homotopy group of $\red{\Sub_h(X,\sqcup)}$.
\end{enumerate}
Then $\red{\Sub_h(X,\sqcup)}\hookrightarrow \red{\PD(X,\sqcup)}$ is null-homotopic.
In particular, $\red{\PD(X,\sqcup)}$ is $(n-2)$-connected.
\end{corollary}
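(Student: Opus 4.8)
The plan is to combine Theorem~\ref{thm:highConnectivity} with Proposition~\ref{prop:topHomology}, using the third hypothesis as the bridge between them. The standing hypotheses --- namely property (LI) together with the sphericity of all the posets $\red{\Sub_h(Y,\sqcup)}$ --- are exactly those of Theorem~\ref{thm:highConnectivity}, so I may take as given that $\red{\PD(X,\sqcup)}$ is $(n-3)$-connected and that the inclusion $i\colon\red{\Sub_h(X,\sqcup)}\hookrightarrow\red{\PD(X,\sqcup)}$ (given by $x\mapsto\{x\}$) is an $(n-2)$-equivalence; concretely, $i_*$ is an isomorphism on $\pi_k$ for $k\le n-3$ and an epimorphism on $\pi_{n-2}$. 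Applying the second hypothesis to $Y=X$ itself (note $X\in\Sub_h(X,\sqcup)$, since $\{X\}\in\D(X,\sqcup)$ is the trivial decomposition, with $\sqcup$-complement the initial object $0$), the poset $\red{\Sub_h(X,\sqcup)}$ is spherical of dimension $n-2$, hence $(n-3)$-connected; transporting its trivial homotopy groups through $i$ already gives $\pi_k(\red{\PD(X,\sqcup)})=0$ for $k\le n-3$.

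The crucial step is to show that the epimorphism $i_*\colon\pi_{n-2}(\red{\Sub_h(X,\sqcup)})\to\pi_{n-2}(\red{\PD(X,\sqcup)})$ is in fact the trivial homomorphism. By the third hypothesis, the homotopy classes $[S_\sigma]$ of the Coxeter spheres associated with the full frames $\sigma$ of $\Sub(X,\sqcup)$ generate $\pi_{n-2}(\red{\Sub_h(X,\sqcup)})$. By Proposition~\ref{prop:topHomology} each sphere $S_\sigma$ becomes null-homotopic once included in $\red{\PD(X,\sqcup)}$, so $i_*[S_\sigma]=0$ for every full frame $\sigma$; since these classes generate the whole group, $i_*$ vanishes identically. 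Being simultaneously surjective and trivial forces $\pi_{n-2}(\red{\PD(X,\sqcup)})=0$. Together with the vanishing $\pi_k(\red{\PD(X,\sqcup)})=0$ for $k\le n-3$ recorded above, this says precisely that $\red{\PD(X,\sqcup)}$ is $(n-2)$-connected, which is the second assertion.

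For the first assertion I would then simply recall that $\red{\Sub_h(X,\sqcup)}$ sits as a subposet of $\red{\PD(X,\sqcup)}$ and has dimension $n-2$, being spherical of that dimension; since $\red{\PD(X,\sqcup)}$ has just been shown to be $(n-2)$-connected, Lemma~\ref{lm:nullHomotopicInclusion} yields at once that the inclusion $\red{\Sub_h(X,\sqcup)}\hookrightarrow\red{\PD(X,\sqcup)}$ is null-homotopic.

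I do not expect a genuine obstacle here, since the substantive work lies in the cited Theorem~\ref{thm:highConnectivity} and Proposition~\ref{prop:topHomology}; the main point requiring care is the homotopy-theoretic bookkeeping in low rank, where ``generate the $(n-2)$th homotopy group'' and ``$(n-2)$-equivalence'' must be read with the right conventions. For $n\ge 4$ all the groups involved are abelian and the argument runs verbatim; for $n=3$ one reads ``generate'' as generating the possibly non-abelian group $\pi_1$, and ``surjective plus trivial'' still forces the target to be trivial; for $n\le 2$ the poset $\red{\Sub_h(X,\sqcup)}$ is at most $0$-dimensional, and one runs the identical ``surjective plus zero'' argument on the reduced homology $\widetilde{H}_{n-2}$, where Proposition~\ref{prop:topHomology} already records the vanishing of the classes $[S_\sigma]$.
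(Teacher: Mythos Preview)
Your argument is correct and follows essentially the same route as the paper: invoke \ref{thm:highConnectivity} to get that the inclusion is an $(n-2)$-equivalence, use the hypothesis that the classes $[S_\sigma]$ generate $\pi_{n-2}$, and apply \ref{prop:topHomology} to see that $i_*$ kills these generators, whence $\pi_{n-2}(\red{\PD(X,\sqcup)})=0$. The paper's proof is terser and leaves the low-rank bookkeeping implicit, but the substance is the same; your derivation of null-homotopy from $(n-2)$-connectedness via \ref{lm:nullHomotopicInclusion} simply makes explicit what the paper's ``The result follows'' is packaging.
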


\begin{proof}
By \ref{thm:highConnectivity}, it is enough to show that $\red{\Sub_h(X,\sqcup)}\hookrightarrow \red{\PD(X,\sqcup)}$ maps a set of generators of the $(n-2)$th homotopy group to the trivial subgroup.
By hypothesis, we can take as a generating set the classes of the spheres obtained as the order complexes of $S_{\sigma}$ for the full frames $\sigma$.
By \ref{prop:topHomology}, these spheres are null-homotopic in $\red{\PD(X,\sqcup)}$.
The result follows.
\end{proof}

\subsection{Unique complementation}

Now we explore some consequences of the relations between the different posets we defined when we additionally have unique downward $\sch$-complementation and property (EX) holds.
Recall that property (EX) states that if we have a partial decomposition $\sigma$ of $\Sub(X,\sqcup)$ and we pick a full decomposition $\tau \in \D(\Sub(X,\sqcup)_{\leq y})$ of the lower interval of one of its elements $y\in \sigma$ which is $\sqcup$-compatible in $\Sub(X,\sqcup)$, then $( \sigma \setminus \{y\})\cup \tau$ is a partial decomposition of $X$.
See \ref{def:propsEXCM}.

\begin{lemma}
\label{lm:isoGMap}
If $\Sub(X,\sqcup)$ is uniquely downward $\sch$-complemented and the map $G$ defined in \ref{lm:GMapOrdDtoS} is surjective, then $G$ is an isomorphism between $\redm{\Ord\D(X,\sqcup)}$ and the opposite poset of $\Delta\red{\Sub(X,\sqcup)}$.
\end{lemma}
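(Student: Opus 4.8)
The plan is to show that the restriction of $G$ to $\redm{\Ord\D(X,\sqcup)}$ is an order-preserving bijection onto ${\left(\Delta\red{\Sub(X,\sqcup)}\right)}^{\op}$ whose inverse is order-preserving. By \ref{lm:GMapOrdDtoS} the map is already order-preserving, and by hypothesis it is surjective, so it remains to prove injectivity and to check that $G^{-1}$ preserves order. First I would record that $G$ genuinely lands where claimed: every element of $\redm{\Ord\D(X,\sqcup)}$ has length $r\geq 2$ (the only element of $\Ord\D(X,\sqcup)$ of length at most $1$ is its maximum $(X)$, which has been removed); for $(x_1,\dots,x_r)$ the partial joins $x_1<x_1\vee x_2<\cdots<x_1\vee\cdots\vee x_{r-1}$ are strictly increasing by the height axiom, and they are proper, since $x_1\neq 0$ and $h(x_1\vee\cdots\vee x_{r-1})<h(x_1\vee\cdots\vee x_r)=h(X)$.

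The crucial step is a reconstruction lemma. Given $z=(x_1,\dots,x_r)\in\redm{\Ord\D(X,\sqcup)}$, put $y_0=0$, $y_i=x_1\vee\cdots\vee x_i$ for $1\leq i\leq r-1$, and $y_r=X$; I claim each $x_i$ is the \emph{unique} $\sch$-complement $y_{i-1}^{\perp_{y_i}}$ of $y_{i-1}$ in $y_i$. To see this I would verify the defining conditions from \ref{def:hcomplementedForObjects}: $x_i\leq y_i$ is clear; $y_{i-1}\wedge x_i=0$ and $y_{i-1}\vee x_i=y_i$ follow from the Boolean-lattice condition (2) of \ref{def:decompositionsAndPD} applied to the subsets $\{x_1,\dots,x_{i-1}\}$ and $\{x_i\}$ of $\{x_1,\dots,x_r\}$; $h(y_{i-1})+h(x_i)=h(y_i)$ follows from the height condition (1); and $\{y_{i-1},x_i\}$ is $\sqcup$-compatible in $\Sub(X,\sqcup)$, obtained by restricting the $\sqcup$-compatibility data of the $\sqcup$-decomposition $\{x_1,\dots,x_r\}$ from \ref{def:sqcupCompatible} to the nested subsets $\{i\}\subseteq\{1,\dots,i\}\supseteq\{1,\dots,i-1\}$, using associativity and commutativity of $\sqcup$ to identify $\bigsqcup_{j\leq i-1}X_j\sqcup X_i$ with $\bigsqcup_{j\leq i}X_j$. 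Unique downward $\sch$-complementation then forces $x_i=y_{i-1}^{\perp_{y_i}}$. Injectivity is immediate: if $G(x_1,\dots,x_r)=G(x_1',\dots,x_s')$ the two chains coincide, so $r=s$ and $y_i:=x_1\vee\cdots\vee x_i=x_1'\vee\cdots\vee x_i'$ for all $i$ (with $y_0=0$, $y_r=X$), whence $x_i=y_{i-1}^{\perp_{y_i}}=x_i'$ for every $i$.

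For the order-reflecting property I would pass to coarsenings. Suppose $G(z)\supseteq G(w)$ with $z=(x_1,\dots,x_r)$ and $w=(x_1',\dots,x_s')$, and write $G(z)=\{y_1<\cdots<y_{r-1}\}$. Since $G(w)\subseteq G(z)$ are linearly ordered chains, $G(w)=\{y_{i_1}<\cdots<y_{i_{s-1}}\}$ for indices $1\leq i_1<\cdots<i_{s-1}\leq r-1$; set $i_0=0$, $i_s=r$. The grouped tuple $z'=\bigl(\bigvee_{j=1}^{i_1}x_j,\ \dots,\ \bigvee_{j=i_{s-1}+1}^{r}x_j\bigr)$ is a coarsening of $z$, hence an ordered $\sqcup$-decomposition (the underlying set lies in $\D(X,\sqcup)$ by \ref{prop:decompintervals}), of length $s\geq 2$, with $z\leq z'$ in $\redm{\Ord\D(X,\sqcup)}$ since each $x_j$ lies below its block and the block order is inherited; and by the Boolean-lattice structure $G(z')=\{y_{i_1}<\cdots<y_{i_{s-1}}\}=G(w)$. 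By injectivity $z'=w$, so $z\leq w$. Thus $G^{-1}$ is order-preserving and $G$ is the asserted isomorphism. The main obstacle I anticipate is precisely the bookkeeping inside the reconstruction lemma — unwinding \ref{def:sqcupCompatible} to confirm that the canonical monomorphisms and commuting triangles witnessing $\sqcup$-compatibility of the two-element set $\{y_{i-1},x_i\}$ are inherited from those of the ambient $\sqcup$-decomposition; everything else is routine manipulation of the Boolean-lattice and height axioms and of the refinement order on $\Ord\D(X,\sqcup)$.
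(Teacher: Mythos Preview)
Your proposal is correct and follows essentially the same approach as the paper. Both arguments hinge on the same reconstruction principle---that under unique downward $\sch$-complementation each entry $x_i$ of an ordered decomposition is recovered as $y_{i-1}^{\perp_{y_i}}$ from the chain of partial joins---and use this to show $G$ reflects order; you factor this through an explicit injectivity statement and a coarsening construction, whereas the paper proves $G(\tau)\supseteq G(\sigma)\Rightarrow\tau\leq\sigma$ directly, but the content is the same.
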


\begin{proof}
Since $G$ is a surjective poset map by hypothesis, it is enough to prove that $G(\tau) \supseteq G(\sigma)$ implies that $\tau \leq \sigma$.

Assume then that $G(\tau)\supseteq G(\sigma)$.
Write $\tau=(z_1,\ldots,z_r)$ and $\sigma=(w_1,\ldots,w_s)$.
For $1\leq j\leq s-1$, let $1\leq \beta(j) \leq r$ be such that
\[ y_j:=w_1\vee\cdots\vee w_j = z_1\vee\cdots\vee z_{\beta(j)}.\]
We claim that
\[ w_{j+1} = z_{\beta(j)+1}\vee \cdots\vee z_{\beta(j+1)}.\]
Indeed, $w_{j+1}$ is the unique $\sch$-complement of
\[ y_j = z_1\vee\cdots\vee z_{\beta(j)}\]
in
\[ y_j \vee w_{j+1} = y_j \vee z_{\beta(j)+1} \vee \cdots\vee z_{\beta(j)}.\]
But also $z_{\beta(j)+1} \vee \cdots\vee z_{\beta(j)}$ is the unique $\sch$-complement of $y_j$ in $y_j \vee w_{j+1}$.
Thus they must coincide.
This shows that $\tau\leq \sigma$.
\end{proof}

\begin{lemma}
\label{lm:GMapSurjective}
If $\Sub(X,\sqcup)$ is uniquely downward $\sch$-complemented and satisfies property (EX), then the map $G$ is surjective.
In particular, $\redm{\Ord\D(X,\sqcup)}$ is isomorphic to $(\Delta\red{\Sub(X,\sqcup)})^{\op}$.
\end{lemma}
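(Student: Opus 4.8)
The plan is to show that every nonempty chain in $\red{\Sub(X,\sqcup)}$ lies in the image of $G$, proceeding by induction on the length of the chain. Let $c = \{x_1 < x_2 < \cdots < x_r\}$ be such a chain, with all $x_i \in \red{\Sub(X,\sqcup)}$. First I would handle the base case $r=1$: the singleton chain $\{x_1\}$ equals $G(x_1^{\perp}, x_1)$ (or rather $G$ applied to a $2$-element ordered decomposition), since by unique downward $\sch$-complementation $x_1$ has a $\sch$-complement $x_1^\perp$ in $\Sub(X,\sqcup)$, so $(x_1^\perp, x_1)\in\redm{\Ord\D(X,\sqcup)}$ and $G(x_1^\perp,x_1)=\{x_1\}$. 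Actually, more uniformly, I would set up the induction so that from a chain of length $r$ I produce an ordered decomposition $(z_1,\dots,z_{r+1})$ with $z_1\vee\cdots\vee z_i = x_i$ for $1\le i\le r$.

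For the inductive step, suppose $G$ hits every chain of length $< r$, and take $c = \{x_1 < \cdots < x_r\}$. Applying the inductive hypothesis to the truncated chain $\{x_1 < \cdots < x_{r-1}\}$ gives an ordered decomposition $\tau = (z_1,\dots,z_r)\in\redm{\Ord\D(X,\sqcup)}$ with $z_1\vee\cdots\vee z_i = x_i$ for $1\le i\le r-1$ and $z_1\vee\cdots\vee z_r = 1$. In particular $\{z_1,\dots,z_r\}\in\D(X,\sqcup)$, so $\sigma := \{z_1,\dots,z_{r-1}, x_{r-1}^{\perp}\}\setminus\{0\}$ is also a decomposition: indeed $x_{r-1} = z_1\vee\cdots\vee z_{r-1}$ and $x_{r-1}^{\perp}$ is its unique $\sch$-complement, and the $\sqcup$-compatibility together with the height identity $h(x_{r-1}^\perp) = n - h(x_{r-1})$ forces this to be a full $\sqcup$-decomposition. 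Now inside $\Sub(X,\sqcup)_{\leq x_{r-1}^{\perp}}$, the element $x_r \wedge x_{r-1}^{\perp}$ — which exists because $x_{r-1} \le x_r$ and $\{x_{r-1}, x_{r-1}^\perp\}$ is a decomposition, so by the computation in \ref{rk:CPprop} the relevant meet exists and $\{x_r\wedge x_{r-1}^\perp,\ x_r^{\perp}\}$ (reading $x_r^\perp$ as the $\sch$-complement of $x_r$) forms a $\sqcup$-compatible full decomposition of $\Sub(X,\sqcup)_{\leq x_{r-1}^{\perp}}$. Then property (EX) applied to $\sigma$ and to the subobject $x_{r-1}^{\perp}\in\sigma$, replacing it by $\{x_r\wedge x_{r-1}^{\perp}, x_r^{\perp}\}\setminus\{0\}$, yields
\[
\sigma' := \{z_1,\dots,z_{r-1},\ x_r\wedge x_{r-1}^{\perp},\ x_r^{\perp}\}\setminus\{0\} \in \D(X,\sqcup).
\]
Ordering this as $(z_1,\dots,z_{r-1},\ x_r\wedge x_{r-1}^{\perp},\ x_r^{\perp})$ gives an element of $\redm{\Ord\D(X,\sqcup)}$ whose successive joins are $x_1, \dots, x_{r-1}$, then $x_{r-1}\vee(x_r\wedge x_{r-1}^\perp) = x_r$ (the last equality again by the height/complement argument, since $x_r\wedge x_{r-1}^\perp$ is the $\sch$-complement of $x_{r-1}$ inside $x_r$), so applying $G$ returns exactly $\{x_1 < \cdots < x_r\}$. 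This closes the induction and proves $G$ is surjective; combined with \ref{lm:isoGMap} it is then an isomorphism onto $(\Delta\red{\Sub(X,\sqcup)})^{\op}$.

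The main obstacle I anticipate is bookkeeping the $\sqcup$-compatibility: every time I invoke (EX) I must check that the replacement decomposition $\{x_r\wedge x_{r-1}^{\perp}, x_r^{\perp}\}$ of the lower interval $\Sub(X,\sqcup)_{\le x_{r-1}^\perp}$ is genuinely $\sqcup$-compatible in $\Sub(X,\sqcup)$, not merely a poset-decomposition; this is where unique downward $\sch$-complementation is essential, since it guarantees the $\sch$-complement (which is by definition $\sqcup$-compatible) exists and is the one produced by the meet computation of \ref{rk:CPprop}. A secondary point requiring care is verifying at each stage that the successive joins of the ordered tuple are the prescribed $x_i$ — this relies repeatedly on the fact that an $h$-complement realizing the correct height sum inside a bounded interval must join to the top of that interval, which is built into \ref{def:hcomplementedForObjects} and noted in \ref{rk:CPprop}. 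Once these compatibility checks are in place, the induction itself is short.
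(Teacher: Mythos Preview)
Your approach is correct and essentially identical to the paper's: both proceed by induction on the length of the chain, apply the inductive hypothesis to the chain with its top element removed, and then use property (EX) to split the last block of the resulting ordered decomposition into two pieces so as to realize the missing top element. The only cosmetic differences are (i) a small slip in the base case where you write $G(x_1^\perp,x_1)$ instead of $G(x_1,x_1^\perp)$, which you immediately fix in the next sentence, and (ii) the paper names the new block directly as $u=$ the unique $\sch$-complement of $x_r^\perp$ in $x_{r-1}^\perp$, whereas you compute it as the meet $x_r\wedge x_{r-1}^\perp$; these coincide, but your formulation requires property (CM), which you should cite via \ref{coro:uniquenessEPandCP} rather than \ref{rk:CPprop}.
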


\begin{proof}
We prove this by induction on the size of a chain.
Let $c = \{ z_0 < z_1 \cdots < z_r \} \in \Delta \red{\Sub(X,\sqcup)}$ be a chain.
If $r = 0$, let $w_0$ be the unique $\sch$-complement of $z_0$.
Thus $\{z_0,w_0\}$ is a $\sqcup$-decomposition and $G(z_0,w_0) = (z_0)$.
Assume then that $r > 0$, and let $c' = c\setminus \{z_r\}$.
Hence there exists an ordered $\sqcup$-decomposition $(x_0,\ldots,x_r)$ with $z_i = x_0 \vee \cdots \vee x_i$, for $0\leq i \leq r-1$.
By uniqueness, we have $x_r = z_{r-1}^\perp > z_r^{\perp}$.
Let $u$ be the unique $\sch$-complement of $z_r^\perp$ in $x_r$.
Then $\{u,z_r^{\perp}\}\in \D( \Sub(X,\sqcup)_{\leq x_r})$ and it is $\sqcup$-compatible in $\Sub(X,\sqcup)$.
By property (EX), we see that $\{x_0,\ldots,x_{r-1},u,z_r^{\perp}\}\in \D(X,\sqcup)$ and also $z_r = z_{r-1}\vee u$.
Hence $G(x_0,\ldots,x_{r-1},u,z_r^{\perp}) = (z_0,\ldots,z_{r-1},z_r)$.
\end{proof}

\begin{corollary}
\label{coro:minElementD}
Suppose that $\D(X,\sqcup)$ has a unique minimum element $\sigma_0$.
Then $\D(X,\sqcup)$ is the partition lattice on $\sigma_0$, $\Sub_h(X,\sqcup)$ is a Boolean lattice on the set $\sigma_0$ and $\red{\PD(X,\sqcup)}$ is contractible.
Moreover, $\Sub_h(X,\sqcup)$, $\D(X,\sqcup)$, $\PD(X,\sqcup)$ are Cohen-Macaulay.
\end{corollary}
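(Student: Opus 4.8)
The plan is to reduce all three posets to purely combinatorial models on the finite set $\sigma_0$ and then invoke classical facts about partition and Boolean lattices. Since $\sigma_0$ is the minimum of $\D(X,\sqcup)$, every decomposition lies in $\D(X,\sqcup)_{\geq\sigma_0}$, so \ref{prop:decompintervals} gives $\D(X,\sqcup)\groupiso\Pi(\sigma_0)$ at once. For $\Sub_h(X,\sqcup)$: by \ref{lm:PhiSpanMap} it is the image of $\Phi$, i.e.\ the set of joins $\bigvee\tau$ over partial decompositions $\tau$; every such $\tau$ is a subset of a decomposition, hence (being $\geq\sigma_0$) a subset of a coarsening of $\sigma_0$, so its elements — and therefore $\bigvee\tau$ — are joins of subsets of $\sigma_0$, and conversely every $\bigvee_{x\in A}x$ with $A\subseteq\sigma_0$ arises. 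By clause~(2) of \ref{def:decompositionsAndPD} applied to $\sigma_0$, these joins form a Boolean lattice on $\sigma_0$ (with meet and join computed in $\Sub(X,\sqcup)$), so $A\mapsto\bigvee_{x\in A}x$ is an isomorphism $2^{\sigma_0}\to\Sub_h(X,\sqcup)$. Finally, since every element of a partial decomposition already lies in $\Sub_h(X,\sqcup)$ and suprema of its elements are taken inside this sublattice, sending a $\sqcup$-partial decomposition to the family of its $\sigma_0$-supports is an isomorphism $\PD(X,\sqcup)\groupiso\PD(2^{\sigma_0})$, the partial partition poset of $\sigma_0$ (equivalently $\PD(\S)$ for the complemented lattice $\S=2^{\sigma_0}$; cf.\ \ref{ex:booleanLattice}, \ref{ex:latticeAsCategoryApproach}). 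Write $m=|\sigma_0|$; we may assume $m\geq 2$, the case $m\leq 1$ being trivial.

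For contractibility of $\red{\PD(X,\sqcup)}$ I would use the self-map $r\colon\red{\PD(X,\sqcup)}\to\red{\PD(X,\sqcup)}$, $r(\tau)=\{\,a\in\sigma_0\tq a\leq\Phi(\tau)\,\}$, which under the identification above shatters $\tau$ into the atoms occurring in it; it is well defined because $m\geq 2$. It is order-preserving, satisfies $r(\tau)\leq\tau$ (in the Boolean lattice $\Sub_h(X,\sqcup)$ an atom below $\bigvee\tau$ lies below one of the elements of $\tau$) and $r(\tau)\subseteq\sigma_0$, hence $r(\tau)\leq\sigma_0$. Since $\sigma_0\in\red{\PD(X,\sqcup)}$, we get $\mathrm{id}\simeq r\simeq c_{\sigma_0}$, so $\red{\PD(X,\sqcup)}$ is contractible.

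For the Cohen--Macaulay statements, $\Pi(\sigma_0)$ and $2^{\sigma_0}$ are Cohen--Macaulay by classical results (both are shellable, with all intervals again of the same form). For $\PD(2^{\sigma_0})$ I would induct on $m$. It is bounded, so its order complex is contractible (hence ``spherical''). A lower interval $\PD(2^{\sigma_0})_{\leq y}$, with $y$ having blocks $C_1,\dots,C_l$, is a product $\prod_j\PD(2^{C_j})$ of partial partition posets of smaller sets (the case $y$ being the top is handled directly), hence Cohen--Macaulay by induction together with the standard fact that products of Cohen--Macaulay posets are Cohen--Macaulay; consequently each open interval $(x,y)$ with $x<y$ is spherical of the correct dimension, while $\PD(2^{\sigma_0})_{<y}$, having $\emptyset$ as minimum, is contractible (so spherical) of the correct dimension. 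An upper interval $\PD(2^{\sigma_0})_{>x}$ is contractible of the correct dimension: if $x$ is a full decomposition then $\PD(2^{\sigma_0})_{\geq x}=\D(2^{\sigma_0})_{\geq x}\groupiso\Pi(x)$, and $\PD(2^{\sigma_0})_{>x}$ corresponds to $\Pi(x)$ with its minimum removed, which is contractible since it has a maximum; otherwise, fixing an atom $a\in\sigma_0$ with $a\not\leq\Phi(x)$, the map sending $\tau$ to $\tau\cup\{a\}$ when $a$ is not yet in the support of $\tau$ and to $\tau$ otherwise is an order-preserving self-map of $\PD(2^{\sigma_0})_{>x}$ lying above the identity, with image $\PD(2^{\sigma_0})_{\geq x\cup\{a\}}$, a poset with a minimum. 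Routine bookkeeping of dimensions then completes the interval conditions; hence $\PD(X,\sqcup)$, $\D(X,\sqcup)$ and $\Sub_h(X,\sqcup)$ are (homotopically) Cohen--Macaulay.

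The step I expect to be the main obstacle is the treatment of the upper intervals $\PD(2^{\sigma_0})_{>x}$: unlike the lower intervals, they are not products of smaller models, so one has to show by hand — via the $a$-retraction above — that deleting the apex $x$ from the cone $\PD(2^{\sigma_0})_{\geq x}$ preserves contractibility. A secondary point to be careful about is the opening reduction: one needs that every element of a $\sqcup$-partial decomposition automatically belongs to $\Sub_h(X,\sqcup)$, which is what lets $\PD(X,\sqcup)\cong\PD(2^{\sigma_0})$ hold without appealing to property~(EX) or to unique $\sch$-complementation.
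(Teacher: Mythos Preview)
Your reduction to the Boolean model $\PD(2^{\sigma_0})$ and the contractibility argument via the shatter-to-atoms retraction $r$ are correct. The paper works directly in $\PD(X,\sqcup)$ and goes the other way: it uses the closure $f(\tau)=\tau\cup\{a\in\sigma_0:a\nleq y\text{ for all }y\in\tau\}$ that adjoins \emph{all} missing atoms, with zigzag $\tau\leq f(\tau)\geq f(\emptyset)=\sigma_0$ instead of your $\tau\geq r(\tau)\leq\sigma_0$. Both land at $\sigma_0$; either works.

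There is, however, a gap in your Cohen--Macaulay argument. Your ``upper interval'' paragraph proves that $\PD(2^{\sigma_0})_{>x}$ is contractible, but this is automatic: it contains the unique maximum $\{X\}$. What the Cohen--Macaulay condition actually requires --- and what your phrase ``the case $y$ being the top is handled directly'' must cover --- is that the \emph{open} interval $(x,\{X\})=\PD(X,\sqcup)_{>x}\setminus\{\{X\}\}$ be spherical of the correct dimension, and your inductive treatment of lower intervals $\PD_{\leq y}$ cannot reach this case. For $x\in\D(X,\sqcup)$ this open interval is $\red{\Pi(x)}$ (both extremes removed, not just the minimum as you write), which is spherical but typically not contractible. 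For $x\notin\D(X,\sqcup)$ your $a$-retraction does give what is needed once you add the observation that it fixes $\{X\}$ (since $a$ lies in its support) and therefore restricts to a closure operator on $(x,\{X\})$ with image $[\,x\cup\{a\},\{X\})$, a poset with a minimum. This is exactly the step the paper carries out with its map $f$, applied directly on $\PD(X,\sqcup)_{>\sigma}\setminus\{\{X\}\}$.
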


\begin{proof}
By \ref{prop:decompintervals}, $\D(X,\sqcup)$ is the partition lattice on $\sigma_0$.
On the other hand, any element of $x\in \Sub_h(X,\sqcup)$ is part of a $\sqcup$-decomposition, so there exists a unique set $\tau\subseteq\sigma_0$ such that $x$ is the join of the elements of $\tau$.
Therefore $\Sub_h(X,\sqcup)$ is the Boolean lattice on the set $\sigma_0$.
In particular $\Sub_h(X,\sqcup)$ and $\D(X,\sqcup)$ are Cohen-Macaulay.

Now we prove that $\red{\PD(X,\sqcup)}$ is contractible.
In fact, we show that if $\sigma \in \PD(X,\sqcup)\setminus \D(X,\sqcup)$, then $\PD(X,\sqcup)_{>\sigma} \setminus\{ \{1\}\}$ is contractible (so $\red{\PD(X,\sqcup)}$ is contractible by taking $\sigma = \{\}$).

Let $\sigma \in \PD(X,\sqcup)\setminus \D(X,\sqcup)$.
Then $\sigma$ is contained in some $\sqcup$-decomposition (i.e. partition) $\tau$ and every element $y\in \sigma$ can be obtained by joining elements of $\sigma_0$.
Thus, the set
\[ f(\sigma) := \sigma \cup \{x \in \sigma_0 \tq x \nleq y \text{ for all } y\in\sigma\}\]
is a partition of $\sigma_0$.
Now we fix $\sigma$, and let $\sigma'>\sigma$.
Then $f(\sigma') \geq \sigma' > \sigma$.
Since $\sigma \notin \D(X,\sqcup)$ and $f(\sigma)\in\D(X,\sqcup)$, we see that $f(\sigma)>\sigma$.
Thus we have the following inequalities in $\PD(X,\sqcup)_{>\sigma}\setminus \{ \{1\} \}$:
\[ \sigma' \leq f(\sigma') \geq f(\sigma).\]
This proves that $\PD(X,\sqcup)_{>\sigma}\setminus \{ \{1\} \}$ is contractible.
It is also not hard to show that this interval has the correct dimension.

Since $f(\emptyset) = \sigma_0$, the homotopy above is
\[ \sigma'\leq f(\sigma')\geq \sigma_0.\]
Hence $\red{\PD(X,\sqcup)}$ is contractible.

On the other hand, if $\sigma\in \D(X,\sqcup)$ then
\[ \PD(X,\sqcup)_{>\sigma} = \D(X,\sqcup)_{>\sigma} \groupiso \Pi(\sigma)_{>\sigma},\]
which is a Cohen-Macaulay poset.
Therefore, upper intervals in the poset $\red{\PD(X,\sqcup)}$ are spherical of the correct dimension.

Finally, since lower intervals in $\PD(X,\sqcup)$ are products of partial decomposition posets, which are partial decompositions of Boolean lattices, we conclude that $\PD(X,\sqcup)$ is a Cohen-Macaulay poset.
\end{proof}

\begin{proposition}
\label{prop:uniqueDownwardhComplementationAndEP}
If $\Sub(X,\sqcup)$ is uniquely downward $\sch$-complemented and satisfies property (EX) then there is a deformation retract of $\red{\PD(X,\sqcup)}$ onto $\redm{\D(X,\sqcup)}$.
\end{proposition}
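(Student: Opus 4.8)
The plan is to prove that the inclusion $j\co\redm{\D(X,\sqcup)}\hookrightarrow\red{\PD(X,\sqcup)}$ is a homotopy equivalence, and then to upgrade this to a deformation retraction: $j$ is induced by an inclusion of order complexes $\Delta(\redm{\D(X,\sqcup)})\subseteq\Delta(\red{\PD(X,\sqcup)})$, hence a cofibration, and a cofibration that is a homotopy equivalence is the inclusion of a (strong) deformation retract.

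First I record that $\redm{\D(X,\sqcup)}$ is an up-set in $\red{\PD(X,\sqcup)}$: if $\sigma\in\redm{\D(X,\sqcup)}$ and $\sigma\leq\tau$ in $\red{\PD(X,\sqcup)}$, then $X=\Phi(\sigma)\leq\Phi(\tau)\leq X$, so $\Phi(\tau)=X$, whence $\tau\in\D(X,\sqcup)$ by \ref{lm:PhiSpanMap}, and $\tau\neq\{X\}$ gives $\tau\in\redm{\D(X,\sqcup)}$. Thus $\red{\PD(X,\sqcup)}\setminus\redm{\D(X,\sqcup)}$ is exactly the set of nonempty $\sqcup$-partial decompositions that are not full. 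By the version of Quillen's fiber theorem \ref{thm:quillenFiber} obtained by passing to opposite posets, it suffices to show that $\redm{\D(X,\sqcup)}_{\geq\sigma}$ is contractible for every $\sigma\in\red{\PD(X,\sqcup)}$. When $\sigma\in\redm{\D(X,\sqcup)}$ this is clear, since $\sigma$ is then the minimum of $\redm{\D(X,\sqcup)}_{\geq\sigma}$.

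The real work is the case of a nonempty, non-full $\sigma$. Put $x:=\Phi(\sigma)=\bigvee_{y\in\sigma}y$; then $x\neq X$, and by unique downward $\sch$-complementation $x$ has a unique $\sch$-complement $x^{\perp}\neq 0$, with $\{x,x^{\perp}\}\in\D(X,\sqcup)$. Since $\sigma$ is a $\sqcup$-compatible partial decomposition with $\bigvee\sigma=x$, we have $\sigma\in\D(\Sub(X,\sqcup)_{\leq x})$, and $\sigma$ is $\sqcup$-compatible in $\Sub(X,\sqcup)$; applying (EX) to replace $x$ inside $\{x,x^{\perp}\}$ by $\sigma$ and invoking \ref{rk:CPprop} gives $\widehat\sigma:=\sigma\cup\{x^{\perp}\}\in\D(X,\sqcup)$, so $\widehat\sigma\in\redm{\D(X,\sqcup)}_{\geq\sigma}$. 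Now recall from \ref{prop:latticeDecompositions} that $\D(X,\sqcup)\cup\{\widehat0\}$ is a lattice, and consider $m\co\tau\mapsto\tau\wedge\widehat\sigma$. The point is that $m$ is a well-defined order-preserving self-map of $\redm{\D(X,\sqcup)}_{\geq\sigma}$: well-definedness rests on the fact that every $\tau\in\redm{\D(X,\sqcup)}_{\geq\sigma}$ and $\widehat\sigma$ admit a common lower bound in $\D(X,\sqcup)$. Indeed, for each block $B$ of $\tau$ lying above some element of $\sigma$, splitting $B$ into $\{y\in\sigma\co y\leq B\}$ together with the $\sch$-complement in $B$ of their join is a $\sqcup$-compatible full decomposition of $B$, so by (EX) it refines $\tau$ to a decomposition $\tau_0\leq\tau$ with $\sigma\subseteq\tau_0$; since $\bigvee\sigma=x$ inside the Boolean lattice spanned by $\tau_0$, the remaining blocks of $\tau_0$ decompose $x^{\perp}$, whence $\tau_0\leq\widehat\sigma$ too. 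Consequently $\tau\wedge\widehat\sigma\in\D(X,\sqcup)$, and the meet formula of \ref{prop:latticeDecompositions} yields $\sigma\leq\tau\wedge\widehat\sigma\leq\widehat\sigma$ (in particular $\tau\wedge\widehat\sigma\neq\{X\}$). Thus $m$ is order-preserving with $m\leq\id$ and $m\leq c_{\widehat\sigma}$, the constant map at $\widehat\sigma$; since comparable order-preserving maps are homotopic, $\id\simeq m\simeq c_{\widehat\sigma}$, so $\redm{\D(X,\sqcup)}_{\geq\sigma}$ is contractible.

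Combining the above, \ref{thm:quillenFiber} gives that $j$ is a weak homotopy equivalence; as $\red{\PD(X,\sqcup)}$ has finite height its order complex is a CW complex, so $j$ is a homotopy equivalence, and being a subcomplex inclusion it exhibits $\redm{\D(X,\sqcup)}$ as a strong deformation retract of $\red{\PD(X,\sqcup)}$. The main obstacle is the contractibility step for non-full $\sigma$: checking that $\widehat\sigma$ really is a $\sqcup$-decomposition (the $\sqcup$-compatibility bookkeeping when gluing $\sigma$ to $x^{\perp}$, and likewise when splitting a block), and producing the common lower bound $\tau_0$, which leans on (EX) and on the description of the refinements of a decomposition provided by \ref{prop:decompintervals} together with \ref{lm:upperIntervalsDecomp}. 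It is worth noting that the naive completion map $\sigma\mapsto\widehat\sigma$ is \emph{not} order-preserving, which is precisely why one argues via contractible upper fibers rather than by writing down a retraction by hand.
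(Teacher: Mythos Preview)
Your proof is correct and follows the same overall strategy as the paper: both arguments show that the upper fiber $\redm{\D(X,\sqcup)}_{\geq\sigma}$ is contractible for every non-full $\sigma$, via a zigzag $\tau\geq(\text{intermediate})\leq(\text{fixed point})$, and then deduce the homotopy equivalence. The technical execution differs slightly. The paper builds the intermediate map $\Lambda$ by hand, splitting each block $B\in\tau$ into $\alpha(B)=\bigvee_{y\in\sigma,\,y\leq B}y$ and its $\sch$-complement $\beta(B)$, and verifies directly that $\Lambda$ is order-preserving with $\tau\geq\Lambda(\tau)\leq\{x,x^{\perp}\}$. You instead package the same idea through the lattice structure of \ref{prop:latticeDecompositions}: after constructing the common lower bound $\tau_0$ (which is essentially the paper's $\Lambda(\tau)$ refined further), you take $m(\tau)=\tau\wedge\widehat\sigma$ and let the meet do the work of order-preservation. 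In fact one can check that the paper's $\Lambda(\tau)$ equals $\tau\wedge\{x,x^{\perp}\}$, so the two contractions are the same idea with different anchors. Your approach has the virtue of making the order-preservation automatic, at the cost of having to justify that the meet lands in $\redm{\D(X,\sqcup)}_{\geq\sigma}$; the paper's direct construction makes the latter obvious but must verify order-preservation on covering relations. You are also more explicit than the paper about upgrading the homotopy equivalence to a deformation retraction via the cofibration argument.
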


\begin{proof}
We prove that for any $\tau \in \red{\PD(X,\sqcup)} \setminus \D(X,\sqcup)$, the interval $\D_{\tau}:=\redm{\D(X,\sqcup)}_{\geq \tau}$ is contractible.
Let $\sigma \in \D_{\tau}$.
For $y\in \sigma$, let $\tau_y := \{x\in\tau\tq x\leq y\}$ and $\alpha(y) := \bigvee_{x\in \tau_y}x$.
Then $\alpha(y)\leq y$.
Let $\beta(y)\leq y$ be the unique $\sch$-complement for $\alpha(y)$ in $y$.
Hence we get a decomposition $\{\alpha(y),\beta(y)\}\setminus \{0\} \in \D(\Sub(X,\sqcup)_{\leq y})$ which is $\sqcup$-compatible in $\Sub(X,\sqcup)$.
By property (EX),
\[ \sigma'_y:=(\sigma\setminus\{y\}) \cup \{\alpha(y),\beta(y)\} \setminus \{0\}\]
is a $\sqcup$-decomposition of $\Sub(X,\sqcup)$.
We can continue in this way until producing a $\sqcup$-decomposition
\[ \Lambda(\sigma) := \{ \alpha(y),\beta(y) : y\in \sigma\} \setminus \{0\} \in \D(X,\sqcup).\]
Note that $\Lambda(\sigma)\leq \sigma$ and $\tau \leq \Lambda(\sigma)$.
Now, let $x$ be the join of the elements of $\tau$ and $x'$ be the $\sch$-complement of $x$ in $\Sub(X,\sqcup)$.
We claim that $\beta(y)\leq x'$.
Since $x = \bigvee_{y \in \sigma} \alpha(y)$, if $x'' = \bigvee_{y\in \sigma} \beta(y)$ then $\{x,x''\}\in \D(X,\sqcup)$, so $x,x''$ are $\sch$-complements of each other since $\Lambda(\sigma)$ is a $\sqcup$-decomposition.
By uniqueness, $x'' = x'$, and this implies that $\beta(y)\leq x'$.
Hence we have the following zigzag whose elements lie in $\D_{\tau}$:
\[\sigma \geq \Lambda(\sigma) \leq \{x,x'\}.\]
It remains to prove that the map $\sigma\mapsto \Lambda(\sigma)$ is order-preserving.
For simplicity, we prove it for covering relations, so assume that $\sigma\leq \rho$, where $\rho$ is obtained by joining two elements $y_1,y_2$ of $\sigma$.
Then we have the $\sqcup$-decompositions by $\sch$-complements $y_1 = \alpha(y_1) \vee \beta(y_1)$ and $y_2 = \alpha(y_2)\vee\beta(y_2)$.
Clearly $\alpha(y_1\vee y_2) = \alpha(y_1)\vee\alpha(y_2)$.
Let $z = \beta(y_1)\vee\beta(y_2)$, which exists since $\Lambda(\sigma)$ is a $\sqcup$-decomposition.
This also implies that $h(z) = h(\beta(y_1)) + h(\beta(y_2))$, and that $z \wedge (\alpha(y_1)\vee\alpha(y_2)) = 0$.
We also have that $z\vee \alpha(y_1)\vee\alpha(y_2) = y_1\vee y_2$.
Therefore $z$ is an $\sch$-complement for $\alpha(y_1)\vee\alpha(y_2)$ in $y_1\vee y_2$.
By uniqueness, $z = \beta(y_1\vee y_2)$.
Hence $\Lambda(\rho)$ is obtained from $\Lambda(\sigma)$ after taking the joins $\alpha(y_1)\vee \alpha(y_2)$ and $\beta(y_1)\vee \beta(y_2)$.
Thus $\Lambda(\sigma)\leq \Lambda(\rho)$.
\end{proof}

\begin{corollary}
\label{coro:collapseFrames}
Suppose that $\Sub(X,\sqcup)$ is uniquely downward $\sch$-complemented, and let $\sigma \in \PF(X,\sqcup)$ be a simplex of size $n-1$.
Then the following hold:
\begin{enumerate}
    \item $\sigma$ is contained in a unique maximal simplex.
    \item There is a deformation retract of $\F(X,\sqcup)$, viewed as a poset, onto the subposet $\widehat{\F}(X,\sqcup)$ of frames of size $\neq n-1$.
    \item If property (EX) holds then we have a poset inclusion $\widehat{\F}(X,\sqcup)^{\op} \hookrightarrow \redm{\D(X,\sqcup)}$ given by $\tau \mapsto \tau \cup \{ \big(\bigvee_{x\in \tau}x \big)^\perp\}$.
\end{enumerate}
\end{corollary}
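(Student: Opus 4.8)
The plan is to treat the three items in order: the first two are short consequences of unique downward $\sch$-complementation, and the third, which uses property (EX), carries the real content. Write $n$ for the height of $\Sub(X,\sqcup)$ and, for a partial frame $\tau$, set $\Phi(\tau)=\bigvee_{x\in\tau}x$, which has height $|\tau|$ because frames consist of atoms. First I would note that a partial frame $\sigma\in\PF(X,\sqcup)$ of size $n-1$ automatically lies in a full frame: being a partial $\sqcup$-decomposition it sits inside some $\sigma'\in\D(X,\sqcup)$, which by \ref{prop:heightDandPD} and \ref{rk:containmentFull} must have size $n$ (size $n-1$ would force $\sigma'=\sigma$ and $\Phi(\sigma)=1_{\Sub(X,\sqcup)}$, contradicting $h(\Phi(\sigma))=n-1$), hence consists of atoms, so $\sigma'\in\F(X,\sqcup)$. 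For item~1 I would then take the unique $\sch$-complement $\Phi(\sigma)^{\perp}$; it is an atom since its height is $n-(n-1)=1$, so $\sigma^{+}:=\sigma\cup\{\Phi(\sigma)^{\perp}\}$ is a full frame containing $\sigma$. Conversely, any full frame $\mu\supseteq\sigma$ has size $n$ by the above, so $\mu=\sigma\cup\{y\}$ with $y$ an atom; coarsening the $\sqcup$-decomposition $\mu$ to the two blocks $\Phi(\sigma)$ and $y$ gives a $\sqcup$-decomposition $\{\Phi(\sigma),y\}$ (\ref{prop:decompintervals}), so $y$ is a $\sch$-complement of $\Phi(\sigma)$ and equals $\Phi(\sigma)^{\perp}$ by uniqueness. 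Since a full $\sqcup$-decomposition is maximal in $\PD(X,\sqcup)$ by \ref{rk:containmentFull}, $\sigma^{+}$ is a maximal simplex and, by the above, the unique one above $\sigma$.

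For item~2 I would define $f\colon\F(X,\sqcup)\to\F(X,\sqcup)$ on face posets by $f(\tau)=\tau$ when $|\tau|\ne n-1$ and $f(\sigma)=\sigma^{+}$ when $|\sigma|=n-1$. By item~1 the only element of $\F(X,\sqcup)$ strictly above such a $\sigma$ is $\sigma^{+}$, which reduces the verification that $f$ is order-preserving to a short case check; since $f(\tau)\ge\tau$ everywhere, $f\simeq\id_{\F(X,\sqcup)}$, and because $f$ has image $\widehat{\F}(X,\sqcup)$ and restricts to the identity there, it exhibits $\widehat{\F}(X,\sqcup)$ as a deformation retract of $\F(X,\sqcup)$.

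For item~3, assuming (EX), I would regard $\widehat{\F}(X,\sqcup)^{\op}$ as the poset of non-empty frames of size $\ne n-1$ and set $\phi(\tau)=\big(\tau\cup\{\Phi(\tau)^{\perp}\}\big)\setminus\{0\}$, then check four points. (a) $\phi(\tau)\in\redm{\D(X,\sqcup)}$: immediate for a full frame ($\Phi(\tau)^{\perp}=0$, $\phi(\tau)=\tau$), and otherwise one applies (EX) to the size-two $\sqcup$-decomposition $\{\Phi(\tau),\Phi(\tau)^{\perp}\}$ with the block $\Phi(\tau)$ refined by $\tau\in\D(\Sub(X,\sqcup)_{\le\Phi(\tau)})$, concluding $\phi(\tau)\in\D(X,\sqcup)$ via \ref{rk:CPprop}, while $|\phi(\tau)|\ge 2$ keeps $\phi(\tau)$ off the maximum $\{X\}$. (b) Injectivity: since $|\tau|\ne n-1$, the element $\Phi(\tau)^{\perp}$ has height $\ne 1$, hence is not an atom, so the atoms of $\phi(\tau)$ are exactly $\tau$. (c) Order-preservation: for $\tau_{2}\subseteq\tau_{1}$ one has $\Phi(\tau_{1})^{\perp}\le\Phi(\tau_{2})^{\perp}$ by the anti-automorphism $x\mapsto x^{\perp}$ of \ref{coro:uniquenessEPandCP}, and coarsening $\phi(\tau_{1})$ to the blocks $\Phi(\tau_{2})$ and $\bigvee(\tau_{1}\setminus\tau_{2})\vee\Phi(\tau_{1})^{\perp}$ identifies the second with $\Phi(\tau_{2})^{\perp}$ by uniqueness, so every block of $\phi(\tau_{1})$ lies below a block of $\phi(\tau_{2})$, i.e.\ $\phi(\tau_{1})\le\phi(\tau_{2})$. (d) Order-reflection: if $\phi(\tau_{1})\le\phi(\tau_{2})$, then each atom of $\tau_{2}$, being a block of the coarser decomposition $\phi(\tau_{2})$, must equal a single block of $\phi(\tau_{1})$ by the additivity of heights (\ref{lm:upperIntervalsDecomp}), hence an atom of $\phi(\tau_{1})$, hence an element of $\tau_{1}$.

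I expect the main obstacle to be item~3, and within it the need to make order-preservation and order-reflection hold simultaneously: both rely on the complementary block $\Phi(\tau)^{\perp}$ never being an atom for a frame of size $\ne n-1$, together with reading coarsenings of $\sqcup$-decompositions as $\sqcup$-decompositions and the uniqueness provided by \ref{coro:uniquenessEPandCP}. The residual difficulty is bookkeeping: the formula only makes literal sense once one excludes the empty frame from $\widehat{\F}(X,\sqcup)$ (otherwise it would be sent to $\{X\}\notin\redm{\D(X,\sqcup)}$) and deletes the $\Phi(\tau)^{\perp}=0$ block arising from a full frame, in keeping with the $\setminus\{0\}$ conventions used elsewhere in the paper.
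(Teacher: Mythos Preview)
Your proof is correct and follows essentially the same approach as the paper, which for items~1 and~2 argues exactly as you do and for item~3 simply cites \ref{coro:uniquenessEPandCP} without spelling out the well-definedness, injectivity, order-preservation, and order-reflection that you verify explicitly. Your flagged bookkeeping issues (excluding the empty frame and reading the formula modulo~$\setminus\{0\}$ for full frames) are genuine minor ambiguities in the statement that the paper leaves implicit; the later use of $\widehat{\F}$ in \ref{sub:formedSpaces} indeed restricts to non-empty frames, confirming your interpretation.
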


\begin{proof}
Let $\sigma \in \PF(X,\sqcup)$ be a frame of size $n-1$, and recall that $\Phi(\sigma)$ is the join of the elements of $\sigma$.
Since $\sigma$ is a partial decomposition, $\sigma \cup \{\Phi(\sigma)^\perp\}\in\D(X,\sqcup)$.
Now, $|\sigma|=n-1$ implies that $h(\Phi(\sigma)) = n-1$ and $h(\Phi(\sigma)^\perp) = 1$.
Therefore $\sigma \cup \{\Phi(\sigma)^\perp\}$ is a frame of size $n$, and clearly $\PF(X,\sqcup)_{>\sigma} = \{ \sigma \cup \{\Phi(\sigma)^\perp\}\} $ by uniqueness of the complement.
This proves item 1.

Item 2 is an easy consequence of item 1. We write down the deformation retract.
Let $\widehat{\F}(X,\sqcup)$ be the subposet of $\F(X,\sqcup)$ consisting of frames of size $\neq n-1$.
By the previous paragraph, $\F(X,\sqcup)_{>\sigma} = \{ \sigma \cup \{\Phi(\sigma)^\perp\}\}$ for any frame $\sigma$ of size $n-1$.
Therefore, the retraction $r:\F(X,\sqcup)\to \widehat{\F}(X,\sqcup)$ that takes a frame of size $n-1$ to the unique maximal frame of size $n$ containing it is an order-preserving map such that $r(\sigma)\geq\sigma$ for all $\sigma \in \F(X,\sqcup)$ and it is the identity on $\widehat{\F}(X,\sqcup)$.
Hence $\F(X,\sqcup) \simeq \widehat{\F}(X,\sqcup)$.

The function $\widehat{\F}(X,\sqcup)^{\op} \hookrightarrow \redm{\D(X,\sqcup)}$ defined in item 3 is an inclusion of posets by \ref{coro:uniquenessEPandCP}.
\end{proof}

\begin{corollary}
\label{coro:eulerCharOPDUniqueCompl}
Assume the following hold for $\Sub(X,\sqcup)$:
\begin{enumerate}
    \item $\Sub(X,\sqcup)$ is finite and uniquely downward $\sch$-complemented.
    \item For $y\in \Sub(X,\sqcup)$, $\Sub(X,\sqcup)_{\leq y} = \Sub(Y,\sqcup)$ for a representative $y = [(Y,i)]$.
    \item Property (EX) holds.
    \item The lower intervals in $\D(X,\sqcup)$ and $\PD(X,\sqcup)$ decompose as in \ref{lm:intervalsOrdered}(2).
\end{enumerate}
Then $\tilde{\chi}(\red{\Ord\PD(X,\sqcup)}) = -1$.
\end{corollary}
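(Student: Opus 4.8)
The plan is to recast the statement as a computation of a M\"obius number and then argue by induction on the height $n$ of $\Sub(X,\sqcup)$. By Philip Hall's theorem, $\tilde{\chi}(\red{\Ord\PD(X,\sqcup)}) = \mu((),(X))$, where $\mu$ is the M\"obius function of the bounded poset $\Ord\PD(X,\sqcup)$ (with minimum $()$ and maximum $(X)$); write $m(X):=\mu_{\Ord\PD(X,\sqcup)}((),(X))$. I claim $m(X)=-1$ for every $X$ satisfying (1)--(4). The base case $n\le 1$ is immediate: then $\PD(X,\sqcup)=\{\emptyset,\{X\}\}$, so $\red{\Ord\PD(X,\sqcup)}=\emptyset$ and $m(X)=\tilde{\chi}(\emptyset)=-1$.

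For the inductive step I would first note that hypotheses (1)--(4) are inherited by every lower interval: by (2) one has $\Sub(X,\sqcup)_{\le y}=\Sub(Y,\sqcup)$, and unique downward $\sch$-complementation, property (EX), and the product decomposition of lower intervals all restrict to this subposet, using that $\sqcup$-compatibility and $\sch$-complements computed inside $\Sub(X,\sqcup)_{\le y}$ agree with those computed in $\Sub(X,\sqcup)$. Hence the induction hypothesis applies to any $Y$ with $h(Y)<n$. Now the defining recursion of $\mu$ gives
\[ m(X) = -\sum_{()\le z<(X)}\mu((),z) = -1 - \sum_{z\in\red{\Ord\PD(X,\sqcup)}}\mu((),z). \]
For $z\in\red{\Ord\PD(X,\sqcup)}$, the interval $[(),z]=\Ord\PD(X,\sqcup)_{\le z}$ is isomorphic, by hypothesis (4) together with \ref{lm:intervalsOrdered}(2), to $\prod_{y\in z}\Ord\PD(Y,\sqcup)$ for suitable representatives $(Y,i_y)\in y$, so the product formula for M\"obius functions gives $\mu((),z)=\prod_{y\in z}m(Y)=(-1)^{|z|}$, the last equality because each $y\in z$ is $h$-complemented with $y\ne X$, whence $h(Y)<n$ and $m(Y)=-1$ by induction. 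Since the missing terms for $()$ and $(X)$ contribute $(-1)^{0}+(-1)^{1}=0$, grouping ordered tuples by their underlying set yields
\[ \sum_{z\in\red{\Ord\PD(X,\sqcup)}}\mu((),z) = \sum_{z\in\Ord\PD(X,\sqcup)}(-1)^{|z|} = \sum_{\sigma\in\PD(X,\sqcup)}(-1)^{|\sigma|}\,|\sigma|!\,. \]

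It remains to show this last sum is $0$. By hypothesis (1) the poset $\Sub(X,\sqcup)$ is $\sch$-complemented, so $\Sub_h(X,\sqcup)=\Sub(X,\sqcup)$ and $\Phi$ is surjective by \ref{lm:PhiSpanMap}; I would group the partial decompositions by $w:=\Phi(\sigma)$. The value $w=0$ contributes only $\sigma=\emptyset$, i.e.\ the term $1$. For $w\ne 0$, property (EX) shows that $\{\sigma\in\PD(X,\sqcup):\Phi(\sigma)=w\}$ is exactly $\D(W,\sqcup)$ (any full $\sqcup$-decomposition of $W$ extends, via the $\sch$-complement $w^{\perp}$ and (EX), to a $\sqcup$-decomposition of $X$), so its contribution is $\sum_{\sigma\in\D(W,\sqcup)}(-1)^{|\sigma|}|\sigma|!=\tilde{\chi}(\redm{\Ord\D(W,\sqcup)})$ by \ref{lm:intervalsOrdered}(1). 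As the hypotheses pass to $W$, \ref{lm:GMapSurjective} gives $\redm{\Ord\D(W,\sqcup)}\cong(\Delta\red{\Sub(W,\sqcup)})^{\op}$, and since a poset and its opposite have the same order complex and $\Sub(W,\sqcup)=\Sub(X,\sqcup)_{\le w}$ by (2), Philip Hall's theorem yields $\tilde{\chi}(\redm{\Ord\D(W,\sqcup)})=\tilde{\chi}(\red{\Sub(X,\sqcup)_{\le w}})=\mu_{\Sub(X,\sqcup)}(0,w)$. Assembling,
\[ \sum_{\sigma\in\PD(X,\sqcup)}(-1)^{|\sigma|}|\sigma|! = 1 + \sum_{0\ne w\in\Sub(X,\sqcup)}\mu_{\Sub(X,\sqcup)}(0,w) = \sum_{w\in\Sub(X,\sqcup)}\mu_{\Sub(X,\sqcup)}(0,w) = 0, \]
the last equality being the defining property of the M\"obius function (valid since $0\ne X$). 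Hence $m(X)=-1-0=-1$, completing the induction.

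The step I expect to be the main obstacle is the careful verification that hypotheses (1)--(4) are genuinely inherited by the lower intervals $\Sub(X,\sqcup)_{\le y}$, so that the induction is legitimate, together with the identification $\Phi^{-1}(w)=\D(W,\sqcup)$ for $w\ne 0$; both rest on the compatibility of $\sqcup$-compatibility and $\sch$-complementation with restriction to lower intervals, and on property (EX). Everything else is formal manipulation of M\"obius functions.
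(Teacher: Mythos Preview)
Your proof is correct and takes a genuinely different route from the paper's. The paper argues topologically: it invokes the wedge decomposition of \ref{thm:orderedVersions}(2) to write $\tilde\chi(\red{\Ord\PD(X,\sqcup)})$ as $\tilde\chi(\red{\PD(X,\sqcup)})$ plus a sum over $\sigma\in\redm{\D(X,\sqcup)}$, then uses the homotopy equivalence $\red{\PD(Y,\sqcup)}\simeq\redm{\D(Y,\sqcup)}$ from \ref{prop:uniqueDownwardhComplementationAndEP} (which already needs (EX) and unique downward $\sch$-complementation) on each factor of the lower intervals, and finishes with a telescoping via the elementary identity \ref{eq:posetFormulaEulerD}. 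Your argument bypasses both \ref{thm:orderedVersions} and \ref{prop:uniqueDownwardhComplementationAndEP}: you work purely with M\"obius functions, reduce to the sum $\sum_{\sigma\in\PD}(-1)^{|\sigma|}|\sigma|!$ via the product decomposition of lower intervals in $\Ord\PD$, fiber this sum over $\Phi$, and recognize each fiber contribution $\sum_{\sigma\in\D(W,\sqcup)}(-1)^{|\sigma|}|\sigma|!$ as $\mu_{\Sub(X,\sqcup)}(0,w)$ through the isomorphism of \ref{lm:GMapSurjective}; the total then vanishes by the defining recursion of $\mu$. Your approach is more elementary in that it needs no homotopy input beyond Philip Hall's theorem, and it makes the enumerative content (that the whole identity is the M\"obius recursion in $\Sub(X,\sqcup)$) completely transparent. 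The paper's approach, on the other hand, yields topological information along the way---the wedge decomposition and the deformation retract $\red{\PD}\simeq\redm{\D}$---that your computation does not produce. Your caveat about inheritance of (1)--(4) to lower intervals and about the identification $\Phi^{-1}(w)=\D(W,\sqcup)$ is well placed; these are exactly the points the paper also has to address (and does, in the first paragraph of its proof), and they go through for the reasons you sketch.
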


\begin{proof}
We compute $\tilde{\chi}(\red{\Ord\PD(X,\sqcup)})$ by using the wedge decomposition given in \ref{thm:orderedVersions}(2) and the following formula for the Euler characteristic of a finite poset:
\begin{equation}
\label{eq:posetFormulaEulerD}
\tilde{\chi}\big({\redm{\D(X,\sqcup)}}\big) = -1 - \sum_{\sigma\in \redm{\D(X,\sqcup)}} \tilde{\chi}(\redm{\D(X,\sqcup)}_{<\sigma}).    
\end{equation}

First, note that $\Sub(Y,\sqcup)$ is also uniquely downward $\sch$-complemented since it coincides with $\Sub(X,\sqcup)_{\leq y}$ for $y = [(Y,i)]$.
From this, it is also not hard to see that property (EX) holds for $\Sub(Y,\sqcup)$.
Thus, by \ref{prop:uniqueDownwardhComplementationAndEP}, $\red{\PD(Y,\sqcup)} \simeq \redm{\D(Y,\sqcup)}$ for any $y = [(Y,i)]\in \Sub(X,\sqcup)$.

Now let $\sigma = \{y_1,\ldots,y_r\}\in \redm{\D(X,\sqcup)}$, with representatives $y_i = [(Y_i,j_i)]$.
We have
\begin{align*}
\tilde{\chi}( (\red{\PD(X,\sqcup)})_{<\sigma} ) & = \tilde{\chi}\bigg( \red{ \big(\prod_i \PD(Y_i,\sqcup) \big) } \bigg) = \prod_i \tilde{\chi}\big( \red{\PD(Y_i,\sqcup)} \big) = \prod_i \tilde{\chi}\big( \redm{\D(Y_i,\sqcup)} \big),    
\end{align*}
and
\begin{align*}
    \tilde{\chi}( \redm{\D(X,\sqcup)}_{<\sigma} ) & = \tilde{\chi}\bigg( \redm{ \big(\prod_i \D(Y_i,\sqcup) \big) } \bigg) =  \tilde{\chi}\bigg( \redm{\D(Y_1,\sqcup)} * \cdots * \redm{\D(Y_r,\sqcup)}  \bigg)\\
    & = (-1)^{|\sigma|-1} \prod_i \tilde{\chi}( \redm{\D(Y_i,\sqcup)} ) \\
    & = (-1)^{|\sigma|-1} \tilde{\chi}( \red{\PD(X,\sqcup)}_{<\sigma} ).
\end{align*}
By item 2 of \ref{thm:orderedVersions} and the above equalities, we get
\begin{align*}
\tilde{\chi}(\red{\Ord\PD(X,\sqcup)}) & = \tilde{\chi}(\redm{\D(X,\sqcup)}) - \sum_{\sigma\in \redm{\D(X,\sqcup)}} (-1)^{|\sigma|-2} \tilde{\chi}( \red{\PD(X,\sqcup)}_{<\sigma} )\\
& = \tilde{\chi}(\redm{\D(X,\sqcup)}) + \sum_{\sigma\in \redm{\D(X,\sqcup)}} \tilde{\chi}( \redm{\D(X,\sqcup)}_{<\sigma} )\\
& = \tilde{\chi}(\redm{\D(X,\sqcup)}) + (-\tilde{\chi}(\redm{\D(X,\sqcup)}) - 1)\\
& = -1.
\end{align*}
\end{proof}

The next example shows that $\red{\Ord\PD(X,\sqcup)}$ 
has the homotopy type of a sphere of the maximal possible dimension.

\begin{example}
\label{ex:OPDBooleanLattice}
Let $L = \Sub(X,\sqcup)$ be the Boolean lattice on $n$ elements, hence of height $n$.
Since this is a uniquely downward $\sch$-complemented geometric lattice (see \ref{sub:matroids}), it satisfies hypotheses of \ref{coro:eulerCharOPDUniqueCompl}.
Moreover, $\PD(L)$ is the partial partition lattice, and its proper part retracts onto $\redm{\D(L)}$ by \ref{prop:uniqueDownwardhComplementationAndEP}, which is contractible (recall it has a unique minimal element $\sigma_0$ given by the partition into one-blocks).

By \ref{thm:orderedVersions}, we see that
\[ \red{\Ord\PD(L)}\simeq \bigvee_{\sigma \in \redm{\D(K)}} S^{|\sigma|-2} * ( \red{\PD(L)})_{<\sigma}.\]
Now, by property (LI), for $\sigma \in \redm{\D(K)}$ it is not hard to see that $\PD(L)_{\leq \sigma}$ is the product of the partial partition lattices on the subsets of $\sigma$.
Hence, after removing the top and minimal element of this product, we see that this is contractible if $\sigma$ contains a block of size at least two.
Hence
\[ \red{\Ord\PD(L)} \simeq S^{|\sigma_0|-2} * ( \red{\PD(L)})_{<\sigma_0}. \]
Now, $(\red{\PD(L)})_{<\sigma_0}$ is the poset of proper faces of the simplex of size $|\sigma_0|$.
Therefore, $(\red{\PD(L)})_{<\sigma_0}$ is a sphere of dimension $|\sigma_0|-2=n-2$. From the join decomposition,
we see that $\red{\Ord\PD(L)}$ has the homotopy type of a sphere of dimension $2n-3$.

For instance, for $n = 2$, $|\red{\Ord\PD(L)}|$ is the triangulation of the $1$-dimensional sphere with exactly four vertices and four edges, as depicted in \ref{fig:OPDSphereBoolean2}.

\begin{figure}
    \centering
    \begin{tikzpicture}
	\draw[draw=black, fill=black, thin, solid] (-2.00,-1.00) circle (0.1);
	\draw[draw=black, fill=black, thin, solid] (2.00,-1.00) circle (0.1);
	\draw[draw=black, fill=black, thin, solid] (0.00,1.00) circle (0.1);
	\draw[draw=black, fill=black, thin, solid] (0.00,-3.00) circle (0.1);
	\draw[draw=black, thin, solid] (-2.00,-1.00) -- (0.00,1.00);
	\draw[draw=black, thin, solid] (0.00,1.00) -- (2.00,-1.00);
	\draw[draw=black, thin, solid] (2.00,-1.00) -- (0.00,-3.00);
	\draw[draw=black, thin, solid] (0.00,-3.00) -- (-2.00,-1.00);
	\node[black, anchor=south west] at (-2.96,-1.24) {$(1)$};
	\node[black, anchor=south west] at (2.07,-1.21) {$(2)$};
	\node[black, anchor=south west] at (-0.58,1.17) {$(1,2)$};
	\node[black, anchor=south west] at (-0.56,-3.75) {$(2,1)$};
\end{tikzpicture}
    \caption{Geometric realization of $\red{\Ord\PD}$ for the Boolean lattice on $\{1,2\}$}
    \label{fig:OPDSphereBoolean2}
\end{figure}
\end{example}

The following example shows that, in the conditions of \ref{coro:eulerCharOPDUniqueCompl}, $\red{\Ord\PD(X,\sqcup)}$ might be a wedge of several spheres of different dimensions.
See \ref{sub:formedSpaces} for more details.

\begin{example}
Let $n=2$ and take a field $\KK$ of characteristic $\neq 2$.
Consider $V= \KK^2$ with bilinear form given by the identity matrix on the canonical basis.
Here we consider the category of finite-dimensional $\KK$-vector spaces with a non-degenerate bilinear form, with monoidal product $\sqcup$ given by the orthogonal product $\oPerpSymbol$.
Taking as our object $X=V$, we see that
$\Sub(X,\sqcup)$ is the poset of non-degenerate subspaces of $V$.
Thus, $\Sub(X,\sqcup)$ is uniquely downward $\sch$-complemented, where the complement is given by the orthogonal complement.
Indeed, it fulfills the hypotheses of \ref{coro:eulerCharOPDUniqueCompl} except maybe for the finiteness condition in item 1 (which is equivalent to the finiteness of $\KK$).

Then, $\red{\Ord\PD(X,\sqcup)}$ is the disjoint union of $1$-spheres indexed by the full frames of $V$.
That is, the disjoint union of the posets given in \ref{fig:OPDOrthogonal}, one for each set $\{S,S^\perp\}$, where $S$ is a non-degenerate $1$-dimensional subspace of $V$.
In particular, if $\KK$ is finite, the reduced Euler characteristic of the space is $-1$, but spheres of different dimensions arise.
If $\KK$ is infinite, then an infinite number of $1$-spheres and $0$-spheres appear.

\begin{figure}
    \centering
    \begin{tikzpicture}
	\draw[draw=black, fill=black, thin, solid] (-2.00,2.00) circle (0.1);
	\draw[draw=black, fill=black, thin, solid] (1.00,2.00) circle (0.1);
	\draw[draw=black, fill=black, thin, solid] (-2.00,-1.00) circle (0.1);
	\draw[draw=black, fill=black, thin, solid] (1.00,-1.00) circle (0.1);
	\draw[draw=black, thin, solid] (-2.00,-1.00) -- (-2.00,2.00);
	\draw[draw=black, thin, solid] (-2.00,2.00) -- (1.00,-1.00);
	\draw[draw=black, thin, solid] (1.00,-1.00) -- (1.00,2.00);
	\draw[draw=black, thin, solid] (1.00,2.00) -- (-2.00,-1.00);
	\node[black, anchor=south west] at (-2.56,-1.75) {$(S)$};
	\node[black, anchor=south west] at (0.44,-1.75) {$(S^\perp)$};
	\node[black, anchor=south west] at (-3.06,2.25) {$(S,S^\perp)$};
	\node[black, anchor=south west] at (0.44,2.25) {$(S^\perp, S)$};
\end{tikzpicture}
    \caption{Connected components of the poset $\red{\Ord\PD}$ of a vector space of dimension two equipped with a non-degenerate bilinear form}
    \label{fig:OPDOrthogonal}
\end{figure}
\end{example}

\section{Application to classical examples}
\label{sec:examples}

This section shows how our constructions give rise to well-studied objects.
In particular, for each particular case we will check properties (LI), (EX), and (CM).
We will see that usually (LI) and (EX) hold, but (CM) might fail.

\subsection{Finitely generated free groups}

Let $\C$ be the category of groups.
We take $\sqcup$ to be the coproduct of this category, which is the free product of groups, denoted by $*$.
Then, for a group $X$, we have that $\Sub(X)$ is the lattice of subgroups.
If $X$ is a free group of finite rank $n$, $\Sub(X,*) = \Freefactors(X) = \Freefactors_n$ is the poset of free factors of $X$.
Hatcher and Vogtmann studied this poset in several articles (see for example \cite{HV1, HV3, HV2}), and they proved it is Cohen-Macaulay of dimension $n$.
Note that this poset is ranked, and the poset-rank of an element is exactly its rank as a free group.
Moreover, $\Freefactors(X)$ is indeed a lattice by the Kurosh subgroup theorem, and for $H\in \Freefactors(X)$ we have $\Freefactors(X)_{\leq H} = \Freefactors(H)$.

On the other hand, if $H_1 * H_2 * \cdots * H_r\in \Freefactors(X)$, then $\{H_1,\ldots,H_r\}$ spans a Boolean lattice such that the map $I\subseteq\{1,\ldots,r\} \mapsto \langle H_i \tq i\in I\rangle \in \Freefactors(X)$ is a lattice embedding.
Therefore, $\D(X,*)$ is the poset of decompositions into free factors, and we may just denote it by $\D(X)$.
That is, its elements are sets of non-trivial free factors $\{H_1,\ldots,H_r\}$ such that $H_1 * \cdots * H_r = X$.
This coincides with the poset defined by Hatcher and Vogtmann in \cite{HV2}.
It is proved in \cite{HV2} that $\redm{\D(X)}$ is spherical of dimension $n-2$.

The poset $\Freefactors(X)$ is (non-uniquely) downward $\sch$-complemented, and it satisfies property (EX). However, property (CM) fails in this case, as the following example shows.

\begin{example}
\label{exa:freeGroupsFailCP}
Let $X$ be the free group of rank $3$ with basis $\{ a,b,c \}$.
Consider the free decompositions $X = \gen{a} * \gen{ca,cb} = \gen{a,b} * \gen{ca}$.
Then we have $\gen{a}\leq \gen{a,b}$ and $\gen{ca,cb}\geq \gen{ca}$.
However,
\[ \{ \gen{a,c} \wedge \gen{ca,cb}, \gen{a}, \gen{ca}\} \setminus \{ 1 \} = \{ \gen{a},\gen{ca} \}\notin \D(X,\sqcup).\]
Thus $X$ fails property (CM).

The same example can be generalized to show that property (CM) fails on any free group of finite rank $\geq 3$.
\end{example}

We naturally have $\PD(X)_{\leq \sigma} = \prod_{H\in \sigma} \PD(H)$ for any $\sigma \in \PD(X)$, so property (LI) also holds.
Moreover, upper intervals in $\D(X)$ are partition lattices by \ref{lm:upperIntervalsDecomp}, and lower intervals are products of smaller decomposition posets.
Thus we conclude that $\D(X)$ is Cohen-Macaulay.

\begin{theorem}
[{Hatcher-Vogtmann}]
\label{thm:HV}
Let $X$ be a free group of finite rank $n$.
Then the posets $\Freefactors(X)$ and $\D(X)$ are Cohen-Macaulay of dimension $n$ and $n-1$ respectively.
\end{theorem}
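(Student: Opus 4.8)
The statement to prove is the pair of assertions: $\Freefactors(X)$ is Cohen-Macaulay of dimension $n$, and $\D(X)$ is Cohen-Macaulay of dimension $n-1$. The first part is entirely classical, so the plan is to cite the Hatcher-Vogtmann results directly: by \cite{HV1,HV2,HV3}, $\Freefactors(X)=\Freefactors_n$ is homotopically Cohen-Macaulay, and its dimension is $n$ because it is a graded poset whose poset-rank agrees with the free-group rank, so a maximal chain $1 < H_1 < \cdots < H_{n-1} < X$ has $n$ edges. (One should be a little careful here: the proper part $\redm{\F(X,\sqcup)}$ removes only the top element $X$, and the excerpt's convention writes $\Freefactors(X)$ for the full poset including $X$ and the trivial subgroup; the stated dimension $n$ refers to the order complex of the full bounded poset, which has height $n$.) So the first claim is a matter of quoting and aligning conventions.

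For the second claim, the plan is to assemble it from the pieces already laid out earlier in this subsection and in Section~\ref{sec:properties}. First, $\redm{\D(X)}$ is spherical of dimension $n-2$ by \cite{HV2}, as recalled just above the theorem; equivalently $\D(X)$ (with its unique maximal element $\{X\}$) has the homotopy type making it a candidate for Cohen-Macaulayness, and it has height $n-1$ since a decomposition has size at most $n$ by \ref{prop:heightDandPD} and a maximal chain in $\D(X)$ runs from a full frame of size $n$ (which exists since $X=\gen{x_1}*\cdots*\gen{x_n}$ for a basis) up to $\{X\}$, giving $n-1$ edges. Next, to get full Cohen-Macaulayness we must check the intervals. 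Upper intervals: for $\sigma\in\D(X)$, \ref{prop:decompintervals} gives $\D(X)_{\geq\sigma}\cong\Pi(\sigma)$, the partition lattice, which is well known to be Cohen-Macaulay of the correct rank; hence $\D(X)_{>\sigma}$ is spherical of dimension $|\sigma|-3$, matching $h(\{X\})-h(\sigma)-2$. Lower intervals: property (LI) holds here — indeed $\PD(X)_{\leq\tau}=\prod_{H\in\tau}\PD(H)$ naturally, as noted in the paragraph before the theorem, and the analogous statement for $\D$ follows, so $\D(X)_{\leq\sigma}=\prod_{H\in\sigma}\D(H)$. Since each $\D(H)$ is again a decomposition poset of a free group of rank $h(H)<n$, and since the proper part of a product of bounded posets is a join of suspensions of the proper parts (as used in \ref{thm:highConnectivity}), an induction on $n$ shows each factor is Cohen-Macaulay and therefore the product is. Finally the mixed intervals $\D(X)_{>\sigma}\cap\D(X)_{<\rho}$ split as a product over the blocks of the coarser decomposition of the corresponding smaller partition-lattice and decomposition-poset intervals, so spherical of the correct dimension follows from the upper- and lower-interval analyses combined.

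So the skeleton of the argument is: (i) quote Hatcher-Vogtmann for sphericity of $\Freefactors(X)$ and $\redm{\D(X)}$ and pin down the dimensions; (ii) verify (LI) for free groups (already observed); (iii) run an induction on $n$, using \ref{prop:decompintervals} for upper intervals (partition lattices, CM), (LI) plus the join-of-suspensions formula for lower intervals, and the product decomposition for mixed intervals, to conclude $\D(X)$ is CM of dimension $n-1$. I would organize the write-up so that the base case $n=1$ (where $\D(X)=\{\{X\}\}$ is a point) and $n=2$ (where $\redm{\D(X)}$ is a discrete infinite set of $0$-spheres' worth of points, i.e. $0$-dimensional and CM by the Hatcher-Vogtmann sphericity statement) are dispatched explicitly.

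The main obstacle is not any single hard computation but rather bookkeeping the conventions and the base cases: being careful about which version of "proper part" is in play ($\red{}$ versus $\redm{}$), about the fact that $\D(X)$ need not have a unique minimal element (so \ref{coro:minElementD} does \emph{not} apply — property (CM) fails by \ref{exa:freeGroupsFailCP}, so we genuinely need the inductive argument through (LI) rather than a clean ``partition-lattice'' shortcut), and about checking that the dimensions coming out of the partition-lattice intervals and the product intervals exactly match the numbers $n-h(x)-1$, $h(y)-1$, $h(y)-h(x)-2$ demanded by the definition of homotopic Cohen-Macaulayness. The genuinely substantive input — sphericity of $\Freefactors_n$ and of $\redm{\D_n}$ — is imported wholesale from \cite{HV1,HV2}, so our contribution here is only to observe that the interval structure, governed by (LI) and \ref{prop:decompintervals}, upgrades sphericity to Cohen-Macaulayness.
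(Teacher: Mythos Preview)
Your proposal is correct and follows essentially the same route as the paper: the Cohen-Macaulayness of $\Freefactors(X)$ is cited directly from Hatcher--Vogtmann, while for $\D(X)$ the paper's argument (given in the paragraphs immediately preceding the theorem) likewise imports sphericity of $\redm{\D(X)}$ from \cite{HV2}, identifies upper intervals as partition lattices via \ref{lm:upperIntervalsDecomp}/\ref{prop:decompintervals}, and uses property (LI) to write lower intervals as products of smaller decomposition posets, concluding Cohen-Macaulayness by induction. Your additional bookkeeping on mixed intervals, base cases, and dimension counts is more explicit than the paper's informal paragraph but does not diverge in substance.
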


By \ref{thm:orderedVersions} we obtain the following corollary.

\begin{corollary}
Let $X$ be a free group of finite rank $n$.
Then the poset $\Ord\D(X)$ is Cohen-Macaulay of dimension $n-1$.
\end{corollary}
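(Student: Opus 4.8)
The plan is to obtain this as a formal consequence of the transfer result Theorem~\ref{thm:orderedVersions}(1), using the Hatcher--Vogtmann theorem recorded as Theorem~\ref{thm:HV}. First I would set up the context via Example~\ref{exa:freeGroupsStarting}: taking $\C$ to be the category of groups and $\sqcup = *$ the free product (which is the categorical coproduct, so $(\C,*)$ is an ISM-category), a free group $X$ of finite rank $n$ has $\Sub(X,*) = \Freefactors(X)$ of finite height $n$; hence the constructions of Sections~\ref{sec:derivedposets} and~\ref{sec:homotopyposets} are available, and $\D(X,*)$ is exactly the poset $\D(X)$ of decompositions of $X$ into free factors, whose ordered version is $\Ord\D(X)$.

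Next I would simply invoke the two cited results in sequence. By Theorem~\ref{thm:HV}, $\D(X)$ is Cohen--Macaulay of dimension $n-1$. Plugging $\D(X,\sqcup) = \D(X)$ into Theorem~\ref{thm:orderedVersions}(1) yields the wedge decomposition
\[ \Ord\redm{\D(X)} \simeq \redm{\D(X)} \vee \bigvee_{\sigma\in\redm{\D(X)}} S^{|\sigma|-2} * \redm{\D(X)}_{<\sigma}, \]
and that same item explicitly asserts that $\Ord\D(X)$ is Cohen--Macaulay whenever $\D(X)$ is (this implication in the proof of Theorem~\ref{thm:orderedVersions} rests on the fiber description in Lemma~\ref{lm:fibersForgetfulMap}(1) together with Corollary~9.7 of~\cite{Qui78}). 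So Cohen--Macaulayness of $\Ord\D(X)$ is immediate.

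The only remaining point — and the only place any care is needed, though it is routine rather than an obstacle — is the dimension count. Here I would use that $\D(X)$ is graded of rank $n-1$, with rank function $\sigma \mapsto n - |\sigma|$ and unique maximal element $\{X\}$ lying on every maximal chain, so $\redm{\D(X)}$ has dimension $n-2$; and for $\sigma\in\redm{\D(X)}$ of size $k$ the interval $\redm{\D(X)}_{<\sigma} = \D(X)_{<\sigma}$ has top element of size $k+1$, hence dimension $n-k-1$, so each join summand $S^{k-2} * \redm{\D(X)}_{<\sigma}$ has dimension $(k-2)+(n-k-1)+1 = n-2$. Thus $\Ord\redm{\D(X)}$ is spherical of dimension $n-2$, and since the only ordered tuple whose underlying decomposition is $\{X\}$ is $(X)$ itself, we have $\redm{\Ord\D(X)} = \Ord\redm{\D(X)}$; restoring the unique maximal element $(X)$ gives $\dim\Ord\D(X) = n-1$, completing the proof.
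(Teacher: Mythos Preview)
Your proposal is correct and follows exactly the paper's approach: apply Theorem~\ref{thm:orderedVersions}(1) to the Cohen--Macaulayness of $\D(X)$ supplied by Theorem~\ref{thm:HV}. The only minor imprecision is the phrase ``has top element of size $k+1$'' for $\D(X)_{<\sigma}$; this interval has no unique maximum, but its maximal elements (the decompositions covered by $\sigma$) have size $k+1$ by gradedness, which is all you need for the height count.
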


Next, note that every partial frame $\{H_1,\ldots,H_r\}$ of $X$ extends to a full frame.
Hence $\PF(X,*) = \F(X,*)$ and the set $\A$ of vertices of $\F(X,*)$ is exactly the set of free factors of $X$ of rank $1$.
We denote this complex simply by $\F(X)$.

For the partial basis complex, we choose the family $P = (P_H)_{H\in \A}$, where $P_H = \{ x\in H \tq \gen{x} = H\} \groupiso \{ \pm 1\}$.
Then $\Vog(X,*)$ is exactly the complex of partial bases of a free group, that is, the maximal simplices are bases of $X$.
This complex is also denoted by $\PB(X) = \PB_n$, and it was shown to be Cohen-Macaulay of dimension $n-1$ by Sadofschi Costa \cite{Ivan}.
In particular, by \ref{prop:inflationDecompositionCM}, we conclude that $\F(X)$ is Cohen-Macaulay.

\begin{theorem}
[{Sadofschi Costa}]
\label{thm:sadofschi}
Let $X$ be a free group of finite rank $n$.
Then the posets $\F(X)$ and $\PB(X)$ are Cohen-Macaulay of dimension $n-1$.
\end{theorem}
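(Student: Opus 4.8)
The plan is to reduce the statement to the theorem of Sadofschi Costa together with the inflation machinery of \ref{prop:inflationDecompositionCM}. First I would recall the identifications set up just before the statement: since every partial frame of $X$ extends to a full frame, $\PF(X,*) = \F(X,*) = \F(X)$; and with the family $P = (P_H)_{H \in \A}$ defined by $P_H = \{ x \in H \tq \gen{x} = H \} \groupiso \{\pm 1\}$, the complex of partial bases $\PVog(X,*,P) = \Vog(X,*,P)$ is precisely the classical partial basis complex $\PB(X) = \PB_n$, whose maximal simplices are the bases of $X$. Under the deflation map $p\co \PB(X) = (\F(X),P) \to \F(X)$ we have, by \ref{prop:inflationDecompositionCM}, $p^{-1}(\F(X)_{\leq \sigma}) = P_{H_1} * \cdots * P_{H_r}$ for each simplex $\sigma = \{H_1,\ldots,H_r\} \in \F(X)$.

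Next I would invoke Sadofschi Costa's result \cite{Ivan}: $\PB(X)$ is homotopically Cohen-Macaulay of dimension $n-1$. The dimension is correct because a basis of $X$ has $n$ elements; and since each $P_H$ has exactly two elements, the maximal simplices of $\F(X)$, namely the full frames (decompositions of size $n$), also have $n$ elements, so $\F(X)$ has dimension $n-1$ as well. Now \ref{prop:inflationDecompositionCM}, applied with $K = \F(X)$ and the collection $P$ of nonempty (here, two-element) sets, asserts that $K$ is homotopically Cohen-Macaulay if and only if $(K,P) = \PB(X)$ is, and likewise over any ring. Since $\PB(X)$ is homotopically CM, it follows that $\F(X)$ is homotopically Cohen-Macaulay of dimension $n-1$, which completes both halves of the statement.

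The genuine content is Sadofschi Costa's theorem itself, which is taken as input; the remaining steps are bookkeeping, and the only place requiring a little care is the precise identification $\Vog(X,*,P) = \PB_n$. Concretely, one must check that a set $\{(H_1,x_1),\ldots,(H_r,x_r)\}$ with $\gen{x_i} = H_i$ is a simplex of the inflation complex $(\F(X),P)$ exactly when $\{x_1,\ldots,x_r\}$ is a partial basis of $X$ in the usual sense, and dually that $\PB_n$ carries no additional simplices. This follows from the Kurosh subgroup theorem: a set $\{H_1,\ldots,H_r\}$ of nontrivial rank-one subgroups is a partial frame precisely when $\gen{H_1,\ldots,H_r} = H_1 * \cdots * H_r$ is a free factor of $X$, contained in a full frame precisely when the $x_i$ can be completed to a basis of $X$ --- which is exactly the condition for $\{x_1,\ldots,x_r\}$ to be a partial basis. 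One also records, as in the text preceding the statement, that $\F(X)$ being CM may be deduced independently of $\D(X)$; the inflation route via \ref{prop:inflationDecompositionCM} is the shortest.
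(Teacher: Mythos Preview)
Your proposal is correct and follows essentially the same approach as the paper: the Cohen-Macaulayness of $\PB(X)$ is taken as the input from Sadofschi Costa \cite{Ivan}, and the Cohen-Macaulayness of $\F(X)$ is then deduced via the inflation equivalence of \ref{prop:inflationDecompositionCM}. The paper's argument is exactly the two sentences preceding the theorem statement, and your write-up simply unpacks them with the same identifications $\PF(X,*)=\F(X)$ and $\Vog(X,*,P)=\PB_n$.
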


Again \ref{thm:orderedVersions} yields the following corollary.

\begin{corollary}
Let $X$ be a free group of finite rank $n$.
Then the posets $\Ord\F(X)$ and $\Ord\PB(X)$ are Cohen-Macaulay of dimension $n-1$.
\end{corollary}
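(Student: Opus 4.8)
The plan is to reduce this immediately to \ref{thm:sadofschi} together with the transfer principle of \ref{thm:orderedVersions}. First I would recall, from the discussion preceding \ref{thm:sadofschi}, that $\F(X) = \F(X,*)$ and that $\PB(X) = \Vog(X,*,P)$ for the family $P = (P_H)_H$ with $P_H = \{x\in H \tq \gen{x} = H\} \groupiso \{\pm 1\}$; in particular both $\F(X)$ and $\PB(X)$ are finite-dimensional simplicial complexes of dimension $n-1$, and by \ref{thm:sadofschi} both are homotopically Cohen-Macaulay.

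The key step is then to apply \ref{thm:orderedVersions}(3), which holds for any finite-dimensional simplicial complex $K$ and, as noted there, applies in particular to $K \in \{\F, \PF, \Vog, \PVog\}$. Taking $K = \F(X)$ and $K = \PB(X)$, the equivalence stated in that item --- namely that $K$ is homotopically Cohen-Macaulay if and only if $\Ord K$ is --- yields at once that $\Ord\F(X)$ and $\Ord\PB(X)$ are homotopically Cohen-Macaulay. For the dimension, I would read it off from the wedge formula of \ref{thm:orderedVersions}(3): since a Cohen-Macaulay complex is pure of dimension $n-1$, for each $\sigma \in K$ the join $S^{|\sigma|-1} * \Lk_K(\sigma)$ has dimension $|\sigma| + \dim\Lk_K(\sigma) = n-1$, and $K$ itself has dimension $n-1$, so $\dim\Ord K = n-1$. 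This runs exactly parallel to the proof of the earlier corollary to \ref{thm:HV} about $\Ord\D(X)$.

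I do not expect a genuine obstacle here, since all the substance is imported: \ref{thm:sadofschi} packages the Sadofschi Costa theorem, and \ref{thm:orderedVersions} packages the ordered-versus-unordered comparison. The only points that warrant a line of verification are the bookkeeping that $\PB(X)$ is precisely the instance $\Vog(X,*,P)$ for the stated choice of $P$, so that \ref{thm:orderedVersions}(3) applies verbatim, and the dimension count above. If one wanted to make the homotopy type of $\Ord\PB(X)$ explicit rather than merely assert Cohen-Macaulayness, the remaining work would be to substitute the Sadofschi Costa computation of the top reduced homology of $\PB_n$ into the same wedge formula, together with \ref{prop:inflationDecompositionCM}; but this is not needed for the corollary as stated.
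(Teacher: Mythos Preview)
Your proposal is correct and follows exactly the paper's approach: the paper's entire proof is the single sentence ``Again \ref{thm:orderedVersions} yields the following corollary,'' and you have simply unpacked this by naming the relevant input (\ref{thm:sadofschi}) and the relevant clause (item~3 of \ref{thm:orderedVersions}). One small remark: your dimension computation reads the dimension off from a homotopy equivalence, which strictly speaking only gives the homotopical dimension; the combinatorial height of $\Ord K$ being $n-1$ follows more directly from the fact that maximal chains in the poset of injective words on a facet of size $n$ have length $n-1$ (cf.\ \ref{lm:fibersForgetfulMap}(3)), but this is a cosmetic point and not a gap.
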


However, little is known about the homotopy type of the poset of partial decompositions $\PD(X) := \PD(X,*)$.
Thus we propose the following question.

\begin{question}
  Let $X$ be a free group of rank $n$.
  Are the proper parts of $\PD(X)$ and $\Ord\PD(X)$ spherical (of dimension $2n-3$)? If so, are they Cohen-Macaulay?
\end{question}

By \ref{thm:highConnectivity}, \ref{thm:HV} and property (LI), we see that $\red{\PD(X)}$ is at least $(n-3)$-connected.

If $n = 2$ then $\red{\PD(X)} = \F(X)$, which is connected by \ref{thm:sadofschi}.
Thus $\red{\PD(X)}$ is connected for all $n\geq 2$.
Moreover, $\red{\PD(X)}$ is simply connected for $n\geq 3$:

\begin{proposition}
If $n\geq 3$ then $\red{\PD(X)}$ is simply connected.
\end{proposition}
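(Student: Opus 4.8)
The plan is to reduce to $n=3$ and then imitate the argument given earlier for the uniform matroid $U_{4,3}$. For $n\ge 4$ there is nothing to do: $\red{\PD(X)}$ is already $(n-3)$-connected, hence simply connected, by the connectivity bound recorded just above (which combines \ref{thm:highConnectivity}, \ref{thm:HV} and property (LI)). So assume $n=3$. I would first record that $\Sub(X,*)=\Freefactors_3$ is Cohen-Macaulay of dimension $3$ by \ref{thm:HV}, so that $\red{\Freefactors_3}$ is a connected graph (spherical of dimension $1$), and that $\Sub_h(X,*)=\Freefactors_3$ because every free factor of $X$ admits a free-factor complement. The hypotheses of \ref{thm:highConnectivity} are then met — property (LI) holds, and $\red{\Sub_h(Y,*)}=\red{\Freefactors(Y)}$ is spherical of dimension $\rk Y-2=h(y)-2$ for every free factor $Y\le X$, again by \ref{thm:HV} — so the inclusion $\red{\Freefactors_3}=\red{\Sub_h(X,*)}\hookrightarrow\red{\PD(X)}$ is a $1$-equivalence, in particular surjective on $\pi_1$. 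Thus it suffices to show that every loop of $\red{\Freefactors_3}$ becomes null-homotopic once pushed into $\red{\PD(X)}$.

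As in the $U_{4,3}$ example, I would take the apartment hexagons as generating loops: for a full frame $\sigma=\{H_1,H_2,H_3\}\in\F(X,*)$, i.e.\ a decomposition $X=H_1*H_2*H_3$ into rank-one free factors, let $S_\sigma$ be the $6$-cycle with vertices $H_1,H_2,H_3$ and $H_1*H_2,\ H_1*H_3,\ H_2*H_3$ and edges $H_i<H_i*H_j$ (the Coxeter complex of \ref{prop:topHomology}). For each $\sigma$ the composite $S_\sigma\hookrightarrow\red{\Freefactors_3}\hookrightarrow\red{\PD(X)}$, $x\mapsto\{x\}$, should be null-homotopic via the order-preserving map $g(x)=\{\,H_k : H_k\le x\,\}$, which sends $H_i\mapsto\{H_i\}$ and $H_i*H_j\mapsto\{H_i,H_j\}$ — here one uses that, for rank reasons, no third rank-one factor lies below a rank-two one — and which satisfies $g(x)\le\{x\}$ and $g(x)\le\sigma$ for every $x$, where $\sigma=\{H_1,H_2,H_3\}$ is itself a vertex of $\red{\PD(X)}$; so the composite is homotopic through $g$ to the constant map at $\sigma$. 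This is the exact analogue of the null-homotopy carried out for $U_{4,3}$.

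It then remains to argue that the hexagons $S_\sigma$ normally generate $\pi_1(\red{\Freefactors_3})$ — equivalently, that $\red{\Freefactors_3}$ becomes simply connected once all the $S_\sigma$ are coned off. Granting this, the inclusion $\red{\Freefactors_3}\hookrightarrow\red{\PD(X)}$ extends over those $2$-cells to a $\pi_1$-surjection out of a simply connected complex, which forces $\pi_1(\red{\PD(X)})=1$. (Phrased differently, this last ingredient is exactly the third hypothesis of \ref{coro:highConnectivity} applied with $n-2=1$, so one could instead simply invoke \ref{coro:highConnectivity}, obtaining $1$-connectivity directly.) I expect this generation statement to be the main obstacle: it concerns the fundamental group — not merely the first homology — of the free factor graph, and it is the one place where genuine input about free groups is needed, namely a surgery/reduction argument on the free factor complex in the spirit of Hatcher and Vogtmann (cf.\ \cite{HV2}), rather than a purely formal manipulation of the posets involved.
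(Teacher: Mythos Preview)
Your approach is essentially the same as the paper's: reduce to $n=3$, invoke \ref{coro:highConnectivity}, and observe that the only nontrivial hypothesis is that the apartment hexagons $S_\sigma$ generate $\pi_1(\red{\Freefactors_3})$. The paper fills exactly the gap you flagged, but with a different reference than you guessed: it cites Proposition~5.8 of Sadofschi Costa \cite{Ivan} (applied to the empty partial basis), not Hatcher--Vogtmann, for the statement that apartments generate the fundamental group of the free factor complex.
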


\begin{proof}
We only need to analyze the case $n = 3$.
By \ref{coro:highConnectivity}, it is enough to show that homotopy classes of the standard apartments generate the fundamental group of $\red{\Freefactors(X)}$.
But this follows from Proposition 5.8 of \cite{Ivan} applied to the empty partial basis there.
\end{proof}

\bigskip

\subsection{Vector spaces}
\label{sub:vectorspaces}

Let $\C$ be the category of vector spaces over a field $\KK$, and let $X$ be a $\KK$-vector space of finite dimension $n$.
Here we take $\sqcup$ to be the coproduct of the category, that is, the direct sum $\oplus$ of vector spaces.
Then $\Sub(X) = \Sub(X,\oplus)$ is the poset of subspaces of $X$ since every subspace has a direct sum complement.
The order complex of $\red{\Sub(X)}$ is the Tits building of $\SL(X)$, which is spherical by the famous Solomon-Tits theorem \cite{Sol}.
Intervals in $\Sub(X)$ are subspace posets again, so $\Sub(X)$ is indeed Cohen-Macaulay of dimension $n$.
This poset is also ranked, and the rank of an element coincides with its dimension as a $\KK$-vector space.
Note that $\{V_1,\ldots,V_r\}\subseteq \Sub(X)$ are subspaces in internal direct sum if and only if $\sum_i \dim(V_i) = \dim(V_1+\cdots+V_r)$.
Hence $\D(X,\oplus)$ is the poset of sets of non-zero subspaces which are in internal direct sum and span $X$.
We denote it simply by $\D(X)$.
This poset was shown to be shellable for finite fields in \cite{Welker}, and in particular Cohen-Macaulay.
In the case $X = \GF{q}^n$, the reduced Euler characteristic of $\D(X)$ is

\begin{equation}
\label{eq:eulerCharDecompVS}
 \frac{(-1)^n}{n} \cdot \prod_{i=1}^{n-1} (q^i-1) \cdot f_n(q),
\end{equation}
\noindent
where $f_n(q)$ is a monic polynomial of degree $\binom{n}{2}$ and positive integer coefficients related to a $q$-analog of Catalan numbers.
The coefficient sequences of these polynomials seem to be always unimodal (see \cite[pp.241-242]{Welker}).

On the other hand, for arbitrary fields, we show later that $\red{\D(X)}$ is spherical by invoking the results from \cite{Charney} on the Charney poset $\Charney(X,\oplus)$ (see  \ref{def:charney}).

Similar to the case of the free factor poset, $\F(X) := \F(X,\oplus) = \PF(X,\oplus)$ and we take $\PB(X) := \Vog(X,\oplus)$ to be the complex of partial bases of $X$ (i.e., linearly independent sets of vectors).
For finite vector spaces $X$, the complexes $\F(X)$ and $\PB(X)$ are matroid complexes, which are shellable and in particular Cohen-Macaulay of dimension $n-1$ (see \cite{Bjo92} and \ref{prop:inflationDecompositionCM}).
For infinite fields, this follows from Theorem 2.6 of \cite{vdK} since fields satisfy the Bass' stable range condition $\mathrm{SR}_2$.
By item (i) of this theorem, we see that $\Ord \PB(X)$ is spherical of dimension $n-1$, and by (ii) there also the intervals $\Ord \PB(X)_{>z}$, for $z\in \Ord \PB(X)$, are spherical.
This implies that $\Ord \PB(X)$ is Cohen-Macaulay of dimension $n-1$.
Then by \ref{thm:orderedVersions}, $\PB(X)$ is Cohen-Macaulay of dimension $n-1$.
By \ref{prop:inflationDecompositionCM}, $\F(X)$ is Cohen-Macaulay of dimension $n-1$.
Again, by \ref{thm:orderedVersions}, we see that $\Ord \F(X)$ is Cohen-Macaulay of dimension $n-1$.

Finally, $\PD(X) := \PD(X,\oplus)$ is the poset of partial direct sum decompositions of $X$, that is, its elements are sets of non-zero subspaces of $X$ which are in internal direct sum.
An unpublished work by Hanlon, Hersh and Shareshian \cite{HHS} proves that $\red{\PD(X)}$ is Cohen-Macaulay of dimension $2n-3$, and in particular spherical, if $X = \GF{q}^ n$.
Moreover, they show that the reduced Euler characteristic of this poset is

\begin{equation}
\label{eq:eulerCharPDVS}
- \frac{1}{n} \cdot q^{ \binom{n}{2} } \cdot \prod_{i=1}^{n-1} (q^i-1).
\end{equation}

Note that this formula is closely related to the Euler characteristic of the join of the poset of proper non-zero subspaces and $\D(X)$. The former has Euler characteristic $(-1)^n q^ {\binom{n}{2}}$, and the Euler characteristic of the latter is given in \ref{eq:eulerCharDecompVS}. 
The join of two posets has as its Euler characteristic the negative of the
product of the two Euler characteristics, so in \ref{eq:eulerCharPDVS} the polynomial $f_n(q)$ is missing.

For finite-dimensional vector spaces $X$ over arbitrary fields, the poset $\PD(X)$ is homotopy equivalent to $\CB(X)$ (see \cite{BPW}), which was proved to be $(2n-4)$-connected in \cite{MPW}.
Hence, $\PD(X)$ is spherical of dimension $2n-3$.
By \ref{thm:vectorspaces} and \ref{thm:orderedVersions}, we can also conclude that $\red{\Ord\PD(X)}$ is spherical.

To conclude Cohen-Macaulayness for $\PD(X)$, we must show that intervals are spherical.
Note that property (LI) holds, so if $\sigma\in \PD(X)$, then $\PD(X)_{\leq \sigma} \cong \prod_{S\in \sigma} \PD(S)$ is spherical by induction.
However, upper intervals do not have a clear description, unless we have full decomposition.
That is, if $\sigma\in \D(X)$ then $\PD(X)_{\geq \sigma} = \D(X)_{\geq \sigma}$ is the partition lattice on $\sigma$, whose proper part is spherical.

Finally, note $\Sub(X)$ is downward $\sch$-complemented for $\sqcup = \oplus$, and it satisfies properties (EX) and (CM).
Hence by \ref{prop:charney}, the Charney complex $\Delta \Charney(X,\oplus)$ is isomorphic to the poset $( \redm{\Ord \D(X,\oplus)} )^{\op}$.
Since the $\Charney(X,\oplus)$ is spherical by \cite{Charney}, we conclude that $\redm{\Ord \D(X)}=\redm{\Ord \D(X,\oplus)}$ is spherical of dimension $n-2$.
Thus $\redm{\D(X)}$ is spherical of dimension $n-2$ by \ref{thm:orderedVersions}.

We conclude the following theorem, whose pieces follow from the references and arguments in the preceding paragraphs. 

\begin{theorem} \label{thm:vectorspaces}
Let $X$ be a vector space of finite dimension $n$ over a field $\KK$.
Then the posets $\PB(X)$, $\Ord\PB(X)$, $\F(X)$, $\Ord\F(X)$, $\Sub(X)$, $\D(X)$ and $\Ord\D(X)$ are Cohen-Macaulay.

The posets $\red{\PD(X)}$ and $\red{\Ord\PD(X)}$ are spherical of dimension $2n-3$.
If $\KK$ is a finite field then $\PD(X)$ and $\Ord\PD(X)$ are Cohen-Macaulay.
\end{theorem}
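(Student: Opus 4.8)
The proof assembles the statements from the discussion in this subsection, treating the four families $\{\Sub(X)\}$, $\{\D(X),\Ord\D(X)\}$, $\{\F(X),\PB(X),\Ord\F(X),\Ord\PB(X)\}$ and $\{\PD(X),\Ord\PD(X)\}$ separately, and running an induction on $n=\dim_\KK X$ wherever a claim about $X$ reduces to the same claim for proper subspaces. First I would record the structural input. The poset $\Sub(X,\oplus)=\Sub(X)$ is the subspace poset, ranked by dimension, every subspace has a direct-sum complement, and from $\dim(U\oplus W)=\dim U+\dim W$ together with the modular law one checks directly that $\Sub(X,\oplus)$ is downward $\sch$-complemented and satisfies properties (LI), (EX) and (CM) of \ref{def:propLI} and \ref{def:propsEXCM}; moreover $\PD(X)_{\leq\sigma}=\prod_{S\in\sigma}\PD(S)$ for every $\sigma\in\PD(X)$. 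That $\Sub(X)$ is Cohen-Macaulay is the Solomon--Tits theorem \cite{Sol}, applied together with the observation that all intervals of $\Sub(X)$ are again subspace posets, so the usual recursion on $n$ goes through.

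For $\D(X)$ and $\Ord\D(X)$: if $\KK$ is finite, shellability (hence Cohen-Macaulayness) of $\D(X)$ is \cite{Welker}. For arbitrary $\KK$ I would use \ref{prop:charney}: since (EX) and (CM) hold, the map $\beta$ there is an isomorphism between $(\redm{\Ord\D(X,\oplus)})^{\op}$ and $\Delta\Charney(X,\oplus)$, and Charney's theorem \cite{Charney} gives that $\Charney(X,\oplus)$ is spherical of dimension $n-2$; hence $\redm{\Ord\D(X)}$ is spherical of dimension $n-2$, and \ref{thm:orderedVersions}(1) transfers this to $\redm{\D(X)}$. Cohen-Macaulayness of $\D(X)$ then follows by induction on $n$: upper intervals of $\D(X)$ are partition lattices by \ref{prop:decompintervals} and lower intervals are products $\prod_{S\in\sigma}\D(S)$ by (LI), both Cohen-Macaulay (the latter by the inductive hypothesis), which together with sphericity gives that $\D(X)$ is Cohen-Macaulay; \ref{thm:orderedVersions}(1) upgrades this to $\Ord\D(X)$.

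For $\F(X)$ and $\PB(X)$: here $\PF(X,\oplus)=\F(X,\oplus)$ because every partial frame of $X$ is contained in a basis, and $\PB(X)=\Vog(X,\oplus)$ is the inflation $(\F(X),P)$ with $P_L$ the set of bases of the line $L$. If $\KK$ is finite, $\F(X)$ and $\PB(X)$ are matroid independence complexes, hence shellable \cite{Bjo92} and Cohen-Macaulay of dimension $n-1$. If $\KK$ is infinite, a field satisfies Bass's stable range condition $\mathrm{SR}_2$, so by \cite[Theorem 2.6]{vdK} the poset $\Ord\PB(X)$ is spherical of dimension $n-1$ with its upper intervals $\Ord\PB(X)_{>z}$ spherical, hence $\Ord\PB(X)$ is Cohen-Macaulay, and \ref{thm:orderedVersions}(3) then gives $\PB(X)$ Cohen-Macaulay. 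In either case \ref{prop:inflationDecompositionCM}, applied to $\PB(X)=(\F(X),P)$, shows $\F(X)$ is Cohen-Macaulay iff $\PB(X)$ is, and a final application of \ref{thm:orderedVersions}(3) gives $\Ord\F(X)$ and $\Ord\PB(X)$ Cohen-Macaulay.

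It remains to treat $\PD(X)$ and $\Ord\PD(X)$. When $\KK$ is finite, \cite{HHS} gives that $\red{\PD(X)}$ is Cohen-Macaulay of dimension $2n-3$, and \ref{thm:orderedVersions}(2) upgrades this to $\Ord\PD(X)$ Cohen-Macaulay. For arbitrary $\KK$ one has a homotopy equivalence $\PD(X)\simeq\CB(X)$ by \cite{BPW}, and $\CB(X)$ is $(2n-4)$-connected by \cite{MPW}; since $\red{\PD(X)}$ has dimension exactly $2n-3$ by \ref{prop:heightDandPD} (a basis of $X$ is a decomposition of size $n$), it is spherical of dimension $2n-3$. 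To get $\red{\Ord\PD(X)}$ spherical I would feed this into \ref{thm:orderedVersions}(2): by (LI), $(\red{\PD(X)})_{<\sigma}=\red{(\prod_{S\in\sigma}\PD(S))}$ is, up to homotopy, $\Sigma^{|\sigma|-1}$ of the join of the $\red{\PD(S)}$, which are spherical of dimension $2\dim S-3$ by induction, so each wedge summand $S^{|\sigma|-2}*(\red{\PD(X)})_{<\sigma}$ is spherical of dimension $2n-3$; hence $\red{\Ord\PD(X)}$ is spherical of dimension $2n-3$. The main obstacle is the arbitrary-field case of the $\PD$ statement: one cannot conclude Cohen-Macaulayness of $\PD(X)$ — and hence cannot invoke the direct implication in \ref{thm:orderedVersions}(2) — because there is no usable description of the upper intervals $\PD(X)_{\geq\sigma}$ unless $\sigma$ is a full decomposition, so the homotopy type must be pinned down by combining the connectivity bound $2n-4$ of \cite{MPW} with the dimension bound $2n-3$ of \ref{prop:heightDandPD} and then carried carefully through the wedge decomposition.
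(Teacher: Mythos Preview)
Your proposal is correct and follows essentially the same route as the paper, which simply assembles the claims from the preceding discussion in \ref{sub:vectorspaces}. In fact you supply slightly more detail than the paper does in two places: the inductive verification that $\D(X)$ is Cohen--Macaulay (via \ref{prop:decompintervals} for upper intervals and (LI) for lower intervals), and the explicit dimension count showing each wedge summand $S^{|\sigma|-2}*(\red{\PD(X)})_{<\sigma}$ in \ref{thm:orderedVersions}(2) is spherical of dimension $2n-3$, which the paper leaves implicit.
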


As we mentioned above, the upper intervals in the partial decomposition posets are harder to compute.
We leave open then the following question.

\begin{question}
  Are the posets $\PD(X)$ and $\Ord\PD(X)$ Cohen-Macaulay, for any finite-dimensional vector space $X$?
\end{question}

We close this section with a brief discussion on the Euler characteristic of frames, partial decompositions and some of the ordered versions.

For $X=\GF{q}^n$, it is easily checked that the Euler characteristic of $\Ord\F(X)$ is a $q$-analog of the derangement numbers. \ref{tab:qder} provides a list for small $n$, and \ref{tab:eulerFrames} contains the values for the unordered version.
These polynomials do not seem to be well-behaved and we do not know of any other
proposed $q$-analog of derangement numbers coinciding with them. Similarly, the 
reduced Euler characteristic of $\Ord\D(X)$ does not suggest any nice pattern.  

\begin{figure}[ht]
\centering
\begin{tabular}{c|c}
        $n$ & $ (-1)^{n-1} \cdot \tilde{\chi}\left(\Ord\F(\GF{q}^n)\right)$ \\
        \hline
        $2$ & $q^2$\\
        $3$ & ${q}^{2} \left( q+1 \right) \left( {q}^{3}+{q}^{2}-1 \right)$\\
        $4$ & ${q}^{2} \left( {q}^{6}+{q}^{5}-{q}^{2}-q+1 \right) \left( {q}^{2}+q+1 \right) ^{2}$\\
        $5$ & $
        {q}^{2} \left( q+1 \right) \left( {q}^{2}+1 \right) \left( {q}^{15}+3\,{q}^{14}+5\,{q}^{13}+6\,{q}^{12}+5\,{q}^{11}\right.$\\
        & $\left.+2\,{q}^{10}-2\,{q}^{9}-5\,{q}^{8}-5\,{q}^{7}-3\,{q}^{6}+2\,{q}^{4}+2\,{q}^{3}+{q}^{2}-1\right)$
    \end{tabular}
\caption{Euler characteristic of $\Ord\F(\GF{q}^n)$}
\label{tab:qder}
\end{figure}

\begin{figure}[ht]
    \centering
    \begin{tabular}{c|c}
        $n$ & $ (-1)^{n-1}n! \cdot \tilde{\chi}\left(\F(\GF{q}^n)\right)$ \\
        \hline
        $2$ & $q(q-1)$\\
        $3$ & $q(q-1)(q^2-1)(q^2+3q+3)$\\
        $4$ & $q(q-1)(q^3-1)( q^7+4q^6+9q^5+12q^4+8q^3-4q^2-12q-12 )$\\
        $5$ & $q(q-1)^2(q^4-1)(q^{13}+6q^{12}+20q^{11}+49q^{10}+94q^9+145q^8$\\
        & $+180q^7+170q^6+105q^5-100q^3-140q^2-120q-60)$
    \end{tabular}
    \caption{Euler characteristic of $\F(\GF{q}^n)$}
    \label{tab:eulerFrames}
\end{figure}

In the following theorem, we compute the reduced Euler characteristic of  $\Ord\PD(X)$ for an $n$-dimensional vector space $X$ over the finite field $\GF{q}$.

\begin{theorem}
\label{thm:eulerCharOPDVectorSpaces}
For $n\geq 1$ and $q$ a prime power, if $X = \GF{q}^n$ then $\tilde{\chi}(\red{\Ord\PD(X)}) = -q^{n(n-1)}$.
\end{theorem}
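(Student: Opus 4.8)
The plan is to compute $\tilde\chi(\red{\Ord\PD(X)})$ by combining the wedge decomposition of \ref{thm:orderedVersions}(2) with the Euler-characteristic recursion implied by the forgetful map, exactly as in the proof of \ref{coro:eulerCharOPDUniqueCompl}, but tracking the extra factor $f_n(q)$ that distinguishes $\tilde\chi(\red{\PD(X)})$ from the ``uniquely complemented'' case. First I would record that $\Sub(X) = \Sub(X,\oplus)$ satisfies properties (LI), (EX) and (CM) (proved in \ref{sub:vectorspaces}), that lower intervals decompose as in \ref{lm:intervalsOrdered}(2), and that $\Sub(X)_{\leq y} = \Sub(Y,\oplus)$ for $y = [(Y,i)]$; the only hypothesis of \ref{coro:eulerCharOPDUniqueCompl} that fails is \emph{unique} downward $\sch$-complementation. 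Consequently the chain of equalities in that proof goes through verbatim \emph{except} that we cannot conclude $\red{\PD(Y,\oplus)}\simeq \redm{\D(Y,\oplus)}$; instead we must use the actual values. By \ref{eq:eulerCharPDVS}, $\tilde\chi(\red{\PD(\GF{q}^m)}) = -\tfrac1m q^{\binom m2}\prod_{i=1}^{m-1}(q^i-1)$ for $m\geq 1$, and by \ref{eq:eulerCharDecompVS}, $\tilde\chi(\redm{\D(\GF{q}^m)}) = \tfrac{(-1)^m}{m}\prod_{i=1}^{m-1}(q^i-1)\,f_m(q)$ for $m\geq 2$, with $\tilde\chi(\redm{\D(\GF{q}^1)}) = -1$ and $\tilde\chi(\red{\PD(\GF{q}^1)}) = -1$ (both are a point; note $f_1(q)=1$, $\binom12=0$).

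The key computation is as follows. By \ref{thm:orderedVersions}(2) and the standard formula for the Euler characteristic under an order-preserving map (the lemma preceding \ref{sec:homotopyposets}.2, applied to $\Forget_{\PD}$ on $\red{\PD(X)}$), together with \ref{lm:fibersForgetfulMap}(1)–(2) describing the fibers over $\sigma\in\redm{\D(X)}$ (the fiber is a permutohedron face lattice, contributing $\tilde\chi(S^{|\sigma|-2}) = (-1)^{|\sigma|}$) and the fact that fibers over $\sigma\in\red{\PD(X)}\setminus\redm{\D(X)}$ are contractible, one gets
\[
\tilde\chi(\red{\Ord\PD(X)}) \;=\; \tilde\chi(\red{\PD(X)}) \;-\; \sum_{\sigma\in\redm{\D(X)}} (-1)^{|\sigma|}\,\tilde\chi\big((\red{\PD(X)})_{<\sigma}\big).
\]
Now I would use (LI): for $\sigma = \{y_1,\dots,y_r\}\in\redm{\D(X)}$ with $y_i = [(Y_i,\oplus)]$ and $\dim Y_i = m_i$, we have $(\red{\PD(X)})_{<\sigma} = \red{(\prod_i \PD(Y_i,\oplus))}$, hence $\tilde\chi((\red{\PD(X)})_{<\sigma}) = \prod_i \tilde\chi(\red{\PD(Y_i,\oplus)})$ (reduced Euler characteristic of a product of posets with maximum/minimum removed is the product of the reduced Euler characteristics — same fact used in \ref{coro:eulerCharOPDUniqueCompl}). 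Substituting \ref{eq:eulerCharPDVS} term-by-term and summing over all $\sigma\in\D(X)$ (which range over all unordered internal direct sum decompositions, refining back to $1$-dimensional pieces), I would reorganize the sum by the multiset of dimensions $(m_1,\dots,m_r)$ and then by composition type, reducing everything to the known generating-function identity for decompositions of $\GF{q}^n$; the bookkeeping here is precisely what produces $f_n(q)$ in \ref{eq:eulerCharDecompVS}, and the point of the computation is that the $f$-polynomials \emph{cancel}. Concretely, comparing with \ref{eq:eulerCharDecompVS} one recognizes the sum $\sum_{\sigma\in\D(X)}(-1)^{|\sigma|}\prod_i \tilde\chi(\red{\PD(Y_i)})$ as $q^{\binom n2}$ times the analogous sum with $\PD$ replaced by the constant $-1$, and the latter telescopes to give $-1-\tilde\chi(\redm{\D(X)})$ in the uniquely complemented recursion; here the extra $q^{\binom n2}$ factors from \ref{eq:eulerCharPDVS} accumulate to $q^{n(n-1)}$. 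A clean way to organize this is to prove by induction on $n$ the identity $\sum_{\sigma\in\D(\GF qn)} (-1)^{|\sigma|}\prod_{y\in\sigma}\tilde\chi(\red{\PD(Y,\oplus)}) = -q^{\binom n2}$, using the recursion that expresses $\tilde\chi(\red{\PD(\GF qn)})$ via the fiber formula over $\Forget_{\PD}$, or alternatively via the Solomon-identity simplification the authors credit to M\o ller.

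I expect the main obstacle to be the combinatorial identity in the last step: showing that the weighted sum over all internal direct-sum decompositions of $\GF{q}^n$ of the product of the partial-decomposition Euler characteristics equals $-q^{n(n-1)}$ (equivalently $\tilde\chi(\red{\Ord\PD(X)}) = \tilde\chi(\red{\PD(X)}) + q^{\binom n2}(-1-\tilde\chi(\redm{\D(X)})) = -q^{n(n-1)}$ after the dust settles, where one must verify the arithmetic $-\tfrac1n q^{\binom n2}\prod_{i=1}^{n-1}(q^i-1) + q^{\binom n2}\cdot(\text{recursion value}) = -q^{n(n-1)}$). This requires either an induction with a carefully chosen auxiliary identity or an appeal to a known $q$-identity (the Solomon/Möbius identity for the subgroup lattice of $\GF qn$, or the generating-function recursion for the number of direct-sum decompositions weighted by Gaussian binomials), and getting the normalization constants ($\tfrac1n$ factors, the exponents $\binom n2$ vs.\ $n(n-1)$, signs $(-1)^{|\sigma|}$) to line up is the delicate part; everything else is a routine assembly of \ref{thm:orderedVersions}(2), \ref{lm:fibersForgetfulMap}, (LI), and the Euler-characteristic product formula.
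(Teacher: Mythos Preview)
Your plan is on the right track but diverges from the paper in a way that creates a real obstacle. By routing through the wedge decomposition of \ref{thm:orderedVersions}(2) you land on a sum over \emph{unordered} decompositions whose terms are $\prod_{S\in\sigma}\tilde\chi(\red{\PD(S)})$, so your recursion feeds on values of $\tilde\chi(\red{\PD(\,\cdot\,)})$ rather than $\tilde\chi(\red{\Ord\PD(\,\cdot\,)})$; the induction you propose therefore does not close on the quantity you are computing, and you are forced to take \ref{eq:eulerCharPDVS} (from the unpublished \cite{HHS}) as an external input. Even granting that, the identity you write down is wrong: for $n=2$ one has
\[
\sum_{\sigma\in\D(\GF{q}^2)}(-1)^{|\sigma|}\prod_{y\in\sigma}\tilde\chi(\red{\PD(Y)})
=\frac{q(q-1)}{2}+\frac{q(q+1)}{2}=q^2=q^{n(n-1)},
\]
not $-q^{\binom n2}=-q$. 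After correcting the target, you still owe a nontrivial identity over unordered decompositions weighted by the values in \ref{eq:eulerCharPDVS}, and you do not indicate how to obtain it beyond a vague appeal to Solomon.

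The paper's argument avoids all of this by applying the elementary recursion $\tilde\chi(P)=-1-\sum_{x\in P}\tilde\chi(P_{<x})$ directly to $P=\red{\Ord\PD(X)}$ and splitting the sum at $\Ord\D(X)$. The contribution of $\U:=\red{\Ord\PD(X)}\setminus\Ord\D(X)$ is $\tilde\chi(\U)=\tilde\chi(\red{\Sub(X)})=(-1)^nq^{\binom n2}$ by \ref{prop:PhiEquivalence}, and the remaining sum is over \emph{ordered} decompositions $\sigma\in\redm{\Ord\D(X)}$. By \ref{lm:intervalsOrdered}(2) the lower interval below such $\sigma$ is $\prod_{S\in\sigma}\Ord\PD(S)$, so one may substitute the inductive hypothesis $\tilde\chi(\red{\Ord\PD(\GF{q}^m)})=-q^{m(m-1)}$ for $m<n$; neither \ref{eq:eulerCharDecompVS} nor \ref{eq:eulerCharPDVS} is used. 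The resulting sum over compositions $(k_1,\dots,k_m)$ of $n$ is then recognized as the Solomon identity \ref{eq:solomonIdentity} after the substitution $q\mapsto q^{-1}$, which finishes the proof.
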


\begin{proof}
We argue by induction on $n$.
The cases $n = 1$ and $n = 2$ are straightforward to verify. Hence we assume $n\geq 3$.
Let $\U = \red{\Ord\PD(X)}\setminus \Ord\D(X)$.
Then, by item 2 of Theorem \ref{thm:orderedVersions} we have
\begin{align*}
\tilde{\chi}(\red{\Ord\PD(X)}) & = -1 - \sum_{\sigma\in \red{\Ord\PD(X)}\setminus \Ord\D(X)} \tilde{\chi}(\red{\Ord\PD(X)}_{<\sigma}) - \sum_{\sigma\in \redm{\Ord\D(X)}} \tilde{\chi}(\red{\Ord\PD(X)}_{<\sigma})\\
& = \tilde{\chi}(\U) - \sum_{\sigma\in \redm{\Ord\D(X)}} \tilde{\chi}(\red{\Ord\PD(X)}_{<\sigma})\\
& = \tilde{\chi}(\red{\S(X)}) - \sum_{\sigma\in \redm{\Ord\D(X)}} \tilde{\chi}(\red{\Ord\PD(X)}_{<\sigma})\\
& = (-1)^nq^{\binom{n}{2}} - \sum_{\sigma\in \redm{\Ord\D(X)}} \tilde{\chi}(\red{\Ord\PD(X)}_{<\sigma}).
\end{align*}
Let
\[ \alpha := \sum_{\sigma\in \redm{\Ord\D(X)}} \tilde{\chi}(\red{\Ord\PD(X)}_{<\sigma}).\]
Thus $\tilde{\chi}(\red{\Ord\PD(X)}) = -q^{n(n-1)}$ if and only if
\[ \alpha - q^{n(n-1)} = (-1)^nq^{\binom{n}{2}}.\]
We write $S\in \sigma$ if $S$ lies in the underlying set of $\sigma \in \Ord\D(X)$.
Now we expand $\alpha$ by taking into account the inductive hypothesis:
\begin{align*}
\alpha & = \sum_{\sigma\in \redm{\Ord\D(X)}} \tilde{\chi}(\red{\Ord\PD(X)}_{<\sigma})\\
& = \sum_{\sigma\in \redm{\Ord\D(X)}} \tilde{\chi}\bigg( \red{ \big( \prod_{S\in \sigma} \Ord\PD(S) \big)} \bigg)\\
& = \sum_{\sigma\in \redm{\Ord\D(X)}} \prod_{S\in \sigma}\tilde{\chi}\big( \red{\Ord\PD(S)} \big)\\
& = \sum_{\sigma\in \redm{\Ord\D(X)}} \prod_{S\in \sigma}(-q^{\dim S(\dim S -1)}).
\end{align*}
Fixing $2\leq m\leq n$ and positive integers $k_1,\ldots,k_m$ such that $k_1+\cdots+k_m=n$, the number of ordered decompositions $\sigma=(S_1,\ldots,S_m)\in \redm{\Ord\D(X)}$ such that $\dim S_i = k_i$ equals 
\[\frac{|\GL_n(q)|}{\displaystyle{\prod_{i=1}^m |\GL_{k_i}(q)|}}.\]
Denote by $\Ord_{n,m}$ the set of sequences $\mathbf{k} = (k_1,\ldots,k_m)$ of positive integers with $\displaystyle{\sum_{i=1}^ m} k_i = n$.
Then
\begin{equation}
    \label{eq:alpha1}
    \begin{split}
    \alpha - q^{n(n-1)} & = \sum_{m=1}^n \,\sum_{\mathbf{k} \in \Ord_{n,m}} \frac{|\GL_n(q)|}{\displaystyle{\prod_{i=1}^ m |\GL_{k_i}(q)|}} \,\prod_{i=1}^m(-q^{k_i(k_i-1)})\\
   & = \sum_{m=1}^n (-1)^m \sum_{\mathbf{k} \in \Ord_{n,m}} \frac{|\GL_n(q)|}{\displaystyle{\prod_{i=1}^ m |\GL_{k_i}(q)|}} \,\,q^{-n + {\sum_{i=1}^ m} k_i^2}.
    \end{split}
\end{equation}
On the other hand, the Euler characteristic of the poset of subspaces is given by the following formula
\begin{equation}
    \label{eq:solomonIdentity}
    (-1)^n q^{\binom{n}{2}} = \sum_{m=1}^n (-1)^m \sum_{\mathbf{k}\in \Ord_{n,m}} \frac{|\GL_n(q)|}{\displaystyle{\prod_{i=1}^ m |\GL_{k_i}(q)|}} \, q^{ -{\sum_{i<j}k_ik_j}},
\end{equation}
which arises by counting flags of subspaces $0<V_1<\cdots<V_{m-1}<X$.
This is also known as the Solomon identity \cite[Corollary 1.1]{Solomon}.
Hence we must show that \ref{eq:alpha1} and \ref{eq:solomonIdentity} coincide.

Note that $-n+\displaystyle{\sum_{i=1}^m k_i^2} = n^2-n - 2 \displaystyle{\sum_{1 \leq i<j \leq m}} k_ik_j$.
Thus in \ref{eq:alpha1} we get
\begin{equation}
    \label{eq:alpha2}
    \alpha-q^{n(n-1)} = \sum_{m=1}^n (-1)^m \sum_{\mathbf{k} \in \Ord_{n,m}} \frac{|\GL_n(q)|}{\displaystyle{\prod_{i=1}^m |\GL_{k_i}(q)|}} \,q^{n(n-1) - 2 \sum_{i<j} k_ik_j}
\end{equation}
If we let $\beta := q^{-n(n-1)}(\alpha-q^{n(n-1)})$, we must show that $\beta = (-1)^n q^{-\binom{n}{2}}$.
This identity now follows by replacing $q$ by $q^{-1}$ in \ref{eq:solomonIdentity} and observing that
\[ ``|\GL_n(q^{-1})|" = q^{-\binom{n}{2}} \prod_{i=1}^n (q^{-i}-1) = (-1)^n q^{-\frac{n(3n-1)}{2}}\, |\GL_n(q)|.\]
\end{proof}

\subsection{Vector spaces with forms}
\label{sub:formedSpaces}

In this section we consider vector spaces equipped with forms. 
This case will require some
preparation before we can turn our attention to the concrete posets and complexes.

Recall that for a vector space $V$ over a field $\KK$, a form is just a map $\Psi:V\times V\to \KK$.
In our setting, we require $\Psi$ to be bi-additive and such that $\Psi(-,v)$ is $\KK$-linear for all $v\in V$.
We call this property \textit{almost $\KK$-bilinear}.
The form $\Psi$ is reflexive if for all $v,w\in V$, we have that $\Psi(v,w) = 0$ if and only if $\Psi(w,v) = 0$.
The radical $\Rad(V) = \{v\in V\tq v\perp w $ for all $ w\in V\}$ and the orthogonal complement $S^\perp = \big\{\,v\in V\tq v\perp w $ for all $ w\in S\,\big\}$  of a subset $S\subseteq V$ are subspaces of $V$ by $\KK$-linearity in the first variable.
Here $v\perp w$ if $\Psi(v,w) = 0$.
A subspace $S\leq V$ is non-degenerate if its radical $\Rad(S) = S\cap S^\perp$ is zero.
In particular, $V$ is non-degenerate if $\Rad(V) = 0$. In this case, we also say $\Psi$ is
non-degenerate.
If $S\leq V$, we regard $S$ as a vector space with a form by restricting $\Psi$ to $S\times S$.
We usually denote this restriction by $\Psi_S$, or simply $\Psi$ when it is clear from the context.
Note that if $\Psi$ is reflexive, then so is $\Psi_S$ for any $S\leq V$.


Consider the category $\Ivec_\KK$ whose objects are pairs $(V,\Psi_V)$, where $V$ is a vector space over $\KK$ and $\Psi_V:V\times V\to \KK$ is an almost $\KK$-bilinear form, and maps are isometries.
That is, a map $f:(V,\Psi_V)\to (W,\Psi_W)$ is a linear map $f:V\to W$ such that $\Psi_W(f(v),f(w)) = \Psi_V(v,w)$ for all $v,w\in V$.
We also consider the full subcategory $\RIvec_\KK$ of reflexive forms.

Since we want to work with posets of subobjects, we characterize in the following proposition the monomorphisms of these categories.

\begin{lemma}
\label{lm:monoIVec}
  In the categories $\Ivec_\KK$ and $\RIvec_\KK$, monomorphisms are exactly the injective isometries (that is, monomorphisms when regarded as linear maps).

  In particular, if $\C = \Ivec_{\KK}$ or $\C = \RIvec_{\KK}$ and
  $(V,\Psi)\in \C$ then
  $\Sub_{\C}(V,\Psi)$ is the poset of subspaces of $V$.
\end{lemma}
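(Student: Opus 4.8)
The plan is to prove both halves by elementary categorical and linear-algebraic arguments. First I would settle the easy inclusion: any injective isometry $f\colon(V,\Psi_V)\to(W,\Psi_W)$ is a monomorphism in $\Ivec_\KK$ (and hence in the full subcategory $\RIvec_\KK$), because if $g,h\colon(U,\Psi_U)\to(V,\Psi_V)$ are isometries with $fg=fh$, then $f g=f h$ as plain linear maps, and injectivity of $f$ forces $g=h$ as linear maps, hence as morphisms in the category. So the content is the converse: a monomorphism must be injective as a linear map.

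For the converse I would argue by contraposition. Suppose $f\colon(V,\Psi_V)\to(W,\Psi_W)$ is an isometry that is not injective as a linear map, so there is a nonzero $v_0\in V$ with $f(v_0)=0$. The goal is to produce an object $(U,\Psi_U)$ of the category and two distinct isometries $g,h\colon(U,\Psi_U)\to(V,\Psi_V)$ with $fg=fh$. The natural choice is $U=\KK$ with the zero form $\Psi_U\equiv 0$ (this is an object of both $\Ivec_\KK$ and $\RIvec_\KK$, the zero form being trivially almost $\KK$-bilinear and reflexive), $h$ the zero map, and $g\colon\KK\to V$, $g(\lambda)=\lambda v_0$. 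Then $fg=0=fh$ while $g\neq h$ since $v_0\neq 0$. The one thing that needs checking — and this is the only real subtlety — is that $g$ is genuinely an isometry $(\KK,0)\to(V,\Psi_V)$, i.e.\ that $\Psi_V(\lambda v_0,\mu v_0)=0$ for all $\lambda,\mu$. This holds because $f$ is an isometry: $\Psi_V(\lambda v_0,\mu v_0)=\Psi_W(f(\lambda v_0),f(\mu v_0))=\Psi_W(0,0)=0$, using almost $\KK$-bilinearity (in particular that the form vanishes when an argument is $0$). So in fact $\KK v_0$ lies in the radical of $V$, which is what makes $(\KK,0)$ embed. This shows $f$ is not a monomorphism, completing the contrapositive and hence the first assertion.

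The ``in particular'' statement then follows almost immediately. By definition, a subobject of $(V,\Psi)$ is an isomorphism class of objects $(U,i)$ of $\C\hookdownarrow(V,\Psi)$ with $i$ a monomorphism; by the first part these are exactly pairs $(U,\Psi_U\!,i)$ with $i\colon U\hookrightarrow V$ an injective linear map and $\Psi_U$ the form induced by restriction (an isometry with domain $U$ and target $V$ must satisfy $\Psi_U(u,u')=\Psi_V(i(u),i(u'))$, so $\Psi_U$ is forced). Hence the underlying-subspace map sends a subobject of $(V,\Psi)$ to a subspace $i(U)\leq V$. This map is surjective: every subspace $S\leq V$ arises from the inclusion $S\hookrightarrow V$ equipped with $\Psi_S=\Psi|_{S\times S}$, which is an injective isometry and hence a monomorphism. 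It is injective on isomorphism classes: if $(U,\Psi_U,i)$ and $(U',\Psi_{U'},i')$ have the same image $S$, then $i'^{-1}i\colon U\to U'$ is a linear isomorphism, and it is an isometry because $\Psi_{U'}((i'^{-1}i)(u),(i'^{-1}i)(u'))=\Psi_V(i(u),i(u'))=\Psi_U(u,u')$; moreover $i=i'(i'^{-1}i)$, so the two subobjects are isomorphic in $\C\hookdownarrow(V,\Psi)$. Finally the order is compatible: $[(U,i)]\leq[(U',i')]$ in $\Sub_\C(V,\Psi)$ iff there is an isometry $h\colon U\to U'$ with $i=i'h$, and such an $h$ exists exactly when $i(U)\subseteq i'(U')$ as subspaces (take $h=i'^{-1}|_{i(U)}\circ i$, which is automatically an isometry by the same restriction-of-form computation). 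Therefore $\Sub_\C(V,\Psi)$ is order-isomorphic to the poset of subspaces of $V$.

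\textbf{Main obstacle.} There is no serious obstacle here; the argument is routine. The only place that requires a moment's care is the verification that the chosen test object $(\KK,0)$ actually embeds, i.e.\ that a nonzero element of the kernel of an isometry spans a subspace on which the form vanishes — this is where one uses that isometries preserve the form \emph{and} that the form is zero whenever an argument is $0$ (a consequence of bi-additivity). One should also take a little care to note that the counterexample object and morphisms lie in $\RIvec_\KK$ as well as $\Ivec_\KK$, so that a single argument handles both categories.
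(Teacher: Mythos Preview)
Your proposal is correct and takes essentially the same approach as the paper: both use a one-dimensional test object with the zero form (the paper writes it as $S=\langle v\rangle$ with the restricted form, you write it as $(\KK,0)$), together with the identity-versus-zero pair of maps, to witness that a non-injective isometry is not a monomorphism. Your treatment of the ``in particular'' clause is more detailed than the paper's, which leaves that part implicit.
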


\begin{proof}
  The proof is the same for both categories.
  It is clear that if $f:(V,\Psi_V)\to (W,\Psi_W)$ is an injective isometry, then $f$ is a monomorphism of the category.
  Conversely, assume that $f$ is a monomorphism in the category, and suppose that $f(v) = 0$.
  We prove that $v=0$.
  Since $f$ is an isometry, $\Psi_V(\lambda v, \lambda' v) = 0$ for all $\lambda,\lambda'\in\KK$.
  Let $S = \gen{v}$ with $\Psi_S = \Psi_V|_S$, which is the null form (and in particular it is reflexive).
  Consider the maps $g_1,g_2:S\to V$ given by $g_1(v) = v$ and $g_2(v) = 0$.
  Then $g_1$ and $g_2$ are isometries and $fg_1 = 0 = fg_2$.
  Thus $g_1,g_2:(S,\Psi_S)\to (V,\Psi_V)$ are maps in $\Ivec_\KK$ (and also in $\RIvec_{\KK}$ if $\Psi_V$ is reflexive).
  Since $fg_1=fg_2$, we conclude that $g_1=g_2$, that is, $v=0$.
\end{proof}

We will also need the following dimension formula for orthogonal complements of subspaces in a vector space with a reflexive almost $\KK$-bilinear form.
The proof follows the same ideas as the original proof for $\sigma$-sesquilinear forms, and we leave the details to the reader.

\begin{proposition}
[{Dimension-formula}]
\label{prop:dimFormulaReflexiveVS}
  Let $V$ be a finite-dimensional vector space and let $\Psi$ be a reflexive almost $\KK$-bilinear form on $V$.
  Then, for all $S\leq V$ we have
  \begin{equation}
      \label{eq:dimFormula}
      \dim(S) + \dim(S^\perp) = \dim(V) + \dim(S\cap \Rad(V)). 
  \end{equation}
In particular, $(S^\perp)^\perp = S + \Rad(V)$ and $\Rad(S^\perp) = \Rad(S) + \Rad(V)$.
\end{proposition}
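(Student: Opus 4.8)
The plan is to imitate the classical proof of the orthogonality dimension formula for $\sigma$-sesquilinear forms, splitting the argument into a reduction to the non-degenerate case and then the non-degenerate case itself.

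First I would reduce to the case $\Rad(V) = 0$. Set $\bar{V} := V/\Rad(V)$ with the form $\bar\Psi$ induced by $\Psi$; reflexivity guarantees that $\bar\Psi$ is well defined, still reflexive and almost $\KK$-bilinear, and now non-degenerate. For $S \leq V$, writing $\pi \colon V \to \bar V$ and $\bar S := \pi(S)$, one has $\dim \bar S = \dim S - \dim(S\cap \Rad V)$, and a direct check from the definitions gives $\pi(S^{\perp}) = \bar S^{\perp}$ (the orthogonal complement taken in $\bar V$); since $\Rad V \subseteq S^{\perp}$, the kernel of $\pi|_{S^{\perp}}$ is exactly $\Rad V$, so $\dim \bar S^{\perp} = \dim S^{\perp} - \dim \Rad V$. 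Substituting the non-degenerate identity $\dim \bar S + \dim \bar S^{\perp} = \dim \bar V$ together with $\dim \bar V = \dim V - \dim \Rad V$ into these relations produces exactly \ref{eq:dimFormula}.

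For the non-degenerate case I would argue through the correspondence $\phi \colon V \to V^{*}$, $v \mapsto \Psi(\,\cdot\,, v)$, which is well defined because $\Psi$ is $\KK$-linear in the first variable; it is additive, and its kernel is the radical, hence $0$, so $\phi$ is injective. The substantive point — and where I expect the real work to lie — is that $\phi$ is $\sigma$-semilinear for a suitable injective field endomorphism $\sigma$ of $\KK$: reflexivity forces $\ker \phi(\lambda v) = \ker \phi(v)$ for every $\lambda \neq 0$, so $\phi(\lambda v)$ and $\phi(v)$ are proportional as functionals, and the standard bookkeeping over a basis shows the proportionality factor $\sigma(\lambda)$ is independent of $v$ and defines a field endomorphism (the cases $\dim V \le 1$ being trivial, and in the situations of interest $\sigma$ is in fact an automorphism). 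Granting this, for $S \leq V$ we have $v \in S^{\perp}$ exactly when $\phi(v)$ annihilates $S$, i.e.\ $S^{\perp} = \phi^{-1}(S^{\circ})$ with $S^{\circ} \subseteq V^{*}$ the annihilator of $S$; as $\phi$ is a semilinear isomorphism it sends subspaces to subspaces of equal dimension, whence $\dim S^{\perp} = \dim S^{\circ} = \dim V - \dim S$, which is \ref{eq:dimFormula} when $\Rad V = 0$.

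The two ``in particular'' statements then come out formally. Applying \ref{eq:dimFormula} to $S^{\perp}$, noting $S^{\perp}\cap \Rad V = \Rad V$, and inserting the value of $\dim S^{\perp}$, gives $\dim (S^{\perp})^{\perp} = \dim(S + \Rad V)$; since $S \subseteq (S^{\perp})^{\perp}$ and $\Rad V \subseteq (S^{\perp})^{\perp}$ (both immediate from reflexivity), equality of dimensions forces $(S^{\perp})^{\perp} = S + \Rad V$. Finally $\Rad(S^{\perp}) = S^{\perp} \cap (S^{\perp})^{\perp} = S^{\perp} \cap (S + \Rad V)$; this contains $\Rad(S) + \Rad(V)$ since $\Rad S = S\cap S^{\perp}$ and $\Rad V \subseteq S^{\perp}$, and conversely if $x = s + r \in S^{\perp}$ with $s \in S$, $r \in \Rad V \subseteq S^{\perp}$, then $s = x - r \in S \cap S^{\perp} = \Rad S$, so $x \in \Rad S + \Rad V$. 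The only genuine obstacle is the semilinearity of $\phi$ above — that a reflexive almost-$\KK$-bilinear form behaves, for dimension-counting purposes, exactly like a $\sigma$-sesquilinear one; everything else is dimension bookkeeping, which is presumably why the authors leave the details to the reader.
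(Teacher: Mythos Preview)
The paper does not actually prove this proposition: it merely states that ``the proof follows the same ideas as the original proof for $\sigma$-sesquilinear forms, and we leave the details to the reader.'' Your proposal is precisely such a proof---reduction to the non-degenerate quotient, the map $\phi\colon V\to V^{*}$, and the formal derivation of the two consequences---and you correctly isolate the one genuinely non-trivial step, namely that reflexivity forces $\phi$ to be $\sigma$-semilinear for an \emph{automorphism} $\sigma$ (so that $\phi$ is a dimension-preserving bijection). This is exactly the approach the paper gestures at, so there is nothing to compare.
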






Having reviewed basic properties of the relevant categories, we now define a suitable monoidal product.
For any two objects $(V,\Psi_V),(W,\Psi_W)$ of $\Ivec_{\KK}$ there is a natural orthogonal sum $(V\oplus W, \Psi_V\operp \Psi_W)$ where 
\[\big(\Psi_V\operp\Psi_W\big)(v\oplus w, v'\oplus w') = \Psi_V(v,v') + \Psi_W(w,w'),\quad v,v'\in V \, w,w'\in W.\]
We denote this construction by $(V,\Psi_V) \operp (W,\Psi_W)$.
The following lemma is a simple consequence of the definitions.

\begin{lemma}
  For any two objects $(V,\Psi_V),(W,\Psi_W)$ of $\Ivec_{\KK}$, $(V\oplus W, \Psi_V\operp \Psi_W)$ is almost $\KK$-bilinear.
  Moreover, if $i_V: V\to V\oplus W$, $v\mapsto v\oplus 0$ and $i_W: W\to V\oplus W$, $w\mapsto 0\oplus w$ are the canonical embeddings, then $i_V, i_W$ are injective isometries and $V\leq W^\perp$ and $W\leq V^\perp$ in $(V\oplus W, \Psi_V\operp \Psi_W)$.
\end{lemma}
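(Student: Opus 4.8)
The plan is to verify each assertion directly from the definitions, since every statement reduces to an elementary computation with the componentwise formula for $\Psi_V\operp\Psi_W$.

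First I would check that $\Psi_V\operp\Psi_W$ is almost $\KK$-bilinear. Bi-additivity is inherited from that of $\Psi_V$ and $\Psi_W$ applied in each coordinate: one expands $(\Psi_V\operp\Psi_W)\big((v+v_1)\oplus(w+w_1), v'\oplus w'\big)$ and uses additivity of $\Psi_V$ and $\Psi_W$ in the first variable, and symmetrically in the second variable. For $\KK$-linearity in the first variable, one computes $(\Psi_V\operp\Psi_W)(\lambda(v\oplus w), v'\oplus w') = \Psi_V(\lambda v,v') + \Psi_W(\lambda w,w') = \lambda\Psi_V(v,v') + \lambda\Psi_W(w,w') = \lambda(\Psi_V\operp\Psi_W)(v\oplus w, v'\oplus w')$, using only $\KK$-linearity in the first slot of $\Psi_V$ and $\Psi_W$, which is all that the hypothesis on these forms provides.

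Next I would treat $i_V$, with $i_W$ handled symmetrically. Injectivity is immediate from the definition of the direct sum. For the isometry property, $(\Psi_V\operp\Psi_W)(i_V(v),i_V(v')) = (\Psi_V\operp\Psi_W)(v\oplus 0, v'\oplus 0) = \Psi_V(v,v') + \Psi_W(0,0)$, and $\Psi_W(0,0)=0$ from bi-additivity (since $\Psi_W(0,0)=\Psi_W(0,0)+\Psi_W(0,0)$); hence $(\Psi_V\operp\Psi_W)(i_V(v),i_V(v')) = \Psi_V(v,v')$, so $i_V$ is an injective isometry, in particular a morphism—indeed a monomorphism—of $\Ivec_\KK$ by \ref{lm:monoIVec}.

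Finally, for the orthogonality statements, for any $v\in V$ and $w\in W$ one computes $(\Psi_V\operp\Psi_W)(i_V(v), i_W(w)) = \Psi_V(v,0) + \Psi_W(0,w) = 0$, again using $\Psi_V(v,0) = 0 = \Psi_W(0,w)$ from bi-additivity; this says that the image of $i_V$ lies in the orthogonal complement of the image of $i_W$, i.e.\ $V\leq W^\perp$ in $(V\oplus W, \Psi_V\operp\Psi_W)$, and the symmetric computation $(\Psi_V\operp\Psi_W)(i_W(w), i_V(v)) = 0$ gives $W\leq V^\perp$. Since none of these computations invokes reflexivity, the same argument applies verbatim in $\RIvec_\KK$. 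There is no genuine obstacle here; the only point requiring a moment's care is to use linearity of the component forms solely in the first variable, since almost $\KK$-bilinear forms need not be linear in the second.
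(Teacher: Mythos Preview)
Your proof is correct and matches the paper's approach: the paper states this lemma is ``a simple consequence of the definitions'' and gives no further argument, while you have simply written out the routine verifications that the paper leaves to the reader.
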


Note that $(V\oplus W, \Psi_V\operp \Psi_W)$ might not be reflexive even if $\Psi_V$ and $\Psi_W$ are.
Nevertheless, this is the case for many important examples such as $\sigma$-sesquilinear forms.

\begin{example}
\label{ex:nonReflexiveProduct}
Let $\KK$ be a field of characteristic different from $2$.
Let $(\KK,\Psi_1)$ be the $1$-dimensional vector space with $\Psi_1$ the product of $\KK$, that is, $\Psi_1(x,y) = xy$.
Consider also $(\KK^2,\Psi_2)$, the symplectic space of dimension $2$.
Here $\Psi_2( (x_1,x_2), (y_1,y_2)) = x_1y_2-x_2y_1$.
Then $(\KK,\Psi_1)\operp (\KK^2,\Psi_2) = (\KK^3,\Psi_3)$ where
\[ \Psi_3( (x_1,x_2,x_3), (y_1,y_2,y_3) ) = x_1y_1 + x_2y_3 - x_3y_2.\]
Therefore, both $\Psi_1,\Psi_2$ are $\KK$-bilinear and reflexive, and $\Psi_3$ is $\KK$-bilinear but not reflexive.
For instance, take $x,y\in\KK^3$ with $x_1=y_1=1$, $x_3=y_2=1$ and $x_2=y_3=0$.
Then
\[ \Psi(x,y) = 1 + 0 - 1 = 0,\]
\[ \Psi(y,x) = 1 + 1 - 0 = 2\neq 0.\]
\end{example}

It is also important to observe that $(V,\Psi_V)\operp (W,\Psi_W)$ is not a coproduct in the category $\Ivec_\KK$.
For example, it fails the universal property for the orthogonal sum of two non-orthogonal lines embedded in the real plane.
In fact, the coproduct of ``most'' pairs of objects of this category does not exist.
Nevertheless, it is easily seen that the operation $\sqcup := \operp$ turns $\Ivec_\KK$ into an ISM-category.
However, by \ref{ex:nonReflexiveProduct}, the product $\operp$ does not restrict to a monoidal product in $\RIvec_\KK$.

The following proposition describes the $\operp$-complemented subspaces of $V$, where $(V,\Psi)\in \Ivec_{\KK}$.
We identify subobjects with subspaces by restricting the form $\Psi$.

\begin{proposition} \label{prop:formsposet}
  Let $(V,\Psi)\in \Ivec_{\KK}$ and $S,W\leq V$ such that $S\cap W = 0$.
  The following are equivalent:
  \begin{enumerate}
    \item $S,W\in \Sub(V,\Psi)$ are $\operp$-complements of each other in $\Sub(S\oplus W,\Psi)$,
    \item $W\leq S^\perp$, $S\leq W^\perp$.
  \end{enumerate}

  Moreover, if $\Psi$ is reflexive, then $S\leq V$ is $\operp$-complemented if and only if $\Rad(S)\leq \Rad(V)$.
  In particular, non-degenerate subspaces are $\operp$-complemented, and if $\Psi$ is also non-degenerate then $\Sub( (V,\Psi) ,\operp)$ is the poset of non-degenerate subspaces.
\end{proposition}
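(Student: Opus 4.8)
The plan is to reduce both assertions to elementary linear algebra by unwinding \ref{def:sqcupCompatible} and \ref{def:decompositionsCategory} in the special case of a two-element family. For the equivalence of (1) and (2), I would first invoke \ref{lm:monoIVec} to identify $\Sub(S\oplus W,\Psi)$ with the subspace poset of $S\oplus W$. Since $S\cap W=0$, in this poset $S\wedge W=0$ and $S\vee W=S\oplus W$ is the maximum, so ``$S$ and $W$ are $\operp$-complements of each other'' carries exactly the requirement that $\{S,W\}$ be $\operp$-compatible. Unwinding \ref{def:sqcupCompatible} for $r=2$: the canonical maps $S\to(S,\Psi_S)\operp(W,\Psi_W)$ and $W\to(S,\Psi_S)\operp(W,\Psi_W)$ are isometric monomorphisms by the lemma preceding \ref{ex:nonReflexiveProduct}, the inclusions $S\hookrightarrow(S\oplus W,\Psi)$ and $W\hookrightarrow(S\oplus W,\Psi)$ are isometries because $\Psi$ restricts to $\Psi_S$ and $\Psi_W$, and the commuting triangles are then automatic; the only remaining condition is that the canonical map $(S,\Psi_S)\operp(W,\Psi_W)\to(S\oplus W,\Psi)$, which is the identity on underlying vector spaces, be a morphism of $\Ivec_\KK$, i.e.\ an isometry. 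Expanding by bi-additivity,
\[
\Psi(s+w,s'+w')=\Psi(s,s')+\Psi(s,w')+\Psi(w,s')+\Psi(w,w'),
\]
whereas $(\Psi_S\operp\Psi_W)(s\oplus w,s'\oplus w')=\Psi(s,s')+\Psi(w,w')$, so the isometry condition is the identity $\Psi(s,w')+\Psi(w,s')=0$ for all $s,s'\in S$ and $w,w'\in W$. Taking $w=0$ gives $S\leq W^\perp$, taking $s=0$ gives $W\leq S^\perp$, and conversely these two inclusions make the displayed sum vanish. This is (1)$\Leftrightarrow$(2).

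For the reflexive case I would argue as follows. When $\Psi$ is reflexive, $W\leq S^\perp$ is equivalent to $S\leq W^\perp$, so for any subspace $W$ with $S\cap W=0$ and $S+W=V$ (a poset-complement of $S$ in $\Sub(V,\Psi)$, which is the subspace poset of $V$ by \ref{lm:monoIVec}), the pair $\{S,W\}$ is $\operp$-compatible in $\Sub(V,\Psi)=\Sub(S\oplus W,\Psi)$ precisely when $W\leq S^\perp$. Hence $S$ is $\operp$-complemented if and only if $S$ has a complement $W$ in $V$ with $W\leq S^\perp$. If such $W$ exists then $V=S+W\subseteq S+S^\perp$, so $S+S^\perp=V$. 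Conversely, if $S+S^\perp=V$, I would take $W$ to be any complement of $\Rad(S)=S\cap S^\perp$ inside $S^\perp$: then $S\cap W=S\cap S^\perp\cap W=\Rad(S)\cap W=0$, and $\dim W=\dim S^\perp-\dim\Rad(S)=\dim(S+S^\perp)-\dim S=\dim V-\dim S$, so $S\oplus W=V$ and $W\leq S^\perp$ as needed. Finally I would feed this into the dimension formula \ref{prop:dimFormulaReflexiveVS}: $\dim(S+S^\perp)=\dim S+\dim S^\perp-\dim\Rad(S)=\dim V+\dim(S\cap\Rad(V))-\dim\Rad(S)$, so $S+S^\perp=V$ is equivalent to $\dim\Rad(S)=\dim(S\cap\Rad(V))$; since $S\cap\Rad(V)\subseteq\Rad(S)$ always holds, this in turn is equivalent to $\Rad(S)=S\cap\Rad(V)$, and in particular to $\Rad(S)\leq\Rad(V)$ (the reverse implication being $\Rad(S)=\Rad(S)\cap S\subseteq\Rad(V)\cap S\subseteq\Rad(S)$). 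Combining, $S$ is $\operp$-complemented iff $\Rad(S)\leq\Rad(V)$. The two consequences are then immediate: a non-degenerate $S$ has $\Rad(S)=0\leq\Rad(V)$, and when $\Psi$ is non-degenerate $\Rad(V)=0$ forces $\Rad(S)=0$, so by \ref{lm:monoIVec} the $\operp$-complemented subobjects are exactly the non-degenerate subspaces.

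The main obstacle I anticipate is essentially bookkeeping: carefully matching \ref{def:sqcupCompatible} (including the footnoted ``natural'' condition on the monomorphisms and the commuting triangle) to the two-element situation, and keeping in mind that in the first part $\Psi$ is only almost $\KK$-bilinear — neither symmetric nor reflexive — so that $W\leq S^\perp$ and $S\leq W^\perp$ must genuinely be extracted as two separate conditions that collapse to one only once reflexivity is assumed. Everything else is routine dimension counting, with the dimension formula \ref{prop:dimFormulaReflexiveVS} doing the essential work in the reflexive half.
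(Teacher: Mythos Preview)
Your argument is correct. For the equivalence (1)$\Leftrightarrow$(2) you spell out in detail what the paper waves off as ``follows easily from the definitions''; your unwinding of $\operp$-compatibility down to the single isometry condition on the identity map $(S,\Psi_S)\operp(W,\Psi_W)\to(S\oplus W,\Psi)$ is exactly right, and your care in extracting the two inclusions separately (without reflexivity) is appropriate.

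For the reflexive half your route genuinely differs from the paper's. You pivot on the intermediate characterization ``$S$ is $\operp$-complemented $\Leftrightarrow$ $S+S^\perp=V$'' and then convert that into $\Rad(S)\leq\Rad(V)$ by a clean dimension count using \ref{prop:dimFormulaReflexiveVS}. The paper instead argues each implication separately and more constructively: for $\Rad(S)\leq\Rad(V)\Rightarrow$ complemented it first splits $S=\Rad(S)\oplus W$, verifies $\Rad(W)=0$, decomposes $W^\perp=\Rad(V)\oplus W'$ and $\Rad(V)=\Rad(S)\oplus R$, and exhibits $R\oplus W'$ as the $\operp$-complement; for the converse it observes directly that both $S$ and its $\operp$-complement lie in $\Rad(S)^\perp$, forcing $\Rad(S)\leq\Rad(V)$. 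Your approach is shorter and more symmetric; the paper's is more explicit about what the complement actually looks like (which is not needed elsewhere). Both rely on the dimension formula in the same essential way.
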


\begin{proof}
  The equivalences in the first part follow easily from the definitions.

  Suppose that $\Psi$ is reflexive and let $S \leq V$.
  First we assume that $\Rad(S) \leq \Rad(V)$ and show that $S$ is $\operp$-complemented.
  Let $W\leq S$ be a subspace complement for the radical, 
  that is, $S = \Rad(S) \oplus W$. We claim that $\Rad(W) = 0$.
  By construction, for a given $s\in S$ there are $r \in \Rad(S)$ and $w \in W$ such that $s = r+w$.
  Also for any $v \in \Rad(W)$ we have $\Psi(v,w) = 0$ and by reflexivity  
  $\Psi(w,v) = 0$. It follows that
  \[ \Psi(s,v) = \Psi(r,v) + \Psi(w,v) = \Psi(w,v) = 0. \]
  This shows that $\Rad(W)\leq \Rad(S)\cap W = 0$ and hence $W\cap W^\perp = \Rad(W) = 0$.

  Decompose $W^ \perp = \Rad(V) \oplus W'$ and $\Rad(V) = \Rad(S) \oplus R$.
  We claim that $V = S\oplus (R\oplus W')$.
  By \ref{prop:dimFormulaReflexiveVS},
  \[ \dim(W) + \dim(W^\perp) = \dim(V) + \dim(W\cap \Rad(V)).\]
  Since $W\cap \Rad(V) \leq \Rad(W) = 0$, we see that 
  \[ \dim(W) + \dim(W^\perp) = \dim(V).\]
  Hence we obtain
  \[ V = W\oplus W^\perp = W \oplus \Rad(S) \oplus R \oplus W' = S \oplus (R\oplus W'). \]
  This shows that $R\oplus W'$ is a complement for $S$.
  Finally, note that $R\oplus W'\perp S$ and $(R\oplus W')\cap S = 0$, and by the first part of the proposition we conclude that $R\oplus W'$ is a $\operp$-complement for $S$.

  Now we show that if $S$ is $\operp$-complemented then $\Rad(S) \leq \Rad(V)$. 
  Let $W$ be a $\operp$-complement of $S$ in $\Sub(V,\Psi)$.
  By definition we have $S\oplus W = V$ and the first part of the proposition implies that
  $S\leq W^\perp$ and $W\leq S^\perp$.
  Since $S\leq \Rad(S)^\perp $ and $W\leq \Rad(S)^\perp $, we conclude by reflexivity 
  that $V = S\oplus W \leq \Rad(S)^\perp$ and hence $\Rad(S) \leq \Rad(V)$.
\end{proof}

\begin{remark}
Note that if $(V,\Psi)$ is reflexive, $S\in \Sub((V,\Psi),\operp)$ and $T\in\Sub((S,\Psi),\operp)$, then $\Rad(T)\leq \Rad(S)\leq \Rad(V)$ and so $T\in \Sub((V,\Psi),\operp)$.
This means that $\Sub((V,\Psi),\operp)_{\leq S} = \Sub((S,\Psi),\operp)$.
\end{remark}

Next, we identify the elements of $\D((V,\Psi),\operp)$ under a suitable
hypothesis. For that we call a collection $\sigma = \{V_1,\ldots,V_r\}$ of subspaces of
$V$ an orthogonal decomposition of $V$ if $V$ is the (internal) direct sum of the $V_1,\ldots, V_r$
and $V_i \perp V_j$ for all $i\neq j$. Any subset of an orthogonal decomposition is called
a partial orthogonal decomposition.
In $\D(V,\Psi)$ we collect all orthogonal decompositions of $V$ and in $\PD(V,\Psi)$
all partial orthogonal decompositions of $V$. Both posets are ordered by refinement and hence are subposets of $\PD(V)$ and $\D(V)$.
The following is now an immediate consequence of \ref{prop:formsposet}.

\begin{corollary} \label{cor:lowerinvervalpd}
  Let $(V,\Psi) \in \Ivec_\KK$ with $V$ finite-dimensional and $\Psi$ reflexive and non-degenerate. 
  Then we have:
  \begin{enumerate}
    \item The poset $\S((V,\Psi),\operp)$ satisfies properties (EX) and (CM), and it is uniquely downward $\sch$-complemented for $\sqcup = \operp$.
    \item $\PD((V,\Psi),\operp) = \PD(V,\Psi)$ and $\D((V,\Psi),\operp) = \D(V,\Psi)$.
    \item Property (LI) holds, i.e., $\PD((V,\Psi),\operp)_{\leq \sigma} = \prod_{S\in\sigma} \PD( (S,\Psi_S),\operp)$.
    \item For $S\in \S((V,\Psi),\operp)$, we have $\S((V,\Psi),\operp)_{\leq S} = \S((S,\Psi|_S),\operp)$.
  \end{enumerate}
\end{corollary}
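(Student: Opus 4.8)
The plan is to deduce the four claims from Proposition~\ref{prop:formsposet}, the dimension formula of Proposition~\ref{prop:dimFormulaReflexiveVS}, and the general machinery of the earlier sections, translating the abstract notions into the concrete language of orthogonal geometry as permitted by Remark~\ref{rk:posetApproach}. The first step is to pin down the underlying poset: since $\Psi$ is reflexive and non-degenerate, Proposition~\ref{prop:formsposet} identifies $\S((V,\Psi),\operp)$ with the poset of non-degenerate subspaces of $V$, and the Remark following it gives part~(4) at once — for non-degenerate $S\leq V$ the restriction $\Psi_S$ is again reflexive and non-degenerate, and a non-degenerate subspace of $V$ contained in $S$ is the same thing as a non-degenerate subspace of $S$, so $\S((V,\Psi),\operp)_{\leq S}=\S((S,\Psi_S),\operp)$.

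Next I would show that a finite set $\{V_1,\dots,V_r\}$ of non-degenerate subspaces is $\operp$-compatible in $\S((V,\Psi),\operp)$ exactly when the $V_i$ are pairwise orthogonal and in internal direct sum. The equivalence (1)$\Leftrightarrow$(2) of Proposition~\ref{prop:formsposet} settles the case of pairs; in general the canonical inclusions $\bigoplus_{i\in I}V_i\hookrightarrow\bigoplus_{i\in I'}V_i\hookrightarrow V$ are injective isometries by the orthogonal-sum lemma, orthogonal direct sums of non-degenerate subspaces are again non-degenerate and therefore realize the joins $\bigvee_{i\in I}V_i$ inside $\S((V,\Psi),\operp)$, and the coherence diagrams of Definition~\ref{def:sqcupCompatible} commute by construction. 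Combining this with the observation that for a non-degenerate $W$ the height $h(W)$ equals the maximal number of non-zero blocks in an orthogonal decomposition of $W$ — obtained from a maximal flag of non-degenerate subspaces of $W$ by successively splitting off orthogonal complements, and conversely — and that this number is additive over orthogonal sums, the height requirement of Definition~\ref{def:decompositionsAndPD} is met by orthogonal decompositions. This gives part~(2): the $\operp$-(partial) decompositions of $(V,\Psi)$ are precisely the (partial) orthogonal decompositions, i.e.\ $\D((V,\Psi),\operp)=\D(V,\Psi)$ and $\PD((V,\Psi),\operp)=\PD(V,\Psi)$.

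For part~(3) I would verify conditions (E1) and (E2) of Lemma~\ref{lm:lowerIntervalsPD} in this situation. Condition (E1) is immediate from part~(4): a partial orthogonal decomposition of $S$ is the same as an element of $\PD((V,\Psi),\operp)$ lying below $\{S\}$. For (E2), distinct members $S,S'$ of an orthogonal family are orthogonal, so the blocks of partial orthogonal decompositions of the various $S\in\sigma$ are automatically pairwise orthogonal and in direct sum, and their union is a partial orthogonal decomposition below $\sigma$; hence $\PD((V,\Psi),\operp)_{\leq\sigma}=\prod_{S\in\sigma}\PD((S,\Psi_S),\operp)$, which is (LI). For part~(1), given $T\leq S$ in $\S((V,\Psi),\operp)$ one takes $T^{\perp_S}:=T^{\perp}\cap S$: applying Proposition~\ref{prop:dimFormulaReflexiveVS} inside $S$ (whose radical is $0$, with $T\cap\Rad(S)=0$) gives $S=T\perp(T^{\perp}\cap S)$ with $T^{\perp}\cap S$ non-degenerate, so $T^{\perp_S}$ is a $\sch$-complement of $T$ in $S$. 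Uniqueness follows since any $\sch$-complement $W$ of $T$ in $S$ satisfies $W\leq T^{\perp}$ by Proposition~\ref{prop:formsposet}(2), whence $T\oplus W$ is non-degenerate with $h(T\oplus W)=h(T)+h(W)=h(S)$, which forces $T\oplus W=S$, and then a dimension count forces $W=T^{\perp}\cap S$. Property (EX) is then immediate, since replacing a block $y$ of a partial orthogonal decomposition by an orthogonal decomposition of $y$ leaves all blocks pairwise orthogonal; and with (EX) and unique downward $\sch$-complementation in hand, property (CM) follows from Corollary~\ref{coro:uniquenessEPandCP}(3). (Alternatively (CM) is checked directly: if $\{x,y\},\{x',y'\}$ are orthogonal decompositions of $V$ with $x\leq x'$ and $y'\leq y$, then $y=x^{\perp}$, $y'=x'^{\perp}$, the meet $x'\wedge y=x'\cap x^{\perp}$ is the orthogonal complement of $x$ in $x'$, and $\{x'\cap x^{\perp},x,x'^{\perp}\}\setminus\{0\}$ is plainly an orthogonal decomposition of $V$.)

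The step I expect to be the main obstacle is the additivity of the height function $h$ over orthogonal sums used in part~(2). In contrast with ordinary direct sums of vector spaces, $h$ on the poset of non-degenerate subspaces is not the linear dimension — for instance in the symplectic case it is half the dimension — so additivity cannot simply be read off dimensions. One must instead argue through the correspondence between maximal flags of non-degenerate subspaces of $W$ and maximal orthogonal decompositions of $W$ into indecomposable (poset-atomic) non-degenerate subspaces, together with the fact that the number of such pieces is an isometry invariant. This last fact is classical for $\sigma$-sesquilinear forms (Witt-type theory) but requires some care at the stated level of generality of reflexive almost $\KK$-bilinear forms.
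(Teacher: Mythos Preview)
Your proposal follows the same route as the paper, which offers no argument beyond ``an immediate consequence of \ref{prop:formsposet}''; you have simply written out what that sentence is meant to encode. The structure---identify the poset via \ref{prop:formsposet}, read off (4) from the Remark after it, unwind $\operp$-compatibility into pairwise orthogonality, verify (E1)--(E2) for (LI), and use the orthogonal complement for the $\sch$-complement---is exactly what the paper expects the reader to do.

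Two small remarks. First, in your uniqueness argument for the $\sch$-complement you invoke $h(T\oplus W)=h(T)+h(W)=h(S)$ to force $T\oplus W=S$, but this detour through heights is unnecessary (and, as you note, relies on the delicate additivity): the definition of an $h$-complement already gives $T\vee W=S$ in $\Sub((V,\Psi),\operp)$, and $\operp$-compatibility identifies this join with the orthogonal sum $T\oplus W$; a linear-dimension count then yields $W=T^\perp\cap S$. Second, your caveat about height additivity is well taken and is a genuine gap the paper leaves open at the stated level of generality. For the forms actually used later (symmetric, symplectic, $\sigma$-Hermitian) the height of a non-degenerate $W$ is simply a fixed function of $\dim W$ (namely $\dim W$ or $\dim W/2$), so additivity over orthogonal sums is immediate; for an arbitrary reflexive almost $\KK$-bilinear form, one indeed needs a Krull--Schmidt/Witt-type statement guaranteeing that the number of indecomposable orthogonal summands of $W$ is well defined.
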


Using \ref{prop:formsposet} and \ref{prop:uniqueDownwardhComplementationAndEP}, 
as well as by \ref{lm:GMapOrdDtoS}, \ref{lm:isoGMap} and \ref{lm:GMapSurjective}
we conclude:

\begin{corollary}
    \label{cor:uniquecomplementform}
    Let $(V,\Psi) \in \Ivec_\KK$ with $V$ finite-dimensional and $\Psi$ reflexive and non-degenerate.
    Then we have:
    \begin{enumerate}
        \item $\red{\PD(V,\Psi)} \simeq \redm{\D(V,\Psi)}$. 
        \item $\Ord\redm{\D(V,\Psi)}\cong \big(\Delta \red{\Sub((V,\Psi),\operp)}\big)^{\op}$.
        \item $\tilde{\chi}( \red{\Ord\PD(V,\Psi)} ) = -1$.
    \end{enumerate}
\end{corollary}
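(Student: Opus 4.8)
The plan is to obtain all three assertions by feeding the structural results of \ref{sec:derivedposets}--\ref{sec:homotopyposets} the hypotheses already verified in \ref{cor:lowerinvervalpd}, specialized to the object $X=(V,\Psi)$ of the ISM-category $\Ivec_\KK$ with monoidal product $\sqcup=\operp$. Recall that \ref{cor:lowerinvervalpd} tells us that $\S((V,\Psi),\operp)$ is uniquely downward $\sch$-complemented, satisfies properties (EX), (CM) and (LI), has lower intervals $\S((V,\Psi),\operp)_{\leq S}=\S((S,\Psi_S),\operp)$, and moreover that $\PD((V,\Psi),\operp)=\PD(V,\Psi)$ and $\D((V,\Psi),\operp)=\D(V,\Psi)$. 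These identifications let us pass freely between the categorical notation and the notation of orthogonal (partial) decompositions; once they are in hand, each item is essentially a citation.

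For item~1, I would apply \ref{prop:uniqueDownwardhComplementationAndEP}: since $\S((V,\Psi),\operp)$ is uniquely downward $\sch$-complemented and satisfies (EX), there is a deformation retract of $\red{\PD((V,\Psi),\operp)}$ onto $\redm{\D((V,\Psi),\operp)}$, which after the substitutions from \ref{cor:lowerinvervalpd} reads exactly $\red{\PD(V,\Psi)}\simeq\redm{\D(V,\Psi)}$. For item~2, the same two properties are precisely the hypotheses of \ref{lm:isoGMap} and \ref{lm:GMapSurjective}, so the map $G$ of \ref{def:GMapOrdDtoS} is a surjective poset map and hence an isomorphism $\redm{\Ord\D((V,\Psi),\operp)}\cong\big(\Delta\red{\Sub((V,\Psi),\operp)}\big)^{\op}$. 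It only remains to note that $\Ord\redm{\D}=\redm{\Ord\D}$ --- a tuple of elements of $\D$ is the top element of $\Ord\D$ exactly when its underlying set is $\{V\}$ --- and to substitute $\D((V,\Psi),\operp)=\D(V,\Psi)$.

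For item~3, I would invoke \ref{coro:eulerCharOPDUniqueCompl}, matching its four hypotheses with \ref{cor:lowerinvervalpd}: uniqueness of $\sch$-complements and property (EX) from part~(1), the lower-interval identity $\Sub(X,\sqcup)_{\leq y}=\Sub(Y,\sqcup)$ from part~(4), and the product decomposition of the lower intervals of $\D$ and $\PD$ (as in \ref{lm:intervalsOrdered}(2)) from property (LI) in part~(3). This gives $\tilde{\chi}(\red{\Ord\PD((V,\Psi),\operp)})=-1$, and a final substitution yields the statement for $\red{\Ord\PD(V,\Psi)}$. The one genuinely delicate point --- and the place where the argument needs an extra word --- is finiteness: \ref{coro:eulerCharOPDUniqueCompl}, and indeed the very definition of the reduced Euler characteristic being used, requires $\Sub((V,\Psi),\operp)$ to be a finite poset, equivalently $\KK$ to be a finite field; so item~3 should be read under that additional assumption. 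Apart from this bookkeeping there is no real obstacle: all of the combinatorial content has already been absorbed into \ref{cor:lowerinvervalpd} and the machinery it draws on.
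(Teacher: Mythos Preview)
Your proposal is correct and follows essentially the same route as the paper: item~1 via \ref{prop:uniqueDownwardhComplementationAndEP}, item~2 via \ref{lm:GMapOrdDtoS}, \ref{lm:isoGMap} and \ref{lm:GMapSurjective}, and item~3 via \ref{coro:eulerCharOPDUniqueCompl}, all fed by \ref{cor:lowerinvervalpd}. Your remark that item~3 tacitly requires $\KK$ finite (so that $\Sub((V,\Psi),\operp)$ is finite) is a valid caveat that the paper leaves implicit in the statement but acknowledges in the subsequent example.
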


\begin{proof}
Item 1 follows by \ref{prop:uniqueDownwardhComplementationAndEP}, item 2 by \ref{lm:GMapOrdDtoS}, \ref{lm:isoGMap} and \ref{lm:GMapSurjective}, and item 3 by \ref{coro:eulerCharOPDUniqueCompl}.
In all cases, we invoke the properties described in \ref{cor:lowerinvervalpd}.
\end{proof}

\begin{remark}
By \ref{cor:uniquecomplementform}, the posets $\red{\Ord\PD(V,\Psi)}$ and $\red{\PD(V,\Psi)}$ cannot be spherical since the latter is homotopy equivalent to $\redm{\D(V,\Psi)}$ which has dimension $n-2$ and is not contractible in general, and $\red{\Ord\PD(V,\Psi)}$ is a wedge of $\red{\PD(V,\Psi)}$ and suspension of intervals.
In particular, $\Ord\PD(V,\Psi)$ and $\PD(V,\Psi)$ are not Cohen-Macaulay.
\end{remark}

For general $\Psi$, we do not know of results about the homotopy type or Cohen-Macaulay\-ness of the posets
discussed in this paper. In the following theorem we collect a few known results for $\sigma$-Hermitian and symplectic forms.
Recall that for a field involution $\sigma$ of $\KK$ and $\epsilon\in \{1,-1\}$, an almost $\KK$-bilinear form $\Psi$ is $(\sigma,\epsilon)$-Hermitian if $\Psi(v,w) = \epsilon \sigma( \,\Psi(w,v)\,)$. 
Note that $(\sigma,\epsilon)$-Hermitian forms are always reflexive.
If $\epsilon = 1$ we just say that $\Psi$ is $\sigma$-Hermitian.
When $\epsilon = 1$ and $\sigma$ has order $2$, we get a Hermitian form, and if in addition $\Psi$ is non-degenerate we say that $\Psi$ is a unitary form and $V$ a unitary space.
If $\epsilon=-1$, $\sigma$ is the identity and $\Psi(v,v) = 0$ for all $v\in V$, we get a symplectic form.
For $\epsilon = 1$ and $\sigma$ the identity map, we get an orthogonal form.

\begin{theorem} \label{thm:formposets} ~

\begin{enumerate}
\item 
Let $\KK$ be a field and $\sigma$ an automorphism of $\KK$ of order one or two.
If $\sigma$ is the identity then we assume that $\KK$ has odd characteristic.
Let $V$ be an $(n+1)$-dimensional $\KK$-vector space and $\Psi$ a non-degenerate $\sigma$-Hermitian form. 
If $\KK$ is a finite field $\GF{q}$ then assume $2^{n}<q$ in the case $\sigma$ is the identity
and $2^{n-1}\,(\sqrt{q}+1)<q$ in case $\sigma$ has order $2$. 
Then the following posets are Cohen-Macaulay:
\begin{enumerate}
    \item \cite{DGM} $\Sub((V,\Psi),\operp)$.
    \item $\Ord\D(V,\Psi)$.
    \item $\D(V,\Psi)$.
    \end{enumerate}
\item Let $\KK$ be a finite field of size $>2$ and $V$ a $\KK$-vector space of dimension $2n$ equipped with
a non-degenerate symplectic form $\Psi$. 
Then the following posets are Cohen-Macaulay: 
\begin{enumerate}
    \item \cite{Das} $\Sub((V,\Psi),\operp)$.
    \item $\Ord\D(V,\Psi)$.
    \item $\D(V,\Psi)$.
\end{enumerate}
\end{enumerate}
 In both cases $\red{\Ord\PD(V,\Psi)}$ is homotopy equivalent to a wedge of spheres.  
\end{theorem}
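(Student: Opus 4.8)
Granting parts (1a)--(2c), the plan is to deduce the final assertion from the wedge decomposition of Theorem~\ref{thm:orderedVersions}(2) applied to $X=(V,\Psi)$ with $\sqcup=\operp$. In both cases the form $\Psi$ is $(\sigma,\epsilon)$-Hermitian, hence reflexive, and it is non-degenerate, so $(V,\Psi)$ is an object of $\Ivec_\KK$ to which Corollaries~\ref{cor:lowerinvervalpd} and~\ref{cor:uniquecomplementform} apply: $\Sub((V,\Psi),\operp)$ is uniquely downward $\sch$-complemented and satisfies properties (EX), (CM) and (LI), and $\PD((V,\Psi),\operp)=\PD(V,\Psi)$.

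First I would record that $\red{\PD(V,\Psi)}$ is a wedge of spheres. By Corollary~\ref{cor:uniquecomplementform}(1), i.e. the deformation retract of Proposition~\ref{prop:uniqueDownwardhComplementationAndEP}, we have $\red{\PD(V,\Psi)}\simeq\redm{\D(V,\Psi)}$; since $\{V\}$ is the unique maximum of $\D(V,\Psi)$ this equals $\D(V,\Psi)_{<\{V\}}$, which is spherical because $\D(V,\Psi)$ is Cohen-Macaulay by parts (1c)/(2c), and hence is a wedge of spheres. The same chain of implications applies verbatim to every non-degenerate subspace $S\le V$ endowed with the restricted form $\Psi_S$: restriction preserves reflexivity and non-degeneracy, and the numerical bounds on $q$ only relax as $\dim S$ drops (for instance $2^{\dim S-1}\le 2^{n-1}$ in the unitary case), so parts (1c)/(2c) apply to $(S,\Psi_S)$ as well, and $\red{\PD(S,\Psi_S)}$ is again a wedge of spheres, with the convention that the empty poset (which occurs precisely when $\dim S$ is minimal) is $S^{-1}$.

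Next I would invoke Theorem~\ref{thm:orderedVersions}(2), which gives
\[
\red{\Ord\PD(V,\Psi)}\;\simeq\;\red{\PD(V,\Psi)}\ \vee\ \bigvee_{\sigma\in\redm{\D(V,\Psi)}} S^{|\sigma|-2}*(\red{\PD(V,\Psi)})_{<\sigma}.
\]
For each $\sigma=\{S_1,\dots,S_r\}\in\redm{\D(V,\Psi)}$ one has $r\ge 2$, the $S_i$ are non-degenerate subspaces of $V$, and property (LI) (Corollary~\ref{cor:lowerinvervalpd}(3)) identifies $\PD(V,\Psi)_{\le\sigma}$ with $\prod_{i=1}^r\PD(S_i,\Psi_{S_i})$; hence $(\red{\PD(V,\Psi)})_{<\sigma}$ is the proper part of this product of bounded posets, and therefore homotopy equivalent to an iterated suspension of the join $\red{\PD(S_1,\Psi_{S_1})}*\cdots*\red{\PD(S_r,\Psi_{S_r})}$ by the standard fact used in the proof of Theorem~\ref{thm:highConnectivity} (see~\cite{Walker2}). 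Because a join of wedges of spheres is a wedge of spheres ($S^a*S^b\simeq S^{a+b+1}$, and joins distribute over wedges up to homotopy) and suspensions of wedges of spheres are wedges of spheres, each summand $S^{|\sigma|-2}*(\red{\PD(V,\Psi)})_{<\sigma}$ is a wedge of spheres; together with $\red{\PD(V,\Psi)}$ this displays $\red{\Ord\PD(V,\Psi)}$ as a wedge of wedges of spheres, i.e. a wedge of spheres, as claimed. This argument has no genuinely hard step; the main thing to be careful about is verifying that the hypotheses of the theorem --- reflexivity and non-degeneracy of $\Psi_S$, and the inequalities constraining $q$ --- descend to every non-degenerate orthogonal summand $S_i$, so that the spherical building blocks $\red{\PD(S_i,\Psi_{S_i})}$ are available without circular reasoning.
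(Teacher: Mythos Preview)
Your argument for the final wedge-of-spheres assertion is correct and follows the paper's approach: both invoke \ref{thm:orderedVersions}(2) and use \ref{cor:uniquecomplementform}(1) and property (LI) to see that all the pieces in the wedge decomposition are wedges of spheres. You have spelled out in more detail what the paper leaves implicit.

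However, you grant parts (1b), (1c), (2b), (2c), whereas the paper actually derives them. Only (1a) and (2a) are quoted results. The paper's derivation is short but not automatic: for (a)$\Rightarrow$(b), it uses \ref{cor:uniquecomplementform}(2), which identifies $\redm{\Ord\D(V,\Psi)}$ with $(\Delta\red{\Sub((V,\Psi),\operp)})^{\op}$, together with \ref{lm:intervalsOrdered} to handle the intervals; Cohen--Macaulayness of $\Sub((V,\Psi),\operp)$ then transfers. For (b)$\Rightarrow$(c), the paper argues by induction on $\dim V$ via \ref{thm:orderedVersions}(1) to get sphericity of $\redm{\D(V,\Psi)}$, and uses \ref{cor:lowerinvervalpd} and \ref{lm:upperIntervalsDecomp} to show all proper intervals in $\D(V,\Psi)$ are spherical. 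Note in particular that your own argument for the final assertion relies on (1c)/(2c) for proper non-degenerate subspaces $S\le V$, so you are implicitly using an inductive statement you have not established; the paper's induction for (c) is exactly what makes this legitimate.
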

\begin{proof}
    The conclusions on $\Sub((V,\Psi),\operp)$ are the main results of \cite{DGM} and \cite{Das}. 
    By item 2 of \ref{cor:uniquecomplementform} and \ref{lm:intervalsOrdered}, we have that (a) implies (b) both in items 1 and 2.
    By induction on the dimensions of $V$, \ref{thm:orderedVersions} implies that 
    $\redm{\D(V,\Psi)}$ is spherical. By \ref{cor:lowerinvervalpd}
    and \ref{lm:upperIntervalsDecomp} all proper intervals are spherical. Thus $\D(V,\Psi)$ is Cohen-Macaulay in both cases. 
    Again \ref{thm:orderedVersions} shows that $\red{\Ord\PD(V,\Psi)}$ is homotopy equivalent to a wedge of spheres. 
\end{proof}

The restriction to characteristic $\neq 2$ in item 1 of \ref{thm:formposets} is not present in 
\cite[Main Theorem]{DGM}. Nevertheless, in \cite{DGM} the Main Theorem is derived from Theorem 4.4 there, which applies to Phan geometries, and it requires the existence of non-isotropic vectors (see \cite[Definition 3.2]{DGM}).
The latter is not guaranteed in characteristic $2$ and for $\sigma$ the identity. We consider the validity of item 1 of \ref{thm:formposets} for characteristic $2$ and $\sigma$ the identity open.
Even if the answer is affirmative, the dimension of the spheres provided in 
\cite[Main Theorem]{DGM} needs some clarification in that case.
Note that if $\KK$ is a field of characteristic $2$ and $\sigma$ the identity, 
then $\Psi(v,w) = \sum_{i=1}^n v_iw_{n-i+1}$ gives a non-degenerate $\sigma$-Hermitian form (in the sense of \cite{DGM}) over a vector space of even dimension $n$.
However, $(V,\Psi)$ is also a symplectic space, so the dimension of the order complex of 
$\S(V,\Psi)$ is not $n-2$, but $\frac{n}{2}-2$.

On the other hand,
Das \cite{Das} works with non-degenerate symplectic forms over finite fields of size $> 2$, so this is where our restriction on the size of the field comes from.
Indeed, we expect these results to be true also for infinite fields.
Furthermore, by the proof of item 2 of \ref{thm:formposets}, Cohen-Macaulayness will follow for the decomposition posets once
the following conjecture is resolved.

\begin{conjecture}
    Let $\KK$ be any field and $V$ a finite-dimensional $\KK$-vector space with 
    a non-degenerate symplectic form $\Psi$. Then 
    $\Sub((V,\Psi),\operp)$ is Cohen-Macaulay.
\end{conjecture}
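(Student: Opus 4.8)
The plan is to run an induction on $n$, where $\dim_\KK V = 2n$, that reduces the Cohen--Macaulay property of $\Sub((V,\Psi),\operp)$ to a single high-connectivity statement about its proper part, and then to attack that statement by a field-independent argument in the spirit of \cite{Charney} rather than by the finite-field combinatorics of \cite{Das}. First I would isolate the recursive structure of the intervals. By \ref{prop:formsposet} and \ref{cor:lowerinvervalpd}, for $(V,\Psi)$ non-degenerate symplectic the poset $\S((V,\Psi),\operp)$ is the poset of non-degenerate subspaces of $V$; it is graded with $h(S)=\tfrac12\dim S$ and height $n$, and $\S((V,\Psi),\operp)_{\leq S}=\S((S,\Psi_S),\operp)$ for every $S$. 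Dually, since $S$ is non-degenerate one has $V=S\operp S^{\perp}$, and $T\mapsto T\cap S^{\perp}$ identifies $\S((V,\Psi),\operp)_{\geq S}$ with $\S((S^{\perp},\Psi_{S^{\perp}}),\operp)$, while for $S<T$ the doubly bounded interval is identified with $\S((T\cap S^{\perp},\Psi),\operp)$. All three are non-degenerate symplectic subspace posets of strictly smaller dimension. Hence, granting the inductive hypothesis that $\S((W,\Psi),\operp)$ is Cohen--Macaulay whenever $(W,\Psi)$ is non-degenerate symplectic with $\dim W<2n$, every proper interval of $\S((V,\Psi),\operp)$ is already spherical of the correct dimension, so the only thing left is to show that $\red{\S((V,\Psi),\operp)}$ is spherical of dimension $n-2$, i.e. $(n-3)$-connected. (The cases $n\leq 2$ are trivial: a point, respectively a discrete set of hyperbolic planes.)

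Next I would record several equivalent forms of this connectivity statement and choose the most workable. By \ref{cor:lowerinvervalpd} the poset $\S((V,\Psi),\operp)$ satisfies properties (EX) and (CM) and is uniquely downward $\sch$-complemented; therefore \ref{lm:GMapSurjective} gives $\redm{\Ord\D(V,\Psi)}\cong\bigl(\Delta\red{\S((V,\Psi),\operp)}\bigr)^{\op}$, so $|\red{\S((V,\Psi),\operp)}|$ is homeomorphic to $|\redm{\Ord\D(V,\Psi)}|$; by \ref{thm:orderedVersions}(1) the latter is, up to homotopy, $\redm{\D(V,\Psi)}$ wedged with suspensions of lower intervals of $\D$; and by \ref{prop:charney} the opposite of $\redm{\OD(V,\Psi)}$ is the order complex of the Charney poset $\Charney((V,\Psi),\operp)$. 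Thus it is equivalent to prove sphericity of dimension $n-2$ for any one of $\red{\S((V,\Psi),\operp)}$, $\redm{\D(V,\Psi)}$, or $\Charney((V,\Psi),\operp)$. I would work with $\redm{\D(V,\Psi)}$, the poset of orthogonal decompositions of $V$ ordered by refinement, both because decomposition posets are the natural place to run a Quillen fibre induction and because the Charney/split-building picture is precisely the setting in which \cite{Charney} and its descendants produce connectivity over general base rings.

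The genuinely hard step is then to prove $\redm{\D(V,\Psi)}$ is $(n-3)$-connected uniformly in $\KK$. Fix a hyperbolic plane $H\leq V$ (available for any non-degenerate alternating form), write $V=H\operp H^{\perp}$, and consider the refinement map $\rho\colon \redm{\D(V,\Psi)}\to \D((H^{\perp},\Psi),\operp)$ obtained by joining each member of a decomposition with the member adjacent to $H$; one then studies, for each $x\in \D((H^{\perp},\Psi),\operp)$, the fibre $\rho^{-1}(\D_{\leq x})$ and the join $\rho^{-1}(\D_{\leq x})*\D_{>x}$ with an eye to \ref{thm:quillenFiber}, the fibres being homotopy equivalent to joins of a non-degenerate subspace poset of $H^{\perp}$ with decomposition posets controlled via \ref{prop:uniqueDownwardhComplementationAndEP}. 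The place where \cite{Das} invokes finiteness of $\KK$ — a counting argument letting one complete a ``bad'' partial configuration while avoiding finitely many obstructions — must be replaced by a general-position argument: over an infinite field one instead produces, for a finite partial orthogonal decomposition sitting inside some subspace, a non-degenerate complement transverse to it, using that the ``good'' locus is the complement of finitely many proper Zariski-closed conditions (equivalently, the stable-range techniques of \cite{vdK} type, valid over any field). The main obstacle is exactly this: establishing the sharp $(n-3)$-connectivity bound, i.e. making the ``push off the bad simplices'' step work without counting. Once that connectivity lemma is in hand, the interval reduction of the first paragraph promotes it to Cohen--Macaulayness of $\S((V,\Psi),\operp)$, and then by \ref{lm:upperIntervalsDecomp}, \ref{cor:lowerinvervalpd} and \ref{thm:orderedVersions} also to Cohen--Macaulayness of $\D(V,\Psi)$ and $\Ord\D(V,\Psi)$, removing the restriction on $|\KK|$ in \ref{thm:formposets}(2).
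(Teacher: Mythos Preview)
This statement is stated in the paper as an open \emph{conjecture}, not a theorem; the paper offers no proof and explicitly attributes the restriction $|\KK|>2$ in \ref{thm:formposets}(2) to the finite-field argument of \cite{Das}, saying only that it ``expects'' the result to hold in general. So there is no paper proof to compare against, and what you have written is, as you label it, a proof \emph{plan} for an open problem.

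Your first two paragraphs are correct and well organized. The interval identifications (lower, upper, and doubly bounded intervals in $\S((V,\Psi),\operp)$ are again non-degenerate symplectic subspace posets of strictly smaller dimension) are right, and they do reduce Cohen--Macaulayness, by induction on $n$, to the single statement that $\red{\S((V,\Psi),\operp)}$ is $(n-3)$-connected. The equivalences with $\redm{\Ord\D(V,\Psi)}$, $\redm{\D(V,\Psi)}$, and $\Charney((V,\Psi),\operp)$ via \ref{lm:GMapSurjective}, \ref{thm:orderedVersions}(1), and \ref{prop:charney} are also valid. These reductions, however, are already implicit in the discussion leading up to the conjecture and in the proof of \ref{thm:formposets}; they reformulate the problem but do not bite into it.

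The third paragraph is where the actual content would have to be, and there it is a sketch with a genuine gap. First, the map $\rho\colon\redm{\D(V,\Psi)}\to\D((H^{\perp},\Psi),\operp)$ you describe is not well-defined: for an arbitrary orthogonal decomposition $\{V_1,\ldots,V_k\}$ of $V$ there is no ``member adjacent to $H$'', since the fixed hyperbolic plane $H$ need not lie in any $V_i$, and intersecting the $V_i$ with $H^\perp$ does not generally produce non-degenerate subspaces. The standard move in a Charney-type argument is rather to look at the subposet of decompositions containing $H$ as a block (which \emph{is} isomorphic to $\D((H^{\perp},\Psi),\operp)$) and then prove that this inclusion is highly connected --- but that last step is exactly the ``push off bad simplices'' / general-position lemma you yourself flag as the main obstacle, and you do not supply it. Invoking Zariski-density or $\mathrm{SR}$-type stable-range heuristics is plausible motivation over infinite fields, but it is not a proof; in particular it does not even address $\KK=\GF{2}$, which your formulation also must cover. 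As it stands, the proposal is a reasonable outline of \emph{why} the conjecture should be true and \emph{where} the difficulty lies, but it does not close the gap, and you should present it as a strategy rather than a proof.
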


We also mention that some vanishing results for the rational homology of the frame complex of unitary and symplectic spaces were obtained in \cite{P23,PW22}, by using Garland's method.
Note that, by \ref{cor:lowerinvervalpd} and \ref{coro:collapseFrames}, $\F(V,\Psi):=\F(V,\Psi,\operp)$ is homotopy equivalent to the poset $\hat{\F}(V,\Psi)$ of non-empty frames of size $\neq n-1$.

\begin{theorem}
Let $V$ be a finite-dimensional vector space over $\KK$.

\begin{enumerate}
\item If $(V,\Psi)$ is a unitary space of dimension $n$ and $\KK = \GF{q^2}$, then the following hold:
\begin{enumerate}
    \item If $n < q+1$ then $\hat{\F}(V,\Psi)$, $\red{\Sub((V,\Psi),\operp)}$ and $\redm{\D(V,\Psi)}$ are Cohen-Macaulay of dimension $n-2$ over $\QQ$.
    \item For $n \geq 11$, the posets $\hat{\F}(V,\Psi)$, $\red{\Sub(V,\Psi)}$ and $\redm{\D(V,\Psi)}$ are $([2n/3]-1)$-connected over $\QQ$.
    \item If $n \geq q(q-1)+1$ then $\hat{\F}(V,\Psi)$ is not Cohen-Macaulay over $\QQ$.
\end{enumerate}

\item If $(V,\Psi)$ is a non-degenerate symplectic space of dimension $2n$ and $\KK = \GF{q}$, then the following hold:
\begin{enumerate}
    \item If $n<q+3$ then $\hat{\F}(V,\Psi)$ is Cohen-Macaulay of dimension $n-2$ over $\QQ$.
    \item If $n\geq 7$ then $\hat{\F}(V,\Psi)$ is $[n/2]$-connected over $\QQ$.
    \item If $n >q^2(q^2+1) + n(n-2)q^{-4}(q^4+q^2+1)^{-1}$ then $\hat{\F}(V,\Psi)$ is not Cohen-Macaulay over $\QQ$.
\end{enumerate}
\end{enumerate}
\end{theorem}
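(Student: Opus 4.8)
The plan is to reduce every assertion to rational (co)homology statements about the frame complex $\F(V,\Psi):=\F(V,\Psi,\operp)$ and the non-degenerate subspace poset $\Sub((V,\Psi),\operp)$, which are exactly the output of the Garland-method computations of \cite{P23} (unitary case) and \cite{PW22} (symplectic case), and then to transport them through the structural results of \ref{sec:derivedposets} and \ref{sec:homotopyposets}. First I would fix the inductive skeleton. By \ref{prop:formsposet} and \ref{cor:lowerinvervalpd}, $\Sub((V,\Psi),\operp)$ is the poset of non-degenerate subspaces of $V$; it has height $n$, it is uniquely downward $\sch$-complemented, it satisfies (EX) and (CM), and $\Sub((V,\Psi),\operp)_{\leq S}=\Sub((S,\Psi|_S),\operp)$. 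In particular \ref{coro:collapseFrames} applies, so $\F(V,\Psi)\simeq\hat{\F}(V,\Psi)$, and, for a partial frame $\sigma\in\F(V,\Psi)$ with orthogonal span $S:=\bigvee_{x\in\sigma}x$, the link $\Lk_{\F(V,\Psi)}(\sigma)$ is canonically the frame complex $\F(S^\perp,\Psi|_{S^\perp},\operp)$ of a non-degenerate form space of the same type whose subobject poset has height $n-|\sigma|$ (using \ref{cor:lowerinvervalpd}(4) and uniqueness of the orthogonal complement). Since the hypotheses $n<q+1$ and $n<q+3$ descend to every such orthogonal complement, an induction on $n$ is available; the small base cases are immediate.

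Next I would quote the homotopical input and transport it. The vanishing theorems of \cite{P23,PW22} give $\wt{H}_i(\F(V,\Psi);\QQ)=0$ for $i<[2n/3]-1$ in the unitary case ($n\ge 11$) and for $i<[n/2]$ in the symplectic case ($n\ge 7$); and, in the small ranges $n<q+1$, resp. $n<q+3$, they give $\wt{H}_i(\F(V,\Psi);\QQ)=0$ for all $i<n-1$, i.e. $\F(V,\Psi)$ is spherical over $\QQ$ of dimension $n-1$. Likewise, a reduced-Euler-characteristic sign obstruction, or a non-vanishing intermediate rational homology class, as computed in those references, shows that $\F(V,\Psi)$ fails to be $\QQ$-spherical once $n\ge q(q-1)+1$, resp. $n>q^2(q^2+1)+n(n-2)q^{-4}(q^4+q^2+1)^{-1}$. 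Because $\F(V,\Psi)\simeq\hat{\F}(V,\Psi)$, each of these passes verbatim to $\hat{\F}(V,\Psi)$, which gives items 1(b), 1(c), 2(b), 2(c), and the homotopical content of 1(a), 2(a). For the remaining two posets in item 1, $\red{\Sub(V,\Psi)}$ is handled by the unitary counterpart of the Garland estimates in \cite{P23}, which strengthens the range coming from \ref{thm:formposets}(1)(a) and \cite{DGM}; and $\redm{\D(V,\Psi)}$ is obtained from $\red{\Sub(V,\Psi)}$ via the isomorphism $\Ord\redm{\D(V,\Psi)}\cong(\Delta\red{\Sub((V,\Psi),\operp)})^{\op}$ of \ref{cor:uniquecomplementform}(2), together with the wedge formula of \ref{thm:orderedVersions}(1) and the same induction.

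Finally I would upgrade sphericity to Cohen-Macaulayness in the small ranges. For $\F(V,\Psi)$: it is $\QQ$-spherical of dimension $n-1$ by the previous step, and for each simplex $\sigma$ the link $\Lk_{\F(V,\Psi)}(\sigma)\cong\F(S^\perp,\Psi|_{S^\perp},\operp)$ is, by the inductive hypothesis, Cohen-Macaulay over $\QQ$, hence $\QQ$-spherical of dimension $n-1-|\sigma|$; the link characterization of Cohen-Macaulayness over $\QQ$ is then met, so $\F(V,\Psi)$ is Cohen-Macaulay over $\QQ$. The deformation retraction of \ref{coro:collapseFrames}(2) transports this to $\hat{\F}(V,\Psi)$: its lower intervals are Boolean lattices, and, by \ref{coro:collapseFrames}(1), its upper intervals are again frame complexes of orthogonal complements of smaller height, hence Cohen-Macaulay by induction, so the link condition at dimension $n-2$ is inherited. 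The Cohen-Macaulayness of $\red{\Sub(V,\Psi)}$ for $n<q+1$ runs the same induction, using that its intervals are again non-degenerate subspace posets; and $\redm{\D(V,\Psi)}$ follows because its upper intervals are partition lattices by \ref{lm:upperIntervalsDecomp}, its lower intervals are, by property (LI) of \ref{cor:lowerinvervalpd}(3), products (hence joins after deleting the cone point) of smaller Cohen-Macaulay decomposition posets, and its top sphericity over $\QQ$ was established above. The genuine obstacle is external and already present in the literature: the precise thresholds in 1(a)--(c) and 2(a)--(c) are exactly those produced by the Garland-type spectral estimates and the Euler-characteristic computations of \cite{P23,PW22}; granting those, everything above is bookkeeping with the transfer theorems of \ref{sec:homotopyposets}.
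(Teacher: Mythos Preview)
Your proposal is correct and follows essentially the same approach as the paper, which presents this theorem as a direct import of the Garland-method results of \cite{P23,PW22} on $\F(V,\Psi)$, transported to $\hat{\F}(V,\Psi)$ via the homotopy equivalence $\F(V,\Psi)\simeq\hat{\F}(V,\Psi)$ from \ref{coro:collapseFrames} and \ref{cor:lowerinvervalpd}; the paper gives no further argument. You supply considerably more detail than the paper does (the link description, the induction for Cohen-Macaulayness, and the passage to $\red{\Sub}$ and $\redm{\D}$ via \ref{cor:uniquecomplementform}(2) and \ref{thm:orderedVersions}(1) as in the proof of \ref{thm:formposets}), all of which is sound. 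One small correction: you have the two references swapped --- \cite{PW22} treats the unitary case and \cite{P23} the symplectic case.
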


The reader can verify that these results on Cohen-Macaulayness over $\QQ$ also hold for the corresponding ordered versions (under the given relations between the dimension and the size of the field).

\medskip

On the other hand, results by Vogtmann \cite{Vog} show that the complex of orthonormal bases of an orthogonal space (under suitable conditions on the ground field) is at least ``$n/3$"-connected.
In fact, Vogtmann's proof can be extended to our context by using our definition of the partial basis complex.
Here, for an atom $S$ of $\Sub(V,\Psi)$ we take as $P_S$ in the definition of $\Vog((V,\Psi),\operp,P)$ a suitable set of bases of $S$.
For example, if $\Psi$ is unitary then $S$ has dimension $1$, and we can take the set of those vectors $v\in S$ such that $\Psi(v,v) = 1$.
Similar considerations can be made for the orthogonal case, but note that there may not exist vectors of norm $1$.
In such cases we can fix a non-square scalar $d$ of the ground field and take those vectors $v\in S$ with $\Psi(v,v) = d$.
In the symplectic case, we can consider for example symplectic bases, that is, pairs $(v,w)$ such that $\Psi(v,w) = 1$.

\medskip

We close this subsection with some comments on the Euler characteristics of the different decomposition posets for unitary and symplectic spaces over finite fields.

If $\Psi$ is unitary or symplectic then $\D(V,\Psi)$ is an exponential structure
in the sense of Stanley \cite{Sta1}. Hence, generating function techniques,
analogous to those applied in \cite{Welker} which yield  \ref{eq:eulerCharDecompVS},
can also be
used to study the reduced Euler characteristic of $\redm{\D(V,\Psi)}$. Therefore one can derive the following results.
For $\KK = \FF_{q^2}$ and $\Psi$ unitary on $V = \FF_{q^ 2}^ n$, the reduced Euler characteristic of $\redm{\D(V,\Psi)}$ is
\begin{equation}
\label{eq:unitaryeulerCharDecompVS}
 \frac{(-1)^{\frac{n(n+1)}{2}}}{n} \cdot \prod_{i=1}^{n-1} ((-q)^i-1) \cdot f_n(-q),
\end{equation}
\noindent
where $f_n(q)$ is the same polynomial as in \ref{eq:eulerCharDecompVS}. Indeed 
\ref{eq:unitaryeulerCharDecompVS} is the evaluation of \ref{eq:eulerCharDecompVS} in $-q$,
which makes it another manifestation of Ennola duality. 

For $\KK = \FF_{q}$ and $\Psi$ symplectic on $V = \FF_{q}^{2n}$, the reduced Euler characteristic of $\redm{\D(V,\Psi)}$ is
\begin{equation}
\label{eq:symplecticeulerCharDecompVS}
 \frac{(-1)^n}{n} \cdot \prod_{i=1}^{n-1} (q^{2i}-1) \cdot f_n(q^2).
\end{equation}
\noindent
Again, $f_n(q)$ is the polynomial from \ref{eq:eulerCharDecompVS}.

We expect similar behavior in case $\Psi$ is
a non-degenerate bilinear form, or even a quadratic form. Since in this case there is no unique form up to isometry,
$\redm{\D(V,\Psi)}$ does not have to be an exponential structure and the calculations above need to be amended. 

\begin{question}
    Let $V$ be a finite-dimensional vector space over
    a finite field and $\Psi$ a non-degenerate
    quadratic form on $V$. What is the reduced Euler characteristic of $\redm{\D(V,\Psi)}$?
\end{question}

\subsection{Matroids}
\label{sub:matroids}

Matroids are a natural generalization of vector spaces. Nevertheless, as we will see, due to a lack of
a categorical setting suiting our approach, we have to proceed differently.

Recall (see e.g., \cite{Bjo92}), that a matroid is a pair $(M,I(M))$, where $I(M)$ is a pure finite-dimensional 
simplicial complex on the set $M$ such that if $\sigma,\tau \in I(M)$ and $|\tau|<|\sigma|$ then there exists $y\in \sigma$ such that $\tau\cup\{y\}\in I(M)$ (this is usually referred as the \textit{exchange property}). The simplices of $I(M)$ are called independent sets, and the rank of $M$ is the size dimension of $I(M)$, that is, the size of a maximal independent set.
We will later recognize $I(M)$ as the
complex of partial bases of the matroid.
Note that in our definition of a matroid, we allow infinite sets $M$. Even though this
definition for infinite $M$ may not be a satisfactory concept from the point of view of matroid
theory, it still allows the construction of interesting posets and many of our results hold in this
generality. 

A subset $\sigma\subseteq M$ is called dependent if $\sigma\notin I(M)$.
Two elements $x,y\in M$ are parallel if $\{x,y\}$ is a dependent set.
A loop is an element $x\in M$ such that $\{x\}$ is not an independent set.

The rank $\rk(F)$ of a subset $F \subseteq M$ is the maximum size of an independent set contained in $F$.
A flat of $M$ is a subset $F\subseteq M$ such that for all $x\in M\setminus F$ 
we have $\rk(F \cup \{x\}) > \rk(F)$. 
In particular, the set of loops, which we denote by $0$, is a flat, and also $M$ is a flat.
The poset of flats, denoted by $\L(M)$, is the set of all flats of $M$ ordered by inclusion.
This is a geometric and hence graded lattice with minimal element $0$, maximal element $M$ and rank function $\rk$. 
Moreover, for flats $F$ and $G$ the rank function satisfies
\[ \rk(F\cap G) + \rk(F\vee G) \leq \rk(F) + \rk(G).\]
This rank inequality is known as the upper semimodular condition (see \cite{Sta2}). Note that the rank of $M$ coincides with the value $\rk(M)$ of the rank function, which is the height of $\L(M)$.

We are interested in analyzing the posets and simplicial complexes
arising from $\L(M)$ and its decomposition poset $\D(\L(M))$.
For the sake of brevity, we will write $\D(M)$ for $\D(\L(M))$ and
$\PD(M)$ for $\PD(\L(M))$.
If $M$ is a finite matroid, then $\L(M)$ is well-known to be shellable
and hence Cohen-Macaulay (see \cite{Bjo92}).
We are not aware of a proof of a similar result in the case $M$ is an infinite matroid of finite rank.
The proof for infinite matroids will be provided in an upcoming work \cite{PW25}.

There is some literature on the category of matroids (see for example \cite{HP}). 
Unfortunately, these results do not allow us to consider matroids as an 
ISM-category. Indeed, there is no natural monoidal product of matroids which, when restricted to disjoint flats of a fixed matroid, yields the 
matroid of the flat which is the join. For example, rank one flats from 
different projective planes are indistinguishable, but the join of two flats of rank one already distinguishes the planes.
Consequently, for matroids, we have to resort to the combinatorial construction of $\D(M)$. 
Nevertheless, since $\L(M)$ is a complemented lattice of finite height, by \ref{ex:latticeAsCategoryApproach}, if we regard $\L(M)$ as a poset category with $\sqcup = \vee$, then $\L(M) = \Sub(M,\vee)$.
In particular, we can apply all our results.

In \cite{Welker}, the lattice $\D(M)$ and related structures associated to rank selections 
of $\L(M)$ are studied. 
Recall first that if $M$ is a matroid of rank $n$ with rank function $\rk$, then a collection of flats $F_1,\ldots, F_k$ is a partial direct sum decomposition if and only if the sum of their ranks equals the rank of their join $F_1\vee\cdots \vee F_j$.
It is not hard to see that partial direct sum decompositions in matroids always extend to direct sum decompositions, and the latter are exactly the decomposition as in \ref{def:decompositionsAndPD}.
Thus the poset of (partial) direct sum decompositions of a matroid, ordered by refinement, is exactly the poset of (partial) decompositions of the lattice of flats.
Therefore, $\D(M) = \D(M,\vee)$ and $\PD(M) = \PD(M,\vee)$.
Moreover, if $M$ has rank $n$, by \ref{prop:heightDandPD} we see that $\D(M)$ and $\PD(M)$ have height $n-1$ and $2n-1$ respectively.
It is shown in \cite{Welker} that, for modularly complemented (finite) matroids, 
$\D(M)$ is shellable and hence homotopically Cohen-Macaulay.

For matroids in which all flats of the same rank define isomorphic matroids, $\D(M)$ is an exponential structure in the sense of Stanley \cite{Sta1}.
This occurs, for example, when $M$ is the matroid defined by the projective geometry derived from a finite vector space of dimension $n$ over a field with $q$ elements.
Note that $M$ is also modularly complemented, so $\D(M)$ is shellable.
In fact, the Euler characteristic of $\D(M)$ is computed in \cite{Welker} by using the exponential structure machinery from \cite{Sta1} (see Eq. \ref{eq:eulerCharDecompVS}).
Although this case was already treated from a different perspective in \ref{sub:vectorspaces}, we see that the shellability of $\D(M)$ is obtained from purely theoretical matroid properties.

Another case for which $\D(M)$ is an exponential structure and $M$ is modularly complemented is when $\L(M)$ is the Boolean lattice of subsets of an $n$-element set.
Here, $\D(M)$ is the partition lattice $\Pi_n$, and, in particular, it has a unique minimal element.
As a consequence of \ref{coro:minElementD}, we then get the following well-known fact.

\begin{corollary}
    The proper part of the poset of partial partitions of $[n]$ is contractible and Cohen-Macaulay.
\end{corollary}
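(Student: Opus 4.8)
The statement to prove is that the proper part of the poset of partial partitions of $[n]$ is contractible and Cohen-Macaulay. The plan is to deduce this directly from \ref{coro:minElementD}, which requires exhibiting $[n]$ (or rather a suitable object giving rise to the Boolean lattice) in the setting of that corollary. First I would invoke \ref{ex:latticeAsCategoryApproach} to regard the Boolean lattice $L$ of subsets of $[n]$ as $\Sub(X,\sqcup)$ for $X = [n]$ in the category of sets with $\sqcup = \coprod$ the disjoint union; this is legitimate since every subset of $[n]$ is complemented in $L$, so $L = \Sub(X,\sqcup)$, and moreover $\PD(L) = \PD(X,\sqcup)$ and $\D(L) = \D(X,\sqcup)$. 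By \ref{ex:orderedPartitions}, $\D(X,\sqcup)$ is the partition lattice $\Pi(n)$ and $\PD(X,\sqcup)$ is the partial partition lattice.

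Next I would check that $\D(X,\sqcup) = \Pi(n)$ has a unique minimum element, namely the partition $\sigma_0 = \{\{1\},\ldots,\{n\}\}$ of $[n]$ into singletons; indeed every partition of $[n]$ refines to $\sigma_0$, so $\sigma_0 \leq \sigma$ for all $\sigma \in \Pi(n)$. This is exactly the hypothesis of \ref{coro:minElementD}. Applying that corollary immediately yields that $\red{\PD(X,\sqcup)} = \red{\PD(L)}$ is contractible, and that $\Sub_h(X,\sqcup)$, $\D(X,\sqcup)$, and $\PD(X,\sqcup)$ are Cohen-Macaulay. Since $\PD(X,\sqcup) = \PD(L)$ is precisely the poset of partial partitions of $[n]$, and its proper part is $\red{\PD(L)}$, this gives both assertions: contractibility of the proper part and Cohen-Macaulayness of the whole poset. (Here Cohen-Macaulayness of a bounded poset is understood, as usual, as Cohen-Macaulayness of its proper part together with the link/interval conditions, which is what \ref{coro:minElementD} delivers.)

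There is essentially no obstacle here: the statement is a direct corollary of \ref{coro:minElementD} once the dictionary of \ref{ex:latticeAsCategoryApproach} and \ref{ex:orderedPartitions} is applied. The only minor point to be careful about is the bookkeeping of which poset is ``the poset of partial partitions'' — one must note that $\PD(L)$ includes the empty partial partition as minimum and $\{[n]\}$ as maximum, so its proper part $\red{\PD(L)}$ is what is claimed to be contractible, while ``Cohen-Macaulay'' refers to the bounded poset $\PD(L)$ in the standard sense. No new argument is needed beyond citing the earlier results.

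\begin{proof}
By \ref{ex:latticeAsCategoryApproach} applied to the Boolean lattice $L$ of all subsets of $[n]$ (equivalently, by \ref{ex:orderedPartitions} taking $X = [n]$ in the category of sets with $\sqcup = \coprod$), we have $L = \Sub(X,\sqcup)$, $\D(L) = \D(X,\sqcup) = \Pi(n)$ is the partition lattice, and $\PD(L) = \PD(X,\sqcup)$ is the partial partition lattice of $[n]$. The partition lattice $\Pi(n)$ has a unique minimum element, namely the partition $\sigma_0$ of $[n]$ into singletons, since every partition of $[n]$ refines $\sigma_0$. Thus the hypothesis of \ref{coro:minElementD} is satisfied, and that corollary yields that $\red{\PD(X,\sqcup)} = \red{\PD(L)}$ is contractible and that $\PD(X,\sqcup) = \PD(L)$ is Cohen-Macaulay. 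Since $\PD(L)$ is exactly the poset of partial partitions of $[n]$ and $\red{\PD(L)}$ is its proper part, the claim follows.
\end{proof}
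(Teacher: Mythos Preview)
Your proof is correct and follows essentially the same approach as the paper: the paper simply notes that when $\L(M)$ is the Boolean lattice, $\D(M)=\Pi_n$ has a unique minimal element, and then invokes \ref{coro:minElementD}. Your additional bookkeeping via \ref{ex:latticeAsCategoryApproach} and \ref{ex:orderedPartitions} just makes explicit the identification that the paper leaves implicit.
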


\begin{remark}
\label{rk:partitionsByInclusion}
As mentioned in \ref{rk:severalOrders}, there is an alternative ordering in $\PD(M)$, namely, the contention ordering, which makes $\PD(M)$ a simplicial complex.
A recent work \cite{Gottstein} shows that the simplicial complex of partial partitions of $[n]$, denoted by $D_n$, is non-pure shellable.
Indeed, $D_n$ has non-zero homology in different degrees.
See Theorem 4.4 of \cite{Gottstein}.
\end{remark}

Recall that if there are no parallel elements in the matroid $M$ then $\L(M)$ is the Boolean lattice on
$n$ elements if and only if $M$ is the uniform matroid $U_{n,n}$. In general, for 
$1 \leq k \leq n$, 
the uniform matroid $U_{n,k}$ is the matroid on the set $[n]$ whose independent sets are the subsets of size $\leq k$.
Then $U_{n,k}$ has rank $k$ and $\L(U_{n,k})$ is the lattice of all subsets of $[n]$ of size $n$ or $< k$, from which it follows that $\D(U_{n,k})$ 
is an exponential structure.
Assume $k \leq n-1$. In general, $\PD(U_{n,k})$ consists of the trivial partition $[n]$ and 
partial partitions of $[n]$ with block sizes
$< k$ and sum of block sizes $\leq k$. We can map such a partial partition to the 
partition with only singleton blocks and one singleton block for each element appearing
in the partition. This map is a downward closure operator on $\red{\PD(U_{n,k})}$ whose image is isomorphic to the poset of non-empty subsets of $[n]$ of size $\leq k$.
This is exactly the proper part of the lattice of flats of $U_{n,k+1}$, which is indeed the $(k-1)$-skeleton of the $n$-simplex.
Thus, $\red{\PD(U_{n,k})}$ has homology concentrated in dimension $k-1$ and its $(k-1)$th homology group is free of rank $\binom{n-1}{k}$. This already shows that for $2 < k \leq n-1$ the poset $\red{\PD(U_{n,k})}$ is not spherical.
Similarly, the downward closure operator restricts to $\redm{\D(U_{n,k})}$ with image an antichain of $\binom{n}{k}$ elements.
This shows that again for $2 < k \leq n-1$ the poset $\D(U_{n,k})$ is not Cohen-Macaulay and $\redm{\D(U_{n,k})}$ is not spherical.

If $\D(M)$ is not an exponential structure, the computation of the 
Euler characteristic becomes even more challenging. 
This is the case for the partition lattice.
Since the lattice of partitions $\Pi_n$ is itself a geometric lattice, there is a
matroid $M$ with $\L(M) = \Pi_n$. 
Note that this matroid has rank $n-1$ and the rank of a flat (=partition) $\pi$ is $n - |\pi|$.
A decomposition of this matroid is, therefore, a set $\{ \pi_1,\ldots,\pi_k\}$ of partitions $\pi_j$ such that $$\displaystyle{\sum_{j=1}^k} \big(\,n - |\pi_j|\,\big) = n-1$$ and $\pi_1\vee\cdots\vee \pi_k$ is the coarsest partition.
Since $\Pi_n$ is modularly complemented, by \cite{Welker} we see that $\D(\Pi_n)$ is Cohen-Macaulay and hence $\redm{\D(\Pi_n)}$ spherical of dimension $n-3$.

In the following result, we relate partial decompositions of $\Pi_n$ to hyperforests, and compute the Euler characteristic of $\redm{\D(\Pi_n)}$ and $\red{\PD(\Pi_n)}$.
Denote by $\HF_n$ the poset
of hyperforests on an $n$-element set ordered by refinement (see \cite{Bacher}), and by 
$\HT_n$ the subposet of $\HF_n$ consisting of hypertrees (see \cite{McCM}). 

\begin{proposition}
\label{prop:partitioncase}
Let $M$ be a matroid such that $\L(M) = \Pi_n$. Then the following holds:
\begin{enumerate}
    \item $\redm{\D(M)}$ is homotopy equivalent to the proper part of $\HT_n$.
    It follows that $\D(M)$ is Cohen-Macaulay and its Möbius number is 
    $\redm{\D(M)}$ is $(-1)^{n-1} (n-1)^{n-2}$.
    \item 
    $\red{\PD(M)}$ is homotopy equivalent to the proper part of $\HF_n$.
    In particular, the reduced Euler characteristic of $\red{\PD}_n$ is $-(n-2)!$.
\end{enumerate}
\end{proposition}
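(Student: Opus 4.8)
The plan is to exhibit explicit poset maps between the relevant decomposition posets and the hyperforest/hypertree posets, and then check that these are homotopy equivalences via Quillen's fiber theorem (\ref{thm:quillenFiber}) or by exhibiting deformation retracts. Recall that $\L(M) = \Pi_n$, so a flat of $M$ is a set partition $\pi$ of $[n]$, with $\rk(\pi) = n - |\pi|$. A decomposition $\{\pi_1,\ldots,\pi_k\}\in\D(M)$ consists of partitions whose ranks sum to $n-1$ and whose join is the one-block partition $\hat 1$. The key observation is that a decomposition of $\L(M)=\Pi_n$ corresponds to a way of ``building up'' the complete connectivity of $[n]$ in independent pieces: each $\pi_j$ contributes $n-|\pi_j|$ independent ``edges'' (merging operations), and the Boolean-lattice condition forces these contributions to be jointly independent in the graphical-matroid sense on $[n]$, totalling $n-1$. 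Thus a decomposition naturally encodes a hypertree on $[n]$: each block of each $\pi_j$ of size $\geq 2$ becomes a hyperedge, and the independence/spanning conditions are exactly the hypertree axioms (the hyperedges, viewed as a hypergraph, have a spanning-tree-like incidence structure with the correct edge count). Similarly, partial decompositions correspond to hyperforests.

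First, I would make this correspondence precise: define $\Theta:\redm{\D(M)}\to \HT_n$ (or rather to a homotopy-equivalent subposet) by sending $\sigma = \{\pi_1,\ldots,\pi_k\}$ to the hyperforest whose hyperedges are the non-singleton blocks of the $\pi_j$, recording multiplicity/structure as needed so that refinement in $\D(M)$ matches refinement of hyperforests. The subtle point is that a single partition $\pi_j$ with several non-singleton blocks contributes several hyperedges at once, so the ordering needs care; the cleanest route is probably to interpolate through an intermediate poset — e.g.\ the poset of ``partial decompositions refined so that each partition has exactly one non-trivial block'' — and show both $\redm{\D(M)}$ and the hypertree poset deformation retract onto (a subposet of) it, using the closure-operator technique already used repeatedly in \ref{sec:examples} (cf.\ the argument in \ref{coro:minElementD} and in \ref{sub:matroids} for uniform matroids). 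Once the homotopy equivalence $\redm{\D(M)}\simeq \overline{\HT_n}$ is established, Cohen-Macaulayness of $\D(M)$ follows from \cite{Welker} since $\Pi_n$ is modularly complemented, and the Möbius number $(-1)^{n-1}(n-1)^{n-2}$ is then read off from the known homotopy type of the hypertree poset in \cite{McCM} (whose proper part is a wedge of $(n-1)^{n-2}$ spheres of dimension $n-3$). For item (2), the same map restricted to partial decompositions gives $\red{\PD(M)}\simeq \overline{\HF_n}$, and the reduced Euler characteristic $-(n-2)!$ comes from the corresponding enumerative result for hyperforests in \cite{Bacher}.

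The main obstacle I expect is bookkeeping in the correspondence between decompositions and hyperforests: verifying that refinement of set-partition decompositions corresponds exactly to refinement of hyperforests in the sense of \cite{Bacher,McCM}, and that the map is well-defined and order-preserving in both directions on the relevant subposets, including checking that the fibers $\Theta^{-1}(\,\cdot\,)$ (or the fibers of the interpolating maps) are contractible so that \ref{thm:quillenFiber} applies. A secondary point is matching conventions: the hyperforest poset literature may order hyperforests so that ``finer'' means more hyperedges, which is opposite to the decomposition refinement ordering, so one may need to work with $\HF_n^{\op}$ or $\HT_n^{\op}$; since proper parts of a poset and its opposite have the same order complex, this does not affect the homotopy type, but it must be stated correctly. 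Given these identifications, the numerical conclusions are immediate consequences of the cited counts, so no further computation is needed beyond confirming the Euler-characteristic normalizations.
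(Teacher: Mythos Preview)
Your proposal is correct and follows essentially the same approach as the paper: the paper identifies the subposet of (partial) decompositions in which every partition has exactly one non-singleton block with the proper part of $\HT_n$ (resp.\ $\HF_n$), and then constructs an explicit closure operator $f(\tau)\leq \tau$ retracting $\redm{\D(M)}$ (resp.\ $\red{\PD(M)}$) onto this subposet by replacing each $\pi_j\in\tau$ with the collection of partitions each having a single non-trivial block taken from $\pi_j$. The numerical conclusions are then read off from \cite{Welker}, \cite{McCM}, and \cite{Bacher} exactly as you indicate.
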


\begin{proof}
We prove first the homotopy equivalence in each case.
We show that if a partial decomposition $\sigma$ contains a partition with at least two blocks of size bigger than one, then $\red{\PD(M)}_{< \sigma}$ and $\redm{\D(M)}_{< \sigma}$ (if $\sigma$ is also a full decomposition) are contractible.
This will yield a homotopy equivalence between $\redm{\D(M)}$ (resp. $\red{\PD(M)}$) and the subposet $X_n$ (resp. $Y_n$) of (resp. partial) direct sum decompositions whose partitions have precisely one block of size bigger than one.
But these posets $X_n$ and $Y_n$ are exactly the proper parts of $\HT_n$ and $\HF_n$ respectively.

Let $\pi$ be a partition of $[n]$.
Consider the following partial decomposition:
\[\sigma_\pi = \Big\{\, \big\{\,B\,\big\} \cup \big\{\, \{x\} \tq x\in [n]\setminus B \big\} \tq B\in \pi \text{ and } |B|\geq 2\,\Big\}.\]
If $\pi$ contains just one block of size $\geq 2$, then $\sigma_\pi = \{\,\pi\,\}$.
It is also not difficult to see that the sum of the ranks of the partitions belonging to $\sigma_\pi$ equals the rank of $\pi$.
Also, we have $\pi = \bigvee_{\rho \in \sigma_\pi} \rho$, since the latter is the smallest partition that contains all the blocks $B\in \pi$ of size $\geq 2$.
Therefore, $\sigma_\pi$ is a direct sum decomposition of $\pi$.

Now, let $\tau = \{\pi_1,\ldots,\pi_r\} \in \red{\PD(M)}$ be a partial direct sum decomposition.
Define the map $f(\tau) = \bigcup_{i=1}^r \sigma_{\pi_i}$.
We claim that $f(\tau)$ is also a partial direct sum decomposition:
\[\bigvee_{\rho\in f(\tau)} \rho =  \bigvee_{i=1}^r \bigvee_{\rho\in \sigma_{\pi_i}}\rho = \bigvee_{i=1}^r \pi_i,\]
and this implies that the sum of the ranks must be correct:
\[\rk\left( \,\bigvee_{i=1}^r \pi_i\, \right) = \sum_{i=1}^r \rk(\pi_i) = \sum_{i=1}^r 
\sum_{\rho\in \sigma_{\pi_i}} \rk(\rho) \geq \sum_{\rho\in f(\tau)} \rk(\rho) \geq \rk\left(\bigvee_{\rho\in f(\tau)} \rho\right) = \rk\left(\,\bigvee_{i=1}^r \pi_i\,\right).\]
Thus $f:\red{\PD(M)} \to Y_n$ is a well-defined and surjective function that restricts to a surjective map $\redm{\D(M)} \to X_n$.
Finally, it is straightforward to verify that if $\rho$ is finer than $\pi$ then $\sigma_\rho\leq \sigma_\pi$, and so $f$ is an order-preserving map.
Since we have $f(\tau) \leq \tau$ and $f$ is the identity on $Y_n$, we conclude that $f$ is a homotopy equivalence.
Moreover, this also shows that the lower intervals ${\red{\PD(M)}}_{< \tau}$ and ${\redm{\D(M)}}_{< \tau}$ (if $\tau$ is a decomposition) are contractible for $\tau\in \red{\PD(M)} \setminus Y_n$ since for $\tau' < \tau$ we have $\tau' \geq f(\tau') \leq f(\tau) < \tau $.

We have already seen that $\D(M)$ is Cohen-Macaulay by \cite{Welker}.
The value of the Euler characteristic of $\redm{\D(M)}$ follows from the homotopy equivalence with the proper part of the hypertree lattice $\HT_n$ and Theorem 1.1 of \cite{McCM}.
For hyperforests, this follows from Proposition 19 of \cite{Bacher}.
\end{proof}

Concerning the ordered versions, we conclude from item 1 in \ref{thm:orderedVersions} that $\redm{\Ord\D(M)}$ is Cohen-Macaulay of dimension $n-3$ if $\L(M) = \Pi_n$.
In \cite{PSW} we show that the M\"obius number in this case is
$(-1)^{n-1} \, (2n-1)^{n-2}$. In the same work we also
show that the M\"obius number of $\red{\Ord\PD(M)}$ is $-(n-1)! \cdot C_{n-1}$, where $C_{n-1}$ is
the $(n-1)$st Catalan number.

On the other hand, we do not have information on the sphericity of $\red{\PD(M)}$ and $\red{\Ord\PD(M)}$.
Preliminary computations show that these posets should be Cohen-Macaulay (and even shellable).
So we propose the following question:

\begin{question}
\label{q:PDOPDPin}
Let $M$ be a matroid such that $\L(M)$ is isomorphic to $\Pi_n$, $n \geq 2$.
Are $\PD(M)$ and $\Ord\PD(M)$ Cohen-Macaulay?
\end{question}

When $M$ is the matroid such that $\L(M)$ is the Boolean lattice, then we have already 
used the well-known fact that $\redm{\Ord\D(M)}$ is the face poset of the permutohedron 
and hence its order complex is homeomorphic to a sphere. The latter is also a consequence of
\ref{lm:isoGMap}.
More interesting is $\red{\Ord\PD(M)}$.
By \ref{coro:eulerCharOPDUniqueCompl},
its Euler characteristic is $-1$. Using item 2 of \ref{thm:orderedVersions} and the fact that
the interval $\red{\PD(M)}_{< \sigma}$ below a full decomposition $\sigma \in \red{\D(M)}$ is contractible, it even 
follows that $\red{\Ord\PD(M)}$ is homotopy equivalent to a sphere arising from the wedge-term in item 2 of \ref{thm:orderedVersions} corresponding to the unique minimal element of $\D(M)$.
Summarizing these results, we obtain:

\begin{proposition}
\label{prop:ODandOPDBooleanLattice}
  Let $M$ be a matroid such that $\L(M)$ is a Boolean lattice on $n$ elements.
  Then:
  \begin{enumerate}
      \item $\redm{\Ord\D(M)}$ is homeomorphic to a polytopal sphere of dimension $n-2$.
      \item $\red{\Ord\PD(M)}$ is homotopy equivalent to a sphere of dimension $2n-3$. 
  \end{enumerate}
\end{proposition}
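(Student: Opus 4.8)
The plan is to read off both statements from \ref{thm:orderedVersions}, \ref{lm:fibersForgetfulMap} and \ref{coro:minElementD} after a short translation. Since $\L(M)$ is a complemented lattice of finite height $n$, I would first regard it, via \ref{ex:latticeAsCategoryApproach}, as $\Sub(X,\sqcup)$ with $X$ its maximal element and $\sqcup=\vee$, so that $\D(M)=\D(X,\sqcup)$ and $\PD(M)=\PD(X,\sqcup)$. As $\L(M)$ is Boolean, $\D(M)=\Pi_n$ has the unique minimal element $\sigma_0$, the partition of the set of atoms into singletons, and $\Sub(M,\vee)$ is uniquely downward $\sch$-complemented (set complement) and satisfies (EX).

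For item 1, I would apply \ref{lm:fibersForgetfulMap}(1) with $\sigma=\sigma_0$. Because $\D(M)=\D(M)_{\geq\sigma_0}$, the whole poset $\Ord\D(M)$ equals $\Forget_\D^{-1}(\D(M)_{\geq\sigma_0})$, which is the ordered partition lattice on $\sigma_0$; by Proposition~1.4 of \cite{BS} this is the face lattice of the permutohedron $P_n$ of order $n$, a polytope of dimension $n-1$. Deleting the maximal element $\{M\}$ leaves the face poset of $\partial P_n$, whose order complex is the barycentric subdivision of $\partial P_n\cong S^{n-2}$, a polytopal sphere of dimension $n-2$. (Alternatively this follows from \ref{lm:GMapSurjective} and \ref{lm:isoGMap}, which identify $\redm{\Ord\D(M)}$ with $(\Delta\red{\Sub(M,\vee)})^{\op}$, the barycentric subdivision of $\partial\Delta^{n-1}$.)

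For item 2, I would start from \ref{thm:orderedVersions}(2):
\[ \Ord\red{\PD(M)} \simeq \red{\PD(M)} \vee \bigvee_{\sigma\in\redm{\D(M)}} S^{|\sigma|-2} * (\red{\PD(M)})_{<\sigma}. \]
The summand $\red{\PD(M)}$ is contractible by \ref{coro:minElementD}. For each $\sigma$, the interval $(\red{\PD(M)})_{<\sigma}$ is the proper part of $\PD(M)_{\leq\sigma}$, and property (LI) --- elementary here, since partial set partitions split block by block --- gives $\PD(M)_{\leq\sigma}\cong\prod_{B\in\sigma}\PD(2^{B})$, a product of partial partition lattices of the blocks $B$ of $\sigma$. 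If $\sigma\neq\sigma_0$, some block $B$ has $|B|\geq 2$, so $\D(2^{B})=\Pi(B)$ has a unique minimum and $\red{\PD(2^{B})}$ is contractible by \ref{coro:minElementD}; hence the proper part of the product (an iterated suspension of a join, one factor of which is contractible) is contractible, and so is $S^{|\sigma|-2}*(\red{\PD(M)})_{<\sigma}$. Thus only the summand indexed by $\sigma_0$ survives. Every partial decomposition refining $\sigma_0$ is a set of atoms, so $\PD(M)_{\leq\sigma_0}\cong 2^{\sigma_0}$, the Boolean lattice on $\sigma_0$, and $(\red{\PD(M)})_{<\sigma_0}=\red{2^{\sigma_0}}$ has order complex $\partial\Delta^{n-1}\cong S^{n-2}$; together with $|\sigma_0|=n$ the surviving summand is $S^{n-2}*S^{n-2}=S^{2n-3}$, giving $\Ord\red{\PD(M)}\simeq S^{2n-3}$.

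The step I expect to require the most care is the bookkeeping in item 2: one must notice that the unique minimal decomposition $\sigma_0$ is itself a full decomposition but, unlike a generic full decomposition, has a \emph{non}-contractible lower interval, so the vanishing of the wedge summands breaks down at exactly this one index, and the surviving join of two $(n-2)$-spheres must then be evaluated. Similarly, the only content of item 1 beyond the general theory is the word ``polytopal'', which I would obtain from the permutohedron identification of the ordered partition lattice.
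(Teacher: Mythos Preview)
Your proof is correct and follows essentially the same route as the paper. For item 1 the paper likewise invokes the identification of the ordered partition lattice with the face lattice of the permutohedron (and mentions the alternative via \ref{lm:isoGMap}); for item 2 the paper's argument, given in \ref{ex:OPDBooleanLattice} and the paragraph preceding \ref{prop:ODandOPDBooleanLattice}, uses exactly your ingredients: the wedge decomposition of \ref{thm:orderedVersions}(2), contractibility of $\red{\PD(M)}$, the (LI) splitting of lower intervals into products of partial partition lattices on the blocks, contractibility of all summands except the one indexed by $\sigma_0$, and the final identification of that summand with $S^{n-2}*S^{n-2}\simeq S^{2n-3}$.
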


It remains open if, in this case,  $\red{\Ord\PD(M)}$ is actually homeomorphic to a sphere. Note that there is a construction 
of a polytopal sphere based on ordered partial partitions -- the elements of $\Ord\PD(M)$ --
in \cite{HGJ}. This construction seems to be unrelated to $\Ord\PD(M)$ and its
order complex, though. It is easily seen that in general the poset $\Ord\PD(M)$ is not a lattice and hence not a face lattice of a polytope. 
Nevertheless, $\red{\Ord\PD(M)}$ is the face poset of a regular CW-complex (see for instance Proposition 3.1 in \cite{BjornerCWPoset}).

\begin{question}
  Let $M$ be a matroid such that $\L(M)$ is the Boolean lattice on $n$ elements.
  Is the geometric realization of $\red{\Ord\PD(M)}$ homeomorphic to the sphere of dimension $2n-3$?
\end{question}

Finally, we discuss the frame complex and the complex of partial bases of a matroid $M$.
Indeed, in our definition, we can regard $\Vog(M,\vee,P)$ as the initial independence complex of $M$ by letting $P_F$ be the set of parallel elements in a flat $F\in \L(M)$ of rank one.
Hence $\Vog(M,\vee,P) = I(M)$.
For a finite matroid, this is a well-studied and important object, which, as we mentioned at the beginning of this section, is well-known to be Cohen-Macaulay.
We deduce then that, for a finite matroid $M$, the frame complex $\F(M)$ is also Cohen-Macaulay.
In fact, $\F(M)$ is also the independence complex of the matroid $M'$ whose ground set is the vertex set of $\F(M)$ (and so $\L(M)$ is isomorphic to $\L(M')$).
Thus, in both cases, the ordered versions are
Cohen-Macaulay by \ref{thm:orderedVersions}.

As for $\L(M)$ in the situation when the matroid is infinite, we do not know of a suitable reference proving Cohen-Macaulayness.
In \cite{PW25}, we also show that $I(M)$ is Cohen-Macaulay of dimension $n-1$ if $M$ is an infinite matroid of finite rank $n$. In particular, we see that the frame complex $\F(M)$ is Cohen-Macaulay of dimension $n-1$ by \ref{prop:inflationDecompositionCM}.

As mentioned above, the results from \ref{prop:bergmantop} on the topology 
of the augmented Bergman complex in the (finite) matroid case are well-known
(see \cite{BKR}). 
    
\subsection{Modules over Dedekind domains}

Let $M$ be a left module over a ring $R$.
In the category of (left) $R$-modules with monoidal product equal to the coproduct, i.e., the direct sum, $\Sub(M)$ is the poset of submodules and $\Sub(M,\oplus)$ is the subposet of direct summands of $M$.
Without further requirements, $\Sub(M,\oplus)$ may have infinite height.
Hence we assume that $M$ is a finitely generated module over a Dedekind domain $R$.
Then we have a structure theorem: $M$ decomposes as the direct sum $T_M\oplus P_M$, where $T_M$ is the torsion submodule and $P_M$ is a projective module.
Moreover, $T_M$ decomposes as a finite direct sum of cyclic torsion modules.
However, $P_M$ may not be a free module, but it decomposes as the direct sum of a free module $R^{n-1}$ and a rank-$1$ projective module $I$.
Thus $M$ is a Noetherian module, and it is projective if and only if it is torsion-free.

The following lemma describes the poset $\Sub(M,\oplus)$ in terms of the torsion submodule and the projective part.

\begin{lemma}
\label{lm:directProductDedekind}
Let $R$ be a Dedekind domain and $M$ a finitely generated $R$-module.
Denote by $T_M$ the torsion submodule of $M$ and $P = M/T_M$.
If $q:M\to M/T_M$ is the quotient map, then
\[ \rho :\left\{ \begin{array}{ccc} \Sub(M,\oplus) & \to & \Sub(T_M,\oplus) \times \Sub(P,\oplus)\\
  N & \mapsto & (N\cap T_M, q(N)) \end{array} \right.\]
is an isomorphism with inverse $(T,R)\mapsto i(T)+s(R)$, where $s:P\to M$ is a section of $q$ and $i:T_M\to M$ is the inclusion map.
\end{lemma}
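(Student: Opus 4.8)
\textbf{Proof plan for \ref{lm:directProductDedekind}.}

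The plan is to verify directly that $\rho$ is a well-defined order-preserving map, exhibit the claimed inverse, and check that the two composites are the identity. First I would fix, once and for all, a section $s\co P\to M$ of the quotient map $q$; such a section exists because $P = M/T_M$ is projective (being a finitely generated torsion-free module over a Dedekind domain, hence projective by the structure theorem recalled above). Note that $s$ splits the exact sequence $0\to T_M\to M\to P\to 0$, so $M = T_M \oplus s(P)$ as internal direct sum, and every element $m\in M$ is written uniquely as $t + s(p)$ with $t = m - s(q(m)) \in T_M$ and $p = q(m)$.

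Next I would check $\rho$ is well-defined on summands. Let $N\in\Sub(M,\oplus)$ with complement $N'$, so $M = N\oplus N'$. For the first coordinate: $N\cap T_M$ is a summand of $T_M$. Indeed, $T_M = (N\cap T_M)\oplus(N'\cap T_M)$ — this uses that $T_M$ is a characteristic (torsion) submodule, so decomposing $t\in T_M$ as $t = n + n'$ with $n\in N$, $n'\in N'$ forces $n = t - n'$; applying $q$ gives $q(n) + q(n') = 0$, and since $q(n)$ and $q(n')$ lie in the complementary summands $q(N)$, $q(N')$ of $P$ (once we know that decomposition), both vanish, so $n,n'\in T_M$. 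For the second coordinate: $q(N)$ is a summand of $P$ because $q$ is surjective and $N\oplus N' = M$ gives $q(N) + q(N') = P$; the sum is direct since $q(N)\cap q(N') \subseteq q(T_M) = 0$ would need care — more cleanly, $q$ restricted to $s(P)\cong P$ is an isomorphism, and one shows $q(N) = q(N\cap s(P)')$... actually the cleanest route is: $M = T_M\oplus s(P)$, and a summand $N$ of $M$ need not respect this decomposition, so I would instead argue abstractly that $q$ induces a surjection $\Sub(M,\oplus)\to\Sub(P,\oplus)$ on summands because $P$ is projective and $\ker q = T_M$ is a summand. Order-preservation of $\rho$ in both coordinates is immediate since $N\mapsto N\cap T_M$ and $N\mapsto q(N)$ are both monotone.

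Then I would verify the inverse. Define $\psi\co \Sub(T_M,\oplus)\times\Sub(P,\oplus)\to\Sub(M,\oplus)$ by $\psi(T,R) = i(T) + s(R)$; since $i(T)\subseteq T_M$ and $s(R)\subseteq s(P)$ and $T_M\cap s(P) = 0$, the sum is direct, $\psi(T,R) = T\oplus s(R)$, and it is a summand of $M$ (complement $T^c\oplus s(R^c)\oplus$ nothing, using $M = T\oplus T^c\oplus s(R)\oplus s(R^c)$ where $T^c$, $R^c$ are chosen complements — here I use $M = T_M\oplus s(P)$). Monotonicity of $\psi$ is clear. For $\rho\circ\psi = \id$: $\psi(T,R)\cap T_M = (T\oplus s(R))\cap T_M = T$ since $s(R)\cap T_M = 0$ and $T\subseteq T_M$; and $q(\psi(T,R)) = q(T) + q(s(R)) = 0 + R = R$. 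For $\psi\circ\rho = \id$: given a summand $N$, I must show $N = i(N\cap T_M) + s(q(N))$. The inclusion $\supseteq$ holds because $N\cap T_M\subseteq N$ and, for $n\in N$, $s(q(n)) = n - (n - s(q(n)))$ where $n - s(q(n))\in T_M\cap N$... wait, $n - s(q(n))\in T_M$ but membership in $N$ requires $N$ to contain $s(q(n))$, which is what we want — so I'd argue $\subseteq$ instead: for $n\in N$, write $n = t + s(p)$ with $t\in T_M$, $p = q(n)$; need $t\in N$ and $s(p)\in N$. This is exactly where the direct-sum complement of $N$ is used together with the compatibility of $T_M$ and $s(P)$ with $N$'s decomposition — and this is the main obstacle: showing that a summand $N$ of $M$ is automatically "split-compatible" with the decomposition $M = T_M\oplus s(P)$, i.e. $N = (N\cap T_M)\oplus (N\cap s(P))$ and $q(N) = q(N\cap s(P))$. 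I expect to resolve it by the lemma that a submodule $N$ with $N + T_M$ a summand and $N\cap T_M$ a summand (both of which follow from $N$ being a summand, since quotients and restrictions of split sequences over the relevant pieces split) is compatible, using projectivity of $P$ to lift the complement. Once that compatibility lemma is in hand, both composites are the identity and $\rho$ is the desired poset isomorphism.
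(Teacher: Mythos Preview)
Your approach mirrors the paper's exactly: check that $\rho$ is well-defined and order-preserving, define $\psi(T,R)=i(T)+s(R)$, and verify the two composites are identities. You correctly isolate the crux as showing $N=(N\cap T_M)\oplus s(q(N))$ for every summand $N$; this is precisely what the paper asserts without further argument when it writes that $0\to T_N\to N\to q(N)\to 0$ ``splits with section $s|_{q(N)}$''.

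However, the compatibility lemma you hope for does not exist, and the statement as written is in fact false. Take $R=\ZZ$ and $M=\ZZ/2\ZZ\oplus\ZZ$, so $T_M=\ZZ/2\ZZ\oplus 0$, $P\cong\ZZ$, and choose the section $s(b)=(0,b)$. Let $N=\ZZ\cdot(1,1)\subseteq M$. Then $N$ is a direct summand (with complement $T_M$ itself), $N\cap T_M=0$, and $q(N)=P$, so $\rho(N)=(0,P)$. But $\psi(0,P)=s(P)=0\oplus\ZZ\neq N$; indeed $\rho(0\oplus\ZZ)=(0,P)=\rho(N)$, so $\rho$ is not even injective. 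Your proposed split-compatibility fails here because $N\cap s(P)=0$ while $N\neq 0$, and the paper's assertion that $s|_{q(N)}$ lands in $N$ fails for the same $N$. So neither your outline nor the paper's argument can be completed as stated; the assertion holds trivially in the projective case $T_M=0$, which is the only case the paper subsequently uses.
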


\begin{proof}
Let $N\in \Sub(M,\oplus)$.
Then $N$ is also finitely generated and the torsion submodule of $N$ is $T_N = N\cap T_M$.
Indeed we also have $N = T_N\oplus s(q(N))$ since the sequence $0\to T_N\to N \to q(N) \to 0$ splits with section $s|_{q(N)}$.
It is not hard to show that if $N'$ is a complement of $N$ in $M$, then $N'\cap T_M$ is a complement of $N\cap T_M$ in $T_M$ and $q(N')$ is a complement of $q(N)$ in $P$.
Hence $\rho$ is a well-defined order-preserving map, and it is an isomorphism with inverse given by $(T,R)\mapsto i(T) + s(R)$.
\end{proof}

The structure of the poset of direct summands $\Sub(T,\oplus)$ of a torsion module $T$ over a Dedekind domain is closely related to the poset of direct summands in an Abelian group.
In this article, we will not study the torsion part and thus only focus on the poset of direct summands of a projective module of finite rank.

\begin{lemma}
\label{lm:dedekindProperties}
Let $M$ be a finitely generated projective module of rank $n$ over a Dedekind domain $R$ with field of fractions $\KK$.
The following hold:
\begin{enumerate}
    \item Let $V = M\otimes_R \KK \groupiso \KK^n$. Then the map $N\mapsto N\otimes_R \KK$ gives an isomorphism $\Sub(M,\oplus) \to \Sub(V)$, where the latter is the poset of subspaces of $V$.
    \item If $N,N'\in \Sub(M,\oplus)$ then $N\cap N'\in \Sub(M,\oplus)$.
    Thus $N\wedge N' = N\cap N'$.
    \item For $P\in M$ it holds $\Sub(M,\oplus)_{\leq P} = \Sub(P,\oplus)$.
    In particular, $\Sub(M,\oplus)$ is downward $\sch$-complemented, with $\sqcup = \oplus$.
    \item Property (LI).
    \item Property (EX).
    \item Property (CM).
\end{enumerate}
\end{lemma}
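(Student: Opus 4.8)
The plan is to reduce everything to the corresponding statement about subspaces of the vector space $V = M\otimes_R\KK$ via the isomorphism in item 1, and then cite the fact that $\S(V)$ (the poset of subspaces) satisfies (LI), (EX) and (CM), which is the content of \ref{sub:vectorspaces}. So the first step is to prove item 1: the functor $N\mapsto N\otimes_R\KK$ is well-defined on summands (a summand of a projective module is projective, hence torsion-free, hence flat, so $-\otimes_R\KK$ is exact and $N\otimes_R\KK \hookrightarrow M\otimes_R\KK$), it is order-preserving, and it is a bijection onto $\S(V)$ because for any subspace $W\le V$ the submodule $N := M\cap W$ (intersection inside $V$) is a direct summand of $M$: indeed $M/N$ embeds in $V/W$, hence is torsion-free and finitely generated, hence projective over a Dedekind domain, so the sequence $0\to N\to M\to M/N\to 0$ splits. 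One checks $N\otimes_R\KK = W$ using that $M/N$ is torsion-free. Injectivity follows since $N$ is the purification of itself, i.e. $N = M\cap(N\otimes_R\KK)$ for a summand $N$.

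For items 2 and 3: item 2 (intersection of two summands is a summand, and equals the meet) follows from the same argument — $M/(N\cap N')$ embeds into $M/N \oplus M/N'$, which is projective, hence torsion-free, so $N\cap N'$ is a summand; and under the isomorphism of item 1 it corresponds to $(N\otimes\KK)\cap(N'\otimes\KK)$, so it realizes the meet. Item 3, $\S(M,\oplus)_{\le P} = \S(P,\oplus)$ for $P$ a summand of $M$, follows because a summand of $P$ is a summand of $M$ (transitivity of ``direct summand'') and conversely any summand of $M$ contained in $P$ is a summand of $P$ (if $N\le P\le M$ with $N$ a summand of $M$, then $N$ is a summand of $P$ since $P/N$ is torsion-free hence projective). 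Downward $\sch$-complementation then is immediate: given $N\le P$ in $\S(M,\oplus)$, we have $N\le P$ both summands, $P/N$ is projective so $N$ has a complement $N'$ in $P$ with $N\oplus N' = P$; the join in the poset is $P$ (by item 1, working in $V$, $N\otimes\KK \vee N'\otimes\KK = P\otimes\KK$), and the $\oplus$-compatibility conditions of \ref{def:sqcupCompatible} hold since the canonical maps $N\to N\oplus N' \to P$ really are the inclusions — here $\sqcup = \oplus$ is literally the direct sum. Note the height of an element $N\in\S(M,\oplus)$ equals $\rk N$, matching $\dim(N\otimes\KK)$ in $\S(V)$, so the isomorphism of item 1 is height-preserving.

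Finally, items 4, 5, 6 — properties (LI), (EX), (CM) — transfer along the isomorphism $\rho\colon \S(M,\oplus)\xrightarrow{\sim}\S(V)$ of item 1. Concretely, since $\rho$ is a height-preserving poset isomorphism that carries meets to meets (item 2) and carries the $\oplus$-structure to the $\oplus$-structure (under $\rho$, a subset $\sigma$ of $\S(M,\oplus)$ is $\oplus$-compatible iff $\rho(\sigma)$ is, because both reduce to the ``sum of ranks equals rank of sum'' / ``sum of dims equals dim of span'' condition together with the honest internal direct sum), it induces isomorphisms $\D(M,\oplus)\cong\D(V)$ and $\PD(M,\oplus)\cong\PD(V)$ respecting all the interval structure. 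Moreover item 3 gives $\S(M,\oplus)_{\le N} = \S(N,\oplus)$, compatible with $\S(V)_{\le W} = \S(W)$, so the recursive/product descriptions in (LI) transfer verbatim. Therefore (LI), (EX), (CM) for $\S(M,\oplus)$ follow from the same properties for $\S(V)$, which were established in \ref{sub:vectorspaces} (see \ref{rk:CPprop} and the discussion there, using the dimension theorem).

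The main obstacle I expect is the careful verification in item 1 that $N\mapsto N\otimes_R\KK$ is a \emph{bijection} onto all of $\S(V)$ — i.e. that every subspace $W$ of $V$ arises as $N\otimes\KK$ for a \emph{direct summand} $N$ of $M$, not merely for a submodule. This is exactly where one uses that over a Dedekind domain finitely generated torsion-free modules are projective (so that $0\to N\to M\to M/N\to 0$ splits once $M/N$ is torsion-free), and that $N := M\cap W$ is the right choice since $M/N\hookrightarrow V/W$ is torsion-free. Everything else is bookkeeping: checking $\oplus$-compatibility amounts to unwinding \ref{def:sqcupCompatible} in the very concrete situation where $\sqcup$ is the literal direct sum and all the structure maps are literal inclusions of summands, and the transfer of (LI)/(EX)/(CM) is then formal given that $\rho$ is an isomorphism of posets-with-$\sqcup$-structure.
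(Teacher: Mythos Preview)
Your arguments for items 1--3 are correct and somewhat more explicit than the paper's (which simply cites \cite{Charney} for items 1--2 and invokes the modular law for item 3).

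However, your transfer argument for items 4--6 has a genuine gap. The poset isomorphism $\rho\colon \Sub(M,\oplus)\to \Sub(V)$ does \emph{not} carry $\oplus$-compatibility back and forth. In the vector space $V$, a family $\{W_1,\ldots,W_r\}$ is $\oplus$-compatible precisely when the $W_i$ are in internal direct sum, since any subspace is automatically a summand. In $\Sub(M,\oplus)$, by contrast, $\oplus$-compatibility (Definition~\ref{def:sqcupCompatible}) requires that for every $I$ the submodule $\sum_{i\in I} N_i$ \emph{equals} the join $\bigvee_{i\in I} N_i$, i.e.\ is itself a direct summand of $M$. This is strictly stronger than ``internal direct sum'' or ``sum of ranks equals rank of join''. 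The paper itself flags exactly this phenomenon in \ref{sub:monoidalCat}: for $M=\ZZ^2$, the summands $N_1=\langle(1,0)\rangle$ and $N_2=\langle(1,2)\rangle$ satisfy $N_1\cap N_2=0$ and $N_1\vee N_2=M$ in $\Sub(M,\oplus)$, but $N_1+N_2$ has index $2$ in $M$, so $\{N_1,N_2\}\notin \D(M,\oplus)$ even though $\{\rho(N_1),\rho(N_2)\}\in\D(V)$. Consequently $\rho$ does \emph{not} induce isomorphisms $\D(M,\oplus)\cong\D(V)$ or $\PD(M,\oplus)\cong\PD(V)$, and your claimed transfer of (LI), (EX), (CM) from $V$ to $M$ breaks down.

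The paper instead proves (LI), (EX), (CM) directly at the module level. For (EX) and (LI) one argues that if $\sigma\cup\{P'\}\in\D(M,\oplus)$ and $\tau_y\in\D(y,\oplus)$ for each $y\in\sigma$, then replacing each $y$ by the elements of $\tau_y$ still gives an internal direct sum decomposition of $M$ (since $\bigoplus_{x\in\tau_y} x = y$ internally and $\bigoplus_{y\in\sigma} y \oplus P' = M$). For (CM), given $M=x\oplus y=x'\oplus y'$ with $x\le x'$ and $y'\le y$, the modular law gives $x+(x'\cap y)=x'\cap(x+y)=x'$, whence $(x'\cap y)\oplus x\oplus y'=x'\oplus y'=M$ internally. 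These are short direct arguments; the detour through $V$ is neither needed nor valid.
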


\begin{proof}
Item 1 is discussed in \cite[p. 3]{Charney} and item 2 is \cite[Lemma 1.2]{Charney}.
Item 3 is a consequence of the modular law.

Next, suppose that $\sigma \in \PD(M,\oplus)$ and let $\tau_P\in \D(P,\oplus)$.
Fix $P'\in \Sub(M,\oplus)$ such that $\sigma \cup \{P'\}\in \D(M,\oplus)$. 
Then $\bigcup_{P\in \sigma} \tau_P \cup \{P'\}\in \D(M,\oplus)$.
Reciprocally, by item 3, if $\sigma \in \PD(M,\oplus)$ and $\Phi(\sigma)\leq P$, then $\Phi(\sigma)\in \Sub(P,\oplus)$ and if $\{ \Phi(\sigma), U \}\in \D(P,\oplus)$ and $\{P,P'\}\in \D(M,\oplus)$, then $\sigma \cup \{U,P'\}\in \D(M,\oplus)$.
From this, it is not hard to conclude that item 4 holds.

Property (EX) also holds by similar arguments involving the use of modular law.

Finally, we show that property (CM) holds.
Let $(P,P'), (N,N')$ two direct sum decompositions of $M$ such that $P\leq P'$ and $N\geq N'$.
Then, by item 2, $P'\cap N = P'\wedge N \in \Sub(M,\oplus)$.
Also ,the submodules $\{P, P'\cap N, N'\}$ are clearly in direct sum, and their span is $M$.
Thus $\{P,P'\cap N, N'\}\setminus\{0\}\in \D(M,\oplus)$.
\end{proof}

In \cite{Charney}, Charney proved that if $R$ is a Dedekind domain and $M$ is a projective module of rank $n$, then $\Charney(M,\oplus)$ is spherical of dimension $n-2$.
By \ref{prop:charney} and \ref{lm:dedekindProperties}, we see that $\Delta \Charney(M,\oplus) \cong {\Ord\redm{\D(M,\oplus)}}^{\op}$.
Now, by \ref{lm:intervalsOrdered} and \ref{lm:dedekindProperties}, the intervals $\Ord\redm{\D(M,\oplus)}_{< z}$ and $\Ord\redm{\D(M,\oplus)}_{> z}$ are spherical of the correct dimension for $z\in \Ord\D(M,\oplus)$.
Thus $\Ord\D(M,\oplus)$ is Cohen-Macaulay (so $\Charney(M,\oplus)$ is Cohen-Macaulay as well).
By \ref{thm:orderedVersions}, $\redm{\D(M,\oplus)}$ is spherical.
Since its intervals are intervals in partition lattices or products of smaller decomposition posets, we also conclude that $\D(M,\oplus)$ is Cohen-Macaulay.

Hence, by Charney's result we get:

\begin{theorem}
[Charney]
Let $M$ be a finitely generated projective module of rank $n$ over a Dedekind domain $R$.
Then $\Sub(M,\oplus)$, $\D(M,\oplus)$, $\Ord\D(M,\oplus)$ and $\Charney(M,\oplus)$ are Cohen-Macaulay posets of dimension $n$, $n-1$, $n-1$ and $n-2$ respectively.
In particular, this holds when $R$ is a PID.
\end{theorem}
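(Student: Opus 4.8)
The plan is to reduce everything to Charney's sphericity theorem for $\Charney(M,\oplus)$ together with the structural facts about $\Sub(M,\oplus)$ recorded in \ref{lm:dedekindProperties}. First I would dispose of $\Sub(M,\oplus)$ directly: by item~1 of \ref{lm:dedekindProperties} the assignment $N\mapsto N\otimes_R\KK$ is an isomorphism from $\Sub(M,\oplus)$ onto the subspace poset of $V=M\otimes_R\KK\cong\KK^n$, and that poset is Cohen--Macaulay of dimension $n$ over any field by \ref{thm:vectorspaces} (cf.\ \ref{sub:vectorspaces}). Next I would invoke \ref{lm:dedekindProperties} once more to record that $\Sub(M,\oplus)$ is downward $\sch$-complemented for $\sqcup=\oplus$ and satisfies properties (LI), (EX), (CM) of \ref{def:propLI} and \ref{def:propsEXCM}; since that lemma is stated for every finitely generated projective $R$-module, the same conclusions hold verbatim for every direct summand of $M$, in particular for the rank-one projective (not necessarily free) summands, and this persistence on summands is precisely what makes the lower intervals below tractable.

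Given (EX) and (CM), \ref{prop:charney} yields that $\beta$ is an isomorphism, hence $\Delta\Charney(M,\oplus)\cong\big(\redm{\Ord\D(M,\oplus)}\big)^{\op}$. Charney's theorem \cite{Charney} states that the left-hand side is spherical of dimension $n-2$, so $\redm{\Ord\D(M,\oplus)}$ is spherical of dimension $n-2$, and then item~1 of \ref{thm:orderedVersions} — which exhibits $\redm{\D(M,\oplus)}$ as a wedge summand, hence a retract, of $\Ord\redm{\D(M,\oplus)}=\redm{\Ord\D(M,\oplus)}$, and retracts preserve $(n-3)$-connectivity — forces $\redm{\D(M,\oplus)}$ to be spherical as well. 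To promote sphericity to Cohen--Macaulayness I would check the interval conditions. The upper intervals are $\D(M,\oplus)_{>\sigma}\cong\Pi(\sigma)_{>\sigma}$ by \ref{prop:decompintervals} and $\Ord\D(M,\oplus)_{\ge z}$, a Boolean lattice by item~1 of \ref{lm:intervalsOrdered}, both Cohen--Macaulay, and the closed double intervals lie inside these. The lower intervals are, by property (LI) and item~2 of \ref{lm:intervalsOrdered}, the products $\prod_{P\in\sigma}\D(P,\oplus)$ and $\prod_{Y\in z}\Ord\D(Y,\oplus)$ taken over projective summands of strictly smaller rank, so re-running the $\beta$-argument with Charney's theorem at lower ranks makes each $\redm{\D(P,\oplus)}$ and $\redm{\Ord\D(Y,\oplus)}$ spherical; hence the proper part of such a product — an iterated suspension of a join of spheres — is spherical of the correct dimension. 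This gives that $\D(M,\oplus)$ and $\Ord\D(M,\oplus)$ are Cohen--Macaulay of dimension $n-1$. Finally $\Charney(M,\oplus)$ is Cohen--Macaulay of dimension $n-2$: by the isomorphism above its order complex is that of $\redm{\Ord\D(M,\oplus)}$, which is the proper part of the Cohen--Macaulay poset $\Ord\D(M,\oplus)$ and therefore itself Cohen--Macaulay, and Cohen--Macaulayness is invariant under passing to the opposite poset. The PID case is immediate: a PID is Dedekind and finitely generated projective modules over it are free, so $M\cong R^n$ and $\Sub(M,\oplus)$ is the poset of summands in the introductory table.

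The real content is external — Charney's proof that $\Charney(M,\oplus)$ is $(n-3)$-connected — which I treat as a black box; the rest is assembly. Within the excerpt's toolkit the only delicate point is bookkeeping: confirming the base cases $n=0,1$ (where all four posets degenerate to a point or to $\{0<\KK\}$), matching the height values in the statement with those computed in \ref{prop:heightDandPD} and \ref{lm:intervalsOrdered}, and being scrupulous about the ``proper part'' conventions so that the wedge decomposition of \ref{thm:orderedVersions} and the interval formulas are applied to $\redm{\D(M,\oplus)}$ and $\redm{\Ord\D(M,\oplus)}$ rather than to the bounded posets themselves. I do not expect a substantive obstacle beyond this organizational work.
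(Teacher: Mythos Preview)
Your proof is correct and follows essentially the same route as the paper: use \ref{lm:dedekindProperties} to identify $\Sub(M,\oplus)$ with a subspace poset and to verify (EX), (CM), (LI); then apply \ref{prop:charney} to get $\Delta\Charney(M,\oplus)\cong(\redm{\Ord\D(M,\oplus)})^{\op}$, feed in Charney's sphericity theorem, and propagate Cohen--Macaulayness to $\Ord\D$, $\D$, and $\Charney$ via \ref{thm:orderedVersions}, \ref{lm:intervalsOrdered}, and \ref{prop:decompintervals}. One small slip: since each $\D(P,\oplus)$ (and $\Ord\D(P,\oplus)$) has only a unique maximum, $\redm{\prod_i \D(P_i,\oplus)}$ is homotopy equivalent to the \emph{join} $\mathop{*}_i \redm{\D(P_i,\oplus)}$, not an iterated suspension of a join---the suspensions appear only when both $\hat 0$ and $\hat 1$ are removed---but this does not affect the conclusion that the lower intervals are spherical.
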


On the other hand, the complex of partial bases of a projective module of finite rank over a Dedekind domain is widely studied.
But first, we need to make a few remarks on its definition.
According to the work by Van der Kallen \cite{vdK}, the poset $\U(M)$ of unimodular sequences of an $R$-module $M$ is the set of ordered sequences of distinct elements of $M$ which form a basis of a free summand of $M$, ordered in the usual way.
That is, if $z,w\in \U(M)$ then $z\leq w$ if and only if $z$ is obtained by deleting elements of the tuple $w$.

However, the definition of $\U(M)$ does not always coincide with our definition of (ordered) frames and partial bases for $M$.
To be more precise, let $\C$ be the category of $R$-modules, where $R$ is a Dedekind domain, $\sqcup= \oplus$, and let $M=R^n$ be the free $R$-module of rank $n\geq 2$.
Then there are frames $\{N_1,\ldots,N_n\}$ of $M$ such that not all the $N_i$ are free, but just projective of rank one.
Indeed, this is the case if $R$ is not a PID.
This leads to the question of which definition we should also employ for the complex of partial bases.
For example, a natural choice for the partial basis complex is to take, for each rank-$1$ summand $N$ of $M$, the set of generating sets of $N$ of minimal possible size.
In the case $N$ is free, this is exactly the set of bases of $N$.
But if $N$ is non-free, these generating sets have size $2$.
We denote our ``natural" choice for the partial basis complex by $\Vog(M,\oplus)$ (note that $\PF(M,\oplus) = \F(M,\oplus)$).
When $R$ is a PID, $\Ord\Vog(M,\oplus) = \U(M)$.

If we work instead in the category of free $R$-modules with $\sqcup = \oplus$, then a rank-$1$ summand of $M = R^n$ is always free and therefore $\U(M)$ coincides with the ordered version of the complex of partial bases as we defined.
To avoid confusion on these different approaches and definitions, we write $\Vog(M)$, $\F(M)$, etc., for the versions of these posets and complexes that only involve decompositions into free summands (so working in the category of free $R$-modules).
We reserve notation with $\oplus$, namely $\Vog(M,\oplus)$, $\F(M,\oplus)$, etc., for the corresponding versions in the category of $R$-modules (so decompositions might involve non-free projective summands).
With this notation 
\[ \Ord\Vog(M) = \U(M) \subsetneq \Ord\Vog(M,\oplus). \]

We recall now some results on these posets and complexes.
By Theorem 2.6 of \cite{vdK}, $\U(R^n)$ has dimension $n-1$ and it is $(n-3)$-connected.
This result settled a conjecture raised by Quillen on higher connectivity of the posets of unimodular sequences on finite-dimensional Noetherian rings.
Moreover, by \ref{thm:orderedVersions}, the unordered version of this poset, i.e., the complex $\B(R^n)$, is $(n-3)$-connected.
In particular, $\F(R^n)$ is $(n-3)$-connected by \ref{prop:inflationDecompositionCM}.

However, it was more recently shown \cite[Theorem 2.1]{CFP} that $\U(R^n)$ might not be spherical, and hence the family of complexes $\U(R^n)$ are not Cohen-Macaulay in general (and thus neither are $\F(R^n)$ by \ref{prop:inflationDecompositionCM}).
For example, for $R = \ZZ[\sqrt{-5}]$ and $n=2$, the complex $\U(R^2)$ is disconnected.
Nevertheless, Theorem E of \cite{CFP} establishes the Cohen-Macaulay property for the complex $\U(R_S^n)$ if $R_S$ is a Dedekind domain of arithmetic type constructed from a global field or number field $K$, and $S$ is a set of places of $K$ of size at least two (and infinite if $K$ is a number field), which also contains a non-complex place.
In particular, its unordered version is Cohen-Macaulay by \ref{thm:orderedVersions}.
Also by \ref{prop:inflationDecompositionCM}, $\F(R_S^n)$ is Cohen-Macaulay.

We propose the study of the partial decompositions in this context:

\begin{question}
For which Dedekind domains $R$ are $\red{\PD(R^n)}$ and $\red{\PD(R^n,\oplus)}$ spherical?
\end{question}

Note that in the example above for $R = \ZZ[\sqrt{-5}]$, $\red{\PD(R^2)} = \F(R^2)$ is disconnected since $\Vog(R^2) = \U(R^2)$ is.
But also $\red{\PD(R^2,\oplus)} = \F(R^2,\oplus)$ is disconnected, as we show below.

\begin{lemma}
Let $R$ be a Dedekind domain.
If $\F(R^2)$ is disconnected, then so is $\F(R^2,\oplus)$.
\end{lemma}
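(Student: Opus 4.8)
The plan is to compare the two frame complexes $\F(R^2)$ and $\F(R^2,\oplus)$ directly, exploiting the fact that in rank $2$ everything is essentially determined by $\Sub(M,\oplus)\cong \Sub(V)$ for $V = R^2\otimes_R\KK\cong \KK^2$ (by \ref{lm:dedekindProperties}(1)). First I would note that both complexes are one-dimensional (a full frame of $R^2$ has exactly two elements, so $\F$ has dimension $1$ and $\red{\PD(R^2)} = \F(R^2)$ as already observed in the text), so ``disconnected'' just means there are at least two connected components, equivalently the $1$-skeleton graph has $\geq 2$ components. The vertices of $\F(R^2)$ are the rank-$1$ \emph{free} summands $N$ of $R^2$, while the vertices of $\F(R^2,\oplus)$ are \emph{all} rank-$1$ projective summands; edges in either complex are pairs $\{N_1,N_2\}$ with $N_1\oplus N_2 = R^2$.

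The key step is to produce, for every rank-$1$ projective summand $N$ of $R^2$ (free or not), a rank-$1$ \emph{free} summand $N'$ that is joined to $N$ by an edge in $\F(R^2,\oplus)$, i.e. with $N\oplus N' = R^2$. If I can do this, then every vertex of $\F(R^2,\oplus)$ lies in the same connected component as some vertex of $\F(R^2)$; hence the number of components of $\F(R^2,\oplus)$ is at most the number of components of the induced subgraph on the free-summand vertices — but that subgraph is exactly $\F(R^2)$ (edges between two free summands are the same in both complexes, since the edge relation $N_1\oplus N_2=R^2$ does not depend on the ambient category). So if $\F(R^2)$ has $\geq 2$ components and one shows these remain separated after adding the non-free vertices, disconnectedness transfers. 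To get the required complement $N'$: given a summand $N$ with complement $P$ (so $N\oplus P = R^2$ with $P$ projective of rank $1$), I would use that $R^2$ is free to write down an explicit free complement — pick a basis $e_1,e_2$ of $R^2$; the image of $N$ in $R^2/ (Re_1)\cong R$ is either $0$ (then $N = Re_1$ is already free, take $N' = Re_2$) or a nonzero submodule, and then $Re_1$ or $Re_2$ will serve as a complement after a possible change of basis; more robustly, any rank-$1$ summand of a free rank-$2$ module has a free complement because the quotient $R^2/N$ is projective of rank $1$ and, when we also know $N$ is a summand, the extension $0\to N\to R^2\to R^2/N\to 0$ splits, and one of the standard coordinate lines meets $N$ trivially with free span complementary to $N$ — this is where I would do the small bit of commutative-algebra bookkeeping.

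The step I expect to be the main obstacle is verifying that adding the non-free vertices does not \emph{merge} two components of $\F(R^2)$ — i.e. that there is no non-free summand $N$ adjacent to free summands lying in two different components of $\F(R^2)$. The clean way around this is to observe that each non-free vertex $N$ has, in $\F(R^2,\oplus)$, \emph{only} free complements: indeed if $N\oplus N' = R^2$ then $N'\cong R^2/N$, and $N\cong R^2/N'$; if both $N,N'$ were non-free their classes in the class group would be nontrivial and would have to multiply to the class of $\det R^2 = R$ (trivial), which forces $[N] = [N']^{-1}$, not a contradiction in general — so this naive argument fails, and instead I would argue via the retraction: the map sending a non-free vertex $N$ to a chosen free complement, together with the identity on free vertices, should be shown to be a graph map to $\F(R^2)$ that is the identity on $\F(R^2)$, whence it induces a surjection on $\pi_0$ in the correct direction; combined with the inclusion $\F(R^2)\hookrightarrow\F(R^2,\oplus)$ this pins down $\pi_0(\F(R^2,\oplus))$ in terms of $\pi_0(\F(R^2))$ and yields that the former is disconnected whenever the latter is. Making this retraction well-defined and order/graph-preserving — in particular checking the chosen complements can be selected coherently on edges — is the technical heart of the argument, but it is the same style of ``fold onto a subcomplex'' homotopy used repeatedly in the paper (e.g. in \ref{coro:collapseFrames} and \ref{prop:partitioncase}).
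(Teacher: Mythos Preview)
Your approach has a genuine gap, and in fact the ``key step'' you propose is false. You try to show that every rank-$1$ projective summand $N$ of $R^2$ admits a \emph{free} rank-$1$ complement $N'$. But this is impossible when $N$ is non-free: if $N\oplus N'\cong R^2$ then comparing Steinitz classes (determinants) in the class group gives $[N][N']=[R^2]=1$, so $[N']=[N]^{-1}$, which is nontrivial whenever $[N]$ is. Thus a non-free summand has \emph{only non-free} complements, and your coordinate-line argument cannot succeed. You actually write down this class-group computation yourself, but then draw the wrong conclusion from it (``each non-free vertex has only free complements'') before noticing it doesn't give a contradiction and pivoting to a retraction. The retraction cannot be made to work either, precisely because there is no free complement to retract onto.

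The paper's proof is the correct reading of the same class-group identity, and it is much shorter: since $[N_1][N_2]=1$, we have $N_1$ free $\iff [N_1]=1\iff [N_2]=1\iff N_2$ free. Hence every edge of $\F(R^2,\oplus)$ joins either two free vertices or two non-free vertices; there are no edges between the two types. Consequently $\F(R^2,\oplus)$ is the disjoint union of the induced subgraph on free vertices, which is exactly $\F(R^2)$, and the induced subgraph on non-free vertices. If $\F(R^2)$ already has at least two components, so does $\F(R^2,\oplus)$. No retraction or coherent choice of complements is needed.
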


\begin{proof}
We prove first that for a frame $\{N_1,N_2\}\in \F(R^2,\oplus)$, $N_1$ is free if and only if $N_2$ is.
Indeed, the $N_i$ are rank-$1$ projective $R$-modules, and hence can be regarded as ideals of $R$.
To be more precise, if $R^2 = N_1\oplus N_2$ then, regarding $N_1,N_2$ as ideals, in the class group we have that $ 1 = [R]^2 = [R^2] = [N_1][N_2]$, which means that $N_1$ is the inverse of $N_2$.
Now note that $N_1$ is free if and only if it is a principal ideal, that is, $[N_1] =1$.
Thus $N_1$ is free if and only if $N_2$ is.

On the other hand, a frame $\{N_1,N_2\}\in \F(R^2,\oplus)$ contains either two rank-$1$ free modules or two non-free rank-$1$ projective modules.
Thus, two frames of free rank-$1$ submodules that are in different connected components in $\F(R^2)$ cannot be connected by using non-free rank-$1$ projective summands in $\F(R^2,\oplus)$.
This shows that $\F(R^2,\oplus)$ is disconnected if $\F(R^2)$ is.
\end{proof}

On the other hand, by Charney's results and \ref{thm:highConnectivity}, we can conclude that $\red{\PD(M,\oplus)}$ is at least $(n-3)$-connected for a rank $n$ projective module $M$.


\end{document}